\documentclass[11pt,a4paper,oneside]{amsart}

\usepackage[T1]{fontenc}
\usepackage[utf8]{inputenc}
\usepackage{lmodern}
\usepackage{microtype}
\microtypesetup{expansion=false}
\usepackage{amsmath,amsfonts,amssymb,mathtools}
\usepackage{amsthm}
\usepackage{enumitem}
\usepackage{xcolor}
\usepackage[margin=1.02in]{geometry}
\usepackage[colorlinks=true,linkcolor=blue!55!black,citecolor=blue!55!black,urlcolor=blue!55!black]{hyperref}
\hypersetup{
 pdftitle={Richardson volume models for skew Schur and skew Schur P/Q-functions},
 pdfauthor={Khai-Hoan Nguyen-Dang; Appendix A contributed by Zhenpeng Wang},
 pdfsubject={Types A, C, and D Richardson varieties, realizable volume polynomials, and Lorentzian coefficient arrays},
 pdfkeywords={skew Schur polynomial, Schur P-function, Schur Q-function, Richardson variety, Lagrangian Grassmannian, spinor variety, characteristic two, realizable volume polynomial, Lorentzian polynomial}
}

\allowdisplaybreaks
\numberwithin{equation}{section}
\setlist{itemsep=2pt,topsep=4pt}

\newtheorem{theorem}{Theorem}[section]
\newtheorem{proposition}[theorem]{Proposition}
\newtheorem{corollary}[theorem]{Corollary}
\newtheorem{lemma}[theorem]{Lemma}
\theoremstyle{definition}
\newtheorem{definition}[theorem]{Definition}
\newtheorem{example}[theorem]{Example}
\theoremstyle{remark}
\newtheorem{remark}[theorem]{Remark}

\DeclareMathOperator{\Gr}{Gr}
\DeclareMathOperator{\LG}{LG}
\DeclareMathOperator{\Supp}{Supp}
\DeclareMathOperator{\Newt}{Newt}
\DeclareMathOperator{\conv}{conv}
\DeclareMathOperator{\diag}{diag}
\DeclareMathOperator{\Cov}{Cov}
\DeclareMathOperator{\Var}{Var}
\DeclareMathOperator{\rank}{rank}

\newcommand{\N}{\mathbb N}
\newcommand{\Z}{\mathbb Z}
\newcommand{\Q}{\mathbb Q}
\newcommand{\R}{\mathbb R}
\newcommand{\C}{\mathbb C}
\newcommand{\cN}{\mathcal N}
\newcommand{\cO}{\mathcal O}
\newcommand{\cS}{\mathcal S}
\newcommand{\cQ}{\mathcal Q}
\newcommand{\cE}{\mathcal E}
\newcommand{\cF}{\mathcal F}
\newcommand{\cG}{\mathcal G}
\newcommand{\cK}{\mathcal K}
\newcommand{\cU}{\mathcal U}
\newcommand{\one}{\mathbf 1}
\newcommand{\abs}[1]{\lvert#1\rvert}
\newcommand{\angles}[1]{\left\langle #1\right\rangle}
\newcommand{\psd}{\preceq_{\mathrm{psd}}}

\title[Richardson volume models in types A, C, and D]{Richardson volume models for skew Schur and skew Schur $P/Q$-functions}

\author{Khai-Hoan Nguyen-Dang,\\With an appendix by Zhenpeng Wang}
\address{Morningside Center of Mathematics, Chinese Academy of Sciences, Beijing 100190, China}
\email{khaihoann@gmail.com}

\date{}
\subjclass[2020]{Primary 05E05; Secondary 14M15, 14C17, 52B40}
\keywords{skew Schur polynomial, Schur $P$-function, Schur $Q$-function, Richardson variety, Lagrangian Grassmannian, spinor variety, total Chern class, volume polynomial, Lorentzian polynomial}

\begin{document}

\begin{abstract}
We identify ordinary skew Schur polynomials and skew Schur $P$-functions as top-degree total-Chern intersection polynomials on Richardson varieties in ordinary and Lagrangian Grassmannians.  We then obtain that
\[
 \cN(s_{\lambda/\mu}),\qquad
 \cN(P_{\lambda/\mu}),\qquad
 \cN(Q_{\lambda/\mu})
\]
are realizable volume polynomials.  This settles the skew-Schur and Schur-$P$ Lorentzian conjectures of Huh--Matherne--M\'esz\'aros--St.~Dizier and strengthens the latter to arbitrary skew $P/Q$-functions.

The constructions extend to cycle transforms attached to arbitrary irreducible subvarieties of ordinary and Lagrangian Grassmannians.  Their realizable-volume interpretation yields reverse Khovanskii--Teissier and Lorentzian Hodge--Riemann inequalities for ordinary and shifted tableau multiplicities; exact ordinary skew-Schur support permutahedra and extremal coefficients; the known straight shifted support polytopes with their vertex coefficients; implicit exact permutahedra for arbitrary skew $P/Q$-functions; and weighted-aggregation, covariance, and two-row Littlewood--Richardson consequences.

An appendix by Zhenpeng Wang constructs the dual type~$D$ spinor cycle transform and a direct Richardson realization of $Q_{\lambda/\mu}$.  In characteristic two, compatible very special isogenies induce finite flat radicial morphisms between the ambient Lagrangian and spinor models.  Their restrictions to the corresponding Richardson varieties have degrees $2^{\ell(\lambda)-\ell(\mu)}$ and $2^{|\lambda|-|\mu|-\ell(\lambda)+\ell(\mu)}$, and a regular complete-intersection bridge explains the difference between the two projective-bundle shifts.
\end{abstract}

\maketitle

\section{Introduction}\label{sec:introduction}

Schur polynomials connect symmetric-function theory, polynomial representations of general linear groups, tableaux, and the intersection theory of Grassmannians.  For partitions $\mu\subseteq\lambda$, the skew Schur polynomial
\[
 s_{\lambda/\mu}(x_1,\ldots,x_n)
 =\sum_T x^{\operatorname{cont}(T)}
\]
is the character of the corresponding skew Schur module; see Fulton~\cite[Chapters~1 and~9]{FultonYT} and Akin--Buchsbaum--Weyman~\cite{ABW82}.  The parallel projective-representation theory is governed by marked shifted tableaux and the Schur $P$- and $Q$-functions of strict partitions; see Macdonald~\cite[Chapter~III, Section~8]{Macdonald} and Stembridge~\cite{Stembridge89}.

For a polynomial $F(x)=\sum_\alpha c_\alpha x^\alpha$, write
\[
 \cN(F):=\sum_\alpha c_\alpha\frac{x^\alpha}{\alpha!},
 \qquad
 \alpha!:=\prod_i\alpha_i!.
\]
Lorentzian polynomials, introduced by Br\"and\'en and Huh~\cite{BH20}, provide a common framework for Hodge--Riemann relations, Alexandrov--Fenchel inequalities, complete log-concavity, and discrete convexity.  Huh--Matherne--M\'esz\'aros--St.~Dizier proved that $\cN(s_\lambda)$ is Lorentzian for every straight shape and, in fact, realized it as a projective volume polynomial~\cite[Theorem~3 and pp.~4413--4414]{HMMSD22}; see also Huh~\cite[Example~3.6]{HuhVolume26}.  They asked whether $\cN(s_{\lambda/\mu})$ is Lorentzian for every skew shape and whether $\cN(P_\lambda)$ is Lorentzian for every strict partition~\cite[Conjectures~19 and~20]{HMMSD22}.

A homogeneous polynomial $f$ of degree $d$ is a \emph{realizable volume polynomial over $\C$} if
\[
 f(x)=\frac{q}{d!}\int_X(x_1D_1+\cdots+x_nD_n)^d
\]
for some $q\in\Q_{\geq0}$, an irreducible projective $d$-fold $X/\C$, and semiample Cartier divisors $D_1,\ldots,D_n$ on $X$; see Grund--Huh--Micha\l ek--S\"uss--Wang~\cite[Definition~1.1]{GHMSW25} and Huh~\cite[Definition~3.1]{HuhVolume26}.  This class is strictly smaller than the Lorentzian cone in general~\cite[Section~3.2]{HuhVolume26}.

\subsection{Main results}

Our principal theorem resolves both conjectures in the stronger volume-realization form and extends the type~$C$ statement from straight to arbitrary skew shifted shapes.

\begin{theorem}[Main volume-realization theorem]\label{thm:intro-volume}
Let $\theta$ be an ordinary skew shape, let $\vartheta$ be a strict skew shape, and let $n\geq1$.  Then
\[
 \cN(s_\theta(x_1,\ldots,x_n)),\qquad
 \cN(P_\vartheta(x_1,\ldots,x_n)),\qquad
 \cN(Q_\vartheta(x_1,\ldots,x_n))
\]
are realizable volume polynomials over $\C$.  If one of these polynomials is nonzero, its realization may be chosen on a smooth irreducible projective variety with semiample divisors.
\end{theorem}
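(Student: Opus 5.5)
The plan follows the HMMSD template for straight shapes. For a straight shape, $s_\lambda(x)$ is realized through the total Chern class of a tautological bundle tensored with line bundles on a product of projective spaces, integrated against a Schubert class. For skew shapes we replace the Schubert variety by a Richardson variety. Concretely, we take $\Gr(k,N)$ with $N$ large and the Richardson variety $X_{\lambda/\mu}=X_\lambda\cap X^{\mu}$, which is irreducible, normal, Cohen--Macaulay and of dimension $|\lambda|-|\mu|$. On $Y=X_{\lambda/\mu}\times(\mathbb P^{m})^n$ we consider the bundle $E=\bigoplus_i \cQ\boxtimes\cO(1)_i$, or more conveniently the single bundle $\cQ\boxtimes \cO(x_1H_1+\dots+x_nH_n)$ viewed formally.

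The identity we aim for is
\[
 s_{\lambda/\mu}(x_1,\dots,x_n)=\int_{X_{\lambda/\mu}}\prod_{i=1}^n c(\cQ)(x_i),
\]
where $c(\cQ)(t)=\sum_j c_j(\cQ)t^{j}$ is taken in top degree. This follows because $\prod_i c(\cQ)(x_i)=\sum_\nu s_\nu(x)\,\sigma_\nu$ by the Cauchy identity for $c(\cQ)=\sum_j\sigma_j$, together with $\int_{X_{\lambda/\mu}}\sigma_\nu=c^{\lambda}_{\mu\nu}$ and $\sum_\nu c^\lambda_{\mu\nu}s_\nu=s_{\lambda/\mu}$.

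Next we pass from the top-degree total-Chern expression to a volume polynomial. Each factor $c(\cQ)(x_i)$ is realized as a pushforward from the projective bundle $\mathbb P(\cQ^\vee\oplus\cO)$, or from the HMMSD-type bundle $\mathbb P(\cQ)$-tower. There, $c(\cQ)$ appears as the Segre-class pushforward of powers of $\cO(1)$, and the coefficient $x_i^{a}/a!$ arises exactly from expanding $\exp$. We therefore set $Z=$ the fiber product over $X_{\lambda/\mu}$ of $n$ copies of the projective bundle $\mathbb P(\cQ\oplus\cO)$, with divisors $D_i=\cO_{i}(1)$. These are semiample because $\cQ$ is globally generated. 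Expanding $(\sum x_iD_i)^{d}$ and pushing forward via $\pi_*\xi^{r+a}=s_a$ gives $d!\,\cN\bigl(\int\prod_i c(\cQ)(x_i)\bigr)$ up to the sign and duality bookkeeping (choose $\cQ$ versus $\cS^\vee$ so that Segre classes become Chern classes). Degrees match with $d=|\lambda|-|\mu|+n\cdot\rank$. $Z$ is irreducible and projective, so this yields realizability with $q=1$.

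For types $C$ and $D$, we use $\LG(n,2n)$ with the tautological quotient $\cQ$, where $c(\cQ)(t)$ is expanded through $\tilde P$-classes. Pragacz's formula gives $\prod_i c(\cQ)(x_i)=\sum_\nu Q_\nu(x)\,\sigma_\nu$ (dual Cauchy for $Q$/$P$), and Richardson intersection numbers on $\LG$ are the shifted LR coefficients. Hence $\int_{X_{\lambda/\mu}}\prod c(\cQ)(x_i)=Q_{\lambda/\mu}$ or $P_{\lambda/\mu}$ depending on the normalization, and $P$ and $Q$ differ by the positive constant $2^{\ell(\lambda)-\ell(\mu)}$, which is absorbed into $q$. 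The same projective-bundle construction then applies.

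For smoothness in the nonzero case, we replace $X_{\lambda/\mu}$ by a resolution, for instance a Bott--Samelson-type resolution of the Richardson variety, or any resolution $\tilde X\to X$. Pullbacks preserve semiampleness and integrals of pulled-back classes. The main obstacle is the exact type $C$ identification, namely getting the correct powers of $2$ and the convention that the Richardson structure constants on $\LG$ are precisely the coefficients of $P_{\lambda/\mu}$ in the $Q_\nu$ expansion. I would settle this first using Pragacz's $\tilde Q$-polynomial description of $H^*(\LG)$.
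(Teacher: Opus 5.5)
Your Richardson identity $s_{\lambda/\mu}(x)=\int_{X_{\lambda/\mu}}\prod_i c_{x_i}(\cQ)$, obtained from the Cauchy identity and $\int_{X_{\lambda/\mu}}\sigma_\nu=c^\lambda_{\mu\nu}$, is correct and is exactly the paper's type~$A$ input. The gap is in passing from this identity to $\cN(s_{\lambda/\mu})$.

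A realizable volume polynomial of degree $e$ must be computed on an irreducible variety of dimension exactly $e$. Your $Z$ has dimension $D=|\lambda|-|\mu|+ns$, where $s$ is the relative dimension of each projective-bundle factor, but $\cN(s_{\lambda/\mu})$ has degree $|\lambda|-|\mu|$. Your sentence ``degrees match with $d=|\lambda|-|\mu|+n\cdot\rank$'' records this mismatch rather than resolving it. Choose the bundle correctly, say $\mathbb P(\cS^\vee)$ with $s=r-1$, whose $\xi$-powers push forward to $h_m(\cS^\vee)=c_m(\cQ)$. With $\mathbb P(\cQ\oplus\cO)$ you would get $h_m(\cQ)=\sigma_{(1^m)}$ and hence the conjugate shape. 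Then $\int_Z\prod_i\xi_i^{s+\alpha_i}=K_{\lambda/\mu,\alpha}$. So the coefficient of $\prod_i x_i^{s+\alpha_i}$ in $\frac{1}{D!}\int_Z(\sum_i x_i\xi_i)^{D}$ is $K_{\lambda/\mu,\alpha}/\prod_i(s+\alpha_i)!$. In other words, $Z$ realizes only the shifted polynomial $\cN\bigl((x_1\cdots x_n)^s s_{\lambda/\mu}(x)\bigr)$; this is the paper's model $Y^A_{\lambda/\mu,n}$. Because the factorials are $(s+\alpha_i)!$ rather than $\alpha_i!$, no choice of $q$ and no sign or duality bookkeeping turns this into $\cN(s_{\lambda/\mu})$.

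The missing step is to apply $\partial^\eta=\prod_i\partial_i^{s}$, which sends $\cN(x^\eta F)$ to $\cN(F)$ term by term, and to prove that this operator preserves realizable volume polynomials. The paper obtains this from the Grund--Huh--Micha\l ek--S\"uss--Wang theorem: realizable covolume polynomials are exactly the operators preserving realizable volume polynomials, and $\partial^\eta$ is one because $\partial^\eta x^{[\eta]}=1$ is the volume polynomial of a point. Alternatively, you would need a Bertini/Stein-factorization argument showing that cutting by general members of the semiample systems $|m\xi_i|$ leaves an irreducible realization at every step. Without this step you get only Lorentzianity of $\cN(s_{\lambda/\mu})$, since derivatives of Lorentzian polynomials are Lorentzian. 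For the same reason, resolving the Richardson base only smooths the shifted model. The smooth clause comes from resolving the final realizing variety (Hironaka), as the paper does.

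There is also a normalization error in type~$C$, which you yourself flagged as the main obstacle. On $\LG$ the correct expansion is $\prod_i c_{x_i}(\cE)=\sum_\nu P_\nu(x)\sigma_\nu$, because $q_r\mapsto c_r(\cE)$ sends $Q_\nu$ to the Kresch--Tamvakis class $\widetilde Q_\nu(\cE)=\sigma_\nu$. Your $\sum_\nu Q_\nu(x)\sigma_\nu$ is off by the $\nu$-dependent factors $2^{\ell(\nu)}$, which cannot be absorbed into $q$. For $(3,1)/(1)$ it would give $Q_{(3)}+2Q_{(2,1)}=2P_{(3)}+8P_{(2,1)}$. This is proportional neither to $P_{(3,1)/(1)}=P_{(3)}+2P_{(2,1)}$ nor to $Q_{(3,1)/(1)}$. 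With the correct normalization, $\int_{R^C_{\lambda/\mu}}\sigma_\nu=[Q_\lambda](Q_\mu Q_\nu)$ yields $P_{\lambda/\mu}$. Then $Q_{\lambda/\mu}=2^{\ell(\lambda)-\ell(\mu)}P_{\lambda/\mu}$ is a genuine global scalar. Your $Q_\nu(x)$ expansion is the spinor-side identity, with the classes $\tau_\nu$ in place of $\sigma_\nu$.
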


The ordinary and shifted assertions are proved in Theorems~\ref{thm:main-A} and~\ref{thm:main-PQ}, respectively; the smooth-realization clause follows from Theorem~\ref{thm:total-chern-volume}.

The theorem is part of a more uniform statement.  Let
\[
 G_A=\Gr(r,r+c),
 \qquad
 0\longrightarrow\cS\longrightarrow\C^{r+c}\otimes\cO_{G_A}
 \longrightarrow\cQ\longrightarrow0
\]
be the ordinary Grassmannian and its universal sequence.  For an irreducible $d$-dimensional subvariety $X\subseteq G_A$, set
\[
 \Theta^A_{X,n}(x)
 :=\sum_{\substack{\nu\subseteq c^r\\|\nu|=d}}
 \left(\int_X s_\nu(\cS^\vee|_X)\right)s_\nu(x_1,\ldots,x_n).
\]
Likewise, let $G_C=\LG(N,2N)$, write $\sigma_\nu$ for its Schubert classes indexed by strict partitions $\nu\subseteq\rho_N=(N,N-1,\ldots,1)$, and set
\[
 \Theta^C_{Z,n}(x)
 :=\sum_{\substack{\nu\subseteq\rho_N\text{ strict}\\|\nu|=d}}
 \left(\int_Z\sigma_\nu|_Z\right)P_\nu(x_1,\ldots,x_n)
\]
for an irreducible $d$-dimensional subvariety $Z\subseteq G_C$.

\begin{theorem}[Richardson cycle transforms]\label{thm:intro-cycle-transforms}
For every irreducible projective $X\subseteq G_A$ and $Z\subseteq G_C$, the polynomials
\[
 \cN(\Theta^A_{X,n})
 \qquad\text{and}\qquad
 \cN(\Theta^C_{Z,n})
\]
are realizable volume polynomials.  Complementary Schubert varieties recover straight Schur and Schur $P$-functions, while the type~$A$ Richardson variety associated with $\lambda/\mu$ recovers $s_{\lambda/\mu}$ and the type~$C$ Richardson variety recovers $P_{\lambda/\mu}$ once $N\geq\lambda_1$.
\end{theorem}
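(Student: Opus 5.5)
We realize $\Theta^A_{X,n}$ as a total-Chern intersection number on a projective bundle over $X$, and then read off the volume polynomial. The key identity is the classical expansion of the total Chern class of a tensor product with a trivial bundle twisted by characters. Let $T=(\C^*)^n$, or work formally with line bundles $L_1,\dots,L_n$, and write $\cS^\vee_X:=\cS^\vee|_X$, of rank $r$. The Cauchy identity
\[
 \prod_{i=1}^n\prod_{j=1}^r(1+x_i+y_j)\quad\text{(top part)}\qquad\text{or more cleanly}\qquad \prod_{i,j}(1+x_iy_j)=\sum_\nu s_\nu(x)s_{\nu'}(y)
\]
shows that
\[
 \int_X\prod_{i=1}^n c\bigl(\cS_X^\vee\otimes L_i\bigr)
\]
is a sum over $\nu$ of $\int_X s_\nu(\cS^\vee_X)$ times $s_{\nu}(x)$ or $s_{\nu'}(x)$, with $x_i=c_1(L_i)$ suitably scaled. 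If $\nu\not\subseteq c^r$ then $s_\nu(\cS_X^\vee)=0$, because $\cS^\vee$ is globally generated of rank $r$ with a $(r+c)$-dimensional space of sections. So $\Theta^A_{X,n}$ equals, after transposition via $\cQ$ if needed, the degree-$d$ part of $\prod_i c(E\otimes L_i)$ for the globally generated bundle $E=\cS^\vee_X$ or $\cQ_X$.

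Next we convert total Chern classes into volumes. For a globally generated bundle $E$ of rank $e$ on $X$, set $P_i=\mathbb P(E\oplus\cO)$, or use the standard trick that $c_k(E)$ is a pushforward of a power of $\cO(1)$ on $\mathbb P(E^\vee\oplus\cO)$ of the appropriate dimension. Take $Y=X\times_{}\prod_i$ fibered product over $X$ of the projective bundles $\mathbb P(E\oplus\cO)$, one copy for each $i$. This $Y$ is irreducible and projective, with $\dim Y=d+ne$. On $Y$ use the semiample divisors $H_i=\cO_{i}(1)$, which are semiample because $E\oplus\cO$ is globally generated, together with the pullback of an ample class $A$ on $X$.

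By the projection formula, $\int_Y\prod_i H_i^{e+a_i}$ with $\sum a_i=d$ equals $\int_X\prod_i s_{a_i}$ of the relevant bundle. A Segre/Chern sign switch, replacing $E$ by its dual or using the quotient, turns these into $\int_X\prod_i c_{a_i}(E)$. Expanding $\cN$ of the total Chern product monomialwise then gives
\[
 \cN(\Theta)(x)=\frac{q}{D!}\int_Y\Bigl(\sum_i x_iH_i+\epsilon A\Bigr)^{D}.
\]
This form is not quite right yet, because the exponents $e+a_i$ must be shifted. The standard remedy, as in \cite{HMMSD22}, is to replace $H_i$ by a divisor whose power picks the correct coefficient. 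Equivalently, we realize $\cN(\Theta)$ as a coefficient extraction: we intersect with $\prod_i H_i^{e}$-type complete intersections of general members of the base-point-free systems. By Bertini, this yields an irreducible subvariety $X'\subseteq Y$ of dimension $d$ on which $\int_{X'}(\sum x_iH_i)^d/d!$ is exactly $\cN(\Theta)$. Bertini applies since the systems are base-point free; in characteristic $0$ the general intersection is reduced, and we take a component or use a positive combination of components, which realizable volume polynomials tolerate up to the factor $q$. For type $C$ we do the same with the tautological bundle on $\LG$. There the relevant identity is $\prod_i c(\cS^\vee\otimes L_i)$ expanding in $Q_\nu(x)\,\sigma_\nu$, which uses the Pragacz $\tilde Q$-polynomial description of $\LG$ Schubert classes, and $P_\nu=2^{-\ell(\nu)}Q_\nu$. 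The powers of $2$ depend on $\nu$, so we instead use the factor $c(\cS^\vee\otimes L)$ with the isotropic structure, $\cS^\vee\cong\cQ$, halving appropriately.

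The last step identifies the specializations. For $X$ a Richardson variety $X_\mu\cap X^{\lambda^\vee}$, we have $\int_X s_\nu=c^{\lambda}_{\mu\nu}$, so $\Theta^A=\sum_\nu c^\lambda_{\mu\nu}s_\nu=s_{\lambda/\mu}$. Analogously, type $C$ Richardson coefficients are the $P$-structure constants, and $\sum_\nu f^\lambda_{\mu\nu}P_\nu=P_{\lambda/\mu}$ provided $N\ge\lambda_1$, so that no truncation occurs.

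The main obstacle is the type $C$ factor-of-$2$ bookkeeping. We must show that the Chern-class expansion produces exactly $P_\nu$, and not $Q_\nu$ with $\nu$-dependent powers of $2$, so that $\Theta^C$ is a genuine volume polynomial and not a reweighting. We would try to handle this first by checking $\nu$ straight against the known realization of $\cN(P_\lambda)$-type identities.
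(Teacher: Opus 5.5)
Your proposal has a genuine gap in type~$C$, which you yourself leave as ``the main obstacle''. The expansion you predict there is also false. Already for $N=1$ one has $G_C=\mathbb P^1$, $\cE=\cO(1)$, $\sigma_{(1)}=h$, and
$\prod_i c_{x_i}(\cE)=\prod_i(1+x_ih)=1+P_{(1)}(x)\,\sigma_{(1)}$, not $1+Q_{(1)}(x)\,\sigma_{(1)}$.

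Since $P_\nu=2^{-\ell(\nu)}Q_\nu$ with a $\nu$-dependent exponent, no uniform ``halving'' can turn a $Q_\nu(x)\sigma_\nu$ expansion into $\Theta^C$. A line-bundle twist $c(\cS^\vee\otimes L_i)$ is the wrong object in any case. It is not the generating function $c_{x_i}(\cF)=\sum_m c_m(\cF)x_i^m$; for instance $c_1(\cF\otimes L)=c_1(\cF)+\rank(\cF)\,c_1(L)$.

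The missing idea is the ring homomorphism $\phi_N:\Gamma_{\Z}\to A^*(G_C)$, $q_r\mapsto c_r(\cE)$.
\begin{itemize}
\item It is well defined because $\cU\simeq\cE^\vee$ gives $c_t(\cE)c_{-t}(\cE)=1$, which is exactly Macdonald's defining relation $q(t)q(-t)=1$.
\item It sends the Pfaffian $Q_\nu$ to the Pragacz/Kresch--Tamvakis polynomial $\widetilde Q_\nu(\cE)=\sigma_\nu$ for $\nu\subseteq\rho_N$, and to $0$ if $\nu_1>N$.
\item Applying $\phi_N$ to the Cauchy identity $\prod_i\sum_r q_r x_i^r=\sum_\nu P_\nu(x)Q_\nu$ gives $\prod_i c_{x_i}(\cE)=\sum_{\nu\subseteq\rho_N}P_\nu(x)\,\sigma_\nu$ at once. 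The $Q$'s become Schubert classes and the $P$'s stay on the variable side, so there is no power of $2$ to track.
\end{itemize}

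Your Richardson step silently needs the same homomorphism. The number $\int_{R^C_{\lambda/\mu}}\sigma_\nu=\int_{G_C}\sigma_{\lambda^\vee}\sigma_\mu\sigma_\nu$ equals $[Q_\lambda](Q_\mu Q_\nu)$, a $Q$-structure constant rather than a $P$-structure constant. This holds because $\phi_N$ is multiplicative, kills $Q_\kappa$ with $\kappa_1>N$, and $\lambda\subseteq\rho_N$ once $N\geq\lambda_1$. Hopf duality then gives $\sum_\nu[Q_\lambda](Q_\mu Q_\nu)\,P_\nu=P_{\lambda/\mu}$.

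Your passage to volumes also needs repair.
\begin{itemize}
\item The natural model is the fiber product of $\mathbb P_X(\cG)$, where $\cG$ is the dual of the kernel of a generating sequence. In type~$A$ take $\cG=\cS^\vee|_X$; in type~$C$ take $\cG=\cE|_Z$, since the kernel is $\cU\simeq\cE^\vee$. Then $p_*\xi^{s-1+m}=h_m(\cG)$ equals $c_m(\cQ)$, respectively $c_m(\cE)$.
\item Using $\mathbb P(E\oplus\cO)$ is harmless but unnecessary; it only raises the shift by one per factor. The ample term $\epsilon A$ just adds unwanted terms.
\item Cutting by $\eta_i$ general members of each $|\xi_i|$ gives the correct intersection numbers, but your irreducibility argument is not valid as stated. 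When the morphism defined by the remaining linear system has one-dimensional image, a general member is a union of fibers.
\item A positive combination of volume polynomials of different varieties need not even be Lorentzian. For example, $\tfrac12x_1^2+\tfrac12x_2^2$ is such a sum, with each summand realized on $\mathbb P^2$.
\item What saves the argument is that these components are algebraically equivalent fibers over a curve, so a single one realizes the polynomial up to a positive scalar. This is in effect a special case of the Grund--Huh--Micha\l ek--S\"uss--Wang operator theorem, which the paper invokes directly: $\partial^\eta x^{[\eta]}=1$ makes $\partial^\eta$ a realizable covolume operator.
\end{itemize}

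Once the twist is replaced by $c_{x_i}(\cQ|_X)$, your type~$A$ identification is correct. The dual Cauchy identity gives $\prod_i c_{x_i}(\cQ|_X)=\sum_\nu s_\nu(x)s_{\nu'}(\cQ|_X)=\sum_{\nu\subseteq c^r}s_\nu(x)s_\nu(\cS^\vee|_X)$, which is equivalent to the paper's Pieri route. Likewise $\int_{R^A_{\lambda/\mu}}s_\nu(\cS^\vee)=c^\lambda_{\mu\nu}$ does recover $s_{\lambda/\mu}$.
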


This is Theorems~\ref{thm:typeA-transform} and~\ref{thm:typeC-transform}.  The skew specializations are the Richardson identities in Theorems~\ref{thm:main-A} and~\ref{thm:main-PQ}.

The type~$C$ realization has a dual spinor companion.

\begin{theorem}[Type $D$ spinor realization; Wang]\label{thm:intro-typeD}
Fix $N\geq1$, and let
\[
 G_D=\operatorname{OG}^{+}(N+1,2N+2)
\]
be a spinor component, let $\mathcal Q_D$ be its universal quotient bundle,
and let $\tau_\nu$ denote the Schubert class indexed by the strict partition
$\nu\subseteq\rho_N$.  If $X\subseteq G_D$ is an irreducible projective
$d$-fold, then
\[
 \Theta^D_{X,n}(x)
 :=\sum_{\substack{\nu\subseteq\rho_N\text{ strict}\\|\nu|=d}}
   \left(\int_X\tau_\nu|_X\right)Q_\nu(x_1,\ldots,x_n)
\]
satisfies
\[
 [x^\alpha]\Theta^D_{X,n}
 =\int_X\prod_{i=1}^n c_{\alpha_i}(\mathcal Q_D|_X)
 \qquad(|\alpha|=d),
\]
and $\cN(\Theta^D_{X,n})$ is a realizable volume polynomial.  For strict
$\mu\subseteq\lambda$ and $N\geq\max\{1,\lambda_1\}$, the spinor
Richardson variety $R^D_{\lambda/\mu}$ gives
\[
 Q_{\lambda/\mu}(x)
 =\int_{R^D_{\lambda/\mu}}
   \prod_{i=1}^n c_{x_i}(\mathcal Q_D).
\]
\end{theorem}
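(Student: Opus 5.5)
The plan has three parts: a Chern-class identity for the coefficients, a realization of $\prod c_{\alpha_i}$-integrals as mixed volumes, and a Richardson computation for the skew case.

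\emph{Step 1: the coefficient identity.} On $G_D=\operatorname{OG}^{+}(N+1,2N+2)$ the universal quotient bundle $\mathcal Q_D$ has rank $N+1$. Its Chern classes are the special Schubert classes: $c_k(\mathcal Q_D)=2\tau_k$ for $1\le k\le N$, with the standard normalization in which the $\tau_\nu$ multiply like $P$-functions, so $\tau_\nu\mapsto P_\nu$. Since $Q_k=2P_k$, the classes $c_k(\mathcal Q_D)$ play the role of $q_k$ in the presentation of the cohomology ring.

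Pikhitsa--Pragacz/Józefiak-type results give a ring homomorphism $\Gamma\to H^*(G_D)$ with $Q_\nu\mapsto 2^{\ell(\nu)}\tau_\nu$ and $q_k\mapsto c_k(\mathcal Q_D)$. I would expand $\prod_i c_{x_i}(\mathcal Q_D)$, the image of the Cauchy kernel $\prod_i\sum_k q_k x_i^k$, in the basis $\tau_\nu$. Using the shifted Cauchy identity
\[
\prod_{i}\sum_k q_k(y)x_i^k=\sum_\nu P_\nu(y)Q_\nu(x),
\]
with the $y$-side mapped to $G_D$ by $P_\nu(y)\mapsto\tau_\nu$, the product becomes $\sum_\nu \tau_\nu Q_\nu(x)$. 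Here partitions $\nu\not\subseteq\rho_N$ map to $0$. Restricting to $X$, integrating, and extracting $[x^\alpha]$ gives the displayed identity. The only bookkeeping needed is that $q_k(y)\mapsto c_k(\mathcal Q_D)$ is compatible with $P_\nu(y)\mapsto\tau_\nu$.

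\emph{Step 2: volume realization.} This would follow the type~$C$ argument of Theorem~\ref{thm:typeC-transform}, or directly Theorem~\ref{thm:total-chern-volume}. The bundle $\mathcal Q_D$ is globally generated, being a quotient of the trivial bundle, so its restriction to $X$ is globally generated. For the projective bundle $\pi\colon\mathbb P(\mathcal Q_D|_X)\to X$ with $\mathcal O(1)$ globally generated, form the fiber product of $n$ copies, $Y=\mathbb P\times_X\cdots\times_X\mathbb P$. Push-forward of powers of $\xi_i$ gives Segre classes, and the $\alpha_i!$ normalization is absorbed via the total-Chern-to-volume dictionary of Theorem~\ref{thm:total-chern-volume}. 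Hence $\cN\bigl(\sum_\alpha x^\alpha\int_X\prod_i c_{\alpha_i}\bigr)$ is a realizable volume polynomial, with semiample divisors pulled back from the $\xi_i$ together with an ample class on $X$ as needed. I would take that theorem as given, so this step is formal.

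\emph{Step 3: the Richardson identity.} Take $X=R^D_{\lambda/\mu}=X_\lambda\cap X^{\mu^\vee}$, using the complementary indexing inside $\rho_N$, which is valid once $N\ge\lambda_1$. By Kleiman transversality,
\[
\int_{R}\tau_\nu=\int_{G_D}\tau_\nu\,\tau_\lambda^{\mathrm{op}}\tau_{\mu^\vee},
\]
which is the structure constant with $\tau_\nu\tau_\mu=\sum f\,\tau_\lambda$, i.e.\ the $P$-product coefficient $f^\lambda_{\nu\mu}$. Then
\[
\sum_\nu f^\lambda_{\nu\mu}Q_\nu=Q_{\lambda/\mu},
\]
which is the standard definition of the skew $Q$-function via $\langle Q_{\lambda/\mu},P_\nu\rangle=\langle Q_\lambda,P_\nu P_\mu\rangle$.

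\emph{Main obstacle.} I expect the difficulty to be the normalization in Step 1: fixing that $c_k(\mathcal Q_D)$ is exactly the image of $q_k$, rather than of $p_k$, under the map for which $\tau_\nu$ corresponds to $P_\nu$. This is precisely what makes type~$D$ produce $Q$ where type~$C$ (with $\cS^\vee$) produces $P$. I would check it first for $N=1$, where $G_D\cong\mathbb P^1$ and $c_1(\mathcal Q_D)$ should be twice the point class, and then argue by the Pieri rule for $\tau_k\tau_\nu$.
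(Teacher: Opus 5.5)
Your proposal is correct and is essentially the paper's argument: the specialization $q_r\mapsto c_r(\mathcal Q_D)$, $P_\nu\mapsto\tau_\nu$ (killing $\nu\not\subseteq\rho_N$) applied to the Cauchy identity $\prod_i q_{\alpha_i}=\sum_\nu [x^\alpha]Q_\nu(x)\,P_\nu$, then Theorem~\ref{thm:total-chern-volume} with $\mathcal S$ as the evaluation kernel and its dual $\mathcal S^\vee\simeq\mathcal Q_D$ as the projectivized bundle, and finally the Richardson class $\tau_{\lambda^\vee}\tau_\mu$ together with Schubert duality. One difference is that your Step~3 goes through the structure constants $\int_R\tau_\nu=f^\lambda_{\nu\mu}$ and the identity $\sum_\nu f^\lambda_{\nu\mu}Q_\nu=Q_{\lambda/\mu}$, which tacitly uses the standard fact $f^\lambda_{\nu\mu}=0$ unless $\nu\subseteq\lambda$ to discard the terms with $\nu\not\subseteq\rho_N$, whereas the paper avoids this by extracting $[x^\alpha]Q_{\lambda/\mu}=[P_\lambda]\bigl(P_\mu\prod_i q_{\alpha_i}\bigr)$ directly; separately, your notation $X_\lambda\cap X^{\mu^\vee}$ and $\tau_\lambda^{\mathrm{op}}\tau_{\mu^\vee}$ must be read so that $i_*[R]=\tau_{\lambda^\vee}\tau_\mu\cap[G_D]$.
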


\begin{theorem}[Characteristic-two comparison; Wang]
\label{thm:intro-char2-comparison}
Let $\mu\subseteq\lambda\subseteq\rho_N$ be strict partitions, put
\[
 d=|\lambda|-|\mu|,
 \qquad
 r=\ell(\lambda)-\ell(\mu),
\]
and work over an algebraically closed field of characteristic two.  For
$N\geq2$, the pinning-compatible special morphisms restrict to finite,
surjective, radicial maps
\[
 R^D_{\lambda/\mu}\xrightarrow{\ \beta_{\lambda/\mu}\ }
 R^C_{\lambda/\mu},
 \qquad
 R^C_{\lambda/\mu}\xrightarrow{\ \alpha_{\lambda/\mu}\ }
 R^D_{\lambda/\mu};
\]
for $N=1$, the same notation refers to the maps obtained after identifying
both ambient spaces with $\mathbb P^1$, with $\alpha$ an isomorphism and
$\beta$ relative Frobenius.  In either case, the two restricted maps have
degrees $2^r$ and $2^{d-r}$, respectively.  The corresponding normalized
projective-bundle intersection polynomials $V_C$ and $V_D$, defined in
Theorem~\ref{appA:thm:projective-bundle-bridge}, satisfy
\[
 \partial_{x_1}\cdots\partial_{x_n}V_D(x)=2^rV_C(x).
\]
\end{theorem}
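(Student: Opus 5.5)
We would prove Theorem~\ref{thm:intro-char2-comparison} by working with the classical characteristic-two comparison between $\mathrm{Sp}_{2N}$ and $\mathrm{SO}_{2N+1}$ (and $\mathrm{Spin}_{2N+2}\supset\mathrm{Spin}_{2N+1}$). In characteristic two the quadratic space $(k^{2N+1},q)$ has a one-dimensional defect line $L$, and projection $k^{2N+1}\to k^{2N+1}/L\cong k^{2N}$ gives the special isogeny $\mathrm{SO}_{2N+1}\to\mathrm{Sp}_{2N}$. A second special isogeny $\mathrm{Sp}_{2N}\to\mathrm{SO}_{2N+1}$ goes the other way, and the two composites are Frobenius. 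The plan is to choose pinnings compatibly, so that both isogenies carry $B$ to $B$, $T$ to $T$ and $B^-$ to $B^-$, and to identify $G_D=\mathrm{OG}^+(N+1,2N+2)$ with the maximal isotropic Grassmannian $\mathrm{OG}(N,2N+1)$. Both isogenies then induce equivariant morphisms $\beta\colon G_D\to G_C$ and $\alpha\colon G_C\to G_D$ of flag varieties. On points, $\beta$ sends $W\mapsto (W+L)/L$ and $\alpha$ is a Frobenius-twisted analogue. Both maps are bijective on points and finite and flat between smooth varieties, hence radicial, and their composite is the Frobenius of $G_C$ or $G_D$.

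Since $\alpha$ and $\beta$ are equivariant for $B$ and for $B^-$ (through the isogenies), and bijective on $T$-fixed points with matching Weyl-group labels (both Weyl groups are the hyperoctahedral group), they map Schubert cells onto Schubert cells and opposite cells onto opposite cells. They therefore restrict to finite surjective radicial maps $R^D_{\lambda/\mu}\to R^C_{\lambda/\mu}$ and back. For $N=1$ everything is $\mathbb P^1$, $\alpha$ is an isomorphism, and $\beta$ is Frobenius, which can be checked directly.

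For the degrees we would compute pullbacks of Schubert classes. Let $\sigma_\nu$ be the Schubert classes on $G_C$ and $\tau_\nu$ those on $G_D$. The comparison $P_\nu\leftrightarrow Q_\nu=2^{\ell(\nu)}P_\nu$ suggests
\[
\beta^*\sigma_\nu=2^{\ell(\nu)}\tau_\nu,\qquad \alpha^*\tau_\nu=2^{|\nu|-\ell(\nu)}\sigma_\nu.
\]
We would check this on special classes $\nu=(k)$ via the tautological quotient bundles. The relation $\beta^*\mathcal Q_C\cong\mathcal Q_D$ holds up to the usual factor of $2$ in the Chern classes, and on lines the map is an inseparable degree-$2$ cover in the long-root directions. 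The general case then follows from the Pieri rules and multiplicativity, with the relation $\alpha^*\beta^*=F^*$, which acts by $2^{|\nu|}$, as a consistency check. The degree of $\beta|_{R^D}$ is then obtained by pairing with the fundamental class: $\beta_*[R^D_{\lambda/\mu}]=\deg(\beta|)\,[R^C_{\lambda/\mu}]$. Intersecting with the Richardson classes and using the projection formula gives $\deg\beta|=2^{\ell(\lambda)}/2^{\ell(\mu)}=2^r$, and similarly $\deg\alpha|=2^{d-r}$. Here we take the exponents as a quotient of the two Schubert pullback factors, $\sigma_\lambda$ against $\sigma_\mu$.

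For the polynomial identity, we take $V_C$ and $V_D$ to be the top intersection numbers on the projective bundles of Theorem~\ref{appA:thm:projective-bundle-bridge} over $R^C$ and $R^D$. These compute $\cN(P_{\lambda/\mu})$ and $\cN(Q_{\lambda/\mu})$, up to the shift by $n$ extra fibre dimensions on the $D$ side. We would show that the regular complete-intersection bridge inside the $D$-bundle is cut out by $n$ divisors in the classes $x_i$-tautological, so that applying $\partial_{x_1}\cdots\partial_{x_n}$ restricts $V_D$ to it. That bridge maps to the $C$-bundle with degree $2^r$, by base change of $\beta|$ along the flat projective-bundle projection. The projection formula then gives the identity.

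The main obstacle is the precise pullback computation $\beta^*\sigma_\nu=2^{\ell(\nu)}\tau_\nu$ in characteristic two. This requires careful control of the inseparability of $\beta$ along each root direction: short roots behave separably and long roots inseparably. We would first try to handle it by restricting to $T$-stable curves and using equivariant localization, where the degrees can be read off directly.
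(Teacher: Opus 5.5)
Your architecture is the paper's: very special isogenies with compatible pinnings, Frobenius factorizations, cell compatibility, the pullbacks $\beta^*\sigma_\nu=2^{\ell(\nu)}\tau_\nu$ and $\alpha^*\tau_\nu=2^{|\nu|-\ell(\nu)}\sigma_\nu$, the projection formula, and a tautological bridge. However, the input that makes the crux work is misstated, and so it is effectively missing.

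\textbf{The bundle comparison.} There is no isomorphism $\beta^*\mathcal E\cong\mathcal Q_D$, even up to factors in Chern classes, since the ranks are $N$ and $N+1$. Your description $W\mapsto(W+L)/L$ gives $\beta^*\mathcal U\cong\mathcal S_B$. Because $W^\perp=W\oplus L$ for the defect line $L$, it also gives an exact sequence $0\to L\otimes\mathcal O\to\mathcal Q_D\to\beta^*\mathcal E\to0$ with constant kernel. This sequence does two jobs.
\begin{itemize}
\item It gives $c(\mathcal Q_D)=\beta^*c(\mathcal E)$ exactly. The factor $2$ lives only in the normalizations $c_r(\mathcal E)=\sigma_{(r)}$ versus $c_r(\mathcal Q_D)=2\tau_{(r)}$. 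Hence $\beta^*\sigma_\nu=\widetilde Q_\nu(\mathcal Q_D)=2^{\ell(\nu)}\widetilde P_\nu(\mathcal Q_D)=2^{\ell(\nu)}\tau_\nu$ by the integral Giambelli formulas, with no localization on $T$-stable curves.
\item It constructs your bridge. On each factor $\mathbb P_{R_D}(\mathcal Q_R)$, composing the constant sub-line bundle with $\mathcal Q_R\to\mathcal O(1)$ gives a section of $\mathcal O(1)\otimes L^\vee$. Its zero scheme is $\mathbb P_{R_D}(\beta^*\mathcal E_R)$, and locally it is a relative coordinate. This gives regularity and $j_*[\widetilde Y_C]=\xi_1\cdots\xi_n\cap[Y^D_{\lambda/\mu,n}]$. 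Without the sequence, the bridge in your last paragraph is undefined.
\end{itemize}
Your Pieri-plus-multiplicativity variant also works once $\beta^*\sigma_{(r)}=2\tau_{(r)}$ is known. The coefficients $2^{a+\ell(\nu)-\ell(\kappa)}$ in $\sigma_{(r)}\sigma_\nu$ and $2^{a-1}$ in $\tau_{(r)}\tau_\nu$, where $a$ counts the components of the strip $\kappa/\nu$, are compatible with $\sigma_\nu\mapsto2^{\ell(\nu)}\tau_\nu$. Special classes generate rationally. You still owe the fact that these integral identities and torsion-freeness hold in characteristic two. The paper obtains this by spreading out from models over $\mathbb Z$ in Lemma~\ref{appA:lem:integral-base-change}.

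\textbf{The degree step.} As written, it does not go through.
\begin{itemize}
\item The Richardson class is $\sigma_{\lambda^\vee}\sigma_\mu$, not a comparison of $\sigma_\lambda$ with $\sigma_\mu$. The pullback formula therefore gives $\beta^*[R^C_{\lambda/\mu}]=2^{\ell(\lambda^\vee)+\ell(\mu)}[R^D_{\lambda/\mu}]=2^{N-r}[R^D_{\lambda/\mu}]$.
\item Pushing forward needs the ambient degree $\deg\beta=2^N$, which you never compute. It comes from applying the pullback formula to the point class $\nu=\rho_N$.
\item The equation $2^{N-r}e_\beta[R^C_{\lambda/\mu}]=2^N[R^C_{\lambda/\mu}]$ then holds in the torsion-free group $A_d(G_C)$, with $[R^C_{\lambda/\mu}]\neq0$. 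This forces $e_\beta=2^r$. Your quotient $2^{\ell(\lambda)}/2^{\ell(\mu)}$ is the outcome of this computation, not of a $\sigma_\lambda$-versus-$\sigma_\mu$ comparison.
\item The formula $\alpha^*\tau_\nu=2^{|\nu|-\ell(\nu)}\sigma_\nu$ is not just a consistency check. It must be derived from $\alpha^*\beta^*=F^*$, $F^*\sigma_\nu=2^{|\nu|}\sigma_\nu$, and torsion-freeness.
\item That formula gives $\deg\alpha=2^{M-N}$ with $M=N(N+1)/2$. It then gives $2^{M-N-d+r}e_\alpha=2^{M-N}$, so $e_\alpha=2^{d-r}$.
\end{itemize}
With these repairs the rest of your plan matches the paper. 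That includes cells via equivariance rather than root-subgroup coordinates, degree $2^r$ for the base-changed bundle, and the projection formula.
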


These statements are proved in Appendix~\ref{appA:main}, contributed by Zhenpeng Wang; see Theorem~\ref{appA:thm:D-cycle-transform}, Theorem~\ref{appA:thm:D-Richardson}, Corollary~\ref{appA:cor:D-projective-bundle}, Theorem~\ref{appA:thm:Richardson-degrees}, and Theorem~\ref{appA:thm:projective-bundle-bridge}.  The type~$D$ model is not needed for the type~$C$ proof; it supplies a dual integral Schubert normalization and a geometric explanation of the $P/Q$ factor.

\subsection{Geometric method}

The proof combines the Richardson coefficient identifications with two general mechanisms.  Cid-Ruiz proves that a homogenization of the total-Chern polynomial of globally generated bundles under a proper map to projective space is denormalized Lorentzian~\cite[Proposition~6.2]{CidRuiz26}; after choosing a projective embedding and specializing the homogenizing variable to zero, this gives an independent direct Lorentzian proof for the top-degree polynomial considered here.  Grund--Huh--Micha\l ek--S\"uss--Wang prove that realizable covolume polynomials are precisely the differential operators preserving realizable volume polynomials~\cite[Theorem~1.3]{GHMSW25}.  Theorem~\ref{thm:total-chern-volume} first constructs the stronger realizable-volume model and then records the direct Cid-Ruiz route. Here, the Richardson coefficient identities, the cycle transforms, and the type~$C/D$ comparison are the substantive new geometric input.

The skew-specific input is a pair of Richardson coefficient formulas.  In type~$A$, repeated Pieri multiplication and complementary Schubert duality give
\begin{equation}\label{eq:intro-A-coeff}
 [x^\alpha]s_{\lambda/\mu}
 =\int_{R^A_{\lambda/\mu}}
   \prod_i h_{\alpha_i}(\cS^\vee).
\end{equation}
Equivalently,
\begin{equation}\label{eq:intro-A-total}
 s_{\lambda/\mu}(x)
 =\int_{R^A_{\lambda/\mu}}
   \prod_i c_{x_i}(\cQ).
\end{equation}
In type~$C$, the Schur $P/Q$ Cauchy identity and a stable Schubert homomorphism give
\begin{equation}\label{eq:intro-C-coeff}
 [x^\alpha]P_{\lambda/\mu}
 =\int_{R^C_{\lambda/\mu}}
   \prod_i c_{\alpha_i}(\cE),
\end{equation}
and hence
\begin{equation}\label{eq:intro-C-total}
 P_{\lambda/\mu}(x)
 =\int_{R^C_{\lambda/\mu}}
   \prod_i c_{x_i}(\cE).
\end{equation}
The type~$C$ passage is not the false literal substitution $Q_\lambda(\cE)=\sigma_\lambda$.  Proposition~\ref{prop:stable-map} first sends the ordinary Schur $Q$-function $Q_\nu$ to the modified $\widetilde Q_\nu$-representative of the Lagrangian Schubert class, following Kresch--Tamvakis~\cite{KT03}; this is the normalization-sensitive step in the shifted argument.

Appendix~\ref{appA:main} gives the dual type~$D$ identity
\begin{equation}\label{eq:intro-D-coeff}
 [x^\alpha]Q_{\lambda/\mu}
 =\int_{R^D_{\lambda/\mu}}
   \prod_i c_{\alpha_i}(\mathcal Q_D),
\end{equation}
where the integral spinor specialization sends $P_\nu$ to the Schubert class $\tau_\nu$.  Thus the Lagrangian and spinor coefficient extractions have the same formal shape, but the $Q_\nu\mapsto\sigma_\nu$ and $P_\nu\mapsto\tau_\nu$ normalizations produce the dual skew outputs $P_{\lambda/\mu}$ and $Q_{\lambda/\mu}$.

After choosing projective embeddings of the Richardson varieties, Formulas~\eqref{eq:intro-A-total} and~\eqref{eq:intro-C-total} also give direct Lorentzian proofs through Cid-Ruiz's homogenized theorem and specialization of its homogenizing variable.  To obtain the stronger realizable-volume statement, we lift the higher Chern classes to powers of tautological divisors on iterated projective bundles.  In type~$A$, if $d=|\lambda|-|\mu|$ and $D_A=d+n(r-1)$, then
\begin{equation}\label{eq:intro-A-volume}
 \frac1{D_A!}\int_{Y^A_{\lambda/\mu,n}}
 (x_1\xi_1+\cdots+x_n\xi_n)^{D_A}
 =\cN\!\left((x_1\cdots x_n)^{r-1}s_{\lambda/\mu}(x)\right).
\end{equation}
The type~$C$ construction has the identical form with rank $N$, dimension $D_C=d+n(N-1)$, and $P_{\lambda/\mu}$ in place of $s_{\lambda/\mu}$.  The spinor quotient bundle in Appendix~\ref{appA:main} has rank $N+1$, so its direct $Q_{\lambda/\mu}$ model has shift $N$.  In characteristic two, the exact sequence
\[
 0\longrightarrow L^\vee\otimes\mathcal O
 \longrightarrow\mathcal Q_D
 \longrightarrow\beta^*\mathcal E
 \longrightarrow0
\]
realizes the type~$C$ projective bundle as a tautological codimension-one divisor in each type~$D$ factor and accounts for this one-step difference.

The monomial differential operators
\[
 \prod_i\partial_i^{r-1}
 \qquad\text{and}\qquad
 \prod_i\partial_i^{N-1}
\]
remove the type~$A$ and type~$C$ shifts, respectively.  The operator theorem of Grund--Huh--Micha\l ek--S\"uss--Wang then proves Theorem~\ref{thm:intro-volume}.  Together, these steps prove the volume-realization theorem.  The comparison with earlier work and the precise novelty claims are collected in Subsection~\ref{subsec:recent-work-novelty}.

\subsection{Coefficient inequalities and discrete convexity}

Among the following consequences, the reverse Khovanskii--Teissier inequalities use the stronger realizable-volume structure, whereas the Hodge--Riemann, root-line, support, and dominance statements already follow from Lorentzianity and symmetry.  The following statement collects the principal general consequences; its detailed forms are Theorems~\ref{thm:master-RKT},~\ref{thm:master-HR}, and~\ref{thm:dominance-monotonicity}.

\begin{theorem}[Coefficient inequalities]\label{thm:intro-coefficient-package}
Let
\[
 F(x)=\sum_{|\alpha|=d}c_\alpha x^\alpha,
 \qquad
 f=\cN(F),
\]
and suppose that $f$ is a realizable volume polynomial.
\begin{enumerate}[label=\textup{(\roman*)}]
\item If $\beta\in\N^n$, $m=d-|\beta|\geq0$, $0\leq e\leq m$, and $i,j,k\in[n]$, then
\begin{equation}\label{eq:intro-RKT}
 \binom me
 c_{\beta+(m-e)e_i+e\,e_j}
 c_{\beta+e\,e_i+(m-e)e_k}
 \geq
 c_{\beta+m e_i}
 c_{\beta+e\,e_j+(m-e)e_k}.
\end{equation}
\item If $d\geq2$ and $|\beta|=d-2$, the matrix
\[
 H^\beta=(c_{\beta+e_i+e_j})_{i,j=1}^n
\]
has at most one positive eigenvalue, and exactly one when it is nonzero.  For every nonempty $I\subseteq[n]$,
\[
 (-1)^{|I|-1}\det H_I^\beta\geq0.
\]
In particular, if $|\alpha|=d$, $i\neq j$, and $\alpha_i,\alpha_j\geq1$, then
\[
 c_\alpha^2\geq
 c_{\alpha+e_i-e_j}c_{\alpha-e_i+e_j}.
\]
The nonzero coefficients on each affine root line form a log-concave unimodal interval.
\item If $F$ is symmetric and $\rho\unlhd\tau$ are partitions of $d$ with at most $n$ parts, then
\[
 c_\rho\geq c_\tau.
\]
Consequently, the balanced partition maximizes the partition-indexed coefficients, while the dominance-maximal support partition minimizes the positive partition-indexed coefficients.
\end{enumerate}
\end{theorem}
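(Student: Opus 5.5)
Throughout, write $f=\cN(F)=\sum_{|\alpha|=d}c_\alpha x^\alpha/\alpha!$. The partial derivatives of $f$ then recover the coefficients of $F$:
\[
\partial^\alpha f=c_\alpha\qquad(|\alpha|=d).
\]
Fix a realization $f=\frac{q}{d!}\int_X(\sum x_iD_i)^d$. For $|\alpha|=d$ this gives
\[
c_\alpha=q\int_X D_1^{\alpha_1}\cdots D_n^{\alpha_n},
\]
so every coefficient of $F$ is a mixed intersection number of nef divisors on an irreducible projective $d$-fold. Each part of the theorem then becomes a known inequality for such numbers.

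\textbf{Part (i).} This is a reverse Khovanskii--Teissier inequality. Cut $X$ by general members of the linear systems $|D_\ell|$, taking $\beta_\ell$ of them for each $\ell$. Since the $D_\ell$ are semiample, Bertini and base-point-freeness after passing to a multiple give an irreducible $m$-dimensional subvariety $W$ (or zero, in which case both sides vanish) with
\[
c_{\beta+\gamma}=q'\int_W D^\gamma .
\]
The statement then reduces to the Lehmann--Xiao/Jiang--Li reverse Khovanskii--Teissier inequality for nef classes $A=D_i$, $B=D_j$, $C=D_k$ on $W$:
\[
\binom me (A^{m-e}B^e)(A^eC^{m-e})\ \ge\ (A^m)(B^eC^{m-e}).
\]
This is stated in \cite{HuhVolume26} or \cite{GHMSW25} for realizable volume polynomials, and I would cite it there. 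The main obstacle is keeping the cited form in the nef/semiample setting on possibly singular $W$. If needed, I would pass to a resolution, since pullbacks of nef classes are nef and intersection numbers are preserved.

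\textbf{Part (ii).} Realizable volume polynomials are Lorentzian. The derivative $\partial^\beta f$ is again Lorentzian and of degree $2$, and its Hessian is exactly $H^\beta$, because
\[
\partial_i\partial_j\partial^\beta f=c_{\beta+e_i+e_j}.
\]
A quadratic Lorentzian form has nonnegative entries and at most one positive eigenvalue \cite{BH20}, with exactly one when it is nonzero, since a nonzero nonnegative matrix has positive trace or a positive quadratic value at $\mathbf 1$. The principal submatrix $H_I^\beta$ is the Hessian of the restriction to the coordinates in $I$, which is again Lorentzian. Its eigenvalue signature is therefore at most one positive with the rest nonpositive, which gives $(-1)^{|I|-1}\det H_I^\beta\ge0$. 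Taking $I=\{i,j\}$ and $\beta=\alpha-e_i-e_j$ yields the root-line log-concavity. Internal zeros are excluded because the support of a Lorentzian polynomial is M-convex. The coefficients of $F$ on a line coincide with those of $f$ up to the factorials, and the factorial ratio is itself log-concave along root lines. Hence log-concavity without internal zeros gives unimodality on the support interval.

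\textbf{Part (iii).} Dominance is generated by transfers $\tau\to\tau-e_a+e_b$ with $\tau_a-\tau_b\ge2$. By symmetry, $c_\tau=c_{\tau'}$ where $\tau'$ swaps the $a$ and $b$ entries. The values $c_{\tau-te_a+te_b}$ for $t=0,\dots,\tau_a-\tau_b$ lie on one root line, are symmetric about the midpoint, and are log-concave without internal zeros by (ii). They are therefore nondecreasing towards the middle, which gives $c_\rho\ge c_\tau$ for each elementary step. Chaining the steps gives the general statement. The final two sentences follow because the balanced partition is dominance-minimal and the dominance-maximal support partition is dominated by no other partition in the support.
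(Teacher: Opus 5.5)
Your proof of the main inequality $c_\rho\ge c_\tau$ in part~(iii) is the paper's argument (Theorem~\ref{thm:dominance-monotonicity}). The final consequence, however, is not proved.

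To deduce that the dominance-maximal support partition $\kappa$ has the smallest positive partition-indexed coefficient, you need $\nu\unlhd\kappa$ for every partition $\nu$ with $c_\nu>0$. Your justification, that $\kappa$ ``is dominated by no other partition in the support,'' only says $\kappa$ is maximal. Dominance is a partial order, so a support partition incomparable to $\kappa$, possibly with smaller coefficient, is not controlled. In a finite poset, being the maximum is the same as being the unique maximal element. That uniqueness is presupposed by ``the'' in the statement, and it is exactly what has to be shown.

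The paper obtains it from Theorem~\ref{thm:symmetric-permutahedron}:
\begin{itemize}
\item The support $B$ is $M$-convex, hence the base set of an integral polymatroid with rank $r(A)=\max_{\alpha\in B}\alpha(A)$.
\item Symmetry forces $r(A)=R(|A|)$, and submodularity makes $R$ concave. So $\kappa_p:=R(p)-R(p-1)$ is a partition.
\item $\kappa$ is a vertex of the base polytope, so it lies in $B$.
\item Every $\nu\in B$ satisfies $\nu_1+\cdots+\nu_p\le R(p)=\kappa_1+\cdots+\kappa_p$, i.e.\ $\nu\unlhd\kappa$.
\end{itemize}
With this in hand, your monotonicity step finishes the claim.

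Otherwise your proposal follows the paper, with two smaller remarks.

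\textbf{Part~(i).} You realize $\partial^\beta f$ by cutting $X$ with general members of $|kD_\ell|$. The paper instead gets realizability of $g=\partial^\beta f$ from the operator theorem (Proposition~\ref{prop:permanence}(i)), then applies Lehmann--Xiao/Jiang--Li to $g$ with $u=e_i$, $v=e_j$, $w=e_k$. Your route is a hands-on version of the same reduction. But your claim that Bertini yields an irreducible cut is false when $|kD_\ell|$ maps $X$ onto a curve: a general member is then a union of fibres. This is repairable. After passing to a resolution (as you suggest) and taking the Stein factorization, the cut cycle is a positive multiple of the class of one irreducible general fibre. Both sides of the reverse Khovanskii--Teissier inequality are homogeneous of degree two, so the multiple cancels. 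Alternatively, just cite the permanence result as the paper does.

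\textbf{Part~(ii).} The sentence about factorials is superfluous. The $2\times2$ principal minor of $H^\beta$ already bounds the $c_\alpha$ themselves, exactly as in the paper. It would also point the wrong way if it were needed: $\alpha!$ is log-convex along root lines, so log-concavity of the coefficients $c_\alpha/\alpha!$ of $f$ alone would not transfer to the $c_\alpha$.
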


Part~\textup{(i)} is a reverse Khovanskii--Teissier inequality for the derivatives of $f$.  Parts~\textup{(ii)} and~\textup{(iii)} come from Lorentzian Hodge--Riemann theory and symmetry.  The dominance statement is also a general theorem of Chin--Qin~\cite[Theorem~3.16]{ChinQin25}; we retain a short root-line proof to make its tableau meaning transparent.  Corollaries~\ref{cor:ordinary-tableau-package},~\ref{cor:shifted-tableau-package}, and~\ref{cor:tableau-dominance} specialize this package to ordinary and shifted tableau multiplicities.

\subsection{Two-row Littlewood--Richardson consequences}

The root-direction inequalities have particularly concrete two-variable consequences for ordinary and shifted Littlewood--Richardson coefficients.  They translate the general root-line inequalities into log-concavity statements for cumulative two-row multiplicities.

\begin{theorem}[Two-row Littlewood--Richardson consequences]\label{thm:intro-two-row}
\begin{enumerate}[label=\textup{(\roman*)}]
\item Write
\[
 s_{\lambda/\mu}=\sum_{\nu\vdash d}c_{\mu\nu}^{\lambda}s_\nu,
 \qquad
 A_r:=\sum_{j=0}^r c_{\mu,(d-j,j)}^{\lambda}
 \quad(0\leq r\leq\lfloor d/2\rfloor).
\]
Then
\[
 A_r=[x^{d-r}y^r]s_{\lambda/\mu}(x,y),
 \qquad
 A_r^2\geq A_{r-1}A_{r+1}
\]
for $1\leq r<\lfloor d/2\rfloor$.
\item Define the shifted Littlewood--Richardson coefficients by
\[
 Q_\mu Q_\nu=\sum_\lambda g_{\mu\nu}^{\lambda}Q_\lambda.
\]
For $\nu_0=(d)$ and $\nu_j=(d-j,j)$, put $g_j=g_{\mu,\nu_j}^{\lambda}$ and
\[
 B_r=[x^{d-r}y^r]P_{\lambda/\mu}(x,y).
\]
Then $B_r=B_{d-r}$,
\[
 B_0=g_0,
 \qquad
 B_r=2\sum_{j=0}^{r-1}g_j+g_r
 \quad\left(1\leq r\leq\left\lfloor\frac{d-1}{2}\right\rfloor\right),
\]
with the corresponding central formula when $d$ is even, and
\[
 B_r^2\geq B_{r-1}B_{r+1}
 \qquad(0<r<d).
\]
Thus the weighted cumulative two-row shifted Littlewood--Richardson sequence has no internal zeros and is log-concave.
\end{enumerate}
\end{theorem}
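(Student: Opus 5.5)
The plan is to reduce both parts to the two-variable case of the main theorem and the coefficient package. With $n=2$, Theorem~\ref{thm:intro-volume} shows that $\cN(s_{\lambda/\mu}(x,y))$ and $\cN(P_{\lambda/\mu}(x,y))$ are realizable volume polynomials, hence Lorentzian. For a bivariate homogeneous polynomial $F=\sum_r c_r x^{d-r}y^r$, Lorentzianity of $\cN(F)$ is equivalent to two conditions: the coefficients $c_r$ form a sequence with no internal zeros, and it is ordinary log-concave, $c_r^2\geq c_{r-1}c_{r+1}$. This is exactly the root-line statement in Theorem~\ref{thm:intro-coefficient-package}(ii) for $\alpha=(d-r,r)$ and $i\neq j$. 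So in both parts the inequalities follow once the coefficients are identified combinatorially. The real work is therefore the identification of $A_r$ and $B_r$.

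For (i), expand $s_{\lambda/\mu}(x,y)=\sum_\nu c^\lambda_{\mu\nu}s_\nu(x,y)$. Only $\nu$ with at most two rows survive. For $\nu=(d-j,j)$ we have
\[
 s_\nu(x,y)=\sum_{k=j}^{d-j}x^{d-k}y^k .
\]
Hence for $r\leq\lfloor d/2\rfloor$, the coefficient of $x^{d-r}y^r$ is $\sum_{j\leq r}c^\lambda_{\mu,(d-j,j)}=A_r$. The inequality $A_r^2\geq A_{r-1}A_{r+1}$ is then the root-line inequality, for every $r$ in the stated range. Note the range needs $r+1\leq\lfloor d/2\rfloor$ so that $A_{r+1}$ is still given by the cumulative formula, which is why the statement restricts $r$.

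For (ii), use the standard identity $P_{\lambda/\mu}=\sum_\nu g^\lambda_{\mu\nu}P_\nu$. This follows from the Hall-inner-product duality between $P$ and $Q$: the skew function is defined by $\langle P_{\lambda/\mu},Q_\nu\rangle=\langle P_\lambda,Q_\mu Q_\nu\rangle$. In two variables only strict $\nu$ of length at most two survive, namely $\nu_0=(d)$ and $\nu_j=(d-j,j)$ with $1\leq j<d/2$. We then need $P_\nu(x,y)$ explicitly:
\[
 P_{(d)}(x,y)=x^d+y^d+2\sum_{0<k<d}x^{d-k}y^k,
\qquad
 P_{(d-j,j)}(x,y)=(xy)^j P_{(d-2j)}(x,y).
\]
The first comes from $Q_{(d)}=2P_{(d)}$ with generating function $\prod_i(1+x_it)/(1-x_it)$. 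The second holds because in two variables $P_{(a,b)}=(xy)^bP_{(a-b)}$ by the explicit symmetrization formula. With these, the coefficient of $x^{d-r}y^r$ for $1\leq r\leq\lfloor(d-1)/2\rfloor$ receives $2g_j$ from each $j<r$ (interior terms) and $g_r$ from $j=r$ (endpoint term $x^{d-2r}$ of $P_{(d-2r)}$ times $(xy)^r$). That gives $B_r=2\sum_{j<r}g_j+g_r$ and $B_0=g_0$.

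When $d$ is even, the central coefficient $r=d/2$ receives $2g_j$ for all $j<d/2$; the partition $(d/2,d/2)$ is not strict. Symmetry $B_r=B_{d-r}$ holds because $P_{\lambda/\mu}$ is symmetric. Log-concavity for all $0<r<d$ and the absence of internal zeros again follow from the bivariate Lorentzian characterization. The main obstacle is fixing the normalization conventions: $Q$ versus $P$ in the definition of $g$, and the factor $2$ in $P_{(d)}$. I would check these against the paper's conventions for $P_{\lambda/\mu}$ before finalizing.
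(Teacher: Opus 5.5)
Your proposal is correct and follows the paper's proof essentially step for step. The paper likewise reads off $A_r$ and $B_r$ from the two-variable Schur and Schur-$P$ expansions, using $P_{(a,b)}(x,y)=(xy)^bP_{(a-b)}(x,y)$ and the endpoint/interior coefficients $1$ and $2$, and gets log-concavity and absence of internal zeros from the root-line inequality and $M$-convexity of the realizable volume polynomial, which is the same content as the bivariate Lorentzian characterization you invoke. One small correction: the duality giving $P_{\lambda/\mu}=\sum_\nu g^\lambda_{\mu\nu}P_\nu$ is the $P/Q$ Hopf pairing $\langle P_\lambda,Q_\nu\rangle=\delta_{\lambda\nu}$, not the Hall inner product, although the formula you wrote is the right one under the paper's conventions.
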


These are Corollary~\ref{cor:ordinary-two-row-LR} and Theorem~\ref{thm:shifted-LR-logconcavity}.  The factor two in the shifted cumulative sum reflects the interior coefficients of a two-variable Schur $P$-function.

The volume models also organize several other consequences, proved in
Sections~\ref{sec:applications}--\ref{sec:concrete-C} and not used in the
proofs of the main volume-realization theorems.  Realizable-covolume operators
produce Schur- and Schubert-differential descendants, iterated factorial
normalizations, and full multiaffine polarizations whose supports are bases of
algebraic matroids over every field of characteristic zero
(Theorem~\ref{thm:operator-descendants}; compare
\cite[Theorem~1.3, Corollary~2.4, and Propositions~2.10, 4.1, 4.2,
and~5.4]{GHMSW25}).  For every nonzero symmetric transform, $M$-convexity
identifies the support with all lattice points of an exact permutahedron,
gives a support polymatroid linearly representable over every infinite field,
and makes the dominant support partition additive under products, with the
mixed integer-decomposition property and multiplicative vertex coefficients
(Theorem~\ref{thm:symmetric-permutahedron}).  In type~$A$ this specializes to
the explicit column-length permutahedron and unit vertex coefficients of
Theorem~\ref{thm:ordinary-skew-support} and
Corollary~\ref{cor:ordinary-maximal-constituent}; these ordinary support facts
recover results of McNamara and Monical--Tokcan--Yong
\cite[Proposition~3.1]{McNamara08}\cite[Corollary~5.10]{MTY19}.
For straight Schur $P/Q$-functions, Corollary~\ref{cor:straight-P-support}
recovers~\cite[Proposition~3.5]{MTY19}, whereas
Corollary~\ref{cor:shifted-tableau-package} gives an implicit exact
permutahedron for every nonzero skew $P/Q$ specialization.  Nonnegative
weighted aggregations remain Lorentzian, two-block aggregations are
ultra-log-concave, and the factorially tilted content law satisfies
$\Cov_z(\alpha)\preceq_{\mathrm{psd}}\diag(\mathbb E_z\alpha)$
(Theorem~\ref{thm:block-covariance}).  Finally, mixed products of ordinary
skew Schur and skew $P/Q$-functions again have realizable factorial
normalizations (Corollary~\ref{cor:mixed-products}).

\subsection{Relation to recent work and novelty}\label{subsec:recent-work-novelty}

\subsubsection{General Lorentzian and volume-polynomial mechanisms.}
Cid-Ruiz's theorem gives a direct Lorentzian implication from either Richardson total-Chern identity after a projective embedding and specialization of the homogenizing variable~\cite[Proposition~6.2]{CidRuiz26}.  We do not claim a new abstract total-Chern-to-Lorentzian principle.  The operator theorem of Grund--Huh--Micha\l ek--S\"uss--Wang upgrades the explicit shifted projective-bundle models to realizations of the unshifted factorial normalizations~\cite[Definition~1.2 and Theorem~1.3]{GHMSW25}; formal descendants such as truncations, polarizations, and iterated normalizations come from that theory.  Chin--Qin prove, in the stable range $n\geq d$, that the partition-indexed support of a degree-$d$ symmetric Lorentzian polynomial is a dominance interval and that its Newton polytope is the corresponding permutahedron; they also establish dominance monotonicity of normalized monomial coefficients~\cite[Theorems~3.7, 3.9, and~3.16]{ChinQin25}.  Our direct finite-variable arguments additionally identify the polymatroid rank function, give a linear representation over every infinite field, and connect these abstract statements to the ordinary and shifted tableau arrays at hand.

\subsubsection{An independent proof of skew-Schur Lorentzianity.}
After the first version of this paper appeared, Zhang independently proved that $\cN(s_{\lambda/\mu})$ is Lorentzian by realizing finite-variable skew Schur polynomials as specializations of Schubert polynomials, using dually Lorentzian closure, and applying rectangular complementation~\cite{Zhang26}.  That method and the Richardson--total-Chern method here are independent.  Zhang's theorem establishes the Lorentzian conclusion and root-direction log-concavity for skew Kostka numbers; the present construction gives the stronger realizable-volume statement, the type~$A$ cycle transform, and the shifted type~$C$ and type~$D$ $P/Q$ models.

\subsubsection{Richardson and quasisymmetric geometry.}
An--Tung--Zhang prove that Postnikov--Stanley polynomials are degree polynomials of Richardson varieties in arbitrary Weyl type and deduce their Lorentzianity~\cite[Proposition~3.3 and Theorem~1.2]{ATZ24}.  Their variables record nef divisor classes already living on the Richardson variety.  Here the coefficients are products of higher Chern classes, and the passage to divisors takes place on fiber products of projective bundles.  Nadeau--Spink--Tewari construct smooth toric Richardson varieties as iterated $\mathbb P^1$-bundles, relate their degree maps to quasisymmetric coinvariants and generalized Littlewood--Richardson coefficients, and geometrically interpret Hall pairings with ribbon skew Schur functions~\cite[Theorem~2.2 and Corollary~10.3]{NST24}.  Bergeron--Gagnon--Spink--Tewari subsequently construct the quasisymmetric Grassmannian, whose irreducible components are translates of ribbon-type toric Richardson varieties~\cite{BGST26}.  These works are predominantly type~$A$ and divisor-theoretic; the present Lagrangian construction instead uses the stable Schur-$Q$ homomorphism and total Chern classes to recover skew $P/Q$ coefficient arrays.  The contributed Appendix~\ref{appA:main} supplies the dual spinor specialization, based on the classical integral $\widetilde P$-Giambelli normalization, and compares the two isotropic geometries through the very special type~$B/C$ isogenies in characteristic two; this comparison is independent of the type~$C$ proof in the main text.  Thus the divisor-theoretic and total-Chern constructions use genuinely different geometric inputs.

\subsubsection{Coefficient inequalities versus shape inequalities.}
Lam--Postnikov--Pylyavskyy's skew Schur log-concavity inequalities~\cite{LPP07} and the skew Ahlswede--Daykin--Schur inequalities of Chan--Chen--Pak--Soskin~\cite[Theorem~1.7]{CCPS26} compare different skew shapes in the Schur-positivity order.  Very recently, Le--Nguyen introduced skew hive and skew skep models and proved a skew Schur log-concavity theorem extending the Lam--Postnikov--Pylyavskyy conjecture from straight to skew shapes; the straight-shape conjecture was proved by Speyer using skeps~\cite{Speyer26,LeNguyen26}.  Our inequalities constrain the monomial coefficient array of one fixed symmetric function.  No implication between the two kinds of statements is used.  Indeed, Schur positivity alone does not force root-direction log-concavity: in two variables,
\[
 s_{(4)}+s_{(3,1)}+8s_{(2,2)}
 =x^4+2x^3y+10x^2y^2+2xy^3+y^4,
\]
and $2^2<1\cdot10$.

Accordingly, the principal new contributions of the main text are the type~$A$ Richardson--Pieri identity, the type~$C$ stable-$Q$ Richardson identity, the two cycle transforms, realizable-volume polynomiality for arbitrary skew Schur $P/Q$-functions, and the resulting reverse Khovanskii--Teissier and shifted-tableau inequalities.  Appendix~\ref{appA:main}, contributed by Zhenpeng Wang, adds the type~$D$ cycle transform and direct spinor Richardson model for skew $Q$-functions, together with the characteristic-two Richardson-degree and projective-bundle comparison.  The general total-Chern, differential-operator, Lorentzian, and symmetric-support mechanisms are attributed to the cited general theories, while the ordinary skew-Schur and straight Schur $P/Q$ support statements are presented as geometric recoveries of known polyhedral results.

\subsection*{Organization.}
Section~\ref{sec:preliminaries} fixes conventions, and Section~\ref{sec:total-chern} proves the general total-Chern volume theorem.  Sections~\ref{sec:typeA} and~\ref{sec:typeC} establish the type~$A$ and type~$C$ cycle transforms and their skew specializations.  Section~\ref{sec:applications} develops permanence, reverse Khovanskii--Teissier inequalities, the Lorentzian Hodge--Riemann package, dominance monotonicity, symmetric supports, and probabilistic consequences.  Sections~\ref{sec:concrete-A} and~\ref{sec:concrete-C} specialize the general results to ordinary and shifted tableaux, including the two-row Littlewood--Richardson applications and mixed products.  Appendix~\ref{appA:main}, contributed by Zhenpeng Wang, develops the type~$D$ spinor companion and its characteristic-two bridge to the type~$C$ model.

\subsection*{Acknowledgements.}
We thank Dang Tuan Hiep for useful discussions.  We are grateful to Zhenpeng Wang for contributing Appendix~\ref{appA:main} and for discussions concerning the type~$D$ spinor companion.  We appreciate the support of the Morningside Center of Mathematics, Chinese Academy of Sciences.

\section{Preliminaries}\label{sec:preliminaries}

Throughout, $\N=\Z_{\geq0}$ and $[n]=\{1,\ldots,n\}$.  For a multi-index $\alpha\in\N^n$, set
\[
 |\alpha|=\sum_i\alpha_i,
 \qquad
 \alpha!=\prod_i\alpha_i!,
 \qquad
 x^{[\alpha]}:=\frac{x^\alpha}{\alpha!},
 \qquad
 \alpha(A):=\sum_{i\in A}\alpha_i\quad(A\subseteq[n]).
\]
For $F(x)=\sum_\alpha c_\alpha x^\alpha$, define
\begin{equation}\label{eq:normalization}
 \cN(F):=\sum_\alpha c_\alpha x^{[\alpha]},
 \qquad
 \Supp(F):=\{\alpha:c_\alpha\neq0\},
 \qquad
 \Newt(F):=\conv(\Supp(F)).
\end{equation}
The zero polynomial is included in every closed cone considered below.

For partitions $\nu$ and $\kappa$ of the same integer, padded by zeros to a common length, write $\nu\unlhd\kappa$ if
\[
 \sum_{i=1}^p\nu_i\leq\sum_{i=1}^p\kappa_i
 \qquad\text{for every }p;
\]
this is the dominance order.  For a partition $\kappa$ of length at most $n$, its permutahedron is
\[
 \mathcal P_\kappa:=\conv\{\pi\kappa:\pi\in\mathfrak S_n\}
 \subseteq\R^n,
\]
where $\mathfrak S_n$ permutes coordinates.

\subsection{Lorentzian and volume polynomials}

A finite set $B\subseteq\N^n$ whose elements have one coordinate sum is \emph{$M$-convex} if, whenever $\alpha,\beta\in B$ and $\alpha_i>\beta_i$, there is $j$ with $\alpha_j<\beta_j$ such that
\[
 \alpha-e_i+e_j\in B,
 \qquad
 \beta+e_i-e_j\in B.
\]
This is the symmetric exchange axiom of discrete convex analysis; see Murota~\cite[Chapter~4]{Murota03}.

For degrees zero and one, every homogeneous polynomial with nonnegative coefficients is Lorentzian.  Let $d\geq2$.  A homogeneous polynomial $f\in\R[x_1,\ldots,x_n]$ of degree $d$ with nonnegative coefficients is \emph{Lorentzian} if its support is $M$-convex and every partial derivative of total order $d-2$ has Hessian with at most one positive eigenvalue.  This is equivalent to the original closure definition of Br\"and\'en--Huh~\cite[Theorem~2.25]{BH20}.  We call $F$ \emph{denormalized Lorentzian} if $\cN(F)$ is Lorentzian.

\begin{definition}\label{def:rv}
A homogeneous polynomial $f\in\Q[x_1,\ldots,x_n]$ of degree $d$ is a \emph{realizable volume polynomial over $\C$} if
\begin{equation}\label{eq:rv-definition}
 f(x)=\frac{q}{d!}\int_X(x_1D_1+\cdots+x_nD_n)^d
\end{equation}
for some $q\in\Q_{\geq0}$, an irreducible projective $d$-dimensional variety $X/\C$, and semiample Cartier divisors $D_1,\ldots,D_n$ on $X$.  The case $q=0$ includes the zero polynomial.
\end{definition}
This is the convention of Grund--Huh--Micha\l ek--S\"uss--Wang~\cite[Definition~1.1]{GHMSW25} and Huh~\cite[Definition~3.1]{HuhVolume26}.  Every realizable volume polynomial is Lorentzian because semiample divisors are nef~\cite[Theorem~4.6]{BH20}.

We work in operational Chow cohomology on possibly singular varieties.  If $X$ is projective of pure dimension $d$ and $\theta\in A^d(X)$, then
\[
 \int_X\theta:=\deg(\theta\cap[X]).
\]
For a vector bundle $\cF$, write
\[
 c_z(\cF):=\sum_{m\geq0}c_m(\cF)z^m,
 \qquad
 \sum_{m\geq0}h_m(\cF)z^m:=c_{-z}(\cF)^{-1}.
\]

\subsection{Ordinary and shifted symmetric functions}

A partition is padded by zeros whenever convenient.  For $\mu\subseteq\lambda$, an ordinary semistandard tableau of shape $\lambda/\mu$ is weakly increasing along rows and strictly increasing down columns.  Its generating polynomial in the alphabet $[n]$ is the skew Schur polynomial $s_{\lambda/\mu}(x_1,\ldots,x_n)$.  We write
\begin{equation}\label{eq:ordinary-skew-kostka}
 K_{\lambda/\mu,\alpha}:=[x^\alpha]s_{\lambda/\mu}(x).
\end{equation}
Repeated Pieri multiplication gives
\begin{equation}\label{eq:ordinary-pieri-coeff}
 K_{\lambda/\mu,\alpha}
 =[s_\lambda]\,s_\mu h_{\alpha_1}\cdots h_{\alpha_n}.
\end{equation}

A strict partition has distinct positive parts, and $\ell(\lambda)$ denotes its number of positive parts.  We use Macdonald's marked shifted-tableau convention for Schur $P$- and $Q$-functions~\cite[Chapter~III, Section~8]{Macdonald}.  Their Hopf-algebraic skew versions will be recalled in Section~\ref{sec:typeC}.  All finite-variable specializations are understood, and a specialization that has no admissible tableaux is zero.

\section{A general total-Chern volume theorem}\label{sec:total-chern}

The following theorem is the common geometric mechanism behind the type~A and type~C constructions.  Its realizable-volume assertion follows from the explicit projective-bundle construction and the operator theorem of Grund--Huh--Micha\l ek--S\"uss--Wang~\cite[Theorem~1.3]{GHMSW25}, and its Lorentzian assertion then follows formally.  Cid-Ruiz~\cite[Proposition~6.2]{CidRuiz26} also gives an independent direct Lorentzian proof after projective homogenization.  We record the full argument because the exact volume identity is used repeatedly.

\begin{theorem}[Total-Chern volume theorem]\label{thm:total-chern-volume}
Let $X$ be an irreducible projective $d$-fold over $\C$, let $n\geq1$, and let $\cF_1,\ldots,\cF_n$ be globally generated vector bundles on $X$.  Define
\begin{equation}\label{eq:total-chern-polynomial}
 T_{X,\cF_\bullet}(x)
 :=\sum_{|\alpha|=d}
 \left(\int_X\prod_{i=1}^n c_{\alpha_i}(\cF_i)\right)x^\alpha.
\end{equation}
Then:
\begin{enumerate}[label=\textup{(\roman*)}]
\item $T_{X,\cF_\bullet}$ is denormalized Lorentzian.
\item $\cN(T_{X,\cF_\bullet})$ is a realizable volume polynomial.
\item More explicitly, choose generating sequences
\begin{equation}\label{eq:generating-sequences}
 0\longrightarrow\cK_i\longrightarrow W_i\otimes\cO_X
 \longrightarrow\cF_i\longrightarrow0
\end{equation}
with $s_i:=\rank\cK_i\geq1$, and put $\cG_i:=\cK_i^\vee$ and $\eta_i=s_i-1$.  On
\[
 Y:=\mathbb P_X(\cG_1)\times_X\cdots\times_X\mathbb P_X(\cG_n),
 \qquad
 D=d+\sum_i\eta_i,
\]
let $L_i$ be the pulled-back tautological quotient line bundle and $\xi_i=c_1(L_i)$.  Then
\begin{equation}\label{eq:general-shifted-volume}
 \frac1{D!}\int_Y\left(\sum_{i=1}^nx_i\xi_i\right)^D
 =\cN\!\left(x^\eta T_{X,\cF_\bullet}(x)\right).
\end{equation}
Each $L_i$ is globally generated and hence semiample.
\end{enumerate}
If the polynomial is nonzero, its realization in part~\textup{(ii)} may be chosen smooth and irreducible.
\end{theorem}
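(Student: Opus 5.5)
The plan is to prove part~(iii) first, by a direct projective-bundle pushforward computation. Part~(ii) then follows from the operator theorem, and part~(i) from the fact that realizable volume polynomials are Lorentzian.

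\emph{Step 1: pushforward along one bundle.} Let $\pi_i\colon\mathbb P_X(\cG_i)\to X$ be the bundle of rank-one quotients of $\cG_i=\cK_i^\vee$. Equivalently, it is the bundle of lines in $\cK_i$, and $L_i$ is its $\cO(1)$. With Fulton's convention for Segre classes,
\[
 \pi_{i*}\bigl(\xi_i^{\eta_i+k}\cap\pi_i^*\gamma\bigr)=s_k(\cK_i)\cap\gamma,
 \qquad s(\cK_i)=c(\cK_i)^{-1},
\]
and $\pi_{i*}(\xi_i^{j}\cap\pi_i^*\gamma)=0$ for $j<\eta_i$. The exact sequence~\eqref{eq:generating-sequences} gives $c(\cK_i)c(\cF_i)=c(W_i\otimes\cO_X)=1$. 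Hence $s_k(\cK_i)=c_k(\cF_i)$.

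The sign conventions here (quotient versus sub-line bundle, and $\cK$ versus $\cK^\vee$) are the one place where care is needed. I would check them on $X=\mathrm{pt}$ and on $\cF=\cO(1)$ over $\mathbb P^1$.

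\emph{Step 2: expand and push forward.} The space $Y$ is an iterated fibre product, so the pushforwards factor. For $\gamma\in\N^n$ with $|\gamma|=D$, this gives
\[
 \int_Y\xi^\gamma=\int_X\prod_i c_{\gamma_i-\eta_i}(\cF_i),
\]
where the right side is interpreted as $0$ unless $\gamma\geq\eta$ coordinatewise. The multinomial expansion reads
\[
 \tfrac1{D!}\Bigl(\sum_i x_i\xi_i\Bigr)^D=\sum_{|\gamma|=D}\xi^\gamma x^\gamma/\gamma!.
\]
Substituting $\gamma=\eta+\alpha$ with $|\alpha|=d$ gives exactly $\sum_\alpha(\int_X\prod_i c_{\alpha_i}(\cF_i))\,x^{\eta+\alpha}/(\eta+\alpha)!$. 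This is $\cN(x^\eta T_{X,\cF_\bullet})$, which proves~\eqref{eq:general-shifted-volume}.

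\emph{Step 3: global generation.} Dualizing the inclusion $\cK_i\hookrightarrow W_i\otimes\cO_X$ gives a surjection $W_i^\vee\otimes\cO_X\twoheadrightarrow\cG_i$. Composing with $\pi_i^*\cG_i\twoheadrightarrow L_i$ shows that $L_i$ is a quotient of a trivial bundle. So $L_i$ is globally generated, hence semiample.

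The space $Y$ is irreducible and projective of dimension $D$, since it is an iterated projective bundle over the irreducible $X$. Therefore $\cN(x^\eta T)$ is a realizable volume polynomial.

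\emph{Step 4: parts (ii) and (i).} The monomial operator $\partial^\eta=\prod_i\partial_i^{\eta_i}$ is a realizable covolume polynomial, being a product of linear forms with nonnegative coefficients. Moreover $\partial^\eta\,x^{\eta+\alpha}/(\eta+\alpha)!=x^\alpha/\alpha!$, so
\[
 \partial^\eta\cN(x^\eta T)=\cN(T).
\]
By \cite[Theorem~1.3]{GHMSW25}, $\cN(T)$ is therefore a realizable volume polynomial. This proves~(ii). Part~(i) follows because semiample divisors are nef \cite[Theorem~4.6]{BH20}.

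\emph{Step 5: smooth realization (main obstacle).} For the smoothness clause I would avoid the abstract operator theorem and cut down concretely. Intersect $Y$ with $\eta_i$ general members of the basepoint-free system $|\xi_i|$, for each $i$, to obtain a subvariety $Z\subseteq Y$ of dimension $d$. Then $\frac1{d!}\int_Z(\sum_i x_i\xi_i)^d=\cN(T)$.

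The delicate point is irreducibility of $Z$. Bertini preserves irreducibility only while the image of the relevant morphism has dimension at least $2$, and $\dim Z=d$ may be small. I would handle this as follows:
\begin{itemize}[label=--]
\item If some intersection becomes reducible, its components all have the same numerical classes up to the Stein factorization. Keeping one component and absorbing the number of components into the rational factor $q$ restores the identity.
\item Finally, take a resolution of singularities $\tilde Z\to Z$ and pull back the divisors. Pullbacks of semiample divisors stay semiample, and top intersection numbers are preserved because the map is birational.
\end{itemize}
The nonvanishing hypothesis is what guarantees that some component contributes, so that a nonzero realization on a smooth irreducible variety exists.
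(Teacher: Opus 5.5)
Your Steps 1--4 follow the paper's proof. Your Fulton normalization $s(\cK_i)=c(\cK_i)^{-1}$ on the bundle of lines in $\cK_i$ is the paper's $h_m(\cG_i)$ in the quotient convention. The paper removes the shift with the same monomial operator $\partial^\eta$ and \cite[Theorem~1.3]{GHMSW25}. It certifies $\partial^\eta$ as realizable covolume by the direct check $\partial^\eta x^{[\eta]}=1$, the volume polynomial of a point, which is cleaner than your appeal to products of linear forms.

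You diverge only on the smoothness clause, and there the ``main obstacle'' is self-inflicted. Once (ii) is proved, it already supplies some irreducible projective realization $Z$ with semiample divisors. The paper simply resolves it, $\rho\colon\widetilde Z\to Z$, pulls back the divisors, and uses $\rho_*[\widetilde Z]=[Z]$. That is exactly your last bullet, with no Bertini step at all.

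Your cutting-down route does buy something: an explicit model, namely a component of a complete intersection of tautological divisors in $Y$. In effect it also gives a self-contained proof of the monomial case of the operator theorem, so (ii) would no longer rest on \cite{GHMSW25}. To make it rigorous, you must handle the reducible case more carefully than ``up to the Stein factorization''. Let $\phi$ be the morphism given by $|\xi_j|$ on the current irreducible $Y'$.
\begin{itemize}
\item If $\dim\phi(Y')\geq2$, Bertini gives an irreducible, reduced general member.
\item If $\phi(Y')$ is a point, then $\xi_j|_{Y'}\equiv0$ and the polynomial vanishes.
\item If $\phi(Y')$ is a curve, pass to the normalization $\widetilde Y'\to Y'$ and the Stein factorization $\widetilde Y'\to\widetilde C$. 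A general member is then the pushforward of a sum of general fibres of $\widetilde Y'\to\widetilde C$. These fibres are integral, since the source is normal and we are in characteristic $0$, and they are algebraically equivalent, so all components have the same intersection numbers.
\end{itemize}
With that in place, keeping one component at each stage multiplies $q$ by the number of components, as you say.
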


\begin{proof}
We first prove the explicit identity in~\textup{(iii)} and the realizable-volume assertion in~\textup{(ii)}.  Enlarge each finite generating space if necessary so that $s_i\geq1$.  Since the quotient in~\eqref{eq:generating-sequences} is locally free, $\cK_i$ is a vector bundle; dualizing the sequence shows that $\cG_i=\cK_i^\vee$ is globally generated.  The Whitney formula gives
\begin{equation}\label{eq:chern-to-complete}
 c_z(\cF_i)=c_z(\cK_i)^{-1}=c_{-z}(\cG_i)^{-1}
 =\sum_{m\geq0}h_m(\cG_i)z^m.
\end{equation}

We use Grothendieck's quotient convention
$\mathbb P_X(\cG)=\operatorname{Proj}_X(\operatorname{Sym}^\bullet\cG)$.
Projective-bundle morphisms are projective, and projectivity is stable under base change and composition; see the Stacks Project, Tags~\texttt{01W9}, \texttt{02V6}, and \texttt{0C4P}~\cite{Stacks}.  Since $X$ is integral, successive projective bundles over $X$ are integral.  Thus $Y$ is an irreducible projective variety of dimension $D$.

For a rank-$s$ vector bundle $\cG$ and $p:\mathbb P_X(\cG)\to X$, the quotient-convention projective-bundle formula is
\begin{equation}\label{eq:general-pushforward}
 p_*\bigl(\xi^{s-1+m}\cap p^*\delta\bigr)
 =h_m(\cG)\cap\delta\quad(m\geq0),
\end{equation}
with vanishing for exponents below $s-1$; see Fulton~\cite[Theorem~3.3]{FultonIT}.  Iterating~\eqref{eq:general-pushforward} yields
\[
 \int_Y\prod_i\xi_i^{\eta_i+\alpha_i}
 =\int_X\prod_i h_{\alpha_i}(\cG_i)
 =\int_X\prod_i c_{\alpha_i}(\cF_i),
\]
where the last equality is~\eqref{eq:chern-to-complete}.  Expanding the left side of~\eqref{eq:general-shifted-volume} in divided powers proves that identity exactly, including all factorials.

Because $\cG_i$ is globally generated, its tautological quotient $\cO(1)$ is globally generated; therefore each $L_i$ is globally generated and semiample.  Hence the left side of~\eqref{eq:general-shifted-volume} is a realizable volume polynomial.

It remains to remove the shift.  Put
\[
 g(\partial):=\partial^\eta=\prod_i\partial_i^{\eta_i}.
\]
In the notation of~\cite[Definition~1.2]{GHMSW25}, take the upper bound $\eta$.  Then
\[
 g(\partial)\,x^{[\eta]}=1,
\]
and the constant $1$ is the degree-zero realizable volume polynomial of a point.  Thus $g$ is a realizable covolume polynomial.  By~\cite[Theorem~1.3]{GHMSW25}, $g(\partial)$ preserves realizable volume polynomials.  Finally,
\[
 \partial^\eta\cN\!\left(x^\eta T_{X,\cF_\bullet}(x)\right)
 =\cN\!\left(T_{X,\cF_\bullet}(x)\right)
\]
term by term, so applying $g(\partial)$ to~\eqref{eq:general-shifted-volume} proves~\textup{(ii)}.  Since every realizable volume polynomial is Lorentzian~\cite[Theorem~4.6]{BH20}, part~\textup{(i)} follows.

For completeness, Cid-Ruiz gives an independent direct proof of~\textup{(i)}.  Choose a closed immersion $q:X\hookrightarrow\mathbb P^m$ with $m\geq d$, write $H$ for the hyperplane class, and define integers $a_\alpha$ by
\[
 q_*\!\left(\prod_{i=1}^n c_{t_i}(\cF_i)\cap[X]\right)
 =\sum_{|\alpha|\leq d}a_\alpha t^\alpha
   H^{m-d+|\alpha|}\cap[\mathbb P^m].
\]
Proposition~6.2 of~\cite{CidRuiz26} states that
\[
 \widetilde T(t_0,t_1,\ldots,t_n)
 :=\sum_{|\alpha|\leq d}a_\alpha
   t_0^{d-|\alpha|}t^\alpha
\]
is denormalized Lorentzian.  For $|\alpha|=d$, the projection formula gives
$a_\alpha=\int_X\prod_i c_{\alpha_i}(\cF_i)$, and hence
$\widetilde T(0,t_1,\ldots,t_n)=T_{X,\cF_\bullet}(t)$.  Lorentzian polynomials are preserved by nonnegative linear substitutions, including $t_0=0$~\cite[Theorem~2.10]{BH20}; this again proves~\textup{(i)}.

For the smooth clause, let $Z$ be an irreducible projective realization of $\cN(T_{X,\cF_\bullet})$ supplied by part~\textup{(ii)}, and choose a projective resolution $\rho:\widetilde Z\to Z$; see Hironaka~\cite{Hironaka64}.  Pullbacks of semiample divisors are semiample, and the projection formula together with $\rho_*[\widetilde Z]=[Z]$ preserves every top intersection number.  Thus $\widetilde Z$ gives a smooth irreducible realization in characteristic zero.  The realizing variety $Z$ need not be the explicit projective bundle $Y$ in~\eqref{eq:general-shifted-volume}.
\end{proof}

\section{Type A: ordinary Grassmannians and skew Schur polynomials}\label{sec:typeA}

Fix positive integers $r,c$ and let
\[
 G_A:=\Gr(r,r+c),
 \qquad
 0\longrightarrow\cS\longrightarrow\C^{r+c}\otimes\cO_{G_A}
 \longrightarrow\cQ\longrightarrow0
\]
be the universal sequence.  Put $E:=\cS^\vee$.  Both $E$ and $\cQ$ are globally generated.  For a partition $\nu\subseteq c^r$, write $\sigma_\nu=s_\nu(E)$.

For an irreducible projective subvariety $X\subseteq G_A$ of dimension $d$, define
\begin{equation}\label{eq:typeA-transform}
 d_\nu^A(X):=\int_Xs_\nu(E|_X),
 \qquad
 \Theta^A_{X,n}(x):=
 \sum_{\substack{\nu\subseteq c^r\\|\nu|=d}}
 d_\nu^A(X)s_\nu(x_1,\ldots,x_n).
\end{equation}
The coefficients $d_\nu^A(X)$ are nonnegative.  Indeed, $\sigma_\nu$ is represented by an effective Schubert cycle of codimension $d$, and a general translate of that cycle meets $X$ properly in an effective zero-cycle by Kleiman transversality; see Fulton--Pragacz~\cite[Chapter~4]{FultonPragacz}.

\begin{theorem}[Type A cycle transform]\label{thm:typeA-transform}
For every irreducible projective $X\subseteq G_A$ and every $n\geq1$,
\begin{equation}\label{eq:typeA-transform-coeff}
 [x^\alpha]\Theta^A_{X,n}
 =\int_X\prod_{i=1}^nh_{\alpha_i}(E|_X)
 =\int_X\prod_{i=1}^nc_{\alpha_i}(\cQ|_X)
 \qquad(|\alpha|=d).
\end{equation}
Equivalently,
\begin{equation}\label{eq:typeA-total-chern}
 \Theta^A_{X,n}(x)=\int_X\prod_{i=1}^nc_{x_i}(\cQ|_X).
\end{equation}
Consequently, $\cN(\Theta^A_{X,n})$ is a realizable volume polynomial.
\end{theorem}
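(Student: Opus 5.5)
The plan is to prove the coefficient identity \eqref{eq:typeA-transform-coeff} by expanding the product of complete homogeneous classes in the Schur basis, and then to read off the remaining claims from it.

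\emph{Step 1: expand the product of $h$'s.} In the ring $\Lambda$ of symmetric functions, repeated Pieri multiplication (as in \eqref{eq:ordinary-pieri-coeff} with $\mu=\emptyset$) gives
\[
 h_{\alpha_1}\cdots h_{\alpha_n}=\sum_{\nu\vdash d}K_{\nu,\alpha}\,s_\nu ,
 \qquad K_{\nu,\alpha}=[x^\alpha]s_\nu(x_1,\ldots,x_n).
\]
Here $K_{\nu,\alpha}=0$ when $\ell(\nu)>n$, so only partitions with at most $n$ parts contribute.

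\emph{Step 2: specialize to the bundle.} Apply the ring homomorphism $\Lambda\to A^\bullet(X)$ that sends $h_m\mapsto h_m(E|_X)$. Because $E$ has rank $r$, we have $h_m(E)=s_{(m)}(\text{Chern roots of }E)$, and by Jacobi--Trudi the homomorphism sends $s_\nu\mapsto s_\nu(E|_X)$. The next point is that this image vanishes whenever $\nu\not\subseteq c^r$. For $\ell(\nu)>r$ this holds because Schur polynomials in $r$ roots vanish. For $\nu_1>c$, write $s_\nu(E)$ via the dual Jacobi--Trudi formula in $e_k(E)$, i.e.\ as a determinant in the Chern classes of $\cS^\vee$; equivalently, use $c(\cQ)=c(\cS)^{-1}$, so $h_m(E)=c_m(\cQ)$ up to the sign conventions checked against \eqref{eq:chern-to-complete}. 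Since $\cQ$ has rank $c$, we get $s_\nu(E)=s_{\nu'}(\cQ)$-type classes that vanish when $\nu_1>c$. This is the standard statement that $\sigma_\nu=0$ on $\Gr(r,r+c)$ outside the rectangle, and I expect it to be the only delicate point: keeping the duality conventions straight so that $h_m(\cS^\vee)=c_m(\cQ)$ exactly. I would verify this identity directly from the universal sequence, which gives $c(\cS)c(\cQ)=1$, together with $h_m(\cS^\vee)$ being defined by $c_{-z}(\cS^\vee)^{-1}=c_z(\cS)^{-1}=c_z(\cQ)$. This proves the second equality in \eqref{eq:typeA-transform-coeff}.

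\emph{Step 3: integrate and conclude.} Integrating over $X$ gives
\[
 \int_X\prod_i h_{\alpha_i}(E|_X)=\sum_{\nu\subseteq c^r,\ |\nu|=d}K_{\nu,\alpha}\,d^A_\nu(X),
\]
which is precisely $[x^\alpha]\Theta^A_{X,n}$. Summing over $\alpha$ with weights $x^\alpha$ yields \eqref{eq:typeA-total-chern}, where the integral extracts the degree-$d$ part. Finally, $\Theta^A_{X,n}=T_{X,\cF_\bullet}$ with every $\cF_i=\cQ|_X$, which is globally generated as a quotient of a trivial bundle. Theorem~\ref{thm:total-chern-volume}(ii) therefore gives that $\cN(\Theta^A_{X,n})$ is a realizable volume polynomial.
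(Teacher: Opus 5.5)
Your proposal is correct and follows essentially the same route as the paper: expand $\prod_i h_{\alpha_i}$ in the Schur basis by Cauchy--Pieri, specialize $s_\nu\mapsto s_\nu(E)$ (which is zero outside $c^r$), restrict to $X$ and integrate, identify $h_m(E)=c_m(\cQ)$ by the Whitney formula, and apply Theorem~\ref{thm:total-chern-volume} to $n$ copies of $\cQ|_X$. The differences are cosmetic. You check the vanishing outside the rectangle explicitly via $s_\nu(E)=s_{\nu'}(\cQ)$, whereas the paper takes it from the standard Grassmannian Schubert homomorphism; and you derive $c_z(\cQ)=\sum_m h_m(E)z^m$ from the original universal sequence rather than its dual.
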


\begin{proof}
The ordinary Cauchy--Pieri identity gives
\[
 \prod_i h_{\alpha_i}
 =\sum_\nu[x^\alpha]s_\nu(x)\,s_\nu.
\]
Under the Grassmannian Schubert homomorphism, $s_\nu$ maps to $s_\nu(E)$ for $\nu\subseteq c^r$ and to zero otherwise.  Restriction to $X$ and integration prove the first equality in~\eqref{eq:typeA-transform-coeff}.  Restricting the dual universal sequence gives
\[
 0\longrightarrow\cQ^\vee\longrightarrow(\C^{r+c})^\vee\otimes\cO_{G_A}
 \longrightarrow E\longrightarrow0,
\]
so
\[
 c_z(\cQ)=c_{-z}(E)^{-1}=\sum_{m\geq0}h_m(E)z^m.
\]
This proves the second equality and~\eqref{eq:typeA-total-chern}.  Theorem~\ref{thm:total-chern-volume}, applied to $n$ copies of $\cQ|_X$, proves the final assertion.
\end{proof}

Let $\mu\subseteq\lambda\subseteq c^r$.  Pad to length $r$ and put
\[
 \lambda^\vee=(c-\lambda_r,\ldots,c-\lambda_1).
\]
For opposite complete flags, define the Grassmannian Richardson variety in codimension notation by
\begin{equation}\label{eq:typeA-Richardson}
 R^A_{\lambda/\mu}
 :=\Omega_{\lambda^\vee}(F_\bullet)
 \cap\Omega_\mu(F_\bullet^{\mathrm{opp}}).
\end{equation}
It is integral and projective, and
\begin{equation}\label{eq:typeA-Richardson-class}
 \dim R^A_{\lambda/\mu}=|\lambda|-|\mu|,
 \qquad
 i_*[R^A_{\lambda/\mu}]
 =\sigma_{\lambda^\vee}\sigma_\mu\cap[G_A];
\end{equation}
see Brion~\cite[Section~1.3]{Brion05}.

\begin{theorem}[Ordinary skew-Schur volume theorem]\label{thm:main-A}
For every ordinary skew shape $\lambda/\mu$ and every $n\geq1$,
\begin{equation}\label{eq:typeA-main}
 \cN(s_{\lambda/\mu}(x_1,\ldots,x_n))
 \text{ is a realizable volume polynomial over }\C.
\end{equation}
More precisely, choose any rectangle $c^r$ containing $\lambda$, form
$R=R^A_{\lambda/\mu}\subseteq\Gr(r,r+c)$ as above, and put
$E_R=E|_R$.  Then
\begin{align}
 [x^\alpha]s_{\lambda/\mu}(x)
 &=\int_R\prod_{i=1}^nh_{\alpha_i}(E_R),
 \label{eq:typeA-Richardson-coeff}\\
 s_{\lambda/\mu}(x)
 &=\int_R\prod_{i=1}^nc_{x_i}(\cQ|_R).
 \label{eq:typeA-Richardson-total}
\end{align}
If $d=|\lambda|-|\mu|$ and
\[
 Y^A_{\lambda/\mu,n}
 :=\underbrace{\mathbb P_R(E_R)\times_R\cdots\times_R\mathbb P_R(E_R)}_{n\text{ factors}},
 \qquad D=d+n(r-1),
\]
then, for the tautological quotient divisor classes $\xi_i$,
\begin{equation}\label{eq:typeA-explicit}
 \frac1{D!}\int_{Y^A_{\lambda/\mu,n}}
 (x_1\xi_1+\cdots+x_n\xi_n)^D
 =\cN\!\left((x_1\cdots x_n)^{r-1}s_{\lambda/\mu}(x)\right).
\end{equation}
\end{theorem}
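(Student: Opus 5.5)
The plan is to deduce everything from Theorem~\ref{thm:typeA-transform} and Theorem~\ref{thm:total-chern-volume} applied to $X=R=R^A_{\lambda/\mu}$. The one skew-specific input is the identification $\Theta^A_{R,n}=s_{\lambda/\mu}(x_1,\ldots,x_n)$.

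For the coefficients, take $\nu\subseteq c^r$ with $|\nu|=d$. By~\eqref{eq:typeA-Richardson-class} and the projection formula,
\[
 d^A_\nu(R)=\int_{G_A}\sigma_\nu\sigma_{\lambda^\vee}\sigma_\mu .
\]
Complementary Schubert duality on $\Gr(r,r+c)$ gives $\int_{G_A}\sigma_\kappa\sigma_{\lambda^\vee}=\delta_{\kappa\lambda}$ for $|\kappa|=|\lambda|$, and $\sigma_\nu\sigma_\mu=\sum_\kappa c^\kappa_{\mu\nu}\sigma_\kappa$, where $\kappa$ ranges over partitions inside $c^r$. Hence $d^A_\nu(R)=c^\lambda_{\mu\nu}$, because $\lambda\subseteq c^r$ survives the truncation. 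It follows that
\[
 \Theta^A_{R,n}=\sum_{\nu\subseteq c^r}c^\lambda_{\mu\nu}s_\nu(x_1,\ldots,x_n).
\]
Every $\nu$ with $c^\lambda_{\mu\nu}\neq0$ satisfies $\nu\subseteq\lambda\subseteq c^r$, so the restriction $\nu\subseteq c^r$ loses nothing. The standard expansion $s_{\lambda/\mu}=\sum_\nu c^\lambda_{\mu\nu}s_\nu$ then gives $\Theta^A_{R,n}=s_{\lambda/\mu}(x)$.

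As an alternative, one can avoid the Littlewood--Richardson expansion and use~\eqref{eq:ordinary-pieri-coeff} directly:
\[
 \int_R\prod_i h_{\alpha_i}(E)=\int_{G_A}\sigma_\mu\prod_ih_{\alpha_i}(E)\,\sigma_{\lambda^\vee}.
\]
Pieri expansion in $H^*(G_A)$ reproduces the expansion in $\Lambda$, truncated to shapes inside $c^r$. Pairing with $\sigma_{\lambda^\vee}$ extracts the coefficient of $s_\lambda$, which is exactly $K_{\lambda/\mu,\alpha}$.

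With this identification, \eqref{eq:typeA-Richardson-coeff} and~\eqref{eq:typeA-Richardson-total} are~\eqref{eq:typeA-transform-coeff} and~\eqref{eq:typeA-total-chern} specialized to $X=R$. Note that $R$ is integral and projective of dimension $d$ by the cited facts of Brion. The realizability claim~\eqref{eq:typeA-main} then follows from the last sentence of Theorem~\ref{thm:typeA-transform}.

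For~\eqref{eq:typeA-explicit}, apply Theorem~\ref{thm:total-chern-volume}(iii) with all $\cF_i=\cQ|_R$. The generating sequence is the restricted dual tautological sequence $0\to\cQ^\vee\to(\C^{r+c})^\vee\otimes\cO\to E\to0$, reversed so that it presents $\cQ|_R$ as a quotient:
\[
 0\to\cS|_R\to\C^{r+c}\otimes\cO_R\to\cQ|_R\to0 .
\]
Then $\cK_i=\cS|_R$ has rank $r\geq1$, so $\cG_i=E_R$ and $\eta_i=r-1$. This gives $D=d+n(r-1)$ and identifies $Y$ with $Y^A_{\lambda/\mu,n}$. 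Formula~\eqref{eq:general-shifted-volume} then reads exactly as~\eqref{eq:typeA-explicit}.

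The main obstacle is the bookkeeping in the duality step. One has to check that the codimension convention in~\eqref{eq:typeA-Richardson} matches $\sigma_{\lambda^\vee}\sigma_\mu$. One also has to confirm that truncating to $c^r$ and restricting to $n$ variables (possibly $n<r$, where $s_\nu(x_1,\ldots,x_n)$ vanishes for $\ell(\nu)>n$) commute with the expansion. Both hold because the identity is proved in $\Lambda$ before specializing.
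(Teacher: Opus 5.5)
Your proof is correct, and your ``alternative'' argument is exactly the paper's proof. The paper pushes $\prod_i h_{\alpha_i}(E)$ against $i_*[R]=\sigma_{\lambda^\vee}\sigma_\mu\cap[G_A]$ and expands $s_\mu h_{\alpha_1}\cdots h_{\alpha_n}$ by repeated Pieri. Complementary duality then extracts the $s_\lambda$ coefficient, which is $K_{\lambda/\mu,\alpha}$ by \eqref{eq:ordinary-pieri-coeff}. The explicit model comes, as in your argument, from Theorem~\ref{thm:total-chern-volume}(iii) with kernel $\cS|_R=E_R^\vee$ of rank $r$.

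Your primary route runs the logic in the opposite direction. You first compute the cycle-transform coefficients $d^A_\nu(R)=c^\lambda_{\mu\nu}$ from the Littlewood--Richardson structure constants of $H^*(\Gr(r,r+c))$. You then identify $\Theta^A_{R,n}=s_{\lambda/\mu}$ via $s_{\lambda/\mu}=\sum_\nu c^\lambda_{\mu\nu}s_\nu$, and only afterwards read off the Chern-number formula from Theorem~\ref{thm:typeA-transform}. The paper instead proves the Chern-number identity directly; in type~$C$ it then deduces the cycle-transform identification from that identity.

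Your route needs the full Littlewood--Richardson rule rather than just Pieri. In exchange, it exhibits the Richardson cycle transform explicitly as the Littlewood--Richardson expansion of $s_{\lambda/\mu}$, which is the ``Richardson variety recovers $s_{\lambda/\mu}$'' clause of the introduction. Both routes handle the truncation to $c^r$ and the finite-variable specialization correctly.
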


\begin{proof}
For $|\alpha|=|\lambda|-|\mu|$, repeated Pieri multiplication and complementary Schubert duality give
\begin{align*}
 \int_R\prod_i h_{\alpha_i}(E_R)
 &=\int_{G_A}\sigma_{\lambda^\vee}\sigma_\mu
   \prod_i h_{\alpha_i}(E)\\
 &=[s_\lambda]\,s_\mu h_{\alpha_1}\cdots h_{\alpha_n}
 =K_{\lambda/\mu,\alpha},
\end{align*}
where the final equality is~\eqref{eq:ordinary-pieri-coeff}.  Every Schur term in the Pieri product has size $|\lambda|$, so complementary duality selects exactly the $s_\lambda$ term.  This proves~\eqref{eq:typeA-Richardson-coeff}.  Equation~\eqref{eq:typeA-Richardson-total} follows from $h_m(E)=c_m(\cQ)$, and Theorem~\ref{thm:total-chern-volume} gives~\eqref{eq:typeA-main}.

For the explicit model, the restricted universal sequence
\[
 0\longrightarrow E_R^\vee\longrightarrow\C^{r+c}\otimes\cO_R
 \longrightarrow\cQ|_R\longrightarrow0
\]
has kernel dual $E_R$.  Therefore part~(iii) of Theorem~\ref{thm:total-chern-volume} gives~\eqref{eq:typeA-explicit} with shift $r-1$.
\end{proof}

Thus Conjecture~19 of Huh--Matherne--M\'esz\'aros--St.~Dizier~\cite[Conjecture~19]{HMMSD22} holds.

\begin{corollary}\label{cor:conj19}
The factorial normalization of every finite-variable skew Schur polynomial is Lorentzian.
\end{corollary}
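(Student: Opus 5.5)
The corollary follows from Theorem~\ref{thm:main-A} together with the fact that realizable volume polynomials are Lorentzian. Fix an ordinary skew shape $\lambda/\mu$ and $n\geq1$. By~\eqref{eq:typeA-main}, $\cN(s_{\lambda/\mu}(x_1,\ldots,x_n))$ is a realizable volume polynomial over $\C$. Concretely, it has the form $\frac{q}{d!}\int_X(x_1D_1+\cdots+x_nD_n)^d$ with $q\in\Q_{\geq0}$, an irreducible projective variety $X$, and semiample divisors $D_i$.

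Next I pass from semiample to nef. Semiample Cartier divisors are nef, since some multiple is the pullback of an ample class under a morphism. Volume polynomials of nef divisors on irreducible projective varieties are Lorentzian by Br\"and\'en--Huh~\cite[Theorem~4.6]{BH20}, and a nonnegative scalar multiple of a Lorentzian polynomial is again Lorentzian. This is the same implication already used in part~\textup{(i)} of Theorem~\ref{thm:total-chern-volume}.

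A few degenerate cases must be covered by the conventions. If $\mu\not\subseteq\lambda$, or if the specialization has no tableaux in $[n]$, the polynomial is zero. The zero polynomial is a realizable volume polynomial with $q=0$ and lies in the Lorentzian cone by the stated convention. In degrees $0$ and $1$, nonnegativity of coefficients already suffices. Some rectangle $c^r$ containing $\lambda$ always exists, so Theorem~\ref{thm:main-A} applies to every skew shape.

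There is no real obstacle here: all of the substance is in Theorem~\ref{thm:main-A}. The only point needing care is that the skew Schur polynomial and its Kostka coefficients $K_{\lambda/\mu,\alpha}$ do not depend on the chosen rectangle. This is clear from the tableau definition, and it is already built into~\eqref{eq:typeA-Richardson-coeff}.
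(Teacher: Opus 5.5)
Your proof is correct and follows the paper's own route: Theorem~\ref{thm:main-A} makes $\cN(s_{\lambda/\mu}(x_1,\ldots,x_n))$ a realizable volume polynomial, and realizable volume polynomials are Lorentzian because semiample divisors are nef~\cite[Theorem~4.6]{BH20} (equivalently, apply part~\textup{(i)} of Theorem~\ref{thm:total-chern-volume} to copies of $\cQ|_R$). Your treatment of the zero and low-degree cases agrees with the paper's conventions, and your remark on independence from the chosen rectangle is harmless but not needed.
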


\begin{remark}
The direct implication from~\eqref{eq:typeA-Richardson-total} to Lorentzianity is precisely Cid-Ruiz's total-Chern theorem.  The new skew-specific step is the Richardson--Pieri identity~\eqref{eq:typeA-Richardson-coeff}; the explicit model~\eqref{eq:typeA-explicit} and the upgrade to realizable volume use the more recent operator theory.  For $\mu=\varnothing$, projective volume realizability was already present in the geometric proof of Huh--Matherne--M\'esz\'aros--St.~Dizier.
\end{remark}

\begin{example}[The Richardson--Pieri identity for $(2,1)/(1)$]
\label{ex:typeA-21-over-1}
Take $r=c=2$, $\lambda=(2,1)$, and $\mu=(1)$, so that
$\lambda^\vee=(1)$ and
\[
 G_A=\Gr(2,4),
 \qquad
 R:=R^A_{(2,1)/(1)}
 =\Omega_{(1)}(F_\bullet)\cap
  \Omega_{(1)}(F_\bullet^{\mathrm{opp}}).
\]
By~\eqref{eq:typeA-Richardson-class}, $R$ is a Richardson surface with
$i_*[R]=\sigma_{(1)}^2\cap[G_A]$.  The two boxes of the skew diagram lie in
different rows and different columns, so their entries are unconstrained.
Consequently,
\begin{equation}\label{eq:example-A-polynomial}
 s_{(2,1)/(1)}=s_{(2)}+s_{(1,1)}
 =\sum_{a=1}^n x_a^2+2\sum_{1\leq a<b\leq n}x_ax_b
 =(x_1+\cdots+x_n)^2.
\end{equation}
The two content types are also visible directly in the Chow ring.  By the
Pieri rule and complementary Schubert duality on $\Gr(2,4)$
\cite[Chapter~9]{FultonYT},
\[
 \sigma_{(1)}^2=\sigma_{(2)}+\sigma_{(1,1)},
 \qquad
 \int_{G_A}\sigma_{(2)}^2
 =\int_{G_A}\sigma_{(1,1)}^2=1,
 \qquad
 \int_{G_A}\sigma_{(2)}\sigma_{(1,1)}=0.
\]
Since $h_m(E)=\sigma_{(m)}$, it follows that
\begin{align*}
 [x_a^2]s_{(2,1)/(1)}
 &=\int_R h_2(E|_R)
   =\int_{G_A}\sigma_{(1)}^2\sigma_{(2)}=1,\\
 [x_ax_b]s_{(2,1)/(1)}
 &=\int_R h_1(E|_R)^2
   =\int_{G_A}\sigma_{(1)}^4=2
   \qquad(a<b).
\end{align*}
Thus~\eqref{eq:typeA-Richardson-coeff} recovers, coefficient by coefficient,
the factor~$2$ for two distinct entries and the factor~$1$ for a repeated
entry in~\eqref{eq:example-A-polynomial}.
\end{example}

\section{Type C: Lagrangian Grassmannians and skew Schur \texorpdfstring{$P/Q$}{P/Q}-functions}\label{sec:typeC}

Let $\mathcal{SP}$ be the set of strict partitions.  Put
\begin{equation}\label{eq:q-generating}
 \prod_{j\geq1}\frac{1+y_jt}{1-y_jt}
 =\sum_{r\geq0}q_r(y)t^r,
 \qquad q_0=1.
\end{equation}
Let $\Omega_{\Q}\subseteq\Lambda_{\Q}$ be the self-dual rational Hopf algebra generated by the odd power sums.  Let
$\Gamma_{\Z}\subseteq\Omega_{\Q}$ denote the integral Schur-$Q$ subring generated by the one-row functions $q_r=Q_{(r)}$; equivalently,
$\Gamma_{\Z}=\bigoplus_{\nu\in\mathcal{SP}}\Z Q_\nu$.  The families
$\{P_\nu\}_{\nu\in\mathcal{SP}}$ and $\{Q_\nu\}_{\nu\in\mathcal{SP}}$ are dual bases of $\Omega_{\Q}$ for the canonical Hopf pairing:
\begin{equation}\label{eq:PQ-pairing}
 \angles{P_\lambda,Q_\nu}=\delta_{\lambda\nu},
 \qquad
 P_\lambda=2^{-\ell(\lambda)}Q_\lambda.
\end{equation}
This is the standard self-dual Schur-$P/Q$ Hopf algebra; see Macdonald~\cite[Chapter~III, Section~8]{Macdonald} and Lam--Lauve--Sottile~\cite[Section~4]{LLS11}.  We define skew elements by the coproduct identities
\begin{equation}\label{eq:skew-coproduct}
 \Delta(P_\lambda)=\sum_{\mu\in\mathcal{SP}}P_{\lambda/\mu}\otimes P_\mu,
 \qquad
 \Delta(Q_\lambda)=\sum_{\mu\in\mathcal{SP}}Q_{\lambda/\mu}\otimes Q_\mu.
\end{equation}
The conventions~\eqref{eq:PQ-pairing}--\eqref{eq:skew-coproduct} agree with the structure equations in Lam--Lauve--Sottile~\cite[equations~(17)--(18)]{LLS11}.  After finite-variable specialization, these skew functions agree with the standard marked shifted-tableau generating functions; see Macdonald~\cite[Chapter~III, Section~8]{Macdonald}.  Jing--Liu~\cite[Section~2]{JingLiu26} place the same construction in a general theory of skew functions for Hopf-dual pairs.

\begin{lemma}\label{lem:Cauchy-coeff}
Let $\alpha=(\alpha_1,\ldots,\alpha_n)\in\N^n$.  Then
\begin{equation}\label{eq:Cauchy-straight-coeff}
 \prod_{i=1}^nq_{\alpha_i}
 =\sum_{\nu\in\mathcal{SP}}
   [x^\alpha]P_\nu(x_1,\ldots,x_n)\,Q_\nu.
\end{equation}
For strict partitions $\mu\subseteq\lambda$,
\begin{equation}\label{eq:Cauchy-skew-coeff}
 [x^\alpha]P_{\lambda/\mu}(x)
 =[Q_\lambda]\left(Q_\mu\prod_{i=1}^nq_{\alpha_i}\right).
\end{equation}
Here $[Q_\lambda]$ means coefficient in the $Q$-basis.
\end{lemma}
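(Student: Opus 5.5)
The plan is to derive both identities from the Schur $P/Q$ Cauchy identity, taking the $P$-basis in the $x$-variables and the $Q$-basis in the $y$-variables:
\[
 \prod_{i=1}^n\prod_{j\geq1}\frac{1+x_iy_j}{1-x_iy_j}
 =\sum_{\nu\in\mathcal{SP}}P_\nu(x_1,\ldots,x_n)\,Q_\nu(y),
\]
which is Macdonald~\cite[Chapter~III, (8.13)]{Macdonald}. Specializing \eqref{eq:q-generating} at $t=x_i$ turns the left side into $\prod_i\sum_{r\geq0}q_r(y)x_i^r$. Extracting the coefficient of $x^\alpha$ on both sides then gives \eqref{eq:Cauchy-straight-coeff}. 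Finite-variable truncation causes no trouble: $P_\nu(x_1,\ldots,x_n)=0$ when $\ell(\nu)>n$, and the identity holds in $n$ variables because both sides are specializations of the infinite-variable identity.

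For \eqref{eq:Cauchy-skew-coeff}, I would use the Hopf duality \eqref{eq:PQ-pairing}--\eqref{eq:skew-coproduct}. The coproduct definition of $P_{\lambda/\mu}$, together with the self-duality of the pairing (product dual to coproduct), gives
\[
 \angles{P_{\lambda/\mu},g}=\angles{P_\lambda,Q_\mu g}\qquad(g\in\Omega_\Q).
\]
To check this, expand $\angles{P_\lambda,Q_\mu g}=\angles{\Delta P_\lambda,g\otimes Q_\mu}=\sum_\kappa\angles{P_{\lambda/\kappa},g}\angles{P_\kappa,Q_\mu}$ and apply $\delta_{\kappa\mu}$. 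Then take $g=\prod_iq_{\alpha_i}$. By \eqref{eq:Cauchy-straight-coeff}, $\angles{P_{\lambda/\mu},\prod_iq_{\alpha_i}}$ should equal $[x^\alpha]P_{\lambda/\mu}(x)$. The right side $\angles{P_\lambda,Q_\mu\prod_iq_{\alpha_i}}$ is exactly the $Q_\lambda$-coefficient, by duality of the bases.

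The remaining, and main, step is the identity $\angles{f,\prod_iq_{\alpha_i}}=[x^\alpha]f(x_1,\ldots,x_n)$ for every $f\in\Omega_\Q$, applied to $f=P_{\lambda/\mu}$. I would prove it by pairing \eqref{eq:Cauchy-straight-coeff} against $P_\kappa$: the pairing of $P_\kappa$ with the sum $\sum_\nu[x^\alpha]P_\nu\,Q_\nu$ picks out $[x^\alpha]P_\kappa$, so the identity holds on the basis $\{P_\kappa\}$ and hence on all of $\Omega_\Q$ by linearity. Equivalently, $\prod_iq_{\alpha_i}$ is the reproducing element for the coefficient functional.

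The obstacle I expect is matching conventions. The skew $P_{\lambda/\mu}$ defined through \eqref{eq:skew-coproduct} must agree with the tableau version under finite specialization, and the argument needs the orientation in which $P$ pairs with $Q$ under the canonical self-dual pairing, with the coproduct adjoint to multiplication. The paper already asserts both facts via Lam--Lauve--Sottile and Macdonald, so I would cite them rather than reprove them.
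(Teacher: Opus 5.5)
Your proposal is correct and follows the paper's proof. Both arguments obtain \eqref{eq:Cauchy-straight-coeff} by extracting the $x^\alpha$-coefficient of the Schur $P/Q$ Cauchy identity, and both obtain \eqref{eq:Cauchy-skew-coeff} from the Hopf adjunction $\langle P_{\lambda/\mu},g\rangle=\langle P_\lambda,Q_\mu g\rangle$. The only difference is packaging: the paper applies the adjunction with $g=Q_\nu$ and then sums against $[x^\alpha]P_\nu$, whereas you apply it once with $g=\prod_iq_{\alpha_i}$ and use that this element represents the functional $f\mapsto[x^\alpha]f(x_1,\ldots,x_n)$, which amounts to the same linearity step.
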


\begin{proof}
The Schur $P/Q$ Cauchy identity is
\begin{equation}\label{eq:PQ-Cauchy}
 \sum_{\nu\in\mathcal{SP}}P_\nu(x)Q_\nu(y)
 =\prod_{i,j}\frac{1+x_iy_j}{1-x_iy_j}
 =\prod_{i=1}^n\left(\sum_{r\geq0}q_r(y)x_i^r\right).
\end{equation}
Taking the coefficient of $x^\alpha$ proves~\eqref{eq:Cauchy-straight-coeff}.

Expand
\[
 P_{\lambda/\mu}=\sum_{\nu\in\mathcal{SP}}a_{\mu\nu}^{\lambda}P_\nu.
\]
By Hopf duality and~\eqref{eq:skew-coproduct},
\[
 a_{\mu\nu}^{\lambda}
 =\angles{P_{\lambda/\mu},Q_\nu}
 =\angles{\Delta(P_\lambda),Q_\nu\otimes Q_\mu}
 =\angles{P_\lambda,Q_\nu Q_\mu}
 =[Q_\lambda](Q_\mu Q_\nu).
\]
Therefore, using~\eqref{eq:Cauchy-straight-coeff},
\begin{align*}
 [x^\alpha]P_{\lambda/\mu}
 &=\sum_\nu [Q_\lambda](Q_\mu Q_\nu)
     [x^\alpha]P_\nu\\
 &=[Q_\lambda]\left(
 Q_\mu\sum_\nu[x^\alpha]P_\nu Q_\nu\right)
 =[Q_\lambda]\left(Q_\mu\prod_i q_{\alpha_i}\right).
\end{align*}
This is~\eqref{eq:Cauchy-skew-coeff}.
\end{proof}

The following is straightforward.

\begin{lemma}\label{lem:skew-PQ-scaling}
For strict $\mu\subseteq\lambda$,
\[
 Q_{\lambda/\mu}=2^{\ell(\lambda)-\ell(\mu)}P_{\lambda/\mu}.
\]
Here $\ell(\nu)$ denotes the number of positive parts of a strict partition $\nu$.
\end{lemma}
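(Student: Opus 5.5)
The plan is to compare the two skew elements through Hopf duality, working entirely in $\Omega_{\Q}$ and using the defining coproduct identities \eqref{eq:skew-coproduct} together with the normalization \eqref{eq:PQ-pairing}. Substituting $Q_\lambda=2^{\ell(\lambda)}P_\lambda$ into the second coproduct identity gives
\[
 \Delta(Q_\lambda)=2^{\ell(\lambda)}\Delta(P_\lambda)
 =\sum_{\mu\in\mathcal{SP}}2^{\ell(\lambda)}P_{\lambda/\mu}\otimes P_\mu
 =\sum_{\mu\in\mathcal{SP}}2^{\ell(\lambda)-\ell(\mu)}P_{\lambda/\mu}\otimes Q_\mu .
\]
The same coproduct is also $\sum_\mu Q_{\lambda/\mu}\otimes Q_\mu$. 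Since $\{Q_\mu\}$ is a basis of $\Omega_{\Q}$, the right tensor factors are linearly independent. Comparing the coefficient of $Q_\mu$ in the second tensor factor then yields $Q_{\lambda/\mu}=2^{\ell(\lambda)-\ell(\mu)}P_{\lambda/\mu}$ for every strict $\mu$.

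To make the comparison of coefficients rigorous, I would pair both expansions against $Q_\nu\otimes P_\mu$, using $\angles{Q_\mu,P_\mu'}=\delta_{\mu\mu'}$ in the second factor. This gives
\[
 \angles{Q_{\lambda/\mu},Q_\nu}=2^{\ell(\lambda)-\ell(\mu)}\angles{P_{\lambda/\mu},Q_\nu}
\]
for all $\nu$. Nondegeneracy of the pairing on the dual bases then forces the identity. Equivalently, by the computation in the proof of Lemma~\ref{lem:Cauchy-coeff}, both sides have $P_\nu$-coefficients given by scalar multiples of $[Q_\lambda](Q_\mu Q_\nu)$, with matching powers of two.

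The statement is restricted to $\mu\subseteq\lambda$, but the identity holds for every strict $\mu$: when $\mu\not\subseteq\lambda$ both sides vanish. Finally, I would note that the identity passes to finite-variable specializations, since specialization is a ring homomorphism. No genuine obstacle arises here. The only point needing care is keeping the convention \eqref{eq:skew-coproduct} straight, namely that the skew factor sits on the left and is dual to the multiplication $Q_\nu Q_\mu$.
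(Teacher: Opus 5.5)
Your proposal is correct and is essentially the paper's argument: substitute $Q_\lambda=2^{\ell(\lambda)}P_\lambda$ into the coproduct identity and compare second tensor factors using linear independence. The paper rewrites $Q_\mu=2^{\ell(\mu)}P_\mu$ and compares with the $P$-coproduct, while you rewrite $P_\mu=2^{-\ell(\mu)}Q_\mu$; these are the same step, and your pairing check and remark about $\mu\not\subseteq\lambda$ are harmless extras not needed for the identity.
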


\begin{proof}
Substitute $Q_\lambda=2^{\ell(\lambda)}P_\lambda$ into the second coproduct identity in~\eqref{eq:skew-coproduct}:
\[
 2^{\ell(\lambda)}\Delta(P_\lambda)
 =\sum_\mu Q_{\lambda/\mu}\otimes 2^{\ell(\mu)}P_\mu.
\]
Comparison with the first identity in~\eqref{eq:skew-coproduct}, using linear independence of the $P_\mu$, gives the claim.
\end{proof}

\subsection{The stable Schubert homomorphism}

Fix $N\geq1$, put
\[
 G_C:=\LG(N,2N),
 \qquad
 \rho_N:=(N,N-1,\ldots,1),
\]
and let
\begin{equation}\label{eq:LG-universal}
 0\longrightarrow\cU\longrightarrow V\otimes\cO_{G_C}
 \longrightarrow\cE\longrightarrow0
\end{equation}
be the universal sequence.  The symplectic form identifies $\cE\simeq\cU^\vee$, and $\cE$ is globally generated.

Recall the universal Pfaffian definition.  Put $q_0=1$ and $q_r=0$ for $r<0$, and for integers $a\geq b\geq0$ set
\begin{equation}\label{eq:Q-two-row}
 Q_{a,b}:=q_aq_b+2\sum_{j=1}^{b}(-1)^j q_{a+j}q_{b-j}.
\end{equation}
If a strict partition $\nu$ has odd length, append a zero part.  Then
\begin{equation}\label{eq:Q-Pfaffian}
 Q_\nu=\operatorname{Pf}\bigl(Q_{\nu_i,\nu_j}\bigr)_{i<j};
\end{equation}
see Macdonald~\cite[Chapter~III, Section~8]{Macdonald}.  Kresch--Tamvakis define modified polynomials $\widetilde Q_\nu$ by the same formulas after replacing the generators $q_r$ by the elementary symmetric functions $e_r$~\cite[Section~2, equations~(1)--(2)]{KT03}.

\begin{proposition}[Stable Schubert homomorphism]\label{prop:stable-map}
There is a surjective graded ring homomorphism
\begin{equation}\label{eq:stable-map}
 \phi_N:\Gamma_{\Z}\longrightarrow A^*(G_C)
\end{equation}
with
\begin{equation}\label{eq:stable-map-values}
\begin{gathered}
 \phi_N(q_0)=1,\qquad
 \phi_N(q_r)=c_r(\cE)=\sigma_{(r)}\quad(1\leq r\leq N),\qquad
 \phi_N(q_r)=0\quad(r>N),\\
 \phi_N(Q_\nu)=\sigma_\nu\quad(\nu\subseteq\rho_N).
\end{gathered}
\end{equation}
If $\nu_1>N$, then $\phi_N(Q_\nu)=0$.
\end{proposition}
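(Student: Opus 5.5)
We define $\phi_N$ on the polynomial generators $q_r$ and check that it respects the relations among them. Recall that $\Gamma_{\Z}$ is generated by the $q_r$. Over $\Q$, the only relations come from the identity $Q(t)Q(-t)=1$ for $Q(t)=\sum_r q_rt^r$, which follows from~\eqref{eq:q-generating}. In components these read
\[
 q_r^2+2\sum_{j=1}^{r}(-1)^jq_{r+j}q_{r-j}=0\qquad(r\geq1).
\]
Rationally, $\Gamma_{\Q}$ is the polynomial ring on the odd $q_{2k+1}$, and the even $q$'s are determined by these relations. Integrally, $\Gamma_{\Z}$ is the quotient of $\Z[q_1,q_2,\ldots]$ by exactly these quadratic relations (Macdonald, III.8). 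This presentation holds because $\Gamma_{\Z}$ has $\Z$-basis $\{Q_\nu\}$, while the quotient is spanned by monomials in the $q$'s that reduce to Pfaffian combinations, and a rank comparison matches the two.

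On $G_C$ we use the Whitney formula for $0\to\cU\to V\otimes\cO\to\cE\to0$ together with $\cU\simeq\cE^\vee$. This gives $c_t(\cE)c_{-t}(\cE)=1$, which is precisely the same family of relations with $q_r\mapsto c_r(\cE)$, where $c_r(\cE)=0$ for $r>N$. Hence the assignment $q_r\mapsto c_r(\cE)$ (and $0$ for $r>N$) kills the defining ideal and induces a graded ring map $\phi_N$. It is surjective because $A^*(G_C)$ is generated by the special classes $\sigma_{(r)}=c_r(\cE)$, $1\leq r\leq N$ (the Lagrangian Schubert calculus, e.g.\ Pragacz). The identification $c_r(\cE)=\sigma_{(r)}$ is the standard Chern-class description of special Schubert varieties.

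Next we compute $\phi_N(Q_\nu)$. Since $\phi_N$ is a ring map, applying it to~\eqref{eq:Q-two-row} and~\eqref{eq:Q-Pfaffian} gives the same Pfaffian expressions in the $c_r(\cE)$. Here we invoke Kresch--Tamvakis~\cite{KT03}: $A^*(G_C)\cong\Z[e_1,\dots,e_N]/\bigl(e_r^2+2\sum_j(-1)^je_{r+j}e_{r-j}\bigr)$, with $e_r$ the Chern roots' elementary functions of $\cE$, i.e.\ $e_r\mapsto c_r(\cE)$. In this presentation, $\sigma_\nu$ is represented by $\widetilde Q_\nu(e)$ for $\nu\subseteq\rho_N$. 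The polynomial $\widetilde Q_\nu$ is defined by the same Pfaffian recursion as $Q_\nu$, with $q_r$ replaced by $e_r$. Since $\phi_N(q_r)=c_r(\cE)=e_r$ for $r\leq N$, and both sides vanish for $r>N$, we get $\phi_N(Q_\nu)=\widetilde Q_\nu(c(\cE))=\sigma_\nu$. This is the normalization-sensitive step. The main obstacle is matching conventions carefully: Kresch--Tamvakis's $e_r$ must be $c_r(\cE)$ rather than $c_r(\cU)$, and the modified $\widetilde Q$ must be the Schubert representative, not the literal $Q_\lambda(\cE)$ built from Chern roots. We would verify this on small cases, e.g.\ $N=1,2$, where $\sigma_{(2,1)}$ on $\LG(2,4)$ should equal $c_2c_1$.

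Finally, suppose $\nu_1>N$. In the Pfaffian~\eqref{eq:Q-Pfaffian}, every entry of the first row is $Q_{\nu_1,\nu_j}=q_{\nu_1}q_{\nu_j}+2\sum_j(\pm)q_{\nu_1+j}q_{\nu_j-j}$. Each term contains a factor $q_m$ with $m\geq\nu_1>N$, so every entry in that row maps to zero. Expanding the Pfaffian along the first row then gives $\phi_N(Q_\nu)=0$. For odd length we append a zero part; this only changes which row is expanded, and the row indexed by $\nu_1$ still consists of zero entries.
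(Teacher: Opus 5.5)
Your proposal is correct and follows essentially the same route as the paper: Macdonald's presentation of $\Gamma_{\Z}$ together with $c_t(\cE)c_{-t}(\cE)=1$ defines $\phi_N$; the Kresch--Tamvakis $\widetilde Q_\nu$-Giambelli formula with $e_r\mapsto c_r(\cE)$ gives $\phi_N(Q_\nu)=\sigma_\nu$; generation by the special classes gives surjectivity; and expanding the Pfaffian along the row indexed by $\nu_1$ handles $\nu_1>N$. The one step the paper makes explicit that you omit is the transfer to Chow: Kresch--Tamvakis state their presentation and Giambelli formulas in integral cohomology, so you need the cycle map $A^*(G_C)\to H^{2*}(G_C,\Z)$ to be an isomorphism, which holds because $G_C$ has an affine cell decomposition.
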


\begin{proof}
By Macdonald's presentation, $\Gamma_{\Z}$ is generated by $q_1,q_2,\ldots$, subject to the coefficients of
\begin{equation}\label{eq:q-relations}
 q(t)q(-t)=1,
 \qquad q(t):=\sum_{r\geq0}q_rt^r;
\end{equation}
see~\cite[Chapter~III, equations~(8.2)--(8.5)]{Macdonald}.  On $G_C$, the universal sequence and $\cE\simeq\cU^\vee$ give
\[
 c_t(\cE)c_{-t}(\cE)=1.
\]
Consequently the assignment $q_r\mapsto c_r(\cE)$ respects the defining relations and determines a graded ring homomorphism
\[
 \phi_N:\Gamma_{\Z}\longrightarrow A^*(G_C).
\]

Under the substitution $q_r\mapsto e_r$, the two-row formula~\eqref{eq:Q-two-row} and the Pfaffian~\eqref{eq:Q-Pfaffian} become the modified polynomials $\widetilde Q_{a,b}$ and $\widetilde Q_\nu$ of Kresch--Tamvakis.  Evaluation of the elementary symmetric functions on the Chern roots of the tautological quotient bundle sends
\[
 \widetilde Q_\nu(\cE)=\sigma_\nu
 \qquad(\nu\subseteq\rho_N)
\]
and $e_r(\cE)=c_r(\cE)$ to the special Schubert class $\sigma_{(r)}$ for $1\leq r\leq N$; for $r>N$, both classes vanish.  See Kresch--Tamvakis~\cite[Section~2, equations~(1)--(2), and the paragraph preceding Theorem~1]{KT03}.  Thus $\phi_N(Q_\nu)=\sigma_\nu$ for $\nu\subseteq\rho_N$.  The special classes generate the Chow ring, so $\phi_N$ is surjective.  The same map is the $k=0$ specialization of the stable theta-polynomial Schubert homomorphism of Buch--Kresch--Tamvakis~\cite[Theorem~2]{BKT17}.

Finally, suppose $\nu_1>N$.  Since $\cE$ has rank $N$, one has $c_a(\cE)=0$ for every $a>N$.  Formula~\eqref{eq:Q-two-row} shows that every entry $Q_{\nu_1,\nu_j}$ in the Pfaffian row indexed by $\nu_1$ maps to zero: each of its summands contains a factor $c_{\nu_1+k}(\cE)$ with $k\geq0$.  Expanding~\eqref{eq:Q-Pfaffian} along that row gives $\phi_N(Q_\nu)=0$.

Kresch--Tamvakis formulate the classical Schubert presentation in integral cohomology.  Since $G_C$ has an affine Schubert-cell decomposition, the cycle-class map $A^*(G_C)\to H^{2*}(G_C,\Z)$ is an isomorphism; see Fulton~\cite[Example~19.1.11]{FultonIT}.  Hence the preceding identities hold in Chow.
\end{proof}

We need the following observation on the symplectic complete-Chern identity.

\begin{lemma}\label{lem:c-equals-h}
In $A^*(G_C)$ and after restriction to any subvariety,
\begin{equation}\label{eq:c-equals-h}
 c_m(\cE)=h_m(\cE)\qquad(m\geq0).
\end{equation}
\end{lemma}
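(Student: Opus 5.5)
The identity follows from the relation $c_t(\cE)c_{-t}(\cE)=1$, which is already used in the proof of Proposition~\ref{prop:stable-map}. By the convention of Section~\ref{sec:preliminaries},
\[
 \sum_{m\geq0}h_m(\cE)z^m=c_{-z}(\cE)^{-1}.
\]
So the claim is equivalent to the power-series identity $c_{-z}(\cE)^{-1}=c_z(\cE)$, that is, $c_z(\cE)c_{-z}(\cE)=1$ in $A^*(G_C)[z]$.

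I derive this in three steps.

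\begin{enumerate}
\item Apply the Whitney formula to the universal sequence~\eqref{eq:LG-universal}. Since $V\otimes\cO_{G_C}$ is trivial, this gives
\[
 c_z(\cU)\,c_z(\cE)=1.
\]
\item Use the symplectic identification $\cU\simeq\cE^\vee$, which holds because the symplectic form gives $\cE\simeq\cU^\vee$. The Chern classes of a dual bundle satisfy $c_m(\cF^\vee)=(-1)^mc_m(\cF)$, so
\[
 c_z(\cU)=c_z(\cE^\vee)=c_{-z}(\cE).
\]
\item Substituting step~2 into step~1 yields $c_{-z}(\cE)\,c_z(\cE)=1$. Hence $c_{-z}(\cE)^{-1}=c_z(\cE)$, and comparing coefficients of $z^m$ gives~\eqref{eq:c-equals-h} in $A^*(G_C)$.
\end{enumerate}

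For the restriction to a subvariety $i:X\hookrightarrow G_C$, note that Chern classes commute with pullback in operational Chow cohomology. The pullback $i^*$ is a ring homomorphism, and
\[
 i^*c_m(\cE)=c_m(\cE|_X),
 \qquad
 i^*h_m(\cE)=h_m(\cE|_X),
\]
since $h_m$ is a universal polynomial in the Chern classes. Applying $i^*$ to the identity on $G_C$ therefore gives it on $X$. Alternatively, the same Whitney argument can be run directly on the restricted exact sequence, because the symplectic isomorphism also restricts.

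There is no substantive obstacle; the only care needed is with sign conventions. One must check that the paper's definition of $h_m$ through $c_{-z}(\cE)^{-1}$ matches the sign coming from the dual bundle, so that the result is $h_m=c_m$ and not $h_m=(-1)^mc_m$.
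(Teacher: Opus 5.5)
Your proof is correct and is the paper's argument: the Whitney formula on the universal sequence, together with $\cU\simeq\cE^\vee$, gives $c_t(\cE)c_{-t}(\cE)=1$, and hence $c_t(\cE)=c_{-t}(\cE)^{-1}=\sum_m h_m(\cE)t^m$. Your remark that the restricted statement follows by pulling back, because $h_m$ is a universal polynomial in the Chern classes, fills in a step the paper leaves implicit.
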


\begin{proof}
From~\eqref{eq:LG-universal} and $\cU\simeq\cE^\vee$,
\[
 c_t(\cE)c_t(\cE^\vee)=c_t(\cE)c_{-t}(\cE)=1.
\]
Therefore $c_t(\cE)=c_{-t}(\cE)^{-1}=\sum_{m\geq0} h_m(\cE)t^m$.
\end{proof}

\begin{remark}
The coefficient of $t^2$ in $c_t(\cE)c_{-t}(\cE)=1$ is
\[
 c_1(\cE)^2=2c_2(\cE).
\]
This matches $Q_1^2=2Q_2$ and
$P_{(2)}(x,y)=x^2+2xy+y^2$, confirming that no power of $2$ is missing in the coefficient-intersection formula.
\end{remark}

For $\nu\subseteq\rho_N$, let $\nu^\vee=\rho_N\setminus\nu$, the strict partition whose parts are the elements of $\{1,\ldots,N\}$ not occurring in $\nu$.  Then
\begin{equation}\label{eq:LG-duality}
 \int_{G_C}\sigma_\eta\sigma_\nu=\delta_{\eta,\nu^\vee};
\end{equation}
see Kresch--Tamvakis~\cite[Section~3.1, equation~(36)]{KT03}.

For strict $\mu\subseteq\lambda\subseteq\rho_N$ and opposite isotropic flags, set
\begin{equation}\label{eq:typeC-Richardson}
 R^C_{\lambda/\mu}
 :=X_{\lambda^\vee}(F_\bullet)
 \cap X^\mu(F_\bullet^{\mathrm{opp}}).
\end{equation}
Standard Richardson theory for $G/P$ gives
\begin{equation}\label{eq:typeC-Richardson-class}
 \dim R^C_{\lambda/\mu}=|\lambda|-|\mu|,
 \qquad
 i_*[R^C_{\lambda/\mu}]
 =\sigma_{\lambda^\vee}\sigma_\mu\cap[G_C],
\end{equation}
and $R^C_{\lambda/\mu}$ is integral and projective~\cite[Section~1.3]{Brion05}.

\begin{theorem}[Skew Schur $P/Q$ volume theorem]\label{thm:main-PQ}
Let $\mu\subseteq\lambda$ be strict partitions and $n\geq1$.  The zero specialization is included by the convention in Section~\ref{sec:preliminaries}.  Then
\begin{equation}\label{eq:main-PQ}
 \cN(P_{\lambda/\mu}(x_1,\ldots,x_n))
 \text{ and }
 \cN(Q_{\lambda/\mu}(x_1,\ldots,x_n))
 \text{ are realizable volume polynomials over }\C.
\end{equation}
Choose $N\geq\max\{1,\lambda_1\}$, put $R=R^C_{\lambda/\mu}$ and $\cE_R=\cE|_R$.  If $d=|\lambda|-|\mu|$, then
\begin{align}
 [x^\alpha]P_{\lambda/\mu}(x)
 &=\int_R\prod_{i=1}^nc_{\alpha_i}(\cE_R)
 =\int_R\prod_{i=1}^nh_{\alpha_i}(\cE_R),
 \label{eq:typeC-coeff}\\
 P_{\lambda/\mu}(x)
 &=\int_R\prod_{i=1}^nc_{x_i}(\cE_R).
 \label{eq:typeC-total-Chern}
\end{align}
Furthermore, on
\[
 Y^C_{\lambda/\mu,n}
 :=\underbrace{\mathbb P_R(\cE_R)\times_R\cdots\times_R\mathbb P_R(\cE_R)}_{n\text{ factors}},
 \qquad D=d+n(N-1),
\]
one has
\begin{equation}\label{eq:typeC-explicit}
 \frac1{D!}\int_{Y^C_{\lambda/\mu,n}}
 (x_1\xi_1+\cdots+x_n\xi_n)^D
 =\cN\!\left((x_1\cdots x_n)^{N-1}P_{\lambda/\mu}(x)\right).
\end{equation}
\end{theorem}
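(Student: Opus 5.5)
The proof follows the type~$A$ argument of Theorem~\ref{thm:main-A}, with Lemma~\ref{lem:Cauchy-coeff} replacing the ordinary Pieri expansion and Proposition~\ref{prop:stable-map} replacing the Grassmannian Schubert homomorphism.

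\textbf{Step 1: the coefficient identity~\eqref{eq:typeC-coeff}.} Fix $N\geq\max\{1,\lambda_1\}$, so that $\mu\subseteq\lambda\subseteq\rho_N$, and take $|\alpha|=d$. By~\eqref{eq:typeC-Richardson-class} and the projection formula,
\[
 \int_R\prod_i c_{\alpha_i}(\cE_R)=\int_{G_C}\sigma_{\lambda^\vee}\sigma_\mu\prod_i c_{\alpha_i}(\cE).
\]
By Proposition~\ref{prop:stable-map}, the right side equals
\[
 \int_{G_C}\sigma_{\lambda^\vee}\,\phi_N\Bigl(Q_\mu\prod_i q_{\alpha_i}\Bigr).
\]
This uses $\phi_N(Q_\mu)=\sigma_\mu$ and $\phi_N(q_r)=c_r(\cE)$, including the vanishing for $r>N$.

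Next expand $Q_\mu\prod_i q_{\alpha_i}=\sum_\kappa b_\kappa Q_\kappa$ in $\Gamma_{\Z}$. Every $\kappa$ here is strict with $|\kappa|=|\lambda|$. The terms with $\kappa_1>N$ are killed by $\phi_N$, and the remaining terms map to $\sigma_\kappa$ with $\kappa\subseteq\rho_N$. The duality~\eqref{eq:LG-duality}, $\int\sigma_{\lambda^\vee}\sigma_\kappa=\delta_{\kappa,\lambda}$, then selects exactly $b_\lambda=[Q_\lambda](Q_\mu\prod q_{\alpha_i})$. By~\eqref{eq:Cauchy-skew-coeff}, this is $[x^\alpha]P_{\lambda/\mu}$.

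The second equality in~\eqref{eq:typeC-coeff} is Lemma~\ref{lem:c-equals-h}. Summing over $\alpha$ gives~\eqref{eq:typeC-total-Chern}.

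\textbf{Step 2: the volume statements.} Identity~\eqref{eq:typeC-total-Chern} says $P_{\lambda/\mu}=T_{R,\cF_\bullet}$ with every $\cF_i=\cE_R$. Since $\cE_R$ is globally generated and $R$ is irreducible and projective, Theorem~\ref{thm:total-chern-volume}(ii) shows that $\cN(P_{\lambda/\mu})$ is a realizable volume polynomial.

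For~\eqref{eq:typeC-explicit}, restrict~\eqref{eq:LG-universal} to $R$. This gives the generating sequence $0\to\cU|_R\to V\otimes\cO_R\to\cE_R\to0$, whose kernel has rank $N\geq1$ and dual $\cU|_R^\vee\simeq\cE_R$. Theorem~\ref{thm:total-chern-volume}(iii) then applies with $\cG_i=\cE_R$ and $\eta_i=N-1$, and yields~\eqref{eq:typeC-explicit} exactly.

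By Lemma~\ref{lem:skew-PQ-scaling}, $\cN(Q_{\lambda/\mu})=2^{\ell(\lambda)-\ell(\mu)}\cN(P_{\lambda/\mu})$. Rescaling $q$ in Definition~\ref{def:rv} by this power of $2$ preserves realizability.

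\textbf{Edge cases.} If $\mu\not\subseteq\lambda$, the specialization is zero and there is nothing to prove. The case $n<$ (number of needed variables) requires no separate treatment, since Lemma~\ref{lem:Cauchy-coeff} holds for any $n$.

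\textbf{Main obstacle.} The delicate step is the normalization check in Step~1. It requires that $\phi_N$ sends $Q_\nu$, rather than $P_\nu$, to $\sigma_\nu$; this is what makes the skew output $P_{\lambda/\mu}$ rather than $Q_{\lambda/\mu}$. It also requires that terms with $\kappa\not\subseteq\rho_N$ vanish, not merely those with $\kappa_1>N$. Since $\kappa$ is strict and $\kappa_1\leq N$ already forces $\kappa\subseteq\rho_N$, this second requirement reduces to the final clause of Proposition~\ref{prop:stable-map}.
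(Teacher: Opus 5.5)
Your proposal is correct and is essentially the paper's own proof. You use the same steps: Lemma~\ref{lem:Cauchy-coeff} for the coefficient extraction, $\phi_N$ to kill every term with $\kappa_1>N$, the duality~\eqref{eq:LG-duality} to isolate $[Q_\lambda]$, Theorem~\ref{thm:total-chern-volume} with dual kernel $\cU_R^\vee\simeq\cE_R$ of rank $N$, and Lemma~\ref{lem:skew-PQ-scaling} for $Q_{\lambda/\mu}$. The only differences are cosmetic: you run the chain of equalities starting from the Richardson integral rather than from $[x^\alpha]P_{\lambda/\mu}$, and your remark that strictness plus $\kappa_1\leq N$ forces $\kappa\subseteq\rho_N$ spells out a point the paper leaves implicit.
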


\begin{proof}
Let $\alpha\in\N^n$ with $|\alpha|=d$.  Lemma~\ref{lem:Cauchy-coeff} gives
\[
 [x^\alpha]P_{\lambda/\mu}
 =[Q_\lambda]\left(Q_\mu\prod_i q_{\alpha_i}\right).
\]
Expand $Q_\mu\prod_i q_{\alpha_i}$ in the $Q$-basis and apply $\phi_N$.  Proposition~\ref{prop:stable-map} kills every term indexed by a strict partition with first part greater than $N$.  Since $\lambda_1\leq N$, the $Q_\lambda$ term survives, and pairing with $\sigma_{\lambda^\vee}$ extracts precisely its coefficient.  Therefore
\begin{align*}
 [x^\alpha]P_{\lambda/\mu}
 &=\int_{G_C}\sigma_{\lambda^\vee}\sigma_\mu
   \prod_i c_{\alpha_i}(\cE)\\
 &=\int_R\prod_i c_{\alpha_i}(\cE_R).
\end{align*}
Lemma~\ref{lem:c-equals-h} gives the second equality in~\eqref{eq:typeC-coeff}, and summation gives~\eqref{eq:typeC-total-Chern}.

Theorem~\ref{thm:total-chern-volume}, applied to $n$ copies of the globally generated bundle $\cE_R$, proves that $\cN(P_{\lambda/\mu})$ is realizable volume.  In the universal sequence the kernel is $\cU_R\simeq\cE_R^\vee$, so the dual kernel in Theorem~\ref{thm:total-chern-volume} is again $\cE_R$, of rank $N$.  Its explicit identity is exactly~\eqref{eq:typeC-explicit}.

Finally, Lemma~\ref{lem:skew-PQ-scaling} gives
$Q_{\lambda/\mu}=2^{\ell(\lambda)-\ell(\mu)}P_{\lambda/\mu}$, and positive rational scaling preserves realizable volume polynomiality.
\end{proof}

Thus Conjecture~20 of Huh--Matherne--M\'esz\'aros--St.~Dizier~\cite[Conjecture~20]{HMMSD22} holds.

\begin{corollary}\label{cor:P-conjecture}
For every strict partition $\lambda$, $\cN(P_\lambda)$ is Lorentzian.  More generally, the normalized skew $P$- and $Q$-polynomials are Lorentzian.
\end{corollary}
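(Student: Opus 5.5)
The plan is to deduce Corollary~\ref{cor:P-conjecture} directly from Theorem~\ref{thm:main-PQ}, using the general fact that realizable volume polynomials are Lorentzian. No new geometry is needed; the work is bookkeeping about degenerate cases and conventions.

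First, I treat the straight case $P_\lambda$ as the special case $\mu=\varnothing$ of the skew statement. I will check that the Hopf-algebraic skew element $P_{\lambda/\varnothing}$ defined in~\eqref{eq:skew-coproduct} coincides with $P_\lambda$. This holds because the coproduct component along $P_\varnothing=1$ is $P_\lambda\otimes1$, by counitality and the pairing~\eqref{eq:PQ-pairing}.

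Second, fix strict $\mu\subseteq\lambda$ and $n\geq1$. Theorem~\ref{thm:main-PQ} gives that $\cN(P_{\lambda/\mu}(x_1,\ldots,x_n))$ and $\cN(Q_{\lambda/\mu}(x_1,\ldots,x_n))$ are realizable volume polynomials over $\C$. By Definition~\ref{def:rv} such a polynomial has the form $\frac{q}{d!}\int_X(\sum x_iD_i)^d$ with $D_i$ semiample, hence nef. By~\cite[Theorem~4.6]{BH20}, volume polynomials of nef divisors on irreducible projective varieties are Lorentzian, and nonnegative scaling by $q$ preserves this.

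Third, I will handle the degenerate cases explicitly, which is the only point requiring care. If the specialization has no admissible tableaux, the polynomial is zero, and zero lies in the Lorentzian cone by the convention in Section~\ref{sec:preliminaries}. Degrees $0$ and $1$ are Lorentzian by definition. Lorentzianity of $\cN(Q_{\lambda/\mu})$ also follows independently from Lemma~\ref{lem:skew-PQ-scaling}, since positive scaling by $2^{\ell(\lambda)-\ell(\mu)}$ preserves the Lorentzian cone.
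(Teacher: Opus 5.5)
Your proposal is correct and is essentially the paper's own argument. The paper leaves this corollary as an immediate consequence of Theorem~\ref{thm:main-PQ}, combined with the remark after Definition~\ref{def:rv} that semiample divisors are nef, so realizable volume polynomials are Lorentzian by~\cite[Theorem~4.6]{BH20}. The straight case is the specialization $\mu=\varnothing$, which the paper equivalently obtains from the Schubert-variety case of Theorem~\ref{thm:typeC-transform}.
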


\subsection{Cycle transforms in type C}

For an irreducible projective subvariety $X\subseteq G_C$ of dimension $d$ and an integer $n\geq1$, put
\begin{equation}\label{eq:typeC-transform}
 d_\nu^C(X):=\int_X\sigma_\nu|_X,
 \qquad
 \Theta^C_{X,n}(x):=
 \sum_{\substack{\nu\subseteq\rho_N\\\nu\text{ strict},\ |\nu|=d}}
 d_\nu^C(X)P_\nu(x_1,\ldots,x_n).
\end{equation}
The integers $d_\nu^C(X)$ are nonnegative: $\sigma_\nu$ is represented by an effective Schubert cycle of codimension $d$, and a general translate meets $X$ properly in an effective zero-cycle by Kleiman transversality.

\begin{theorem}[Type C cycle transform]\label{thm:typeC-transform}
For every irreducible projective $X\subseteq G_C$ and every $n\geq1$,
\begin{equation}\label{eq:typeC-transform-coeff}
 [x^\alpha]\Theta^C_{X,n}
 =\int_X\prod_i c_{\alpha_i}(\cE|_X)
 =\int_X\prod_i h_{\alpha_i}(\cE|_X)
 \qquad(|\alpha|=d).
\end{equation}
Consequently, $\cN(\Theta^C_{X,n})$ is a realizable volume polynomial.  The Schubert variety $X_{\lambda^\vee}$ gives $P_\lambda$.  If $N\geq\lambda_1$, then the Richardson variety $R^C_{\lambda/\mu}$ gives $P_{\lambda/\mu}$.
\end{theorem}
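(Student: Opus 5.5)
The plan is to mirror the proof of Theorem~\ref{thm:typeA-transform}, with the Schur $P/Q$ Cauchy identity in place of the ordinary one and the stable Schubert homomorphism $\phi_N$ of Proposition~\ref{prop:stable-map} in place of the Grassmannian Schubert homomorphism.

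\textbf{Step 1: coefficient identity.} Fix $\alpha$ with $|\alpha|=d$. By \eqref{eq:Cauchy-straight-coeff},
\[
 \prod_i q_{\alpha_i}=\sum_{\nu\in\mathcal{SP}}[x^\alpha]P_\nu(x)\,Q_\nu
\]
in $\Gamma_\Z$. Apply $\phi_N$ to this identity, using \eqref{eq:stable-map-values}:
\begin{itemize}
\item the left side becomes $\prod_i c_{\alpha_i}(\cE)$;
\item on the right, $Q_\nu\mapsto\sigma_\nu$ for $\nu\subseteq\rho_N$;
\item $Q_\nu\mapsto 0$ whenever $\nu_1>N$.
\end{itemize}
A strict partition with $\nu_1\le N$ is automatically contained in $\rho_N$, so these two cases exhaust all strict $\nu$. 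Hence
\[
 \prod_i c_{\alpha_i}(\cE)=\sum_{\nu\subseteq\rho_N}[x^\alpha]P_\nu\,\sigma_\nu
\]
in $A^*(G_C)$. Restrict to $X$ and cap with $[X]$. Only classes of degree $d=\dim X$ have nonzero degree, and $P_\nu$ is homogeneous of degree $|\nu|$, so $[x^\alpha]P_\nu=0$ unless $|\nu|=d$. This gives
\[
 \int_X\prod_i c_{\alpha_i}(\cE|_X)=\sum_{|\nu|=d}d^C_\nu(X)\,[x^\alpha]P_\nu=[x^\alpha]\Theta^C_{X,n}.
\]
The second equality in \eqref{eq:typeC-transform-coeff} follows from Lemma~\ref{lem:c-equals-h}, which holds after restriction to $X$.

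\textbf{Step 2: volume realizability.} Since $\cE|_X$ is globally generated, Theorem~\ref{thm:total-chern-volume} applied to $n$ copies of $\cE|_X$ shows that $\cN(\Theta^C_{X,n})$ is a realizable volume polynomial.

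\textbf{Step 3: specializations.} The two cases are handled as follows.
\begin{itemize}
\item \emph{Schubert variety.} Take $X=X_{\lambda^\vee}$, whose class is $\sigma_{\lambda^\vee}\cap[G_C]$. Then $d^C_\nu(X)=\int_{G_C}\sigma_{\lambda^\vee}\sigma_\nu=\delta_{\nu\lambda}$ by the duality \eqref{eq:LG-duality}. Hence $\Theta^C_{X,n}=P_\lambda$. Here $\lambda\subseteq\rho_N$ is implicit in forming $\lambda^\vee$.
\item \emph{Richardson variety.} Take $X=R^C_{\lambda/\mu}$ with $N\ge\lambda_1$. Step~1 gives $[x^\alpha]\Theta^C_{X,n}=\int_R\prod_i c_{\alpha_i}(\cE_R)$, and \eqref{eq:typeC-coeff} of Theorem~\ref{thm:main-PQ} identifies this with $[x^\alpha]P_{\lambda/\mu}$. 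Both polynomials are homogeneous of degree $d$, so $\Theta^C_{R,n}=P_{\lambda/\mu}$.
\end{itemize}

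\textbf{Main obstacle.} The delicate point is justifying that $\phi_N$ acts correctly on every term of the infinite-looking Cauchy sum. For fixed $\alpha$ only finitely many $\nu$ of size $d$ contribute, so the sum is finite. One must then confirm that every strict $\nu$ either lies in $\rho_N$ or has $\nu_1>N$, so no stray term survives with an unintended value. This relies on the Kresch--Tamvakis normalization $\phi_N(Q_\nu)=\sigma_\nu$, rather than a factor of $2^{\ell(\nu)}$; that normalization was already secured in Proposition~\ref{prop:stable-map}, so the argument reduces to bookkeeping once it is cited.
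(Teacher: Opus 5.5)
Your proposal is correct and follows the paper's proof essentially step for step. You apply $\phi_N$ to the Cauchy coefficient identity~\eqref{eq:Cauchy-straight-coeff}, restrict to $X$ and integrate, invoke Lemma~\ref{lem:c-equals-h} and Theorem~\ref{thm:total-chern-volume}, and then specialize via the duality~\eqref{eq:LG-duality} and the Richardson identity~\eqref{eq:typeC-coeff}. Your extra bookkeeping makes explicit what the paper's one-line ``in degree $d$'' step leaves implicit: strict $\nu$ with $\nu_1\le N$ automatically lie in $\rho_N$, and only $|\nu|=d$ contributes.
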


\begin{proof}
Apply $\phi_N$ to~\eqref{eq:Cauchy-straight-coeff}; in degree $d$,
\[
 \prod_i c_{\alpha_i}(\cE)
 =\sum_{\substack{\nu\subseteq\rho_N\text{ strict}\\|\nu|=d}}
 [x^\alpha]P_\nu\,\sigma_\nu.
\]
Restriction to $X$ and integration prove the first equality in~\eqref{eq:typeC-transform-coeff}; Lemma~\ref{lem:c-equals-h} gives the second.  Theorem~\ref{thm:total-chern-volume} gives realizable volume.

For $X=X_{\lambda^\vee}$, Poincar\'e duality gives $d_\nu^C(X)=\delta_{\lambda\nu}$.  Suppose now that $N\geq\lambda_1$ and $X=R^C_{\lambda/\mu}$.  By~\eqref{eq:typeC-transform-coeff} and the Richardson coefficient identity~\eqref{eq:typeC-coeff},
\[
 [x^\alpha]\Theta^C_{X,n}
 =\int_X\prod_i c_{\alpha_i}(\cE|_X)
 =[x^\alpha]P_{\lambda/\mu}
 \qquad(|\alpha|=|\lambda|-|\mu|).
\]
Hence $\Theta^C_{X,n}=P_{\lambda/\mu}$ in every finite alphabet, with no stronger stable-range assumption.
\end{proof}
\section{Consequences and applications}\label{sec:applications}

\subsection{Permanence and formal operator descendants}

Throughout this section, let
\begin{equation}\label{eq:generic-F}
 F(x)=\sum_{\abs\alpha=d}c_\alpha x^\alpha
 \qquad(c_\alpha\geq0)
\end{equation}
be a nonzero homogeneous polynomial such that
\[
 f:=\cN(F)=\sum_{\abs\alpha=d}c_\alpha x^{[\alpha]}
\]
is a realizable volume polynomial.  Every nonzero type~A or type~C cycle transform constructed above, every nonzero ordinary skew Schur polynomial, and every nonzero skew Schur $P$- or $Q$-polynomial satisfies this hypothesis.

\begin{proposition}\label{prop:permanence}
The following polynomials are realizable volume polynomials.
\begin{enumerate}[label=\textup{(\roman*)}]
\item $\partial^\beta f$ for every $\beta\in\N^n$.
\item $f(Ay)$ for every matrix $A$ with nonnegative rational entries.
\item The lower and upper coordinatewise truncations
\[
 f_{\geq\gamma}=\sum_{\alpha\geq\gamma}c_\alpha x^{[\alpha]},
 \qquad
 f_{\leq\gamma}=\sum_{\alpha\leq\gamma}c_\alpha x^{[\alpha]},
\]
where $\alpha\leq\gamma$ means $\alpha_i\leq\gamma_i$ for every $i$.
\item If $F_1,\ldots,F_m$ have realizable factorial normalizations, then both
\[
 \prod_{a=1}^m\cN(F_a)
 \quad\text{and}\quad
 \cN\left(\prod_{a=1}^mF_a\right)
\]
are realizable volume polynomials.
\end{enumerate}
\end{proposition}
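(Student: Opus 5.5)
Each item will be derived from the operator theorem of Grund--Huh--Micha\l ek--S\"uss--Wang \cite[Theorem~1.3]{GHMSW25}, together with elementary geometric operations on a realization $f=\frac{q}{d!}\int_X(\sum x_iD_i)^d$.

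For \textup{(i)}, I will check that $\partial^\beta$ is a realizable covolume polynomial, exactly as in the proof of Theorem~\ref{thm:total-chern-volume}. Take the upper bound $\beta$; then $\partial^\beta x^{[\beta]}=1$ is the volume polynomial of a point. Theorem~1.3 then applies. A direct alternative is to cut by general members of the base-point-free systems $|mD_i|$, which are irreducible by Bertini when they have positive dimension.

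For \textup{(ii)}, substituting $x=Ay$ gives
\[
\sum_i x_iD_i=\sum_j y_j\Bigl(\sum_i a_{ij}D_i\Bigr).
\]
A nonnegative rational combination of semiample divisors becomes semiample after clearing denominators, and the resulting constant can be absorbed into $q$.

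For \textup{(iv)}, the first product will be realized on $X_1\times\cdots\times X_m$ with pulled-back divisors. The multinomial identity
\[
\frac{1}{(\sum d_a)!}\Bigl(\sum_a\Xi_a\Bigr)^{\sum d_a}=\prod_a\frac{\Xi_a^{d_a}}{d_a!}
\]
holds on the product because of the Künneth degree constraints. The second polynomial $\cN(\prod F_a)$ is obtained from $\prod\cN(F_a)$ in doubled variables $x^{(a)}$. Identifying all blocks $x^{(a)}\mapsto x$ yields a product of divided powers,
\[
x^{[\alpha]}x^{[\alpha']}=\binom{\alpha+\alpha'}{\alpha}x^{[\alpha+\alpha']},
\]
which is not $\cN$ of the product. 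I will therefore avoid this route and instead use that $\cN$ corresponds to the Hadamard product with $e^{x}$-type kernels. Concretely, $\cN(FG)$ should be expressible through the operator framework: \cite[Corollary~2.4/Proposition~2.10]{GHMSW25} treat such normalized products, and I would invoke those. The geometric fallback applies when each $F_a$ comes from a total-Chern model $T_{X_a,\cF}$. In that case $\prod F_a$ is the total-Chern polynomial on $\prod X_a$ of the bundles $\bigoplus_a \mathrm{pr}_a^*\cF_{a,i}$, by the Whitney formula, and Theorem~\ref{thm:total-chern-volume} applies directly.

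For \textup{(iii)}, truncations are the hardest step. For the lower truncation, $f_{\geq\gamma}$ equals $x^{[\gamma]}$-shifted material. Indeed, $\partial^\gamma f_{\geq\gamma}=\partial^\gamma f$, but recovering $f_{\geq\gamma}$ itself requires an antiderivative. I expect this case to be covered by the GHMSW list of preserved operations, namely truncations as covolume-type operators \cite[Propositions~4.1, 4.2]{GHMSW25}, and I would cite them. If that fails, I would try to realize the upper truncation geometrically by replacing each $D_i$ with a divisor on a suitable projective bundle that kills $D_i^{\gamma_i+1}$. Concretely, I would pass to $\mathbb P^{\gamma_i}$-type constructions, that is, Veronese-type products with the point-restriction $x_i^{[\gamma_i+1]}=0$. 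This is the step I regard as the main obstacle.
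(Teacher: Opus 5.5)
Your items (i), (ii) and the first half of (iv) are fine. Item (i) is the paper's argument via \cite[Theorem~1.3]{GHMSW25}. Your direct arguments for (ii) (clearing denominators in $\sum_i a_{ij}D_i$) and for $\prod_a\cN(F_a)$ (product variety plus the K\"unneth degree count) are correct elementary substitutes for the paper's citations of \cite[Proposition~2.8 and Corollary~2.9]{GHMSW25}. One small correction to your optional Bertini route for (i): a general member of $|mD_i|$ is irreducible only when the image of the associated morphism has dimension at least two, not merely positive dimension.

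\textbf{The gaps.} The genuine gaps are (iii) and the second assertion of (iv), for which you give no proof in general. The results you would invoke, \cite[Corollary~2.4 and Propositions~2.10, 4.1, 4.2]{GHMSW25}, are, as this paper itself uses them, the Schubert-covolume, characteristic-independence, polarization and normalization statements. None of them yields truncations or $\cN$ of a product. The paper closes both points by citing \cite[Corollary~3.3]{GHMSW25} for the truncations and \cite[Corollary~2.6]{GHMSW25} for $\cN(\prod_aF_a)$.

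Your projective-bundle/Veronese idea for $f_{\le\gamma}$ is not a construction. It specifies no divisor whose $(\gamma_i+1)$-st power vanishes while all lower mixed intersection numbers with the other $D_j$ are preserved. That requires a birational modification, for example the closure $Z$ of the graph of the morphism of $|D_i|$ followed by a general linear projection $\mathbb P^{m_i}\dashrightarrow\mathbb P^{\gamma_i}$. On $Z$ one has $p^*D_i=D_i'+E$, with $E$ lying over a locus of codimension $\gamma_i+1$. For $f_{\ge\gamma}$ you only observe that an antiderivative is needed.

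\textbf{How your fallback repairs two of the gaps.} Your total-Chern fallback is in fact fully general. Suppose $\cN(F)=\frac{q}{d!}\int_X(\sum_ix_iD_i)^d$. Replace the $D_i$ by a common base-point-free multiple $kD_i$ and $q$ by $q/k^d$. Then $c_\alpha=q\int_XD^\alpha$. Dualizing $W_i\otimes\cO_X\twoheadrightarrow\cO_X(D_i)$ gives $0\to\cO_X(-D_i)\to W_i^\vee\otimes\cO_X\to\cF_i\to0$, with $\cF_i$ globally generated and $c(\cF_i)=(1-D_i)^{-1}$. Hence $F=q\,T_{X,\cF_\bullet}$.

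So every $F_a$ is a total-Chern polynomial, and your Whitney-sum construction on $\prod_aX_a$ proves that $\cN(\prod_aF_a)$ is realizable with no restriction.

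For the lower truncation, put $G=\sum_\eta c_{\gamma+\eta}x^\eta$. Then $f_{\ge\gamma}=\cN(x^\gamma G)$, and $\cN(G)=\partial^\gamma f$ is realizable by (i); if it is zero there is nothing to prove. Write $G$ as a total-Chern polynomial. Enlarge each generating space by a summand mapping to zero; this adds trivial summands to $\cG_i$, leaves $h_m(\cG_i)$ unchanged, and makes the shift in Theorem~\ref{thm:total-chern-volume}(iii) some $\eta'\ge\gamma$. Applying the covolume operator $\partial^{\eta'-\gamma}$ to $\cN(x^{\eta'}G)$ then gives $f_{\ge\gamma}$.

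The upper truncation still needs either the graph-closure argument sketched above or the citation.
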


\begin{proof}
Part~(i) is Theorem~1.3 of Grund--Huh--Micha{\l}ek--S\"uss--Wang applied to the realizable-covolume monomial differential operator $\partial^\beta$.  Part~(ii) is~\cite[Proposition~2.8]{GHMSW25}.  Part~(iii) is~\cite[Corollary~3.3]{GHMSW25}.  The first assertion of part~(iv) follows from the product construction on the product of the realizing varieties~\cite[Corollary~2.9]{GHMSW25}; the second is~\cite[Corollary~2.6]{GHMSW25}.
\end{proof}

A homogeneous polynomial $g(\partial)$ is called a \emph{realizable covolume
polynomial} if, for some $\mu\in\N^n$ with
$g\in\Q[\partial]_{\leq\mu}$, the polynomial
$g(\partial)x^{[\mu]}$ is realizable volume.  This condition is independent of
$\mu$; see \cite[Definition~1.2]{GHMSW25}.

\begin{theorem}
\label{thm:operator-descendants}
Let $g(\partial)\in\Q[\partial_1,\ldots,\partial_n]$ be a realizable
covolume polynomial over $\C$.  Then
\begin{equation}\label{eq:covolume-descendant}
 g(\partial)f\text{ is a realizable volume polynomial}.
\end{equation}
In particular, the following assertions hold.
\begin{enumerate}[label=\textup{(\roman*)}]
\item For every partition $\tau$ with $\ell(\tau)\leq n$ and every permutation $w\in\mathfrak S_n$, the polynomials
\[
 s_\tau(\partial_1,\ldots,\partial_n)f,
 \qquad
 \mathfrak S_w(\partial_1,\ldots,\partial_n)f
\]
are realizable volume polynomials, where $\mathfrak S_w$ is the Schubert polynomial of $w$.
\item For every integer $q\geq1$, the $q$-fold factorial normalization
\begin{equation}\label{eq:iterated-factorial-normalization}
 \cN_q(F):=\sum_{|\alpha|=d}c_\alpha
       \frac{x^\alpha}{(\alpha!)^q}
\end{equation}
is a realizable volume polynomial.  Thus the same coefficient array gives an
infinite hierarchy of volume polynomials.
\item Put $\mu_i:=\max\{\alpha_i:c_\alpha\neq0\}$ and let
$Y_i=\{y_{i1},\ldots,y_{i\mu_i}\}$.  The full polarization
\begin{equation}\label{eq:full-polarization}
 \operatorname{Pol}_\mu(f)
 :=\sum_{|\alpha|=d}\frac{c_\alpha}{\alpha!}
   \prod_{i=1}^n
   \frac{e_{\alpha_i}(Y_i)}{\binom{\mu_i}{\alpha_i}}
\end{equation}
(with the evident convention when $\mu_i=0$) is a multiaffine realizable
volume polynomial.  If $B=\Supp(F)$ and
$E_i=\{(i,1),\ldots,(i,\mu_i)\}$, then its support is
\begin{equation}\label{eq:polarized-support}
 \widetilde B
 =\left\{\mathbf 1_S:
   \bigl(|S\cap E_1|,\ldots,|S\cap E_n|\bigr)\in B\right\}.
\end{equation}
Consequently, $\widetilde B$ is the set of bases of an algebraic matroid over
every field of characteristic zero, and the block-cardinality projection
$\widetilde B\to B$ is surjective.
\item For $i\neq j$ and $t\in\Q_{\geq0}$,
$(1+t x_i\partial_j)f$ is realizable volume.  If $h$ is any multiaffine
realizable volume polynomial, then for rational $0\leq t\leq1$ the symmetric
exclusion operator
\[
 \Phi_{ij}(t)h
 :=t h+(1-t)h\circ(i\ j)
\]
is realizable volume.
\end{enumerate}
Every nonzero polynomial in this theorem therefore satisfies all conclusions
proved below whose stated hypotheses it meets.
\end{theorem}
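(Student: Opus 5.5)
The main statement \eqref{eq:covolume-descendant} is a direct application of the operator theorem \cite[Theorem~1.3]{GHMSW25}: $f$ is realizable volume by hypothesis, and realizable covolume operators preserve that class. Each item then amounts to two tasks. First, show that the relevant operator is realizable covolume, or identify the construction with a cited permanence result. Second, compute what the operator does to the coefficient array.

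\textbf{Part (i).} I would verify that $s_\tau(\partial)$ and $\mathfrak S_w(\partial)$ are realizable covolume polynomials. The plan is to pick an upper bound $\mu$ and check that $s_\tau(\partial)x^{[\mu]}$ and $\mathfrak S_w(\partial)x^{[\mu]}$ are realizable volume.

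For Schur operators, take $\mu=(m,\ldots,m)$ with $m$ large. Then $s_\tau(\partial)x^{[\mu]}$ is a sum of normalized monomials weighted by Kostka numbers. I would identify it, via Theorem~\ref{thm:typeA-transform} applied to a suitable Schubert or Richardson subvariety together with Theorem~\ref{thm:total-chern-volume}, with a shifted normalization of a skew Schur polynomial; essentially complementing $\tau$ inside a rectangle gives a skew shape. Alternatively I would simply cite the Schur and Schubert covolume examples in \cite{GHMSW25}, which is the safer route. For Schubert polynomials I expect to rely on exactly such a citation, since the flag-variety degree computation is what makes them covolume.

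\textbf{Part (ii).} Proceed by induction on $q$ using Hadamard-type products. The standard fact \cite[Corollary~2.6]{GHMSW25} says that if $\cN(F)$ and $\cN(G)$ are realizable, then so is the normalized coefficientwise pairing. Applying it with $x^{[\alpha]}$-coefficients $c_\alpha/\alpha!$ repeatedly produces the extra factorials. If that corollary only gives products rather than Hadamard products, I would instead realize $\cN_q$ as $g(\partial)$ applied to $\cN(F)\cdot\prod_i x_i^{[\mu_i]}$-type auxiliaries, via the Hadamard/duality proposition in \cite{GHMSW25}. I expect this bookkeeping to be the main obstacle: matching exactly which factorials appear is where the argument could break.

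\textbf{Part (iii).} Polarization of a realizable volume polynomial is realizable; the geometric model is a product with copies of projective space and pulled-back divisors \cite[Proposition~4.1]{GHMSW25}. Its support is read off directly from \eqref{eq:full-polarization}, since $e_{\alpha_i}(Y_i)$ has all $\alpha_i$-subsets as support, which gives \eqref{eq:polarized-support}. Supports of multiaffine realizable volume polynomials are bases of algebraic matroids (the cited matroid proposition), and surjectivity onto $B$ is immediate.

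\textbf{Part (iv).} The operator $1+tx_i\partial_j$ is handled by \cite[Proposition~5.4]{GHMSW25}, after checking it is among the admissible linear-change operators; this follows from Proposition~\ref{prop:permanence}(ii) with $x_j\mapsto x_j+tx_i$ at first order on the multiaffine part. The exclusion operator is a convex combination of $h$ and a coordinate permutation, which \cite[Proposition~2.10]{GHMSW25} covers.
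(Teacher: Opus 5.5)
\textbf{Summary.} The main assertion and parts~(i) and~(iii) are fine, but parts~(ii) and~(iv) have genuine gaps.

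\textbf{Parts (i) and (iii).} In~(i), your complement idea is a valid, self-contained alternative to the paper's dummy-variable reduction of $s_\tau$ to a Grassmannian Schubert polynomial. For $m\geq\tau_1$,
\[
s_\tau(\partial)x^{[m\mathbf 1]}=\cN\bigl((x_1\cdots x_n)^m s_\tau(x_1^{-1},\ldots,x_n^{-1})\bigr)=\cN(s_{\tau^\vee}(x)),
\]
where $\tau^\vee=(m-\tau_n,\ldots,m-\tau_1)$ is a straight shape, so Theorem~\ref{thm:main-A} applies directly. In~(iii), you still need \cite[Proposition~2.10]{GHMSW25} to pass from $\C$ to every field of characteristic zero.

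\textbf{Part (ii).} \cite[Corollary~2.6]{GHMSW25} is the normalized-product statement: if $\cN(F)$ and $\cN(G)$ are realizable, then so is $\cN(FG)$. It is not a Hadamard-product statement, and coefficientwise products are not preserved at all. For example, $(x_1+x_2)(x_3+x_4)$ and $(x_1+x_3)(x_2+x_4)$ are multiaffine realizable volume polynomials on $\mathbb P^1\times\mathbb P^1$. Their coefficientwise product is $x_1x_4+x_2x_3$, with or without $\cN$ (which is the identity on multiaffine polynomials), and its support is not $M$-convex. Your fallback does not work either. The map $f\mapsto g(\partial)(x^{[\mu]}f)$ multiplies $x^{[\alpha]}$ by $\binom{\alpha+\mu}{\alpha}$ and then shifts exponents, so it cannot in general produce the diagonal factor $1/\alpha!$. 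The missing ingredient is that the normalization operator $\cN$ itself preserves realizable volume polynomials \cite[Proposition~4.2]{GHMSW25}; note that $\cN$ is not a constant-coefficient differential operator. Since $\cN_{q+1}(F)=\cN(\cN_q(F))$, the induction on $q$ is then immediate.

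\textbf{Part (iv), first operator.} The substitution $x_j\mapsto x_j+tx_i$ gives
\[
f(x+tx_ie_j)=\sum_{k\geq0}\tfrac{t^k}{k!}x_i^k\partial_j^kf .
\]
This equals $(1+tx_i\partial_j)f$ only when $\partial_j^2f=0$, which fails as soon as $f$ has degree at least two in $x_j$. Also, \cite[Proposition~5.4]{GHMSW25} is the polymatroid support theorem, not an operator statement. A repair within your toolkit: polarize only the $x_j$-block, substitute $y_{j1}\mapsto y_{j1}+sx_i$ (exact, since the polarization is affine in $y_{j1}$), and then re-diagonalize. By symmetry of the polarization this yields $(1+\tfrac{s}{\mu_j}x_i\partial_j)f$, and you take $s=t\mu_j$.

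\textbf{Part (iv), exclusion operator.} Realizable volume polynomials are not closed under convex combinations. The polynomial $\tfrac12x_1^2$ is realizable, but for $0<t<1$ the combination $\tfrac t2x_1^2+\tfrac{1-t}2x_2^2$ has a Hessian with two positive eigenvalues, so it is not even Lorentzian. The multiaffine hypothesis must therefore enter substantively, and \cite[Proposition~2.10]{GHMSW25} only concerns independence of the characteristic. The paper handles both operators by citing \cite[Propositions~4.3 and~4.4]{GHMSW25}.
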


\begin{proof}
The main assertion is the characterization of realizable covolume
polynomials as differential operators preserving realizable volume
polynomials~\cite[Theorem~1.3]{GHMSW25}.  Schubert polynomials are realizable
covolume polynomials by~\cite[Corollary~2.4]{GHMSW25}.  A Schur polynomial
$s_\tau(x_1,\ldots,x_n)$ with $\ell(\tau)\leq n$ is a Grassmannian Schubert
polynomial after adjoining finitely many dummy variables.  We may likewise view
$f$ as independent of these additional variables; applying the operator theorem
and then discarding the unused variables proves~(i).

For~(ii), the case $q=1$ is the hypothesis on $f$.  The normalization operator
sends a realizable volume polynomial to a realizable volume polynomial
by~\cite[Proposition~4.2]{GHMSW25}; iteration gives
\eqref{eq:iterated-factorial-normalization}.

The polynomial in~\eqref{eq:full-polarization} is the usual full polarization:
it is symmetric in each block $Y_i$, multiaffine, and its diagonal
specialization $y_{i1}=\cdots=y_{i\mu_i}=x_i$ is $f$.  Successive applications
of the polarization theorem~\cite[Proposition~4.1]{GHMSW25} prove that it is
realizable volume.  Each elementary symmetric polynomial in
\eqref{eq:full-polarization} has positive coefficients on all subsets of the
prescribed cardinality, which proves~\eqref{eq:polarized-support}.  A
multiaffine $M$-convex support is a matroid.  Since the polarized polynomial is
realizable volume, the support-realization theorem
\cite[Proposition~5.4]{GHMSW25} makes this matroid algebraic; characteristic
independence follows from~\cite[Proposition~2.10]{GHMSW25}.

Finally,~(iv) is~\cite[Propositions~4.3 and~4.4]{GHMSW25}.
\end{proof}

\subsection{Reverse Khovanskii--Teissier inequalities}

For $v\in\R_{\geq0}^n$, write $\partial_v=\sum_i v_i\partial_i$, and define the complete mixed form
\begin{equation}\label{eq:complete-mixed-form}
 M_f(v_1,\ldots,v_d)
 :=\partial_{v_1}\cdots\partial_{v_d}f.
\end{equation}
It is symmetric and multilinear.

\begin{theorem}[Reverse Khovanskii--Teissier inequalities]\label{thm:master-RKT}
For $u,v,w\in\R_{\geq0}^n$ and $0\leq e\leq d$,
\begin{equation}\label{eq:master-RKT}
 \binom de
 M_f(u^{d-e},v^e)M_f(u^e,w^{d-e})
 \geq
 M_f(u^d)M_f(v^e,w^{d-e}).
\end{equation}
More generally, let $\beta\in\N^n$, put $m=d-\abs\beta$, and assume $m\geq0$.  Then, for $0\leq e\leq m$ and $i,j,k\in[n]$,
\begin{equation}\label{eq:master-coeff-RKT}
 \binom me
 c_{\beta+(m-e)e_i+e\,e_j}
 c_{\beta+e\,e_i+(m-e)e_k}
 \geq
 c_{\beta+m e_i}
 c_{\beta+e\,e_j+(m-e)e_k}.
\end{equation}
Coefficients with an index outside $\N^n$ are interpreted as zero.
\end{theorem}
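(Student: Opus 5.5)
The plan is to realize $f$ geometrically and reduce \eqref{eq:master-RKT} to the reverse Khovanskii--Teissier inequality for nef divisors on a projective variety. By Definition~\ref{def:rv}, write
\[
 f(x)=\frac{q}{d!}\int_X(x_1D_1+\cdots+x_nD_n)^d
\]
with $X$ irreducible projective of dimension $d$, $D_i$ semiample, and $q\geq0$. The case $q=0$ is trivial, and by resolution (as in the smooth clause of Theorem~\ref{thm:total-chern-volume}) we may take $X$ smooth. Differentiating gives
\[
 M_f(v_1,\ldots,v_d)=q\,(H_{v_1}\cdots H_{v_d}),
 \qquad H_v:=\sum_i v_iD_i .
\]
For $v\in\R^n_{\geq0}$, each $H_v$ is a nef $\R$-divisor. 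Hence \eqref{eq:master-RKT} is, after cancelling $q^2$, the statement
\[
 \binom de\,(A^{d-e}\cdot B^e)(A^e\cdot C^{d-e})\geq (A^d)(B^e\cdot C^{d-e})
\]
for nef $A=H_u$, $B=H_v$, $C=H_w$. This is the reverse Khovanskii--Teissier inequality of Lehmann--Xiao (and Jiang--Li in the form with the binomial constant), proved for nef classes on projective varieties over $\C$. I would cite it. The inequality extends to $\R$-divisors because both sides are polynomial in the coefficients, so continuity applies, and it passes to singular $X$ by pulling back to a resolution.

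The main obstacle is securing this inequality with exactly the constant $\binom de$ in the nef (not merely ample) setting. I would proceed in three steps.
\begin{enumerate}
\item First prove it for ample classes, via the Lehmann--Xiao argument. One uses Teissier's proportionality and a Hodge-index estimate, or the Jiang--Li argument: write $A^e$ against a movable intersection and use the divisibility of $A^{d-e}\cdot B^e$ through the ``$B^e\leq\binom de\frac{A^{d-e}\cdot B^e}{A^d}A^e$'' comparison in the cone of positive classes.
\item Then perturb nef classes by $\epsilon$ times an ample class and let $\epsilon\to0$.
\item If the citation is unavailable, try to derive the key positivity $\binom de\frac{(A^{d-e}B^e)}{(A^d)}A^e-B^e\in$ (dual of the nef cone) from the Khovanskii--Teissier/Hodge index inequalities combined with an effective Siu-type comparison.
\end{enumerate}

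For the coefficient form \eqref{eq:master-coeff-RKT}, I would apply Proposition~\ref{prop:permanence}(i). The derivative $g:=\partial^\beta f$ is realizable volume of degree $m$, with coefficients $c_{\beta+\gamma}$ in divided-power form:
\[
 g=\sum_{|\gamma|=m}c_{\beta+\gamma}x^{[\gamma]}.
\]
Its complete mixed form evaluated on basis vectors is
\[
 M_g(e_{i_1},\ldots,e_{i_m})=c_{\beta+e_{i_1}+\cdots+e_{i_m}}.
\]
Taking $u=e_i$, $v=e_j$, $w=e_k$ and $d\to m$ in \eqref{eq:master-RKT} for $g$ yields \eqref{eq:master-coeff-RKT} exactly. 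Negative indices cannot arise here, since the indices are $\beta$ plus nonnegative vectors. If some index falls outside the support, the corresponding coefficient is zero, and the inequality still holds because it is the specialization of a valid one. The degenerate cases $e=0$, $e=m$, and $m=0$ reduce to equalities, which I would check directly.
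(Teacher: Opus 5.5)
Your proposal is correct and follows the paper's proof. Both realize $f$ with $q>0$ and write $M_f$ as $q$ times mixed intersection numbers of the nef classes $D(u),D(v),D(w)$. Both then invoke the Lehmann--Xiao/Jiang--Li reverse Khovanskii--Teissier inequality, multiplied by $q^2\binom de$, with continuity handling real weights. Finally, both apply this to $\partial^\beta f$, which is realizable volume by Proposition~\ref{prop:permanence}(i), taking $u=e_i$, $v=e_j$, $w=e_k$.

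The differences are minor. Passing to a resolution is unnecessary, and so are your fallback steps~1--3, because Jiang--Li's algebraic proof already covers nef divisors on arbitrary projective varieties.
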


\begin{proof}
For $e=0$ and $e=d$, inequality~\eqref{eq:master-RKT} is an identity.  Assume $1\leq e\leq d-1$, and choose any projective realization~\eqref{eq:rv-definition} with $q>0$, as supplied by the definition of a nonzero realizable volume polynomial.  No smoothness is needed: the algebraic reverse Khovanskii--Teissier inequality of Jiang--Li applies to arbitrary projective varieties over an algebraically closed field.  For $a\in\R_{\geq0}^n$, let $D(a)=\sum_i a_iD_i$.  Then
\[
 M_f(v_1,\ldots,v_d)=q\int_X D(v_1)\cdots D(v_d).
\]
The classes $D(u),D(v),D(w)$ are nef $\R$-divisor classes.  The reverse Khovanskii--Teissier inequality of Lehmann--Xiao~\cite[Theorem~5.7]{LehmannXiao17}, with its algebraic proof over arbitrary algebraically closed fields by Jiang--Li~\cite[Theorem~1.1]{JiangLi23}, states
\[
 \left( D(u)^{d-e}D(v)^e\right)
 \left( D(u)^eD(w)^{d-e}\right)
 \geq
 \frac{e!(d-e)!}{d!}
 \left(D(u)^d\right)
 \left(D(v)^eD(w)^{d-e}\right).
\]
Multiplication by $q^2\binom de$ and the mixed-intersection identity prove~\eqref{eq:master-RKT}.  Irrational coefficients follow by continuity from rational approximations.

Put $g=\partial^\beta f$.  If $g=0$, then every coefficient $c_{\beta+\eta}$ with $|\eta|=m$ vanishes, so both sides of~\eqref{eq:master-coeff-RKT} are zero and the assertion is immediate.  Assume henceforth that $g\neq0$.  By Proposition~\ref{prop:permanence}(i), $g$ is a realizable volume polynomial of degree $m$, and
\[
 g(x)=\sum_{\abs\eta=m}c_{\beta+\eta}x^{[\eta]}.
\]
Apply~\eqref{eq:master-RKT} to $g$ with $u=e_i$, $v=e_j$, and $w=e_k$.  Complete differentiation of divided powers gives
\[
 M_g(e_i^{m-e},e_j^e)=c_{\beta+(m-e)e_i+e\,e_j},
\]
and similarly for the other three terms, proving~\eqref{eq:master-coeff-RKT}.
\end{proof}

\subsection{Hodge--Riemann inequalities and dominance monotonicity}

The following is fundamental.

\begin{theorem}[Lorentzian Hodge--Riemann package]\label{thm:master-HR}
The polynomial $f$ is Lorentzian, completely log-concave, and strongly log-concave.  Its support is $M$-convex and hence saturated in its Newton polytope.  If $d\geq2$, the following additional statements hold.
\begin{enumerate}[label=\textup{(\roman*)}]
\item For every $\beta\in\N^n$ with $\abs\beta=d-2$, the symmetric matrix
\begin{equation}\label{eq:H-beta}
 H^\beta:=\bigl(c_{\beta+e_i+e_j}\bigr)_{1\leq i,j\leq n}
\end{equation}
has at most one positive eigenvalue.  If $H^\beta\neq0$, it has exactly one positive eigenvalue.
\item For every $\beta\in\N^n$ with $\abs\beta=d-2$ and every nonempty $I\subseteq[n]$,
\begin{equation}\label{eq:principal-sign}
 (-1)^{\abs I-1}\det H_I^\beta\geq0,
\end{equation}
where $H_I^\beta$ is the principal submatrix on $I$.
\item If $\alpha\in\N^n$ satisfies $\abs\alpha=d$, $i\neq j$, and $\alpha_i,\alpha_j\geq1$, then
\begin{equation}\label{eq:root-logconcavity}
 c_\alpha^2\geq
 c_{\alpha+e_i-e_j}c_{\alpha-e_i+e_j}.
\end{equation}
On every affine root line $\alpha+\Z(e_i-e_j)$, the nonzero coefficient sequence is supported on an integer interval and is log-concave and unimodal.
\item For $u,v,w_3,\ldots,w_d\in\R_{\geq0}^n$,
\begin{equation}\label{eq:AF-master}
 M_f(u,v,w_3,\ldots,w_d)^2
 \geq
 M_f(u,u,w_3,\ldots,w_d)
 M_f(v,v,w_3,\ldots,w_d).
\end{equation}
\end{enumerate}
\end{theorem}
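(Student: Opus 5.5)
The plan is to use that $f$ is a realizable volume polynomial, hence Lorentzian by \cite[Theorem~4.6]{BH20} (semiample divisors are nef). From Lorentzianity we get complete log-concavity and strong log-concavity via the equivalences in Br\"and\'en--Huh. $M$-convexity of the support is part of the definition recalled in Section~\ref{sec:preliminaries}. Saturation in the Newton polytope is the standard fact that an $M$-convex set equals the lattice points of its convex hull, which is a generalized permutahedron~\cite[Chapter~4]{Murota03}. This gives the preamble.

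For (i), apply $\partial^\beta$ with $|\beta|=d-2$. The result is the quadratic form $\partial^\beta f=\sum_{i,j}c_{\beta+e_i+e_j}x_ix_j/2$ up to divided-power normalization. Concretely, $\partial_i\partial_j\partial^\beta f=c_{\beta+e_i+e_j}$, so the Hessian of $\partial^\beta f$ is exactly $H^\beta$. The Lorentzian definition says this Hessian has at most one positive eigenvalue. If $H^\beta\neq0$, it is a nonzero symmetric matrix with nonnegative entries. Either a diagonal entry is positive, so $e_i^TH^\beta e_i>0$, or some off-diagonal entry $c>0$ gives $(e_i+e_j)^TH^\beta(e_i+e_j)\geq 2c>0$. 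In both cases there is a positive eigenvalue.

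For (ii), take the principal submatrix $H_I^\beta$. By Cauchy interlacing it has at most one positive eigenvalue. It is nonnegative, so it is zero or has exactly one positive eigenvalue, and the remaining $|I|-1$ eigenvalues are $\leq0$. The sign of the determinant is therefore $(-1)^{|I|-1}$ or the determinant is zero.

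For (iii), take $I=\{i,j\}$ with $\beta=\alpha-e_i-e_j$, so that $\det H^\beta_{\{i,j\}}\le 0$. This says $c_{\alpha+e_i-e_j}c_{\alpha-e_i+e_j}\leq c_\alpha^2$, because the diagonal entries are $c_{\beta+2e_i}$ and $c_{\beta+2e_j}$ and the off-diagonal entry is $c_\alpha$. Log-concavity along the root line follows. The absence of internal zeros is where $M$-convexity enters. If $\alpha,\alpha'$ on the line are in the support with $\alpha_i>\alpha'_i$, the exchange axiom forces the exchanged index to be $j$, since the two points differ only in coordinates $i$ and $j$. This produces $\alpha-e_i+e_j$ in the support, and induction fills the interval. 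Log-concavity without internal zeros then gives unimodality.

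For (iv), this is the Alexandrov--Fenchel inequality. Either restrict the quadratic form $M_f(\cdot,\cdot,w_3,\ldots,w_d)$, which has at most one positive eigenvalue by Lorentzianity and nonnegative linear substitution, to nonnegative $u,v$ and use the reverse Cauchy--Schwarz for Lorentzian signature forms; or directly invoke Khovanskii--Teissier/Alexandrov--Fenchel for the nef classes $D(u),D(v),D(w_k)$ on the realizing variety. I would take the geometric route via the realization, with continuity for irrational entries.

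The main subtlety is (iv) in the degenerate case $M_f(u,u,\ldots)=0$, and the quadratic-form argument must handle a form with at most one positive eigenvalue evaluated on nonnegative vectors. Using the nef intersection-theoretic Alexandrov--Fenchel avoids this issue.
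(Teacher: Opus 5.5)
Your proposal is correct and follows the paper's proof essentially step by step. You obtain Lorentzianity from the Br\"and\'en--Huh volume theorem and identify the Hessian of $\partial^\beta f$ with $H^\beta$. You pass to principal submatrices by interlacing and use the $2\times2$ minor with $\beta=\alpha-e_i-e_j$, together with the exchange axiom, to fill root-line intervals. You then apply Alexandrov--Fenchel to the nef classes $D(u),D(v),D(w_k)$. The only cosmetic difference is that you exhibit a positive eigenvalue of a nonzero $H^\beta$ using the test vector $e_i$ or $e_i+e_j$ rather than $\one$.
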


\begin{proof}
Semiample divisors are nef, so the Br\"and\'en--Huh volume theorem implies that $f$ is Lorentzian~\cite[Theorem~4.6]{BH20}.  Lorentzianity is equivalent to complete and strong log-concavity for homogeneous polynomials with nonnegative coefficients, and its support is $M$-convex~\cite[Theorems~2.25 and~2.30]{BH20}.  A finite $M$-convex set is the set of lattice points of its integral base polytope, proving saturation.

For $\abs\beta=d-2$,
\[
 \partial^\beta f
 =\sum_i c_{\beta+2e_i}\frac{x_i^2}{2}
   +\sum_{i<j}c_{\beta+e_i+e_j}x_ix_j.
\]
Thus its Hessian is exactly $H^\beta$.  Lorentzianity gives at most one positive eigenvalue.  If $H^\beta\neq0$, then it is entrywise nonnegative and
\[
 \one^{\mathsf T}H^\beta\one>0,
\]
so it has at least one positive eigenvalue.  This proves the first assertion.  Every principal submatrix also has at most one positive eigenvalue; if nonzero, it has exactly one by the same argument.  Its remaining eigenvalues are nonpositive, which gives~\eqref{eq:principal-sign}.

Set $\beta=\alpha-e_i-e_j$ and apply~\eqref{eq:principal-sign} to the $2\times2$ principal minor on $\{i,j\}$.  This is~\eqref{eq:root-logconcavity}.  To prove the interval assertion, let two support points lie on the same root line.  In the symmetric exchange axiom, the only coordinates in which they differ are $i$ and $j$, so repeated exchanges fill every lattice point between them.  Log-concavity on this interval follows from~\eqref{eq:root-logconcavity}, and a nonnegative log-concave sequence without internal zeros is unimodal.

Finally,~\eqref{eq:AF-master} is the Alexandrov--Fenchel inequality for the nef divisor classes
\[
 D(u),\ D(v),\ D(w_3),\ldots,D(w_d),
\]
or equivalently Br\"and\'en--Huh~\cite[Proposition~4.5]{BH20}.
\end{proof}

\subsection{Symmetric supports and polymatroid realizations}

We record the observation that ymmetric support is an exact permutahedron.

\begin{theorem}\label{thm:symmetric-permutahedron}
Assume in addition that $F$ is nonzero and symmetric.  Then there is a unique partition
\[
 \kappa(F)=(\kappa_1\geq\cdots\geq\kappa_n\geq0),
 \qquad \sum_i\kappa_i=d,
\]
such that
\begin{equation}\label{eq:symmetric-support}
 \Supp(F)=\mathcal P_{\kappa(F)}\cap\Z^n,
 \qquad
 \Newt(F)=\mathcal P_{\kappa(F)},
\end{equation}
where $\mathcal P_\kappa=\conv\{\pi\kappa:\pi\in\mathfrak S_n\}$.  The exponent $\kappa(F)$ belongs to $\Supp(F)$ and is the unique dominance-maximal partition occurring in the support.  The associated polymatroid rank function is
\begin{equation}\label{eq:symmetric-rank-function}
 r_\kappa(A)=\sum_{p=1}^{|A|}\kappa_p,
\end{equation}
and this polymatroid is linearly representable over every infinite field.

If $F_1,\ldots,F_m$ are symmetric polynomials satisfying the same hypotheses, then
\begin{equation}\label{eq:product-kappa}
 \kappa\left(\prod_{a=1}^mF_a\right)=\sum_{a=1}^m\kappa(F_a),
\end{equation}
and
\begin{equation}\label{eq:product-support-general}
 \Supp\left(\prod_aF_a\right)
 =\mathcal P_{\sum_a\kappa(F_a)}\cap\Z^n.
\end{equation}
Equivalently, the lattice points satisfy the mixed integer-decomposition identity
\begin{equation}\label{eq:mixed-IDP}
 \left(\mathcal P_{\kappa(F_1)}\cap\Z^n\right)+\cdots+
 \left(\mathcal P_{\kappa(F_m)}\cap\Z^n\right)
 =\mathcal P_{\sum_a\kappa(F_a)}\cap\Z^n.
\end{equation}
In particular, every $\mathcal P_{\kappa(F)}$ has the integer-decomposition
property.  If $v_a=[x^{\kappa(F_a)}]F_a$, then the coefficient of every vertex monomial of the product is $\prod_a v_a$.
\end{theorem}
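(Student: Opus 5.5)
We derive everything from $M$-convexity of $B:=\Supp(F)=\Supp(f)$, which Theorem~\ref{thm:master-HR} supplies, together with symmetry of $B$ under $\mathfrak S_n$. First we fix $\kappa(F)$. Any two partitions in $B$ can be compared via the polymatroid description of an $M$-convex set. By Murota's theory, $B$ is the set of integer points of a base polytope $\{\alpha:\alpha(A)\leq r(A)\ \forall A,\ \alpha([n])=d\}$, where the submodular rank function is $r(A)=\max_{\alpha\in B}\alpha(A)$. Symmetry of $B$ makes $r(A)$ depend only on $|A|$. For $A=\{1,\ldots,p\}$, the maximum of $\alpha(A)$ over $B$ is attained at a sorted exponent, so it is attained by a partition. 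Let $\kappa$ be a maximizer for $p=1$. We then show that one partition maximizes all the partial sums simultaneously. This is the step I expect to be the main obstacle. The plan is a greedy argument: the greedy algorithm on a polymatroid, run in the order $1,2,\ldots,n$, outputs a single base $\kappa$ with $\kappa(\{1..p\})=r(\{1..p\})$ for every $p$. That base is a partition because $r(\{1..p\})$ depends only on $p$ and is concave in $p$ by submodularity. Hence $r_\kappa(A)=\sum_{p\leq|A|}\kappa_p$, which is \eqref{eq:symmetric-rank-function}. The base polytope with this rank function is exactly $\mathcal P_\kappa$ (Rado's theorem), and this gives \eqref{eq:symmetric-support} and dominance-maximality. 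Uniqueness is immediate, since a dominance-maximum is unique.

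For linear representability over an infinite field $k$, we take $\kappa_1$-dimensional space $k^{\kappa_1}$ with its flag and choose $n$ generic subspaces. More concretely, we take the polymatroid $A\mapsto\rank$ of a generic $d\times(\text{blocks})$ configuration: assign to element $i$ the subspace spanned by generic vectors arranged so that the rank of a union of $|A|$ blocks equals $\sum_{p\leq|A|}\kappa_p$. A clean choice is the following. Let $V=k^d$ be split as $\bigoplus_p V_p$ with $\dim V_p=\kappa_p$. Element $i$ gets a generic subspace $U_i$ of dimension $\kappa_1$ inside a structure realizing the truncation. An alternative, which I would try first, is to note that $r_\kappa$ is a sum of uniform-matroid-type ranks $\min(|A|,p)$ weighted by $\kappa_p-\kappa_{p+1}$. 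Each $A\mapsto\min(|A|,p)$ is represented by $n$ generic vectors in $k^p$, which is possible over an infinite field. Direct sums of representations then realize the nonnegative combination.

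For products, Newton polytopes add: $\Newt(\prod F_a)=\sum\mathcal P_{\kappa(F_a)}=\mathcal P_{\sum\kappa(F_a)}$, by the standard Minkowski sum of permutahedra, which follows from the rank functions adding. No cancellation occurs, since all coefficients are nonnegative. The product $\prod F_a$ is again symmetric with realizable normalization by Proposition~\ref{prop:permanence}(iv), so the first part applies to it and gives \eqref{eq:product-kappa} and \eqref{eq:product-support-general}. Since supports multiply as Minkowski sums of supports with nonnegative coefficients, $\Supp(\prod F_a)=\sum_a\Supp(F_a)$, which is \eqref{eq:mixed-IDP}; the case $F_a=F$ gives the integer-decomposition property. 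Finally, a vertex $\pi\sum\kappa(F_a)$ of a Minkowski sum decomposes uniquely as a sum of the corresponding vertices $\pi\kappa(F_a)$. Hence its coefficient is $\prod_a[x^{\pi\kappa(F_a)}]F_a=\prod_a v_a$, by symmetry.
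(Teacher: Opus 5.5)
Your proposal is correct and follows the paper's proof essentially step by step: the greedy base you build is exactly the increment vector $\kappa_p=r([p])-r([p-1])$ that the paper defines directly from the concavity of $r$ in $|A|$, and your use of Rado's theorem, the uniform-matroid decomposition $r_\kappa(A)=\sum_q(\kappa_q-\kappa_{q+1})\min\{|A|,q\}$ realized by direct sums of generic configurations, the product argument via Proposition~\ref{prop:permanence}(iv), and the unique decomposition of a Minkowski-sum vertex all coincide with the paper's. Discard your first representability sketch with generic $\kappa_1$-dimensional subspaces, which does not give $r_\kappa$ in general (for $\kappa=(3,1,1)$, two generic $3$-planes in $k^5$ span dimension $5\neq\kappa_1+\kappa_2$); the uniform-matroid route you fall back on is the correct one.
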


\begin{proof}
Let $B=\Supp(F)$.  By Theorem~\ref{thm:master-HR}, $B$ is $M$-convex, hence the base set of an integral polymatroid.  Its rank function is
\[
 r(A):=\max_{\alpha\in B}\sum_{i\in A}\alpha_i.
\]
Symmetry of $F$ implies $r(A)=R(\abs A)$ for a function $R:\{0,\ldots,n\}\to\N$.  Submodularity applied to two subsets of size $p$ whose intersection has size $p-1$ shows
\[
 R(p+1)-R(p)\leq R(p)-R(p-1).
\]
Define $\kappa_p=R(p)-R(p-1)$.  Monotonicity gives $\kappa_p\geq0$, the displayed inequality gives $\kappa_1\geq\cdots\geq\kappa_n$, and $R(n)=d$ gives $\sum_p\kappa_p=d$.

The base polytope of $r$ is
\[
 \left\{z\in\R_{\geq0}^n:
 z([n])=d,\ z(A)\leq R(\abs A)=\sum_{p=1}^{\abs A}\kappa_p
 \text{ for all }A\subseteq[n]\right\}.
\]
By Rado's theorem~\cite{Rado52}, this is precisely $\mathcal P_\kappa$.  Since an $M$-convex set consists of all lattice points of its base polytope,~\eqref{eq:symmetric-support} follows.  The vector $\kappa$ is a vertex and therefore lies in the support.  The same inequalities show that every partition $\nu\in B$ satisfies $\nu\unlhd\kappa$, proving uniqueness of the dominance maximum.

Set $\kappa_{n+1}=0$.  The rank function admits the uniform-matroid decomposition
\begin{equation}\label{eq:uniform-rank-decomposition}
 r_\kappa(A)
 =\sum_{q=1}^n(\kappa_q-\kappa_{q+1})\min\{|A|,q\}.
\end{equation}
Let $k$ be an infinite field.  For every $q$, choose $n$ vectors
$v_{q,1},\ldots,v_{q,n}$ in a $q$-dimensional $k$-vector space $W_q$ so that
every at most $q$ of them are linearly independent.  Take
$\kappa_q-\kappa_{q+1}$ mutually independent copies of this configuration and,
for each ground-set element $i$, let $V_i$ be the direct sum of the lines
spanned by all vectors carrying the label $i$.  Then
\[
 \dim_k\sum_{i\in A}V_i
 =\sum_{q=1}^n(\kappa_q-\kappa_{q+1})\min\{|A|,q\}
 =r_\kappa(A).
\]
This proves linear representability over every infinite field.

For products, coefficients are nonnegative, so the Newton polytope is the Minkowski sum of the individual Newton polytopes.  Permutahedra satisfy
\[
 \mathcal P_{\kappa^{(1)}}+\cdots+\mathcal P_{\kappa^{(m)}}
 =\mathcal P_{\kappa^{(1)}+\cdots+\kappa^{(m)}}.
\]
By Proposition~\ref{prop:permanence}(iv), the factorial normalization of the product is realizable volume; hence its support is $M$-convex and contains every lattice point of its Newton polytope.  This proves~\eqref{eq:product-kappa}--\eqref{eq:product-support-general}.  Since the support of a product with nonnegative coefficients is the set-theoretic Minkowski sum of the supports of the factors, comparison with~\eqref{eq:product-support-general} gives~\eqref{eq:mixed-IDP}.  Applying this identity to $m$ identical copies of $F$ gives
\[
 \underbrace{\bigl(\mathcal P_{\kappa(F)}\cap\mathbb Z^n\bigr)+\cdots+
 \bigl(\mathcal P_{\kappa(F)}\cap\mathbb Z^n\bigr)}_{m\text{ summands}}
 =\bigl(m\mathcal P_{\kappa(F)}\bigr)\cap\mathbb Z^n,
\]
which is precisely the integer-decomposition property of $\mathcal P_{\kappa(F)}$.

Choose a strictly decreasing linear functional exposing the vertex $\sum_a\kappa(F_a)$.  It exposes $\kappa(F_a)$ uniquely in each summand.  Thus the only support decomposition of the product vertex is the tuple of individual vertices, and its coefficient is the product of their coefficients.  Symmetry gives the same conclusion at every permuted vertex.
\end{proof}

\begin{remark}[Relation to symmetric Lorentzian support theory]\label{rem:chin-qin-support}
Chin--Qin prove, in the stable range $n\geq d$, that the partition-indexed support of a degree-$d$ symmetric Lorentzian polynomial is a dominance interval and that its Newton polytope is the corresponding permutahedron~\cite[Theorems~3.7 and~3.9]{ChinQin25}.  Theorem~\ref{thm:symmetric-permutahedron} gives a finite-variable formulation directly from the $M$-convex rank function and additionally records the uniform-matroid representation, the mixed integer-decomposition identity, and the behavior of vertex coefficients under products.
\end{remark}

\begin{theorem}\label{thm:dominance-monotonicity}
Assume that $F$ is symmetric.  For partitions $\mu,\lambda$ of $d$, padded by zeros to length $n$, if
\[
 \mu\unlhd\lambda,
\]
then
\begin{equation}\label{eq:dominance-monotonicity}
 c_\mu\geq c_\lambda.
\end{equation}
Consequently, if
\[
 d=qn+r,
 \qquad 0\leq r<n,
 \qquad
 \operatorname{bal}_n(d):=(\underbrace{q+1,\ldots,q+1}_{r},
 \underbrace{q,\ldots,q}_{n-r}),
\]
then $c_{\operatorname{bal}_n(d)}$ is maximal among all partition-indexed coefficients of $F$.  If $F\neq0$, the dominance-maximal support partition $\kappa(F)$ from Theorem~\ref{thm:symmetric-permutahedron} has minimal positive coefficient among the partition-indexed support.
\end{theorem}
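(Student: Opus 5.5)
The plan is to reduce the dominance inequality to a single ``Robin Hood'' transfer and then use the root-line log-concavity of Theorem~\ref{thm:master-HR}\textup{(iii)} together with the symmetry of $F$. First I reduce to elementary moves. By the classical lemma of Muirhead and Hardy--Littlewood--P\'olya, if $\mu\unlhd\lambda$ are partitions of $d$ with at most $n$ parts, then $\mu$ is obtained from $\lambda$ by finitely many transfers $\alpha\mapsto\alpha-e_i+e_j$ with $i<j$ and $\alpha_i\geq\alpha_j+2$, every intermediate vector being a partition. Hence it suffices to prove
\[
 c_{\alpha-e_i+e_j}\geq c_\alpha
 \qquad\text{whenever }|\alpha|=d,\ \alpha_i\geq\alpha_j+2 .
\]
(For $d\leq1$ the dominance order on partitions of $d$ is trivial, so assume $d\geq2$.)

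Next I fix such $\alpha$, $i$, $j$, set $s=\alpha_i+\alpha_j$, and consider the root-line sequence
\[
 b_k:=c_{\alpha+(k-\alpha_i)(e_i-e_j)},\qquad 0\leq k\leq s,
\]
so $b_k$ is the coefficient with $i$-th coordinate $k$ and $j$-th coordinate $s-k$, all other coordinates fixed. Since $F$ is symmetric, the transposition $(i\ j)$ gives $b_k=b_{s-k}$, so the sequence is palindromic. By Theorem~\ref{thm:master-HR}\textup{(iii)}, the nonzero $b_k$ form an integer interval on which the sequence is log-concave, hence unimodal. A palindromic unimodal sequence without internal zeros has its support interval symmetric about $s/2$ and is nonincreasing as $|k-s/2|$ grows. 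The hypothesis $\alpha_i\geq\alpha_j+2$ gives $\alpha_i-1\geq s-\alpha_i+1$, so $s/2\leq\alpha_i-1<\alpha_i$. Therefore $b_{\alpha_i-1}\geq b_{\alpha_i}$, which is exactly $c_{\alpha-e_i+e_j}\geq c_\alpha$; the case $b_{\alpha_i}=0$ is trivial. Chaining these transfers proves \eqref{eq:dominance-monotonicity}.

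For the consequences, $\operatorname{bal}_n(d)$ is the unique dominance-minimum among partitions of $d$ with at most $n$ parts, so its coefficient is maximal. For the last claim, Theorem~\ref{thm:symmetric-permutahedron} shows that every support partition $\nu$ satisfies $\nu\unlhd\kappa(F)$, so $c_\nu\geq c_{\kappa(F)}>0$. The only delicate point is the palindromic-unimodal step. I must use that the root-line log-concavity concerns the unnormalized coefficients $c_\alpha$, as stated in Theorem~\ref{thm:master-HR}, and that support convexity excludes internal zeros. The symmetry under $(i\ j)$ is then what places the peak at the center $s/2$.
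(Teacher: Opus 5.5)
Your proof is correct and follows essentially the paper's route. Both arguments reduce via the Muirhead/Hardy--Littlewood--P\'olya transfer lemma to a single balancing move $\alpha\mapsto\alpha-e_i+e_j$ with $\alpha_i\geq\alpha_j+2$, and then settle it on that root line using symmetry under $(i\ j)$, absence of internal zeros, and the root-line log-concavity of Theorem~\ref{thm:master-HR}\textup{(iii)}. Your palindromic-unimodal step is a repackaging of the paper's ratio argument, which shows that $a_1<a_0$ would force $a_L<a_0=a_L$.
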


\begin{proof}
It suffices to prove monotonicity under one balancing transfer.  Let
$\alpha\in\N^n$ and suppose that $\alpha_i\geq\alpha_j+2$.  Put
$L=\alpha_i-\alpha_j$ and
\[
 a_t:=c_{\alpha-t e_i+t e_j}
 \qquad(0\leq t\leq L).
\]
If $a_0=0$, the desired inequality $a_1\geq a_0$ is trivial.  Assume $a_0>0$.  Symmetry gives $a_L=a_0>0$, since the exponent at $t=L$ is obtained from $\alpha$ by interchanging the $i$th and $j$th coordinates.  The $M$-convexity of the support fills the entire root segment between these two exponents, so every $a_t$ is positive.  The root-direction inequalities in Theorem~\ref{thm:master-HR}~\textup{(iii)} imply
\[
 a_t^2\geq a_{t-1}a_{t+1}
 \qquad(1\leq t\leq L-1).
\]
Thus the ratios $a_t/a_{t-1}$ are nonincreasing.  If $a_1<a_0$, all these ratios would be strictly smaller than one, forcing $a_L<a_0$, a contradiction.  Hence
\[
 c_{\alpha-e_i+e_j}=a_1\geq a_0=c_\alpha.
\]
The transfer characterization of majorization expresses $\mu\unlhd\lambda$ as a finite sequence of such unit transfers, up to permutations of coordinates; see Marshall--Olkin--Arnold~\cite[Chapter~2, Section~B]{MarshallOlkinArnold11}.  Iteration and symmetry prove~\eqref{eq:dominance-monotonicity}.

The balanced partition is dominance-minimal among partitions of $d$ with at most $n$ parts, which proves the maximality assertion.  Every partition in the support is dominated by $\kappa(F)$, so the final assertion follows from~\eqref{eq:dominance-monotonicity}.
\end{proof}

\begin{remark}
The coefficient inequality~\eqref{eq:dominance-monotonicity} is a general theorem for symmetric Lorentzian polynomials; see also Chin--Qin~\cite[Theorem~3.16]{ChinQin25}.  We include the short root-line proof because it makes the direction of monotonicity transparent and applies immediately to the ordinary and shifted tableau arrays identified in this paper.
\end{remark}

The following states linear and algebraic support polymatroids.

\begin{corollary}\label{cor:algebraic-polymatroid}
The support polymatroid of every nonzero type~A or type~C cycle transform, and hence of every nonzero skew Schur, skew Schur $P$, or skew Schur $Q$ polynomial, is linearly representable over every infinite field.  In the present symmetric setting this is already a formal consequence of $M$-convexity and symmetry, through the uniform-matroid decomposition in Theorem~\ref{thm:symmetric-permutahedron}; it is not claimed as an additional volume-theoretic novelty.  The support polymatroids of all nonzero derivatives and all nonzero coordinate truncations are algebraic over every field of characteristic zero.
\end{corollary}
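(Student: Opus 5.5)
The corollary has three assertions. For each I will check that the relevant polynomial satisfies the standing hypothesis of the section, namely that $F$ is nonzero and $f=\cN(F)$ is a realizable volume polynomial, and then invoke the support results already proved.

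\emph{First assertion.} Let $F$ be a nonzero type~A or type~C cycle transform, $\Theta^A_{X,n}$ or $\Theta^C_{X,n}$. Theorems~\ref{thm:typeA-transform} and~\ref{thm:typeC-transform} show that $\cN(F)$ is a realizable volume polynomial. These transforms are nonnegative combinations of Schur or Schur $P$-polynomials, so $F$ is symmetric. Theorem~\ref{thm:symmetric-permutahedron} then applies. It gives $\Supp(F)=\mathcal P_{\kappa(F)}\cap\Z^n$ with rank function $r_\kappa(A)=\sum_{p\le|A|}\kappa_p$. The uniform-matroid decomposition~\eqref{eq:uniform-rank-decomposition}, realized by generic vectors in general position, gives a linear representation over any infinite field.

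The skew specializations are covered by the Richardson identities in Theorems~\ref{thm:main-A} and~\ref{thm:main-PQ}. For $Q_{\lambda/\mu}$, note that it is a positive multiple of $P_{\lambda/\mu}$ by Lemma~\ref{lem:skew-PQ-scaling}, so it has the same support. The remark that only $M$-convexity and symmetry are used is simply a reading of that proof. Volume realizability enters only through the $M$-convexity supplied by Theorem~\ref{thm:master-HR}.

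\emph{Derivatives and truncations.} By Proposition~\ref{prop:permanence}(i) and~(iii), $\partial^\beta f$, $f_{\ge\gamma}$ and $f_{\le\gamma}$ are again realizable volume polynomials. Their supports are, respectively, $\{\alpha-\beta:\alpha\in\Supp F,\ \alpha\ge\beta\}$ and the coordinatewise truncations of $\Supp F$. Here we use that all coefficients are nonnegative and that differentiating divided powers introduces no cancellation.

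\emph{Algebraicity.} For a nonzero realizable volume polynomial $g$, the support-realization theorem~\cite[Proposition~5.4]{GHMSW25} shows that $\Supp(g)$ is the basis set of an algebraic polymatroid over $\C$. The characteristic-independence result~\cite[Proposition~2.10]{GHMSW25} transfers this to every field of characteristic zero. This is the same citation pattern as in Theorem~\ref{thm:operator-descendants}(iii).

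The main point needing care is that the GHMSW support theorem is stated for polymatroids, whereas Theorem~\ref{thm:operator-descendants}(iii) stated only the matroid version. If needed, I would reduce to that case via the full polarization~\eqref{eq:full-polarization}. The polarized support~\eqref{eq:polarized-support} is an algebraic matroid. Collapsing each block $E_i$ to a single element turns it into the polymatroid with rank $A\mapsto\mathrm{rk}(\bigcup_{i\in A}E_i)$, and this polymatroid is algebraic: one takes $V_i$ to be the field generated by the elements representing $E_i$.
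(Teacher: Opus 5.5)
Your proposal is correct and follows the paper's proof. For the cycle transforms, and hence the skew specializations, symmetry together with Theorem~\ref{thm:symmetric-permutahedron} gives linear representability; for the derivatives and truncations, Proposition~\ref{prop:permanence} combined with \cite[Propositions~5.4 and~2.10]{GHMSW25} gives algebraicity. Your polarization fallback is not needed, since the paper applies \cite[Proposition~5.4]{GHMSW25} directly to integral polymatroids, but it would also work.
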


\begin{proof}
The original cycle transforms are symmetric, so their linear representability is Theorem~\ref{thm:symmetric-permutahedron}.  For the descendants, Grund--Huh--Micha{\l}ek--S\"uss--Wang prove that an integral polymatroid is algebraic over a field if and only if its base set is the support of a realizable volume polynomial over that field~\cite[Proposition~5.4]{GHMSW25}; see also~\cite[Proposition~5.5]{HuhVolume26}.  Factorial normalization does not alter support.  Our descendants are realizable over $\C$ by Proposition~\ref{prop:permanence}, and realizability depends only on the characteristic of the ground field~\cite[Proposition~2.10]{GHMSW25}.  The characterization gives the result.
\end{proof}

\subsection{Weighted aggregations and factorially tilted content laws}

We have the following result on weighted aggregations and factorially tilted content laws.

\begin{theorem}\label{thm:block-covariance}
For every matrix $L$ with nonnegative real entries, the polynomial $f(Ly)$ is
Lorentzian; if the entries of $L$ are rational, it is a realizable volume
polynomial.  Thus every weighted aggregation of the coefficient array into
finitely many blocks remains Lorentzian.

Let $I\sqcup J=[n]$, choose nonnegative real weights $a_i$ for $i\in I$ and $b_j$ for $j\in J$, and set
\[
 x_i=a_i s\ (i\in I),
 \qquad
 x_j=b_j t\ (j\in J).
\]
Write
\begin{equation}\label{eq:block-g}
 f(x(s,t))=\sum_{r=0}^d C_r s^r t^{d-r}.
\end{equation}
If this polynomial is nonzero, then $C_0,\ldots,C_d$ has no internal zeros and
\begin{equation}\label{eq:block-ULC}
 \left(\frac{C_r}{\binom dr}\right)^2
 \geq
 \frac{C_{r-1}}{\binom d{r-1}}
 \frac{C_{r+1}}{\binom d{r+1}}
 \quad(0<r<d).
\end{equation}

For $z\in\R_{>0}^n$, define
\begin{equation}\label{eq:factorial-probability}
 \mathbb P_z(\alpha)
 :=\frac{c_\alpha z^\alpha/\alpha!}{f(z)}.
\end{equation}
Then, with $Z=\diag(z_1,\ldots,z_n)$,
\begin{equation}\label{eq:cov-identity-general}
 \Cov_z(\alpha)
 =\diag(\mathbb E_z\alpha)+Z\nabla^2\log f(z)Z,
\end{equation}
and hence
\begin{equation}\label{eq:cov-bound-general}
 \Cov_z(\alpha)\psd\diag(\mathbb E_z\alpha).
\end{equation}
Here $A\preceq_{\mathrm{psd}}B$ means that $B-A$ is positive semidefinite.
Equivalently,
\[
 \Var_z(u\cdot\alpha)
 \leq\sum_i u_i^2\mathbb E_z[\alpha_i]
 \qquad(u\in\R^n).
\]
\end{theorem}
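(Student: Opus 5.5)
The statement has three parts, and I will prove them in order: Lorentzianity of $f(Ly)$, two-block ultra-log-concavity, and the covariance identity.

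\textbf{Aggregation.} For the first part I use the permanence results. When $L$ has nonnegative rational entries, Proposition~\ref{prop:permanence}(ii) says directly that $f(Ly)$ is a realizable volume polynomial, and hence Lorentzian by~\cite[Theorem~4.6]{BH20}. For real nonnegative $L$ I would instead quote closure of Lorentzian polynomials under nonnegative linear substitution~\cite[Theorem~2.10]{BH20}. Alternatively one can argue by continuity, since the Lorentzian cone is closed and rational matrices are dense.

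\textbf{Two-block specialization.} Here I take the substitution $x_i=a_is$, $x_j=b_jt$, which is a special $L$. So $g(s,t)=\sum_r C_rs^rt^{d-r}$ is a bivariate Lorentzian polynomial. For a nonzero bivariate homogeneous polynomial, Lorentzianity is equivalent to two facts: the coefficient sequence has no internal zeros, which is $M$-convexity of the support in $\N^2$; and $(C_r/\binom dr)$ is log-concave. The second fact I would derive from the Hessian condition. Consider the derivative $\partial_s^{r-1}\partial_t^{d-r-1}g$. It is a quadratic with coefficients proportional to $C_{r-1},C_r,C_{r+1}$, with the explicit factorials $(r-1)!(d-r-1)!$ times binomial normalizations. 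Having at most one positive eigenvalue means its $2\times2$ Hessian has nonpositive determinant. Writing this out gives exactly~\eqref{eq:block-ULC}; the only work is bookkeeping the factorials, and they produce the binomials.

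\textbf{Covariance.} Write $f(z)=\sum_\alpha w_\alpha z^\alpha$ with $w_\alpha=c_\alpha/\alpha!\ge0$. The plan is a direct log-partition-function computation. From $z_i\partial_i f=\sum_\alpha\alpha_iw_\alpha z^\alpha$ I get $\mathbb E_z\alpha_i=z_i\partial_i\log f$. Applying $z_j\partial_j$ once more gives
\[
 \Cov_z(\alpha_i,\alpha_j)=z_j\partial_j\bigl(z_i\partial_i\log f\bigr)=\delta_{ij}z_i\partial_i\log f+z_iz_j\partial_i\partial_j\log f.
\]
This is~\eqref{eq:cov-identity-general}. For~\eqref{eq:cov-bound-general} it remains to show that $\nabla^2\log f(z)\psd0$ on the positive orthant. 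This is log-concavity of $f$ on $\R^n_{>0}$, which holds because Lorentzian polynomials are log-concave there~\cite[Theorem~2.30]{BH20}. Conjugating by $Z$ preserves negative semidefiniteness, and the variance form of the inequality follows by evaluating the quadratic form at $u$.

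\textbf{Main obstacle.} I expect the only delicate step to be matching the Hessian determinant in the two-variable case to the precise binomial normalization of~\eqref{eq:block-ULC}, together with the no-internal-zeros claim. For the latter I would cite $M$-convexity of supports of Lorentzian polynomials~\cite[Theorem~2.25]{BH20}, applied to $g$.
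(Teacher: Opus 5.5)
Your proposal is correct and follows the paper's proof essentially step for step: Lorentzianity of $f(Ly)$ via closure under nonnegative linear substitution (with Proposition~\ref{prop:permanence}(ii) for rational $L$), the bivariate Lorentzian characterization for the two-block counts, and the log-partition-function computation combined with log-concavity of $f$ on the positive orthant. The only cosmetic difference is that you check the binomial normalization in~\eqref{eq:block-ULC} directly from the $2\times2$ Hessian of $\partial_s^{r-1}\partial_t^{d-r-1}g$, whereas the paper simply cites the bivariate characterization in Br\"and\'en--Huh~\cite[Example~2.26]{BH20}.
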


\begin{proof}
Nonnegative real changes of variables preserve Lorentzianity
\cite[Theorem~2.10]{BH20}, while the rational realizable-volume assertion is
Proposition~\ref{prop:permanence}(ii).  The substitution in~\eqref{eq:block-g}
is such a nonnegative linear change of variables, so the resulting bivariate
polynomial is Lorentzian.  The bivariate characterization of Lorentzian forms is precisely~\eqref{eq:block-ULC}, together with absence of internal zeros; see Br\"and\'en--Huh~\cite[Example~2.26]{BH20}.

Logarithmic differentiation of the partition function $f(z)$ gives
\[
 \mathbb E_z[\alpha_i]=z_i\partial_i\log f(z).
\]
A second logarithmic derivative gives
\[
 \Cov_z(\alpha_i,\alpha_j)=z_iz_j\partial_{ij}\log f(z)
 \quad(i\neq j)
\]
and
\[
 \Var_z(\alpha_i)
 =z_i^2\partial_{ii}\log f(z)+\mathbb E_z[\alpha_i].
\]
These identities assemble into~\eqref{eq:cov-identity-general}.  Since a nonzero Lorentzian polynomial is log-concave on the positive orthant, $\nabla^2\log f(z)$ is negative semidefinite.  This proves~\eqref{eq:cov-bound-general}; the variance inequality follows by taking quadratic forms.
\end{proof}

\begin{remark}
The law~\eqref{eq:factorial-probability} is the factorially tilted content law, not the uniform distribution on tableaux.  The Loewner-order estimate~\eqref{eq:cov-bound-general} does not by itself imply negative association or real stability.
\end{remark}

\begin{corollary}\label{cor:ordinary-tableau-package}
Let $K_{\lambda/\mu,\alpha}=[x^\alpha]s_{\lambda/\mu}$ and $d=|\lambda|-|\mu|$, and set $K_{\lambda/\mu,\gamma}=0$ when $\gamma\notin\N^n$ or $|\gamma|\neq d$.  Then the following hold.
\begin{enumerate}[label=\textup{(\roman*)}]
\item For $|\beta|\leq d$, $m=d-|\beta|$, $0\leq e\leq m$, and $i,j,k\in[n]$,
\[
 \binom me
 K_{\lambda/\mu,\beta+(m-e)e_i+e\,e_j}
 K_{\lambda/\mu,\beta+e\,e_i+(m-e)e_k}
 \geq
 K_{\lambda/\mu,\beta+m e_i}
 K_{\lambda/\mu,\beta+e\,e_j+(m-e)e_k}.
\]
\item If $|\beta|=d-2$, the matrix
$\bigl(K_{\lambda/\mu,\beta+e_i+e_j}\bigr)_{i,j}$ has at most one positive eigenvalue, and exactly one when nonzero; its principal minors have the signs in~\eqref{eq:principal-sign}.
\item Whenever $\alpha\in\N^n$, $|\alpha|=d$, $i\neq j$, and $\alpha_i,\alpha_j\geq1$,
\[
 K_{\lambda/\mu,\alpha}^2
 \geq K_{\lambda/\mu,\alpha+e_i-e_j}
          K_{\lambda/\mu,\alpha-e_i+e_j}.
\]
The nonzero coefficients in each coordinate root direction form a log-concave unimodal interval.
\end{enumerate}
\end{corollary}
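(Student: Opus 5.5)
This corollary is a direct specialization of the general coefficient package, so the plan is to verify its hypotheses for $F=s_{\lambda/\mu}(x_1,\ldots,x_n)$ and then read off the three parts.

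First, set $F(x)=\sum_{|\alpha|=d}K_{\lambda/\mu,\alpha}x^\alpha$ with $d=|\lambda|-|\mu|$. By \eqref{eq:ordinary-skew-kostka}, its coefficients are the skew Kostka numbers $K_{\lambda/\mu,\alpha}$, and they are nonnegative. Theorem~\ref{thm:main-A} says that $f=\cN(F)$ is a realizable volume polynomial. We may therefore take $F$ as the polynomial \eqref{eq:generic-F} of Section~\ref{sec:applications}, with $c_\alpha=K_{\lambda/\mu,\alpha}$.

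Next comes a degenerate case. If $s_{\lambda/\mu}(x_1,\ldots,x_n)=0$, for instance when some column of $\lambda/\mu$ is longer than $n$, every $K$ vanishes. Then (i) and (iii) read $0\geq0$, and in (ii) the zero matrix has no positive eigenvalue and its principal minors vanish. Hence we may assume $F\neq0$, which is the standing hypothesis of Section~\ref{sec:applications}.

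With that settled, the three parts follow from earlier theorems.
\begin{enumerate}[label=\textup{(\roman*)}]
\item This is \eqref{eq:master-coeff-RKT} of Theorem~\ref{thm:master-RKT} with $c_\gamma=K_{\lambda/\mu,\gamma}$. The convention that coefficients indexed outside $\N^n$ vanish matches the stated convention for $K$. Indices of the wrong total degree never occur, because each index in the inequality has total degree $|\beta|+m=d$.
\item If $d\geq2$, this is Theorem~\ref{thm:master-HR}(i)--(ii). Since $|\beta|=d-2\geq0$ is required, the case $d<2$ is vacuous.
\item The inequality is Theorem~\ref{thm:master-HR}(iii). The interval and unimodality statement along each line $\alpha+\Z(e_i-e_j)$ is the last sentence of that part, which uses $M$-convexity of the support.
\end{enumerate}

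There is no real obstacle. The only points needing care are the bookkeeping of out-of-range indices and the zero and low-degree cases, both handled above. All the substantive input is already in Theorem~\ref{thm:main-A}, which gives realizability, and in Theorems~\ref{thm:master-RKT} and~\ref{thm:master-HR}.
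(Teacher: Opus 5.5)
Your proposal is correct and follows essentially the same route as the paper: dispose of the zero specialization via the stated convention, then apply Theorems~\ref{thm:master-RKT} and~\ref{thm:master-HR} to $F=s_{\lambda/\mu}$, with realizability supplied by Theorem~\ref{thm:main-A}. Your extra bookkeeping (total degree of indices in (i), the vacuous case $d<2$ in (ii)) is accurate and simply makes explicit what the paper leaves implicit.
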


\begin{proof}
If $s_{\lambda/\mu}(x_1,\ldots,x_n)=0$, all assertions are immediate from the stated zero convention.  Otherwise apply Theorems~\ref{thm:master-RKT} and~\ref{thm:master-HR} to $F=s_{\lambda/\mu}$, using Theorem~\ref{thm:main-A}.
\end{proof}

\section{Concrete type A support and extremal coefficients}\label{sec:concrete-A}

Let $\theta=\lambda/\mu$ be an ordinary skew shape.  For each column $j$, put
\begin{equation}\label{eq:ordinary-column-heights}
 c_j(\theta):=\lambda'_j-\mu'_j,
 \qquad
 \kappa_p(\theta):=\#\{j:c_j(\theta)\geq p\}.
\end{equation}
Thus $\kappa(\theta)$ is the partition conjugate to the weakly decreasing
rearrangement of the column lengths.  For the empty shape we use
$\max\varnothing=0$ and, after fixing $n$, set
$\kappa(\varnothing)=(0,\ldots,0)\in\mathbb Z^n$; thus the statement below
includes $s_\varnothing=1$.

\begin{theorem}[Exact ordinary skew-Schur support]\label{thm:ordinary-skew-support}
Let $n\geq1$.  If $\max_j c_j(\theta)>n$, then
$s_\theta(x_1,\ldots,x_n)=0$.  Otherwise, pad $\kappa(\theta)$ by zeros to
length $n$.  Then
\begin{equation}\label{eq:ordinary-skew-support}
 \Supp(s_\theta)=\mathcal P_{\kappa(\theta)}\cap\Z^n,
 \qquad
 \Newt(s_\theta)=\mathcal P_{\kappa(\theta)}.
\end{equation}
For $A\subseteq[n]$, the support-polymatroid rank is
\begin{equation}\label{eq:ordinary-skew-rank}
 r_\theta(A)
 =\sum_j\min\{|A|,c_j(\theta)\}
 =\sum_{p=1}^{|A|}\kappa_p(\theta).
\end{equation}
Every vertex monomial has coefficient one, and the support polymatroid is
linearly representable over every infinite field.
\end{theorem}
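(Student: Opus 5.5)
The plan is to feed Theorem~\ref{thm:symmetric-permutahedron} with $F=s_\theta$, which is symmetric and, by Theorem~\ref{thm:main-A}, has realizable factorial normalization. That theorem then supplies a partition $\kappa(F)$ with $\Supp(s_\theta)=\mathcal P_{\kappa(F)}\cap\Z^n$ and the rank function $r_{\kappa(F)}$. It also gives linear representability over every infinite field. The real work is to identify $\kappa(F)$ with the column partition $\kappa(\theta)$ and to compute the extremal coefficients.

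\textbf{Vanishing.} If some column of $\theta$ has length greater than $n$, that column must be filled strictly increasingly with entries from $[n]$. No such filling exists, so there are no tableaux and $s_\theta=0$.

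\textbf{Upper bound on the rank.} Assume every column has length at most $n$. For any $A\subseteq[n]$ and any tableau $T$, the entries of $T$ lying in $A$ are distinct within each column, so column $j$ contributes at most $\min\{|A|,c_j(\theta)\}$ such entries. Hence
\[
 \alpha(A)\leq\sum_j\min\{|A|,c_j(\theta)\}\qquad\text{for every }\alpha\in\Supp(s_\theta).
\]
Counting cells of the diagram row by row gives $\sum_j\min\{p,c_j\}=\sum_{q\leq p}\kappa_q(\theta)$, which is the second equality in~\eqref{eq:ordinary-skew-rank}.

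\textbf{Attaining the bound.} It suffices to show that $\kappa(\theta)$ itself is the content of a unique tableau. Then it lies in the support, so $r(A)\geq\sum_{q\leq|A|}\kappa_q$ for every $A$, and symmetry together with Rado's description of the base polytope (as in Theorem~\ref{thm:symmetric-permutahedron}) forces $\kappa(F)=\kappa(\theta)$.

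The candidate tableau $T_0$ fills each column $j$ with $1,2,\ldots,c_j$ from top to bottom. Its content is $\#\{j:c_j\geq p\}=\kappa_p(\theta)$ in each value $p$.

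\textbf{Main obstacle.} I must check that $T_0$ is semistandard, meaning rows weakly increase. Take adjacent cells $(i,j)$ and $(i,j+1)$ in the same row. Their entries are $i-\mu'_j$ and $i-\mu'_{j+1}$. Since $\mu'_{j+1}\leq\mu'_j$, we get $i-\mu'_j\leq i-\mu'_{j+1}$, which is the required weak increase.

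For uniqueness, suppose $T$ has content $\kappa(\theta)$. Apply the rank bound with $A=[p]$ for each $p$: equality must hold, which forces every column $j$ to contain exactly the values $1,\ldots,\min\{p,c_j\}$. Taking this for all $p$ forces column $j$ to contain exactly $\{1,\ldots,c_j\}$. Strict increase down the column then pins down the order, so $T=T_0$. Hence the vertex $\kappa(\theta)$ has coefficient $1$.

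\textbf{Conclusion.} Symmetry transfers coefficient one to every permuted vertex. Linear representability follows from Theorem~\ref{thm:symmetric-permutahedron}, or directly from the uniform-matroid decomposition of $\sum_j\min\{|A|,c_j\}$. The empty-shape case is immediate.
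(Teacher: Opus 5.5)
Your proposal is correct and follows essentially the same route as the paper's proof. Both arguments use the column-wise bound $\alpha(A)\le\sum_j\min\{|A|,c_j(\theta)\}$ and the canonical tableau $T_0(i,j)=i-\mu'_j$ attaining it, then feed the $M$-convexity from the type~A volume theorem into Theorem~\ref{thm:symmetric-permutahedron}, and prove coefficient one by forcing equality at every $A=[p]$.
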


\begin{proof}
A semistandard tableau with entries in $[n]$ is strictly increasing down each
column, so a column of length greater than $n$ is impossible.  Assume henceforth
that every $c_j(\theta)\leq n$.

Let $T$ be a semistandard tableau of shape $\theta$ and content $\alpha$.  If
$A\subseteq[n]$ has cardinality $p$, then a strictly increasing column of
length $c_j$ contains at most $\min\{p,c_j\}$ entries from $A$.  Therefore
\begin{equation}\label{eq:ordinary-subset-upper}
 \alpha(A)\leq\sum_j\min\{p,c_j(\theta)\}
 =\sum_{q=1}^{p}\kappa_q(\theta).
\end{equation}

There is a canonical tableau attaining all prefix bounds.  The boxes in column
$j$ occupy rows $\mu'_j+1,\ldots,\lambda'_j$; define
\begin{equation}\label{eq:ordinary-canonical-tableau}
 T_0(i,j):=i-\mu'_j.
\end{equation}
Each column is $1,2,\ldots,c_j$ from top to bottom.  If both $(i,j)$ and
$(i,j+1)$ are boxes, then $\mu'_j\geq\mu'_{j+1}$, and hence
$T_0(i,j)\leq T_0(i,j+1)$.  Thus $T_0$ is semistandard and has content
$\kappa(\theta)$.  In particular, equality in~\eqref{eq:ordinary-subset-upper}
is attained for $A=[p]$ for every $p$.  Since $s_\theta$ is symmetric, the
maximum depends only on $|A|$, and~\eqref{eq:ordinary-skew-rank} follows.

The type~A volume theorem makes $\Supp(s_\theta)$ an $M$-convex set.  Applying
Theorem~\ref{thm:symmetric-permutahedron} with the just-computed rank function
proves~\eqref{eq:ordinary-skew-support} and linear representability.

It remains to prove the vertex coefficient.  Suppose a tableau has content
$\kappa(\theta)$.  For every $p$, its total number of entries in $[p]$ attains
the upper bound in~\eqref{eq:ordinary-subset-upper}.  Hence every column of
length $c$ contains exactly $\min\{p,c\}$ entries in $[p]$ for every $p$.
Strict increase then forces that column to be $1,2,\ldots,c$.  Thus the tableau
is $T_0$, so $[x^{\kappa(\theta)}]s_\theta=1$.  Symmetry gives coefficient one
at every permuted vertex.
\end{proof}

\begin{remark}
The Newton-polytope and extremal-coefficient conclusions are classical in
substance.  The proof above is included because it exhibits the complete
polymatroid rank function directly and shows how it is recovered from the
Richardson-volume theorem.
\end{remark}

We obtain the following consequence.

\begin{corollary}\label{cor:ordinary-maximal-constituent}
Write
\[
 s_{\lambda/\mu}=\sum_{\nu}c_{\mu\nu}^{\lambda}s_\nu.
\]
Then $\kappa(\lambda/\mu)$ is the unique dominance-maximal partition $\nu$ with $c_{\mu\nu}^{\lambda}>0$, and
\[
 c_{\mu,\kappa(\lambda/\mu)}^{\lambda}=1.
\]
\end{corollary}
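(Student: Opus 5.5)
The plan is to transfer the support statement of Theorem~\ref{thm:ordinary-skew-support} from monomial coefficients to Schur coefficients, using unitriangularity of Schur functions against monomial functions in dominance order. Since the statement is stable, I would first choose $n\geq d=|\lambda|-|\mu|$, so that every partition of $d$ has at most $n$ parts. Then the expansion $s_{\lambda/\mu}=\sum_\nu c^\lambda_{\mu\nu}s_\nu$ specializes faithfully to $n$ variables, and $s_\nu(x_1,\ldots,x_n)\neq0$ for every $\nu\vdash d$. Moreover, every column height satisfies $c_j(\theta)\leq d\leq n$, so Theorem~\ref{thm:ordinary-skew-support} applies. If $d=0$, the claim is trivial: $\kappa=\varnothing$ and $c^\lambda_{\mu\varnothing}=1$. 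So assume $d\geq1$.

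Next I would write $s_\nu=m_\nu+\sum_{\rho\lhd\nu}K_{\nu\rho}m_\rho$ with $K_{\nu\rho}\geq0$. Here $K_{\nu\rho}>0$ exactly when $\rho\unlhd\nu$, and $K_{\nu\nu}=1$. Because all $c^\lambda_{\mu\nu}\geq0$ and all Kostka numbers are nonnegative, there is no cancellation. Hence the partition-indexed support of $s_{\lambda/\mu}$ is precisely the set of partitions $\rho$ with $\rho\unlhd\nu$ for some $\nu$ satisfying $c^\lambda_{\mu\nu}>0$.

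By Theorem~\ref{thm:ordinary-skew-support} together with Theorem~\ref{thm:symmetric-permutahedron}, $\kappa:=\kappa(\lambda/\mu)$ is the unique dominance-maximal partition in this support. Every constituent $\nu$ with $c^\lambda_{\mu\nu}>0$ lies in the support, since $K_{\nu\nu}=1$, so $\nu\unlhd\kappa$. On the other hand, $\kappa$ lies in the support, so $\kappa\unlhd\nu_0$ for some constituent $\nu_0$. Combining these gives $\nu_0\unlhd\kappa\unlhd\nu_0$, so $\kappa=\nu_0$ is itself a constituent, and it dominates every other constituent. This gives the uniqueness claim.

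For the multiplicity, I would take the coefficient of $x^\kappa$, that is, of $m_\kappa$. The only constituents that can contribute are those $\nu$ with $\kappa\unlhd\nu$, and by the previous step the only such constituent is $\nu=\kappa$. It contributes $c^\lambda_{\mu\kappa}K_{\kappa\kappa}=c^\lambda_{\mu\kappa}$. Theorem~\ref{thm:ordinary-skew-support} says this vertex coefficient equals one, so $c^\lambda_{\mu\kappa}=1$.

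The only delicate point is the variable count. If $n$ were small, some constituents $s_\nu$ with $\ell(\nu)>n$ would vanish and the Schur expansion would lose information. Choosing $n\geq d$ at the outset removes this problem, and the Littlewood--Richardson coefficients do not depend on $n$.
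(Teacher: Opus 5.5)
Your proof is correct and follows essentially the same route as the paper. You specialize to $n\geq d$, use the exact support and unit vertex coefficient from Theorem~\ref{thm:ordinary-skew-support} to show that every constituent is dominated by $\kappa(\lambda/\mu)$, and then read off $c^{\lambda}_{\mu,\kappa(\lambda/\mu)}$ as the coefficient of $x^{\kappa(\lambda/\mu)}$. The differences are cosmetic: the paper phrases the domination step through Newton-polytope containment and Rado's theorem rather than Kostka nonvanishing, and it obtains that $\kappa(\lambda/\mu)$ is a constituent directly from the coefficient being one rather than from your separate antisymmetry argument.
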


\begin{proof}
Specialize to $n\geq d=|\lambda|-|\mu|$, so no Schur polynomial indexed by a partition of $d$ vanishes.  The Kostka dominance criterion gives $\Newt(s_\nu)=\mathcal P_\nu$.  Since the Littlewood--Richardson expansion has nonnegative coefficients, Theorem~\ref{thm:ordinary-skew-support} implies
\[
 c_{\mu\nu}^{\lambda}>0\quad\Longrightarrow\quad
 \mathcal P_\nu\subseteq\mathcal P_{\kappa(\lambda/\mu)},
\]
which is equivalent by Rado's theorem~\cite{Rado52} to $\nu\unlhd\kappa(\lambda/\mu)$.  The coefficient of the dominant vertex monomial $x^{\kappa(\lambda/\mu)}$ in the skew Schur polynomial is one.  Among the Schur summands with $\nu\unlhd\kappa(\lambda/\mu)$, that monomial can occur only for $\nu=\kappa(\lambda/\mu)$.  Its coefficient is therefore the displayed Littlewood--Richardson coefficient.
\end{proof}

\begin{remark}
McNamara's dominance bounds and the occurrence of both extreme constituents already determine the Newton polytope, and the extreme constituents occur with coefficient one~\cite[Proposition~3.1 and its proof]{McNamara08}.  Monical--Tokcan--Yong supply the saturated Newton-polytope statement~\cite[Corollary~5.10]{MTY19}.  The preceding argument is retained as a direct Lorentzian reconstruction of the complete support rank function.
\end{remark}

\section{Concrete type C coefficient consequences}\label{sec:concrete-C}

For strict $\mu\subseteq\lambda$, put
\[
 d=\abs\lambda-\abs\mu,
 \qquad
 K^P_{\lambda/\mu,\alpha}:=[x^\alpha]P_{\lambda/\mu}(x).
\]

\begin{corollary}\label{cor:shifted-tableau-package}
Fix $n\geq1$ and interpret $K^P_{\lambda/\mu,\gamma}$ as zero when
$\gamma\notin\N^n$ or $|\gamma|\neq d$.  The following hold.
\begin{enumerate}[label=\textup{(\roman*)}]
\item If $\abs\beta\leq d$, $m=d-\abs\beta$, $0\leq e\leq m$, and $i,j,k\in[n]$, then
\begin{equation}\label{eq:shifted-RKT}
 \binom me
 K^P_{\lambda/\mu,\beta+(m-e)e_i+e\,e_j}
 K^P_{\lambda/\mu,\beta+e\,e_i+(m-e)e_k}
 \geq
 K^P_{\lambda/\mu,\beta+m e_i}
 K^P_{\lambda/\mu,\beta+e\,e_j+(m-e)e_k}.
\end{equation}
\item For $\abs\beta=d-2$, the matrix
\[
 \left(K^P_{\lambda/\mu,\beta+e_i+e_j}\right)_{i,j}
\]
has at most one positive eigenvalue, and exactly one when nonzero; all principal minors have alternating signs as in~\eqref{eq:principal-sign}.
\item Whenever $\alpha\in\N^n$, $|\alpha|=d$, $i\neq j$, and $\alpha_i,\alpha_j\geq1$,
\begin{equation}\label{eq:shifted-root}
 \left(K^P_{\lambda/\mu,\alpha}\right)^2
 \geq
 K^P_{\lambda/\mu,\alpha+e_i-e_j}
 K^P_{\lambda/\mu,\alpha-e_i+e_j}.
\end{equation}
The nonzero shifted-tableau multiplicities along each coordinate root direction $e_i-e_j$ form a log-concave unimodal interval.
\item If $P_{\lambda/\mu}(x_1,\ldots,x_n)\neq0$, the weighted block counts of the factorially tilted marked-shifted-tableau content distribution are ultra-log-concave, and the full content vector satisfies the covariance bound~\eqref{eq:cov-bound-general}.
\item If $P_{\lambda/\mu}(x_1,\ldots,x_n)\neq0$, there is a unique partition $\kappa^P_{\lambda/\mu,n}$ such that
\begin{equation}\label{eq:implicit-P-support}
 \Supp(P_{\lambda/\mu}(x_1,\ldots,x_n))
 =\mathcal P_{\kappa^P_{\lambda/\mu,n}}\cap\Z^n.
\end{equation}
The same support statement holds for $Q_{\lambda/\mu}$.  The associated
support polymatroid is linearly representable over every infinite field and
its permutahedron has the integer-decomposition property.
\end{enumerate}
\end{corollary}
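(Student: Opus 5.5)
The plan is to reduce everything to the general machinery of Section~\ref{sec:applications}, applied to $F=P_{\lambda/\mu}(x_1,\ldots,x_n)$, in the same way as Corollary~\ref{cor:ordinary-tableau-package}.

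\textbf{The zero case.} If $P_{\lambda/\mu}(x_1,\ldots,x_n)=0$, every $K^P_{\lambda/\mu,\gamma}$ vanishes. Parts~(i)--(iii) are then trivial equalities $0\geq0$, and parts~(iv)--(v) are vacuous by hypothesis.

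\textbf{The nonzero case, parts (i)--(iii).} Theorem~\ref{thm:main-PQ} shows that $f=\cN(P_{\lambda/\mu})$ is a realizable volume polynomial. Its coefficient array is $c_\alpha=K^P_{\lambda/\mu,\alpha}$, and these are nonnegative as tableau counts. So $F$ satisfies the standing hypothesis~\eqref{eq:generic-F}.
\begin{itemize}
\item Part~(i) is~\eqref{eq:master-coeff-RKT} from Theorem~\ref{thm:master-RKT}.
\item Parts~(ii) and~(iii) are Theorem~\ref{thm:master-HR}~(i)--(iii).
\end{itemize}

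\textbf{Part (iv).} Take the block substitution of Theorem~\ref{thm:block-covariance}. Its ultra-log-concavity statement~\eqref{eq:block-ULC} gives the block-count claim. The covariance bound~\eqref{eq:cov-bound-general} gives the rest, with $\mathbb P_z$ read as the factorially tilted law on contents of marked shifted tableaux.

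\textbf{Part (v).} The function $P_{\lambda/\mu}$ lies in $\Omega_\Q\subseteq\Lambda_\Q$, so its finite specialization is symmetric. Theorem~\ref{thm:symmetric-permutahedron} then supplies the following:
\begin{itemize}
\item a unique partition $\kappa^P_{\lambda/\mu,n}:=\kappa(P_{\lambda/\mu})$ with $\Supp=\mathcal P_\kappa\cap\Z^n$;
\item linear representability of the support polymatroid over every infinite field;
\item the integer-decomposition property of $\mathcal P_\kappa$.
\end{itemize}
For $Q_{\lambda/\mu}$, Lemma~\ref{lem:skew-PQ-scaling} shows $Q_{\lambda/\mu}$ is a positive scalar multiple of $P_{\lambda/\mu}$. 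Hence the two functions have identical support, and all conclusions transfer to $Q_{\lambda/\mu}$.

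\textbf{Expected obstacle.} The only point needing any care is justifying symmetry of the finite specialization. Everything else is a direct citation of the earlier theorems.
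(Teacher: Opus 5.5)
Your proposal is correct and is essentially the paper's own proof. You dispose of the zero specialization, apply Theorems~\ref{thm:master-RKT}, \ref{thm:master-HR}, \ref{thm:block-covariance}, and~\ref{thm:symmetric-permutahedron} to $F=P_{\lambda/\mu}$ (realizable by Theorem~\ref{thm:main-PQ}), and pass to $Q_{\lambda/\mu}$ via the positive scalar of Lemma~\ref{lem:skew-PQ-scaling}. Your explicit remark that $P_{\lambda/\mu}\in\Omega_{\Q}$ makes the finite specialization symmetric, as Theorem~\ref{thm:symmetric-permutahedron} requires, is a point the paper leaves implicit.
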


\begin{proof}
If $P_{\lambda/\mu}(x_1,\ldots,x_n)=0$, parts~\textup{(i)}--\textup{(iii)} are immediate.  Otherwise apply Theorems~\ref{thm:master-RKT} and~\ref{thm:master-HR} to $F=P_{\lambda/\mu}$.  Under the stated nonvanishing hypothesis, Theorems~\ref{thm:symmetric-permutahedron} and~\ref{thm:block-covariance} give parts~\textup{(iv)} and~\textup{(v)}.  The $Q$-polynomial differs by the positive scalar in Lemma~\ref{lem:skew-PQ-scaling}, so it has the same support and satisfies the same homogeneous quadratic inequalities.
\end{proof}

We get the following observation.

\begin{corollary}\label{cor:tableau-dominance}
Let $n\geq1$, let $\theta$ be an ordinary skew shape of size $d$, let
$\vartheta=\lambda/\mu$ be a strict skew shape of size $d$, and write
$K_{\theta,\alpha}:=[x^\alpha]s_\theta(x_1,\ldots,x_n)$ and
$K^P_{\vartheta,\alpha}:=[x^\alpha]P_\vartheta(x_1,\ldots,x_n)$.  Let
$\rho,\tau$ be partitions of $d$ with at most $n$ parts.  If
$\rho\unlhd\tau$, then
\[
 K_{\theta,\rho}\geq K_{\theta,\tau},
 \qquad
 K^P_{\vartheta,\rho}\geq K^P_{\vartheta,\tau}.
\]
The analogous inequality holds for the coefficients of $Q_\vartheta$.
Consequently, among partition-indexed contents, every nonzero ordinary or
shifted tableau array is maximized at $\operatorname{bal}_n(d)$ and minimized
on its dominance-maximal support partition.
\end{corollary}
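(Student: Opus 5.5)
This corollary is a direct specialization of Theorem~\ref{thm:dominance-monotonicity}; the only work is to check that its hypotheses hold for each of the three polynomials. I would take $F=s_\theta(x_1,\ldots,x_n)$ in the ordinary case and $F=P_\vartheta(x_1,\ldots,x_n)$ in the shifted case. In each case the hypotheses are as follows. The polynomial is homogeneous of degree $d$ with nonnegative coefficients, since it is a tableau generating function. It is symmetric: for $s_\theta$ this is classical, and for $P_\vartheta$ it is the image of a symmetric function under the Hopf-algebraic skewing of Section~\ref{sec:typeC}. Finally, $\cN(F)$ is a realizable volume polynomial, by Theorem~\ref{thm:main-A} and Theorem~\ref{thm:main-PQ} respectively.

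If $F=0$, every coefficient vanishes and the inequalities $K_{\theta,\rho}\geq K_{\theta,\tau}$ and $K^P_{\vartheta,\rho}\geq K^P_{\vartheta,\tau}$ hold trivially as $0\geq0$. For nonzero $F$, Theorem~\ref{thm:dominance-monotonicity} applies directly. With $\rho$ and $\tau$ padded to length $n$, it gives $c_\rho\geq c_\tau$ whenever $\rho\unlhd\tau$, and here $c_\rho$ is exactly the tableau multiplicity $K_{\theta,\rho}$ or $K^P_{\vartheta,\rho}$.

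For $Q_\vartheta$, Lemma~\ref{lem:skew-PQ-scaling} gives $Q_\vartheta=2^{\ell(\lambda)-\ell(\mu)}P_\vartheta$. Multiplying both sides of the $P$-inequality by this positive constant gives the $Q$-inequality.

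The extremal statements are the last two assertions of Theorem~\ref{thm:dominance-monotonicity}. The partition $\operatorname{bal}_n(d)$ is dominance-minimal among partitions of $d$ with at most $n$ parts, so it maximizes the partition-indexed coefficients. By Theorem~\ref{thm:symmetric-permutahedron}, every partition in the support is dominated by $\kappa(F)$, so $\kappa(F)$ attains the minimal positive coefficient. In the ordinary case, Theorem~\ref{thm:ordinary-skew-support} identifies $\kappa(F)$ with the column-length partition $\kappa(\theta)$, and there the minimal positive value is~$1$.

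There is no real obstacle. The only point needing care is that symmetry of the finite-variable specialization $P_\vartheta(x_1,\ldots,x_n)$ holds for skew shapes as well as straight ones. This follows because $P_\vartheta$ lies in $\Omega_\Q\subseteq\Lambda_\Q$, and specialization preserves symmetry.
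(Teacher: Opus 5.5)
Your proposal is correct and matches the paper's proof: you handle the zero specialization trivially, apply Theorem~\ref{thm:dominance-monotonicity} to each nonzero specialization using the realizability from Theorems~\ref{thm:main-A} and~\ref{thm:main-PQ}, and pass to $Q_\vartheta$ through the positive scalar in Lemma~\ref{lem:skew-PQ-scaling}. Your extra remark that the minimal positive ordinary coefficient equals $1$, via the unit vertex coefficient in Theorem~\ref{thm:ordinary-skew-support}, is also correct but is not needed for the statement.
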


\begin{proof}
For a zero specialization the corresponding inequality is immediate.  For
each nonzero specialization, the factorial normalization is a realizable
volume polynomial by Theorem~\ref{thm:main-A} or~\ref{thm:main-PQ}; apply
Theorem~\ref{thm:dominance-monotonicity}.  The $Q$-coefficient array is a
positive scalar multiple of the $P$-coefficient array by
Lemma~\ref{lem:skew-PQ-scaling}.
\end{proof}

We obtain the following consequence.

\begin{corollary}\label{cor:straight-P-support}
Let $\lambda$ be a strict partition with $\ell(\lambda)\leq n$, padded by zeros to length $n$.  Then
\begin{equation}\label{eq:straight-P-support}
 \Supp(P_\lambda(x_1,\ldots,x_n))
 =\mathcal P_\lambda\cap\Z^n,
 \qquad
 \Newt(P_\lambda)=\mathcal P_\lambda.
\end{equation}
The same support and Newton-polytope identities hold for $Q_\lambda$.  Every vertex monomial of $P_\lambda$ has coefficient one, and the corresponding vertex coefficient of $Q_\lambda$ is $2^{\ell(\lambda)}$.

More generally, for strict partitions $\lambda^{(1)},\ldots,\lambda^{(m)}$ with at most $n$ parts,
\begin{equation}\label{eq:straight-P-product-support}
 \Supp\left(\prod_{a=1}^mP_{\lambda^{(a)}}\right)
 =\mathcal P_{\lambda^{(1)}+\cdots+\lambda^{(m)}}\cap\Z^n,
\end{equation}
and every vertex coefficient of the product is one.  For the analogous product of $Q$-polynomials, every vertex coefficient is
$2^{\sum_a\ell(\lambda^{(a)})}$.
\end{corollary}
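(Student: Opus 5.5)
The plan is to run the same scheme as in the proof of Theorem~\ref{thm:ordinary-skew-support}: compute the support polymatroid rank function directly, feed it into Theorem~\ref{thm:symmetric-permutahedron}, and then identify the vertex coefficient by a uniqueness argument on tableaux.

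\emph{Upper bound.} For a marked shifted tableau $T$ of shape $\lambda$ with entries in $1'<1<\cdots<n'<n$, and for $A\subseteq[n]$ with $|A|=p$, I will show
\[
\alpha(A)\leq\sum_{q=1}^{p}\lambda_q .
\]
I would route this through $Q_\lambda=2^{\ell(\lambda)}P_\lambda$ and the Schur expansion of $Q_\lambda$. That expansion, via Stembridge's shifted Littlewood--Richardson rule, has nonnegative integer coefficients, and every constituent $s_\nu$ satisfies $\nu\unlhd\lambda$ (Macdonald III.8, the triangularity of $P_\lambda$ with respect to monomials). Since the coefficients are nonnegative, the Newton polytope of $P_\lambda$ is contained in the union of the $\mathcal P_\nu$ with $\nu\unlhd\lambda$, and each of these lies in $\mathcal P_\lambda$. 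This already gives $\Newt(P_\lambda)\subseteq\mathcal P_\lambda$.

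\emph{Attaining the bound.} I need $x^\lambda$ to occur with coefficient exactly one. I will use the unitriangularity
\[
P_\lambda=m_\lambda+\sum_{\nu<\lambda}(\ast)\,m_\nu
\]
from Macdonald III.8. Combinatorially: a tableau of content $\lambda$ must place its $\lambda_1$ entries from $\{1',1\}$ in the first row, since entries $\le 1$ can occupy only the diagonal box of row one and the boxes to its right. The diagonal box is unprimed by the $P$-convention, so row one is forced to be all $1$'s. Inducting on rows, the only such tableau is the canonical one with row $i$ filled by $i$. Hence $[x^\lambda]P_\lambda=1$, and the rank function is $r(A)=\sum_{q\le|A|}\lambda_q$.

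\emph{Conclusion of the straight case.} Theorem~\ref{thm:main-PQ} with $\mu=\varnothing$ makes $\Supp(P_\lambda)$ $M$-convex. Theorem~\ref{thm:symmetric-permutahedron} then gives $\kappa(P_\lambda)=\lambda$ and the support identity~\eqref{eq:straight-P-support}. Symmetry transfers the coefficient one to every permuted vertex, and Lemma~\ref{lem:skew-PQ-scaling} (with $\mu=\varnothing$) gives the $Q$-statements with the factor $2^{\ell(\lambda)}$. The hypothesis $\ell(\lambda)\le n$ ensures $P_\lambda(x_1,\ldots,x_n)\ne0$, because the canonical tableau exists.

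\emph{Products.} Each factor is nonzero with $\kappa=\lambda^{(a)}$. The product clause of Theorem~\ref{thm:symmetric-permutahedron} gives~\eqref{eq:straight-P-product-support} and vertex coefficients $\prod_a1=1$. For $Q$, the vertex coefficient is $\prod_a 2^{\ell(\lambda^{(a)})}$.

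The main obstacle is the upper bound $\alpha(A)\le\sum_{q\le p}\lambda_q$ for arbitrary $A$, not just prefixes. A direct column-counting argument as in type~A fails for shifted shapes because primed entries may repeat in columns. This is why I would lean on symmetry of $P_\lambda$, which reduces the problem to $A=[p]$, together with the dominance triangularity of the monomial expansion, rather than on a tableau count.
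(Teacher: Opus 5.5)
Your proof is correct, but it fills in the lattice points by a different mechanism than the paper.

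\textbf{The paper's route.} The paper never uses $M$-convexity for the straight support identity. It starts from the Schur-positive, dominance-unitriangular expansion $P_\lambda=s_\lambda+\sum_{\nu\lhd\lambda}a_{\lambda\nu}s_\nu$.
\begin{itemize}
\item The inclusion $\Supp\subseteq\mathcal P_\lambda\cap\Z^n$ comes from Kostka saturation of each surviving $s_\nu$ together with Rado's criterion $\mathcal P_\nu\subseteq\mathcal P_\lambda$.
\item The reverse inclusion comes from the fact that the single summand $s_\lambda$ already has support $\mathcal P_\lambda\cap\Z^n$. Nonnegativity of the $a_{\lambda\nu}$ rules out cancellation.
\item The vertex coefficient is read off from $s_\lambda$, since no $\mathcal P_\nu$ with $\nu\lhd\lambda$ contains $\lambda$.
\end{itemize}

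\textbf{Your route.} You get only $\Newt(P_\lambda)\subseteq\mathcal P_\lambda$ from dominance triangularity, and $\lambda\in\Supp$ with coefficient one from the canonical tableau. You then fill in every lattice point using the $M$-convexity supplied by Theorem~\ref{thm:main-PQ} with $\mu=\varnothing$, through Theorem~\ref{thm:symmetric-permutahedron}. This exactly parallels the proof of Theorem~\ref{thm:ordinary-skew-support}.

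\textbf{What each buys.} Your route needs only monomial triangularity. Your appeal to Stembridge's Schur positivity is in fact superfluous: the support of a sum lies in the union of the supports whatever the signs. Relatedly, it is the support, not the Newton polytope, that lies in the union of the $\mathcal P_\nu$; the Newton polytope then lies in its convex hull. Your route also presents the straight case as an instance of the volume-theoretic support mechanism.

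The paper's route makes the straight support and vertex statements independent of the Lagrangian geometry. It recovers the Monical--Tokcan--Yong result from classical facts alone, at the cost of requiring Schur positivity. The volume theorem is then needed only for the product clause, and there your argument and the paper's coincide.
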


\begin{proof}
The Schur expansion of a Schur $P$-function is dominance-unitriangular and Schur-positive:
\begin{equation}\label{eq:P-Schur-unitriangular}
 P_\lambda=s_\lambda+
 \sum_{\nu\lhd\lambda}a_{\lambda\nu}s_\nu,
 \qquad a_{\lambda\nu}\in\N;
\end{equation}
see Macdonald~\cite[Chapter~III, equation~(8.17)(ii)]{Macdonald} and Shaw--van Willigenburg~\cite[Section~1]{ShawVW07}.  For every surviving term, namely every $\nu$ with $\ell(\nu)\leq n$, the classical Kostka nonvanishing criterion gives
$\Supp(s_\nu(x_1,\ldots,x_n))=\mathcal P_\nu\cap\Z^n$; terms with
$\ell(\nu)>n$ vanish after specialization; see
Macdonald~\cite[Chapter~I, Section~6]{Macdonald}.  If $\nu\lhd\lambda$, then Rado's dominance criterion gives
$\mathcal P_\nu\subseteq\mathcal P_\lambda$.  Hence every monomial in the surviving terms on the right side of~\eqref{eq:P-Schur-unitriangular} lies in
$\mathcal P_\lambda$, while the summand $s_\lambda$ already contains every lattice point of $\mathcal P_\lambda$.  This proves~\eqref{eq:straight-P-support}.

At the dominant vertex $\lambda$, the coefficient in $s_\lambda$ is one: the unique semistandard tableau of shape and content $\lambda$ has every box in row $i$ filled with $i$.  Symmetry gives the same coefficient at every vertex.  No term $s_\nu$ with $\nu\lhd\lambda$ contains such a vertex, because its Newton polytope is a proper subpolytope of $\mathcal P_\lambda$.  Thus the vertex coefficient is one.  The assertion for $Q_\lambda$ follows from $Q_\lambda=2^{\ell(\lambda)}P_\lambda$.

The product statements follow from Theorem~\ref{thm:symmetric-permutahedron}: the associated maximal partitions add, and the product vertex coefficient is the product of the individual vertex coefficients.
\end{proof}

\begin{remark}
The support and Newton-polytope assertions for straight Schur $P$- and $Q$-functions were proved by Monical--Tokcan--Yong~\cite[Proposition~3.5]{MTY19}.  We retain the proof above to identify the dominant partition, vertex coefficients, and product supports within the uniform symmetric-$M$-convex framework used throughout this paper.
\end{remark}

\subsection{Two-row ordinary and weighted shifted Littlewood--Richardson consequences}

Define nonnegative integers $g_{\mu\nu}^{\lambda}$ by
\begin{equation}\label{eq:shifted-LR-def}
 Q_\mu Q_\nu=\sum_\lambda g_{\mu\nu}^{\lambda}Q_\lambda.
\end{equation}
Equivalently, by Hopf duality,
\begin{equation}\label{eq:skew-P-expansion}
 P_{\lambda/\mu}=\sum_\nu g_{\mu\nu}^{\lambda}P_\nu.
\end{equation}
These are one standard normalization of the shifted Littlewood--Richardson coefficients.  Their nonnegativity follows from the shifted Littlewood--Richardson rule of Stembridge~\cite[Theorem~8.3]{Stembridge89}.

\begin{lemma}\label{lem:P-two-variable}
For $r\geq1$,
\begin{equation}\label{eq:P-r-two-vars}
 P_{(r)}(x,y)=x^r+y^r+2\sum_{a=1}^{r-1}x^{r-a}y^a.
\end{equation}
For $a>b>0$,
\begin{equation}\label{eq:P-ab-two-vars}
 P_{(a,b)}(x,y)=(xy)^bP_{(a-b)}(x,y).
\end{equation}
\end{lemma}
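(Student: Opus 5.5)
Both formulas will come from the marked shifted-tableau description of $P_\lambda$ in two variables, taken in Macdonald's convention (the one recalled in Section~\ref{sec:preliminaries}). For $P_\lambda$ we fill the shifted diagram with letters $1'<1<2'<2$. Rows and columns are weakly increasing; each unprimed letter occurs at most once per column and each primed letter at most once per row. In addition, the diagonal entries are unprimed. The content counts $k$ and $k'$ together.

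First I prove \eqref{eq:P-r-two-vars}. The shifted diagram of $(r)$ is a single row. A filling of a row with content $(r-a,a)$ is determined by the primes, as follows. The letters $1,1'$ occupy the first $r-a$ boxes, and among them at most one $1'$ appears, in the leftmost box. The letters $2,2'$ occupy the last $a$ boxes, with at most one $2'$, in the first of those boxes. The diagonal box is the first box of the row and must be unprimed.

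For $a=0$ or $a=r$ the first block starts at the diagonal, so it is forced unprimed, and the other block is empty. This gives coefficient $1$.

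For $1\le a\le r-1$ the $1$-block contains the diagonal box, so it is forced unprimed. The first box of the $2$-block is off the diagonal, so it may be $2$ or $2'$. This gives coefficient $2$.

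As a cross-check I use Lemma~\ref{lem:Cauchy-coeff} with $Q_\mu=1$: $[x^{r-a}y^a]P_{(r)}=[Q_{(r)}](q_{r-a}q_a)$. By \eqref{eq:Q-two-row}, $q_{r-a}q_a$ equals $Q_{(r-a,a)}$ plus twice a sum of terms, and the $Q_{(r)}$ coefficient is $2$ when $0<a<r$. One can also check the answer against the generating function $\frac{(1+xt)(1+yt)}{(1-xt)(1-yt)}$.

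Next I prove \eqref{eq:P-ab-two-vars}. With two letters, every column has length at most $2$. The shifted diagram of $(a,b)$ has row~2 occupying columns $2,\dots,b+1$, each directly below a row~1 box. The strategy is to show that these $b$ length-two columns are forced to hold $1$ on top and $2$ below. Deleting them then leaves the shape $(a-b)$, which gives the bijection.

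I would like to do the forcing box by box. The row~2 diagonal box (column 2) is unprimed and lies below a row~1 entry, so it must be $2$ (a $1$ there would sit below a $1$ or $1'$, which is forbidden). The entry above it is then $1$, since a row~1 entry that is $\ge 2'$ would force the rest of row~1 to be $\ge 2'$. I then need that same exclusion argument to propagate along row~2 and fill all of row~2 with $2$ (no $2'$ to the right of the diagonal).

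This propagation step is the one I expect to be the main obstacle, and it is not yet justified. Row~2 has only the letter $2$ available, and a $2'$ is allowed once per row, so primes off the diagonal have to be ruled out explicitly. The claimed shift $(xy)^b$ also needs reconciling with the fact that row~1 has length $a$ but only its first $b+1$ boxes lie above row~2. The weight $(xy)^b$ is exactly the contribution of the $b$ columns $1$ over $2$, so I would check the bijection on $(2,1)$ by hand before trusting it.

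If the direct tableau argument stalls, I would fall back on the Pfaffian \eqref{eq:Q-two-row} specialized to two variables. The key tool is the factorization $q(t)=\frac{(1+xt)(1+yt)}{(1-xt)(1-yt)}$ together with the vanishing $P_\nu(x,y)=0$ for $\ell(\nu)>2$.
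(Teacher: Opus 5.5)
Your proof of \eqref{eq:P-r-two-vars} is correct. The one-row count (a forced unprimed diagonal box, plus a free choice between $2$ and $2'$ at the start of the $2$-block exactly when $0<a<r$) is a valid alternative to the paper's proof, which reads off the coefficient of $t^r$ in $\frac{1+xt}{1-xt}\frac{1+yt}{1-yt}$ and halves it.

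The argument for \eqref{eq:P-ab-two-vars} has a genuine gap, because its central claim is false: the $b$ length-two columns are not forced to hold $1$ over $2$. Already for $(2,1)$, the filling with first row $1\,2'$ and second row $2$ is admissible. Its diagonal is unprimed, $2'$ occurs once in its row, and unprimed $2$ occurs once in its column; its weight is $xy^2$. Together with $1\,1$ over $2$ it gives $P_{(2,1)}(x,y)=xy(x+y)$. So your step ``the entry above it is then $1$'' fails for $b=1$. The reason you give (an entry $\geq 2'$ forces the rest of row~1 to be $\geq 2'$) does not exclude a single $2'$. More generally, for every $a>b>0$ there is exactly one tableau with only $b$ ones: first row $1^b\,2'\,2^{a-b-1}$ over second row $2^b$. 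Its column $b+1$ is $2'$ over $2$. Any weight-preserving correspondence with one-row tableaux of length $a-b$ must send the all-$2$ row to this tableau, so ``delete the length-two columns and divide by $(xy)^b$'' is not the right map. Conversely, the step you flagged as the main obstacle is trivial: row~2 begins with an unprimed $2$, and $2$ is the largest letter, so weak increase makes all of row~2 equal to $2$.

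To repair the tableau route, classify the fillings directly. Row~2 is $2^b$, and row~1 is $1^p(2')^{\epsilon}2^{a-p-\epsilon}$ with $\epsilon\in\{0,1\}$; there is no $1'$, since it could only occupy the diagonal box. Forbidding a second unprimed $2$ in columns $2,\ldots,b+1$ forces $p\geq b+1$ when $\epsilon=0$, and $b\leq p\leq a-1$ when $\epsilon=1$. Hence $[x^py^{a+b-p}]P_{(a,b)}(x,y)$ equals $1$ for $p=b$ and $p=a$, and $2$ for $b<p<a$. By the first part, this is $(xy)^bP_{(a-b)}(x,y)$.

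The paper bypasses tableaux entirely. Macdonald's symmetrization formula \cite[Chapter~III, (8.3)]{Macdonald} gives $P_{(a,b)}(x,y)=\frac{x+y}{x-y}(x^ay^b-x^by^a)=(xy)^b\frac{x+y}{x-y}(x^{a-b}-y^{a-b})$, and the last factor is $P_{(a-b)}(x,y)$ by \eqref{eq:P-r-two-vars}. Your Pfaffian fallback is not yet an argument: it would require actually evaluating the alternating sum \eqref{eq:Q-two-row} in two variables.
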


\begin{proof}
Equation~\eqref{eq:P-r-two-vars} follows by taking the coefficient of $t^r$ in
\[
 \frac{1+xt}{1-xt}\frac{1+yt}{1-yt}
 =\sum_{r\geq0}Q_{(r)}(x,y)t^r
\]
and using $P_{(r)}=Q_{(r)}/2$ for $r>0$.

For a strict partition $(a,b)$ of length two, Macdonald's symmetrization formula~\cite[Chapter~III, (8.3)]{Macdonald} gives
\[
 P_{(a,b)}(x,y)
 =\frac{x+y}{x-y}\left(x^ay^b-x^by^a\right)
 =(xy)^b\frac{x+y}{x-y}\left(x^{a-b}-y^{a-b}\right).
\]
The final factor is $P_{(a-b)}(x,y)$ by~\eqref{eq:P-r-two-vars}.
\end{proof}

\begin{corollary}[Cumulative two-row Littlewood--Richardson coefficients]\label{cor:ordinary-two-row-LR}
Let $\mu\subseteq\lambda$ be ordinary partitions, put $d=|\lambda|-|\mu|$, and write
\[
 s_{\lambda/\mu}=\sum_{\nu\vdash d}c_{\mu\nu}^{\lambda}s_\nu.
\]
For $0\leq r\leq\lfloor d/2\rfloor$, define
\[
 A_r:=\sum_{j=0}^r c_{\mu,(d-j,j)}^{\lambda}.
\]
Then
\begin{equation}\label{eq:ordinary-LR-cumulative}
 A_r=[x^{d-r}y^r]s_{\lambda/\mu}(x,y),
 \qquad
 A_r^2\geq A_{r-1}A_{r+1}
\end{equation}
for $1\leq r<\lfloor d/2\rfloor$.  Equivalently, if
$c_r=c_{\mu,(d-r,r)}^{\lambda}$, then
\begin{equation}\label{eq:ordinary-LR-increment}
 A_rc_r\geq A_{r-1}c_{r+1}.
\end{equation}
\end{corollary}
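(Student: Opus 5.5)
The plan is to specialize to two variables and compute the coefficient of $x^{d-r}y^r$ in $s_{\lambda/\mu}(x,y)$ directly from the Littlewood--Richardson expansion, and then read off log-concavity from the root-line inequality of Corollary~\ref{cor:ordinary-tableau-package}(iii).

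\emph{Step 1: the coefficient identity.} In two variables, $s_\nu(x,y)=0$ unless $\ell(\nu)\leq2$. Writing $\nu=(d-j,j)$ with $0\leq j\leq\lfloor d/2\rfloor$, we have
\[
 s_{(d-j,j)}(x,y)=(xy)^j\sum_{a=0}^{d-2j}x^{d-2j-a}y^a .
\]
Thus $[x^{d-r}y^r]s_{(d-j,j)}=1$ exactly when $j\leq r\leq d-j$, and is $0$ otherwise. For $r\leq\lfloor d/2\rfloor$ the condition $r\leq d-j$ is automatic once $j\leq r$. Summing the expansion $s_{\lambda/\mu}=\sum_\nu c^{\lambda}_{\mu\nu}s_\nu$ term by term therefore gives $[x^{d-r}y^r]s_{\lambda/\mu}(x,y)=\sum_{j\leq r}c^\lambda_{\mu,(d-j,j)}=A_r$.

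\emph{Step 2: log-concavity.} Apply Corollary~\ref{cor:ordinary-tableau-package}(iii) with $n=2$, $\alpha=(d-r,r)$ and $i=1$, $j=2$. This requires $r\geq1$ and $d-r\geq1$, which holds for $1\leq r<\lfloor d/2\rfloor$. The neighbours of $\alpha$ are $(d-r+1,r-1)$ and $(d-r-1,r+1)$, whose coefficients are $A_{r-1}$ and $A_{r+1}$ by Step 1; the latter needs $r+1\leq\lfloor d/2\rfloor$, which is why the range is $r<\lfloor d/2\rfloor$. This gives $A_r^2\geq A_{r-1}A_{r+1}$. The zero specialization is harmless, since all quantities then vanish.

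\emph{Step 3: the increment form.} For \eqref{eq:ordinary-LR-increment}, substitute $A_{r+1}=A_r+c_{r+1}$ and $A_r=A_{r-1}+c_r$. Then
\[
 A_r^2-A_{r-1}A_{r+1}=A_r(A_{r-1}+c_r)-A_{r-1}(A_r+c_{r+1})=A_rc_r-A_{r-1}c_{r+1},
\]
so the two inequalities are equivalent.

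The only step needing care is the range bookkeeping in Step 1, namely ensuring that every index used stays at most $\lfloor d/2\rfloor$ so that the cumulative formula for the coefficients applies; the rest is formal.
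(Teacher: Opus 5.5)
Your proposal is correct and follows essentially the same route as the paper. Both proofs take the coefficient of $x^{d-r}y^r$ in the two-variable Littlewood--Richardson expansion, obtain the root-line inequality from the type~A realizable-volume theorem through Theorem~\ref{thm:master-HR}\textup{(iii)} (which your cited Corollary~\ref{cor:ordinary-tableau-package}\textup{(iii)} simply specializes), and finish with the rearrangement using $A_r-A_{r-1}=c_r$ and $A_{r+1}-A_r=c_{r+1}$. Your range bookkeeping is slightly more explicit than the paper's.
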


\begin{proof}
In two variables, only the Schur polynomials indexed by $(d-j,j)$ survive, and
\[
 s_{(d-j,j)}(x,y)=\sum_{r=j}^{d-j}x^{d-r}y^r.
\]
Taking the coefficient of $x^{d-r}y^r$ in the Littlewood--Richardson expansion proves the first identity in~\eqref{eq:ordinary-LR-cumulative}.  The normalized skew Schur polynomial is realizable volume by the type~A theorem, so Theorem~\ref{thm:master-HR}~\textup{(iii)} gives the log-concavity inequality.  Finally,
$A_r-A_{r-1}=c_r$ and $A_{r+1}-A_r=c_{r+1}$; rearranging
$A_r^2\geq A_{r-1}A_{r+1}$ gives~\eqref{eq:ordinary-LR-increment}.
\end{proof}

The following result is about two-row shifted Littlewood--Richardson coefficients.

\begin{theorem}\label{thm:shifted-LR-logconcavity}
Assume $d:=\abs\lambda-\abs\mu\geq1$ and
\[
 m=\left\lfloor\frac{d-1}{2}\right\rfloor,
 \qquad
 \nu_0=(d),
 \qquad
 \nu_j=(d-j,j)\quad(1\leq j\leq m).
\]
Put
\[
 g_j:=g_{\mu,\nu_j}^{\lambda},
 \qquad
 B_r:=[x^{d-r}y^r]P_{\lambda/\mu}(x,y).
\]
Thus $B_r$ is a weighted cumulative sum of the two-row shifted Littlewood--Richardson coefficients: interior contributions occur with weight two.  More precisely, $B_r=B_{d-r}$ and
\begin{align}
 B_0&=g_0,
 \label{eq:B0}\\
 B_r&=2\sum_{j=0}^{r-1}g_j+g_r
 \qquad(1\leq r\leq m),
 \label{eq:Br}\end{align}
while, if $d$ is even,
\begin{equation}\label{eq:Bcenter}
 B_{d/2}=2\sum_{j=0}^{m}g_j.
\end{equation}
The sequence $B_0,B_1,\ldots,B_d$ has no internal zeros and is log-concave:
\begin{equation}\label{eq:B-logconcave}
 B_r^2\geq B_{r-1}B_{r+1}\qquad(0<r<d).
\end{equation}
In particular, for $1\leq r<m$,
\begin{equation}\label{eq:g-cumulative-ineq}
 B_r(g_{r-1}+g_r)
 \geq
 B_{r-1}(g_r+g_{r+1}).
\end{equation}
\end{theorem}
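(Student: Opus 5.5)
The plan has two parts: an explicit two-variable computation of the coefficients $B_r$, and then log-concavity taken from the Lorentzian package.

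\emph{Step 1: the formulas for $B_r$.} Start from the Hopf-dual expansion \eqref{eq:skew-P-expansion}, $P_{\lambda/\mu}=\sum_\nu g_{\mu\nu}^{\lambda}P_\nu$, and specialize to two variables. A Schur $P$-polynomial $P_\nu(x,y)$ vanishes when $\ell(\nu)>2$. So only the strict partitions $\nu_0=(d)$ and $\nu_j=(d-j,j)$ with $d-j>j\geq1$ contribute, i.e.\ $1\leq j\leq m=\lfloor (d-1)/2\rfloor$.

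By Lemma~\ref{lem:P-two-variable}:
\begin{itemize}
\item $P_{(d)}(x,y)$ has coefficient $1$ at $x^d$ and $y^d$, and coefficient $2$ at each interior monomial $x^{d-r}y^r$ with $0<r<d$.
\item For $j\geq1$, $P_{(d-j,j)}(x,y)=(xy)^jP_{(d-2j)}(x,y)$. Its coefficient at $x^{d-r}y^r$ is $1$ for $r=j$ and $r=d-j$, is $2$ for $j<r<d-j$, and is $0$ otherwise. Here $d-2j\geq1$ because $j\leq m$.
\end{itemize}

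Summing, $B_r=\sum_j g_j\,[x^{d-r}y^r]P_{\nu_j}$.
\begin{itemize}
\item For $r=0$ only $j=0$ contributes, with weight $1$, giving \eqref{eq:B0}.
\item For $1\leq r\leq m$, each index $j<r$ contributes weight $2$ and $j=r$ contributes weight $1$, giving \eqref{eq:Br}.
\item When $d$ is even and $r=d/2$, every $j\leq m=d/2-1$ satisfies $j<d/2<d-j$, so every weight is $2$, giving \eqref{eq:Bcenter}.
\end{itemize}
The symmetry $B_r=B_{d-r}$ holds because $P_{\lambda/\mu}$ is symmetric. The case $d=1$ reduces to $B_0=B_1=g_0$.

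\emph{Step 2: log-concavity.} Apply Theorem~\ref{thm:main-PQ} with $n=2$: $\cN(P_{\lambda/\mu}(x,y))$ is a realizable volume polynomial, hence Lorentzian. If $P_{\lambda/\mu}(x,y)=0$, every statement is trivial. Otherwise use Theorem~\ref{thm:master-HR}(iii) on the single root line $\alpha+\Z(e_1-e_2)$ with $\alpha=(d-r,r)$. This gives $B_r^2\geq B_{r-1}B_{r+1}$ for $0<r<d$, which is \eqref{eq:B-logconcave}. The same theorem says the nonzero coefficients on that line form an integer interval, which is the no-internal-zeros claim. Note that the inequality applies directly to the coefficients $c_\alpha$ of $F$, not to the normalized ones, exactly as stated in Theorem~\ref{thm:master-HR}.

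\emph{Step 3: the increment form \eqref{eq:g-cumulative-ineq}.} For $1\leq r<m$, formula \eqref{eq:Br} gives
\[
B_r-B_{r-1}=g_{r-1}+g_r,\qquad B_{r+1}-B_r=g_r+g_{r+1}.
\]
For $r=1$ this also holds, using $B_0=g_0$ and $B_1=2g_0+g_1$. Substitute $B_{r-1}=B_r-(g_{r-1}+g_r)$ and $B_{r+1}=B_r+(g_r+g_{r+1})$ into $B_r^2\geq B_{r-1}B_{r+1}$ and expand. The $B_r^2$ terms cancel, leaving
\[
B_r(g_{r-1}+g_r)-B_r(g_r+g_{r+1})+(g_{r-1}+g_r)(g_r+g_{r+1})\geq0.
\]

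This is where I expect the main obstacle: the leftover product term $(g_{r-1}+g_r)(g_r+g_{r+1})$. Unlike the ordinary case, where $A_r-A_{r-1}=c_r$ and $A_{r+1}-A_r=c_{r+1}$ make the rearrangement exact, the expansion here does not immediately give \eqref{eq:g-cumulative-ineq}. It yields
\[
B_r(g_{r-1}+g_r)+(g_{r-1}+g_r)(g_r+g_{r+1})\geq B_r(g_r+g_{r+1}),
\]
and I would rewrite this using $B_{r-1}+(g_{r-1}+g_r)=B_r$.

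Concretely, I would expand $B_r^2-B_{r-1}B_{r+1}$ with $B_{r-1}=B_r-a$ and $B_{r+1}=B_r+b$, where $a=g_{r-1}+g_r$ and $b=g_r+g_{r+1}$. This gives
\[
B_r^2-B_{r-1}B_{r+1}=B_r a-B_{r-1}b .
\]
That is exactly the claimed inequality $B_ra\geq B_{r-1}b$. So the rearrangement is exact after all, and I would carry out this expansion carefully to confirm it.
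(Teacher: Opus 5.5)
Your proposal is correct and follows the paper's proof essentially step for step. You expand $P_{\lambda/\mu}(x,y)$ via \eqref{eq:skew-P-expansion} and Lemma~\ref{lem:P-two-variable}, take log-concavity and the absence of internal zeros from the root-line statement of Theorem~\ref{thm:master-HR}\textup{(iii)} applied to Theorem~\ref{thm:main-PQ} with $n=2$, and then rearrange using the increments $B_r-B_{r-1}=g_{r-1}+g_r$ and $B_{r+1}-B_r=g_r+g_{r+1}$.

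The obstacle you flag in Step~3 is not really one. With $a=g_{r-1}+g_r$ and $b=g_r+g_{r+1}$, the exact identity $B_r^2-B_{r-1}B_{r+1}=aB_r-bB_{r-1}$ holds, because your leftover $ab$ term is absorbed by $B_{r-1}=B_r-a$. This identity is precisely the rearrangement the paper uses.
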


\begin{proof}
After specialization to two variables, only strict partitions of length at most two survive in~\eqref{eq:skew-P-expansion}.  Lemma~\ref{lem:P-two-variable} shows that $P_{\nu_j}$ is supported on
\[
 x^{d-j}y^j, x^{d-j-1}y^{j+1},\ldots, x^j y^{d-j},
\]
with coefficient $1$ at the two endpoints and coefficient $2$ at every interior monomial.  Summing the contributions in~\eqref{eq:skew-P-expansion} gives~\eqref{eq:B0}--\eqref{eq:Bcenter}; symmetry is clear.

Equation~\eqref{eq:B-logconcave} is the root-direction inequality~\eqref{eq:shifted-root} in two variables.  Absence of internal zeros follows from $M$-convexity.  For $1\leq r<m$, formulas~\eqref{eq:Br} give
\[
 B_r-B_{r-1}=g_{r-1}+g_r,
 \qquad
 B_{r+1}-B_r=g_r+g_{r+1}.
\]
Rearranging $B_r^2\geq B_{r-1}B_{r+1}$ yields~\eqref{eq:g-cumulative-ineq}.
\end{proof}

\begin{corollary}[Mixed products]\label{cor:mixed-products}
Let $\theta_a$ be ordinary skew shapes and let $\eta_b,\zeta_c$ be strict skew shapes.  Then
\begin{equation}\label{eq:mixed-product}
 \cN\left(
  \prod_a s_{\theta_a}(x)
  \prod_b P_{\eta_b}(x)
  \prod_c Q_{\zeta_c}(x)
 \right)
\end{equation}
is a realizable volume polynomial.  If the product is nonzero, all conclusions of Theorems~\ref{thm:master-RKT},~\ref{thm:master-HR},~\ref{thm:symmetric-permutahedron}, and~\ref{thm:block-covariance} apply; in particular, its support is exactly the set of lattice points of its Newton polytope.  If it is zero, the homogeneous coefficient inequalities are trivial.
\end{corollary}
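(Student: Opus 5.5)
The proof combines the individual realizations with the product closure of Proposition~\ref{prop:permanence}(iv). By Theorem~\ref{thm:main-A}, each $\cN(s_{\theta_a}(x_1,\ldots,x_n))$ is a realizable volume polynomial. By Theorem~\ref{thm:main-PQ}, each $\cN(P_{\eta_b})$ and each $\cN(Q_{\zeta_c})$ is a realizable volume polynomial. Zero factors are allowed, since the zero polynomial is realizable with $q=0$.

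The product $F=\prod_a s_{\theta_a}\prod_b P_{\eta_b}\prod_c Q_{\zeta_c}$ is therefore a product of finitely many polynomials $F_1,\ldots,F_m$ whose factorial normalizations are realizable. The second assertion of Proposition~\ref{prop:permanence}(iv), that is~\cite[Corollary~2.6]{GHMSW25}, then gives that $\cN(\prod F_a)=\cN(F)$ is realizable volume. If some factor vanishes, then $F=0$. This case is covered by the convention $q=0$, and all homogeneous coefficient inequalities hold trivially because every coefficient is zero.

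Now suppose $F\neq0$. Then $F$ is a nonzero homogeneous polynomial with nonnegative coefficients whose normalization is realizable volume, which is exactly the standing hypothesis~\eqref{eq:generic-F} of Section~\ref{sec:applications}. Hence Theorems~\ref{thm:master-RKT},~\ref{thm:master-HR}, and~\ref{thm:block-covariance} apply directly.

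For Theorem~\ref{thm:symmetric-permutahedron} we also need $F$ to be symmetric. Each factor is symmetric, since ordinary skew Schur polynomials and skew $P/Q$ polynomials are symmetric functions specialized to $x_1,\ldots,x_n$, so the product is symmetric as well. The identity $\Supp(F)=\Newt(F)\cap\Z^n$ then follows from \eqref{eq:symmetric-support}. Alternatively, it follows from $M$-convexity in Theorem~\ref{thm:master-HR} together with the fact that an $M$-convex set is the set of lattice points of its base polytope.

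No step here is a genuine obstacle. The only point requiring care is the nonnegativity and homogeneity of $F$. Homogeneity holds because each factor is homogeneous of degree equal to its shape size. Nonnegativity holds because each factor has nonnegative coefficients, being a tableau generating function, so the product does too.
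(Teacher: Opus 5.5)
Your proposal is correct and matches the paper's proof: each factor has a realizable factorial normalization by Theorems~\ref{thm:main-A} and~\ref{thm:main-PQ}, Proposition~\ref{prop:permanence}(iv) gives closure under products, and then the master theorems apply. Your remarks on symmetry (needed for Theorem~\ref{thm:symmetric-permutahedron}), nonnegativity, homogeneity, and the zero case just make explicit what the paper's two-line proof leaves implicit.
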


\begin{proof}
Each factor has realizable factorial normalization by the type~A theorem and Theorem~\ref{thm:main-PQ}.  Apply Proposition~\ref{prop:permanence}(iv), followed by the master theorems.
\end{proof}

\appendix

\section{The type \texorpdfstring{$D$}{D} spinor companion}
\label{appA:main}

\begin{center}
{\large\textsc{Zhenpeng Wang}\footnote{Department of Mathematics, The University of Hong Kong, u3011717@connect.hku.hk}}
\end{center}
\smallskip

\noindent This appendix is contributed by Zhenpeng Wang.  We construct the spinor counterpart of the Lagrangian model in Section~\ref{sec:typeC}.
The first two subsections give the type~$D$ cycle transform, the Richardson
realization of skew Schur $Q$-functions, and the associated
projective-bundle formula.  We then compare the Schubert geometry of the
Lagrangian and spinor constructions.  In characteristic two this comparison
is induced by the exceptional isogenies between the symplectic and odd
orthogonal groups, and the resulting morphisms explain both the power of two
relating the skew $P$- and $Q$-normalizations and the one-step difference
between the two projective-bundle shifts.

We use the notation and conventions of Sections~\ref{sec:preliminaries},~\ref{sec:total-chern}, and~\ref{sec:typeC}.  Thus
$\Omega_{\mathbb Q}$ is the rational Schur $P/Q$ Hopf algebra,
$\Gamma_{\mathbb Z}=\bigoplus_{\nu\in\mathcal{SP}}\mathbb ZQ_\nu$, and
$\langle P_\lambda,Q_\nu\rangle=\delta_{\lambda\nu}$.  Projective bundles
are taken in Grothendieck's quotient convention.  Integration over a
possibly singular variety means operational-Chow degree; Chow groups have
integral coefficients unless a subscript $\mathbb Q$ is displayed.

\subsection{The spinor Schubert specialization}
\label{appA:spinor-specialization}

Let $W$ be a complex vector space of dimension $2N+2$ endowed with a
nondegenerate quadratic form, and fix one component
\[
  G_D:=\operatorname{OG}^{+}(N+1,2N+2)
\]
of the Grassmannian of maximal isotropic $(N+1)$-planes.  Thus
$G_D\simeq \operatorname{Spin}(W)/P_{\mathrm{spin}}$, where $P_{\mathrm{spin}}$ is the
spinor maximal parabolic corresponding to the chosen component.  Its universal sequence
is
\begin{equation}
\label{appA:eq:universal-D}
  0\longrightarrow\mathcal S\longrightarrow
  W\otimes\mathcal O_{G_D}\longrightarrow\mathcal Q_D\longrightarrow0.
\end{equation}
The quadratic form identifies $\mathcal Q_D\simeq\mathcal S^\vee$, and hence
\begin{equation}
\label{appA:eq:Chern-relation-D}
  c_t(\mathcal Q_D)c_{-t}(\mathcal Q_D)=1.
\end{equation}

Put
\[
  \rho_N=(N,N-1,\ldots,1),
  \qquad
  M=|\rho_N|=\frac{N(N+1)}2.
\]
Strict partitions $\nu\subseteq\rho_N$ index Schubert classes
$\tau_\nu\in A^{|\nu|}(G_D)$.  For such a partition, let
\[
  \nu^\vee:=\rho_N\setminus\nu,
\]
the strict partition whose parts are the elements of $\{1,\ldots,N\}$ not
occurring in $\nu$.  Then
\[
  |\nu^\vee|=M-|\nu|,
  \qquad
  \ell(\nu^\vee)=N-\ell(\nu),
\]
and
\begin{equation}
\label{appA:eq:D-pairing}
  \int_{G_D}\tau_{\nu^\vee}\tau_\eta=\delta_{\nu\eta}.
\end{equation}
The special classes satisfy
\begin{equation}
\label{appA:eq:D-special-classes}
  c_r(\mathcal Q_D)=2\tau_{(r)}\quad(1\le r\le N),
  \qquad
  c_{N+1}(\mathcal Q_D)=0.
\end{equation}
These are the standard integral normalizations of spinor Schubert calculus;
see \cite{KT04,PR97}.  Since $G_D$ is cellular, its integral Chow ring
is identified with its even integral cohomology.

Let
\[
  \Gamma^P_{\mathbb Z}
  :=\bigoplus_{\nu\in\mathcal{SP}}\mathbb ZP_\nu
  \subset\Omega_{\mathbb Q}.
\]
This is the integral Schur-$P$ subring; as a lattice, it is dual to
$\Gamma_{\mathbb Z}$ under the pairing above
\cite[Chapter~III, Section~8]{Macdonald}.  Since
$q_r=Q_{(r)}=2P_{(r)}$, every generator $q_r$ belongs to
$\Gamma^P_{\mathbb Z}$.

\begin{proposition}[Spinor Schubert specialization]
\label{appA:prop:spinor-specialization}
There is a graded $\mathbb Q$-algebra homomorphism
\[
  \Phi^D_{N,\mathbb Q}:\Omega_{\mathbb Q}
  \longrightarrow A^*(G_D)_{\mathbb Q},
  \qquad
  q_r\longmapsto c_r(\mathcal Q_D),
\]
which restricts to a surjective graded ring homomorphism
\[
  \varphi_N^D:\Gamma^P_{\mathbb Z}\longrightarrow A^*(G_D).
\]
It satisfies $\varphi_N^D(q_r)=c_r(\mathcal Q_D)$ for every $r\geq0$.
For every strict partition $\nu$,
\begin{equation}
\label{appA:eq:D-specialization}
  \varphi_N^D(P_\nu)=
  \begin{cases}
    \tau_\nu,&\nu\subseteq\rho_N,\\
    0,&\nu\not\subseteq\rho_N.
  \end{cases}
\end{equation}
Consequently,
\begin{equation}
\label{appA:eq:D-specialization-Q}
  \Phi^D_{N,\mathbb Q}(Q_\nu)
  =2^{\ell(\nu)}\tau_\nu
  \qquad(\nu\subseteq\rho_N).
\end{equation}
\end{proposition}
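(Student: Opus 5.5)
The plan mirrors the proof of Proposition~\ref{prop:stable-map}, with the roles of $P$ and $Q$ interchanged.

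\textbf{Step 1: the rational homomorphism.} Over $\mathbb Q$, $\Omega_{\mathbb Q}$ is generated by $q_1,q_2,\ldots$, subject only to the coefficients of $q(t)q(-t)=1$; equivalently it is the polynomial ring in the odd $q_{2k+1}$ (Macdonald, III.8). By \eqref{appA:eq:Chern-relation-D}, the assignment $q_r\mapsto c_r(\mathcal Q_D)$ respects these relations. It therefore defines a graded $\mathbb Q$-algebra homomorphism $\Phi^D_{N,\mathbb Q}$, and by construction $\Phi^D_{N,\mathbb Q}(q_r)=c_r(\mathcal Q_D)$ for all $r$, including $c_r(\mathcal Q_D)=0$ for $r>N+1$ and, by \eqref{appA:eq:D-special-classes}, for $r=N+1$.

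\textbf{Step 2: the values on $P_\nu$.} First, $P_{(r)}=q_r/2\mapsto c_r(\mathcal Q_D)/2=\tau_{(r)}$ for $1\le r\le N$, and $P_{(r)}\mapsto0$ for $r>N$. For general $\nu$, I would use the Pfaffian \eqref{appA:eq:D-specialization}-type formula $P_\nu=\operatorname{Pf}(P_{\nu_i,\nu_j})$ with $P_{a,b}=2^{-2}Q_{a,b}$ for $a>b>0$ and $P_{a,0}=P_{(a)}$, appending a zero part when the length is odd. I would compare it with the integral $\widetilde P$-Giambelli (Pfaffian) presentation of spinor Schubert classes of Kresch--Tamvakis \cite{KT04} and Pragacz--Ratajski \cite{PR97}, in which $\tau_\nu$ is the same Pfaffian in the special classes $\tau_{(r)}=c_r(\mathcal Q_D)/2$. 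The identity to check is that
\[
 P_{a,b}=P_aP_b+2\sum_{j\ge1}(-1)^jP_{a+j}P_{b-j}+(-1)^bP_{a+b}
\]
matches the two-row spinor Giambelli formula $\tau_{a,b}=\tau_a\tau_b+2\sum_{j=1}^{b-1}(-1)^j\tau_{a+j}\tau_{b-j}+(-1)^b\tau_{a+b}$. This follows by substituting $q=2P$ into \eqref{eq:Q-two-row}, since the $j=b$ term involves $q_0=1$ rather than $2P_0$. With this, $\Phi(P_\nu)=\tau_\nu$ for $\nu\subseteq\rho_N$.

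When $\nu_1>N$, every entry in the first Pfaffian row maps to $0$, because each term contains $P_{\nu_1+k}$ with $k\ge0$ (or $q_{\nu_1+b}$), all of which vanish. Expanding along that row gives $0$, exactly as in Proposition~\ref{prop:stable-map}.

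\textbf{Step 3: integrality and consequences.} Because each $P_\nu$ maps to an integral class, $\Phi$ sends the lattice $\Gamma^P_{\mathbb Z}$ into $A^*(G_D)$, the latter being torsion-free and cellular, so it embeds in $A^*_{\mathbb Q}$. Closure of $\Gamma^P_{\mathbb Z}$ under products (Macdonald) makes the restriction $\varphi^D_N$ a ring homomorphism. It is surjective because every Schubert basis element $\tau_\nu$ is hit. Finally, \eqref{appA:eq:D-specialization-Q} follows from $Q_\nu=2^{\ell(\nu)}P_\nu$.

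\textbf{Main obstacle.} The delicate step is the normalization in Step~2: confirming that the cited integral spinor Giambelli formula is literally the $P$-Pfaffian in the classes $\tau_{(r)}$, with the correct treatment of the $j=b$ and zero-part boundary terms. If the literature states it with $\widetilde P$ written in $e_r$ variables, I would instead identify $\widetilde P_\nu(\mathcal Q_D)$ with the image of $P_\nu$ under the substitution $q_r\mapsto c_r(\mathcal Q_D)$ directly.
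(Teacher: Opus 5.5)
Your proposal is correct and follows the paper's proof essentially step for step. The relation $c_t(\mathcal Q_D)c_{-t}(\mathcal Q_D)=1$ defines $\Phi^D_{N,\mathbb Q}$, and $q_r=2P_{(r)}$ turns the $P$-Pfaffian into the integral spinor $\widetilde P$-Giambelli formula of Kresch--Tamvakis and Pragacz--Ratajski. Your check of the $j=b$ boundary term makes explicit the normalization that the paper handles by citation; note that in your first displayed identity the inner sum should run only over $1\le j\le b-1$. The row expansion then kills the case $\nu_1>N$, and integrality, surjectivity, and $Q_\nu=2^{\ell(\nu)}P_\nu$ finish the argument exactly as in the paper.
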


\begin{proof}
Equation~\eqref{appA:eq:Chern-relation-D} is precisely the relation
$q(t)q(-t)=1$ in the standard presentation of $\Omega_{\mathbb Q}$; it
therefore defines $\Phi^D_{N,\mathbb Q}$.  Since
$P_{(r)}=q_r/2$, equations~\eqref{appA:eq:D-special-classes} give
$\Phi^D_{N,\mathbb Q}(P_{(r)})=\tau_{(r)}$ for $1\le r\le N$.
Under the substitution $q_r\mapsto c_r(\mathcal Q_D)$, the classical
two-row and Pfaffian formulas for $P_\nu$ become the modified
$\widetilde P_\nu$ Giambelli polynomials.  Here and below,
$\widetilde P_\nu(c_1(\mathcal Q_D),c_2(\mathcal Q_D),\ldots)$ means the
universal integral spinor Giambelli polynomial in the special classes
$\tau_{(r)}=c_r(\mathcal Q_D)/2$; it is not the evaluation of a rational
polynomial with denominators on an arbitrary sequence of Chern classes.
The integral spinor Giambelli formula therefore gives
\[
  \Phi^D_{N,\mathbb Q}(P_\nu)
  =\widetilde P_\nu\bigl(c_1(\mathcal Q_D),c_2(\mathcal Q_D),\ldots\bigr)
  =\tau_\nu
  \qquad(\nu\subseteq\rho_N);
\]
see \cite{PR97} and \cite[\S3.1]{KT04}.  These images are integral.  If
$\nu_1>N$, expand the Pfaffian along the row containing $\nu_1$.  In each
corresponding two-row entry, every summand contains a one-row factor of index
at least $N+1$.  Its image is zero by
\eqref{appA:eq:D-special-classes} and the rank of $\mathcal Q_D$.  This proves
\eqref{appA:eq:D-specialization} and shows that
$\Phi^D_{N,\mathbb Q}$ restricts to $\Gamma^P_{\mathbb Z}$.  The special
Schubert classes generate $A^*(G_D)$, so the restriction is surjective.
Finally, \eqref{appA:eq:D-specialization-Q} follows from
$Q_\nu=2^{\ell(\nu)}P_\nu$.
\end{proof}

For an irreducible projective subvariety $X\subseteq G_D$ of dimension $d$
and an integer $n\ge1$, define
\begin{equation}
\label{appA:eq:D-cycle-transform}
  d_\nu^D(X):=\int_X\tau_\nu|_X,
  \qquad
  \Theta^D_{X,n}(x):=
  \sum_{\substack{\nu\subseteq\rho_N\text{ strict}\\|\nu|=d}}
  d_\nu^D(X)Q_\nu(x_1,\ldots,x_n).
\end{equation}
The integers $d_\nu^D(X)$ are nonnegative: by Kleiman transversality, a
general translate of the Schubert cycle representing $\tau_\nu$ meets $X$
properly in an effective zero-cycle.

\begin{theorem}[Type $D$ cycle transform]
\label{appA:thm:D-cycle-transform}
For every $\alpha\in\mathbb N^n$ with $|\alpha|=d$,
\begin{equation}
\label{appA:eq:D-cycle-coeff}
  [x^\alpha]\Theta^D_{X,n}(x)
  =\int_X\prod_{i=1}^n c_{\alpha_i}(\mathcal Q_D|_X).
\end{equation}
Equivalently,
\begin{equation}
\label{appA:eq:D-cycle-Chern}
  \Theta^D_{X,n}(x)
  =\int_X\prod_{i=1}^n c_{x_i}(\mathcal Q_D|_X).
\end{equation}
Consequently, $\cN(\Theta^D_{X,n})$ is a realizable volume polynomial
over $\mathbb C$.
\end{theorem}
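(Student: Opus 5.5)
The plan mirrors the proof of Theorem~\ref{thm:typeC-transform}, with the roles of $P$ and $Q$ exchanged.

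\emph{Step 1: the Cauchy identity in the dual form.} Swap $x$ and $y$ in the Schur $P/Q$ Cauchy identity~\eqref{eq:PQ-Cauchy}. Since the product $\prod_{i,j}(1+x_iy_j)/(1-x_iy_j)$ is symmetric in the two alphabets, this gives
\[
 \prod_{i=1}^n q_{\alpha_i}
 =\sum_{\nu\in\mathcal{SP}}[x^\alpha]Q_\nu(x_1,\ldots,x_n)\,P_\nu .
\]
This is the analogue of~\eqref{eq:Cauchy-straight-coeff} with $P_\nu$ and $Q_\nu$ interchanged. Both sides lie in $\Gamma^P_{\mathbb Z}$: the left side because each $q_r=2P_{(r)}$, and the right side because the coefficients $[x^\alpha]Q_\nu$ are integers.

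\emph{Step 2: specialize to $G_D$.} Apply $\varphi^D_N$ from Proposition~\ref{appA:prop:spinor-specialization}. The left side becomes $\prod_i c_{\alpha_i}(\mathcal Q_D)$. On the right side, each $P_\nu$ goes to $\tau_\nu$ when $\nu\subseteq\rho_N$ and to $0$ otherwise. Take the component of degree $d=|\alpha|$; the result is the identity in $A^d(G_D)$
\[
 \prod_i c_{\alpha_i}(\mathcal Q_D)
 =\sum_{\nu\subseteq\rho_N,\ |\nu|=d}[x^\alpha]Q_\nu\,\tau_\nu .
\]
Restrict this identity to $X$ and integrate. This yields~\eqref{appA:eq:D-cycle-coeff} directly from the definition~\eqref{appA:eq:D-cycle-transform}. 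Summing over $\alpha$ with $|\alpha|=d$ gives the total Chern form~\eqref{appA:eq:D-cycle-Chern}. Only the degree-$d$ part of the integrand contributes to the integral over the $d$-fold $X$.

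\emph{Step 3: realizable volume.} The bundle $\mathcal Q_D$ is a quotient of the trivial bundle $W\otimes\mathcal O$, so it is globally generated, and so is its restriction to $X$. Theorem~\ref{thm:total-chern-volume}, applied to $n$ copies of $\mathcal Q_D|_X$, then shows that $\cN(\Theta^D_{X,n})$ is a realizable volume polynomial.

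\emph{Main obstacle.} The delicate point is the integrality and normalization in Step 2. Proposition~\ref{appA:prop:spinor-specialization} is stated for the integral lattice $\Gamma^P_{\mathbb Z}$. It sends $P_\nu\mapsto\tau_\nu$ and $q_r\mapsto c_r(\mathcal Q_D)$, not $q_r\mapsto\tau_{(r)}$. The identity of Step 1 must therefore be read inside $\Gamma^P_{\mathbb Z}$, with the $\mathbb Q$-algebra map $\Phi^D_{N,\mathbb Q}$ used to justify applying the specialization multiplicatively. Working with $\Phi^D_{N,\mathbb Q}$ and then observing that both sides are integral classes avoids any factor-of-two ambiguity. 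I expect no issue with torsion, since $A^*(G_D)$ is torsion-free because $G_D$ is cellular.
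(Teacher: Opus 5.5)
Your proposal is correct and matches the paper's proof step for step. Both use the dual form of the Schur $P/Q$ Cauchy identity, apply $\varphi^D_N$ from Proposition~\ref{appA:prop:spinor-specialization}, restrict to $X$ and integrate, and then invoke Theorem~\ref{thm:total-chern-volume} for $n$ copies of the globally generated bundle $\mathcal Q_D|_X$; your extra care about reading the identity inside $\Gamma^P_{\mathbb Z}$ (or via $\Phi^D_{N,\mathbb Q}$) is harmless and consistent with how that proposition is stated.
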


\begin{proof}
The second form of the Schur $P/Q$ Cauchy identity gives
\[
  \prod_{i=1}^n q_{\alpha_i}
  =\sum_{\nu\in\mathcal{SP}}[x^\alpha]Q_\nu(x)P_\nu.
\]
Apply Proposition~\ref{appA:prop:spinor-specialization}, restrict to $X$, and
take degree.  This proves \eqref{appA:eq:D-cycle-coeff}; summing over
$|\alpha|=d$ gives \eqref{appA:eq:D-cycle-Chern}.  The last assertion follows from Theorem~\ref{thm:total-chern-volume},
applied to $n$ copies of the globally generated bundle $\mathcal Q_D|_X$.
\end{proof}

\subsection{Spinor Richardson varieties}
\label{appA:spinor-Richardson}

Fix strict partitions $\mu\subseteq\lambda$ and work in the stable range
$N\ge\max\{1,\lambda_1\}$.  For a pair of opposite isotropic flags, define the spinor Richardson variety by
\begin{equation}
\label{appA:eq:D-Richardson}
  R^D_{\lambda/\mu}
  :=X^D_{\lambda^\vee}(F_\bullet)
    \cap (X^D)^\mu(F_\bullet^{\mathrm{opp}}).
\end{equation}
Standard Richardson theory gives that $R^D_{\lambda/\mu}$ is integral and
projective, of dimension $d=|\lambda|-|\mu|$ \cite[Section~1.3]{Brion05}, and
\begin{equation}
\label{appA:eq:D-Richardson-class}
  i_*[R^D_{\lambda/\mu}]
  =\tau_{\lambda^\vee}\tau_\mu\cap[G_D].
\end{equation}

\begin{theorem}[Spinor Richardson formula]
\label{appA:thm:D-Richardson}
Let $\mathcal Q_R=\mathcal Q_D|_{R^D_{\lambda/\mu}}$.  For every
$\alpha\in\mathbb N^n$ with $|\alpha|=d$,
\begin{equation}
\label{appA:eq:D-Richardson-coeff}
  [x^\alpha]Q_{\lambda/\mu}(x_1,\ldots,x_n)
  =\int_{R^D_{\lambda/\mu}}
    \prod_{i=1}^n c_{\alpha_i}(\mathcal Q_R).
\end{equation}
Equivalently,
\begin{equation}
\label{appA:eq:D-Richardson-poly}
  Q_{\lambda/\mu}(x_1,\ldots,x_n)
  =\int_{R^D_{\lambda/\mu}}
    \prod_{i=1}^n c_{x_i}(\mathcal Q_R).
\end{equation}
\end{theorem}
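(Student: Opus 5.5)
The plan mirrors the type~$C$ Richardson argument of Theorem~\ref{thm:main-PQ}, with the roles of $P$ and $Q$ interchanged. The first step is a dual Cauchy extraction, analogous to Lemma~\ref{lem:Cauchy-coeff}. Expand
\[
 Q_{\lambda/\mu}=\sum_\nu b_{\mu\nu}^\lambda Q_\nu .
\]
By \eqref{eq:PQ-pairing} and the second coproduct identity in \eqref{eq:skew-coproduct},
\[
 b_{\mu\nu}^\lambda=\angles{Q_{\lambda/\mu},P_\nu}
 =\angles{\Delta Q_\lambda,P_\nu\otimes P_\mu}
 =[P_\lambda](P_\mu P_\nu),
\]
where $[P_\lambda]$ denotes the coefficient in the $P$-basis. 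The Cauchy identity \eqref{eq:PQ-Cauchy}, read with $x$ and $y$ exchanged, gives
\[
 \prod_i q_{\alpha_i}=\sum_\nu[x^\alpha]Q_\nu(x)\,P_\nu .
\]
Combining the two,
\[
 [x^\alpha]Q_{\lambda/\mu}
 =\sum_\nu[P_\lambda](P_\mu P_\nu)\,[x^\alpha]Q_\nu
 =[P_\lambda]\Bigl(P_\mu\prod_i q_{\alpha_i}\Bigr).
\]
The element $P_\mu\prod_i q_{\alpha_i}$ lies in $\Gamma^P_{\mathbb Z}$, because $q_r=2P_{(r)}$ and $\Gamma^P_{\mathbb Z}$ is a ring.

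Next apply $\varphi^D_N$ from Proposition~\ref{appA:prop:spinor-specialization}. Expand $P_\mu\prod_i q_{\alpha_i}=\sum_\eta a_\eta P_\eta$ with integer coefficients $a_\eta$; every $\eta$ in this sum has size $|\lambda|$. By \eqref{appA:eq:D-specialization}, $\varphi^D_N$ sends $P_\eta$ to $\tau_\eta$ when $\eta\subseteq\rho_N$ and to $0$ otherwise. Since $N\ge\lambda_1$, the strict partition $\lambda$ lies in $\rho_N$, so the $P_\lambda$ term survives with image $a_\lambda\tau_\lambda$.

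Now multiply by $\tau_{\lambda^\vee}$ and integrate over $G_D$. The pairing \eqref{appA:eq:D-pairing} applies to the surviving terms $\tau_\eta$ with $\eta\subseteq\rho_N$ and $|\eta|=|\lambda|$, and it extracts exactly $a_\lambda$. The left-hand side of the resulting identity is
\[
 \int_{G_D}\tau_{\lambda^\vee}\tau_\mu\prod_i c_{\alpha_i}(\mathcal Q_D),
\]
using $\varphi^D_N(P_\mu)=\tau_\mu$ and $\varphi^D_N(q_r)=c_r(\mathcal Q_D)$. The class formula \eqref{appA:eq:D-Richardson-class} together with the projection formula converts this integral to
\[
 \int_{R^D_{\lambda/\mu}}\prod_i c_{\alpha_i}(\mathcal Q_R),
\]
which proves \eqref{appA:eq:D-Richardson-coeff}. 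Summing over $|\alpha|=d$ then gives \eqref{appA:eq:D-Richardson-poly}, because $c_{x_i}$ contributes $c_{\alpha_i}x_i^{\alpha_i}$ and only the terms with $|\alpha|=d$ have nonzero degree on the $d$-fold $R^D_{\lambda/\mu}$.

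The step requiring care is the bookkeeping of normalizations. Here it is $P_\nu\mapsto\tau_\nu$ that is integral, not $Q_\nu\mapsto\tau_\nu$, so the Cauchy identity must be applied in the $\prod_i q_{\alpha_i}=\sum_\nu Q_\nu(x)P_\nu$ orientation, and the skew coefficient must be extracted in the $P$-basis. Choosing the other orientation would introduce spurious factors of $2^{\ell(\lambda)-\ell(\mu)}$.

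As a consistency check, Lemma~\ref{lem:skew-PQ-scaling} gives $Q_{\lambda/\mu}=2^{\ell(\lambda)-\ell(\mu)}P_{\lambda/\mu}$. Comparing with Theorem~\ref{thm:main-PQ}, the type~$D$ integral must therefore equal $2^{\ell(\lambda)-\ell(\mu)}$ times the type~$C$ integral, which is consistent with the degree comparison in Theorem~\ref{thm:intro-char2-comparison}. I would also verify the small case $\lambda=(1)$, $\mu=\varnothing$: both sides equal $2\sum_i x_i$, using $c_1(\mathcal Q_D)=2\tau_{(1)}$.
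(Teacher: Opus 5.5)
Your proposal is correct and follows the paper's proof essentially step for step. Both arguments use the dual skew-Cauchy extraction $[x^\alpha]Q_{\lambda/\mu}=[P_\lambda]\bigl(P_\mu\prod_i q_{\alpha_i}\bigr)$, apply $\varphi^D_N$, pick out the $P_\lambda$-coefficient by pairing with $\tau_{\lambda^\vee}$, and pass to $R^D_{\lambda/\mu}$ via \eqref{appA:eq:D-Richardson-class} and the projection formula. The only difference is that you derive the dual Cauchy extraction explicitly by Hopf duality, mirroring Lemma~\ref{lem:Cauchy-coeff}, whereas the paper simply invokes it.
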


\begin{proof}
The dual skew-Cauchy identity gives
\[
  [x^\alpha]Q_{\lambda/\mu}
  =[P_\lambda]\left(P_\mu\prod_{i=1}^nq_{\alpha_i}\right).
\]
Expand the expression in the $P$-basis and apply $\varphi_N^D$.
Terms indexed by strict partitions with first part greater than $N$ vanish by
Proposition~\ref{appA:prop:spinor-specialization}; among the surviving terms,
pairing with $\tau_{\lambda^\vee}$ extracts precisely the coefficient of
$P_\lambda$, because $\lambda\subseteq\rho_N$.  Equation~\eqref{appA:eq:D-Richardson-class} and the projection formula identify the
resulting ambient intersection number with the right-hand side of
\eqref{appA:eq:D-Richardson-coeff}.  Summing over $|\alpha|=d$ proves
\eqref{appA:eq:D-Richardson-poly}.
\end{proof}

Define
\begin{equation}
\label{appA:eq:D-projective-bundle}
  Y^D_{\lambda/\mu,n}
  :=\bigl(\mathbb P_{R^D_{\lambda/\mu}}(\mathcal Q_R)\bigr)
    ^{\times_{R^D_{\lambda/\mu}}n},
\end{equation}
and let $\xi_i$ be the tautological quotient class on the $i$th factor.

\begin{corollary}[Spinor projective-bundle model]
\label{appA:cor:D-projective-bundle}
Put $D_D=d+nN$.  Then
\begin{equation}
\label{appA:eq:D-volume-model}
  \frac{1}{D_D!}
  \int_{Y^D_{\lambda/\mu,n}}
  \left(\sum_{i=1}^n x_i\xi_i\right)^{D_D}
  =\cN\!\left((x_1\cdots x_n)^NQ_{\lambda/\mu}(x)\right).
\end{equation}
\end{corollary}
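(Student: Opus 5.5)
The plan is to apply part~(iii) of Theorem~\ref{thm:total-chern-volume} to $X=R:=R^D_{\lambda/\mu}$, with $n$ copies of the globally generated bundle $\cF_i=\mathcal Q_R$. We then identify the resulting top-degree polynomial using Theorem~\ref{appA:thm:D-Richardson}. The only care needed is in choosing the generating sequence, since the kernel's dual must be exactly $\mathcal Q_R$, of rank $N+1$. That fixes the shift at $\eta_i=N$.

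First, restrict the universal sequence~\eqref{appA:eq:universal-D} to $R$:
\[
 0\longrightarrow\mathcal S|_R\longrightarrow W\otimes\mathcal O_R\longrightarrow\mathcal Q_R\longrightarrow0 .
\]
This is a generating sequence of the form~\eqref{eq:generating-sequences}, with $\cK=\mathcal S|_R$. The quadratic form gives $\mathcal Q_D\simeq\mathcal S^\vee$, so $\cG=\cK^\vee\simeq\mathcal Q_R$. Its rank is $s=N+1\geq1$, so no enlargement of $W$ is needed, and $\eta=s-1=N$.

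Second, with this choice the fibre product $Y$ of Theorem~\ref{thm:total-chern-volume} is exactly $Y^D_{\lambda/\mu,n}$ from~\eqref{appA:eq:D-projective-bundle}. Since $R$ is integral and projective of dimension $d$ by~\eqref{appA:eq:D-Richardson-class} and standard Richardson theory, the dimension is $D=d+\sum_i\eta_i=d+nN=D_D$. Identity~\eqref{eq:general-shifted-volume} then gives
\[
 \frac1{D_D!}\int_{Y^D_{\lambda/\mu,n}}\Bigl(\sum_i x_i\xi_i\Bigr)^{D_D}=\cN\bigl((x_1\cdots x_n)^N T_{R,\mathcal Q_R}(x)\bigr).
\]
Here $T_{R,\mathcal Q_R}$ is the polynomial of~\eqref{eq:total-chern-polynomial}, built from $\int_R\prod_i c_{\alpha_i}(\mathcal Q_R)$. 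The underlying pushforward computation uses $h_m(\cG)=c_m(\mathcal Q_R)$. This follows from Whitney applied to the sequence above, since $c_z(\mathcal Q_R)=c_z(\mathcal S|_R)^{-1}=c_{-z}(\mathcal Q_R)^{-1}$; it is the type~$D$ analogue of Lemma~\ref{lem:c-equals-h} and is equivalent to~\eqref{appA:eq:Chern-relation-D}.

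Third, by~\eqref{appA:eq:D-Richardson-coeff}, $T_{R,\mathcal Q_R}(x)=Q_{\lambda/\mu}(x_1,\ldots,x_n)$ coefficientwise, which gives~\eqref{appA:eq:D-volume-model}.

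No real obstacle is expected. The one point that needs checking is that the quotient-convention bundle $\mathbb P_R(\cG)$ is formed with $\cG=\mathcal Q_R$ itself, not its dual. That is what makes the extra shift $N$ rather than $N-1$: rank $N+1$ in type~$D$, against the rank-$N$ bundle $\cE$ in type~$C$.
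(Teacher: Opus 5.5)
Your proposal is correct and follows essentially the same route as the paper. Both arguments restrict the universal spinor sequence to $R^D_{\lambda/\mu}$, observe that the dual of the kernel $\mathcal S|_R$ is $\mathcal Q_R$ of rank $N+1$ (so part~(iii) of Theorem~\ref{thm:total-chern-volume} applies with shift $N$ and $D=d+nN$), and identify the coefficients by Theorem~\ref{appA:thm:D-Richardson}; your Whitney check that $h_m(\mathcal Q_R)=c_m(\mathcal Q_R)$ is already built into the proof of Theorem~\ref{thm:total-chern-volume}, so it is harmless but not needed.
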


\begin{proof}
In the universal sequence~\eqref{appA:eq:universal-D}, the evaluation kernel
is $\mathcal S$ and its dual is $\mathcal Q_D$.  Apply the explicit projective-bundle identity in
Theorem~\ref{thm:total-chern-volume} to $n$ copies of $\mathcal Q_R$.
Since $\operatorname{rk}\mathcal Q_R=N+1$, the shift is $N$; the coefficient
identification is Theorem~\ref{appA:thm:D-Richardson}.
\end{proof}

\subsection{The common integral Schubert geometry}
\label{appA:common-Schubert-geometry}

Assume first that $N\geq2$.  Let $\mathbf G_B$, $\mathbf G_C$, and
$\mathbf G_D$ be the split simply connected Chevalley group schemes over
$\mathbb Z$ of types $B_N$, $C_N$, and $D_{N+1}$, respectively.  Choose
split pinnings
\[
 (\mathbf T_B,\mathbf B_B,\{x^B_\gamma\}_{\gamma\in\Phi_B}),
 \qquad
 (\mathbf T_C,\mathbf B_C,\{x^C_\delta\}_{\delta\in\Phi_C}),
\]
and choose a split pinning of $\mathbf G_D$.  Let
$\mathbf P_B\supseteq\mathbf B_B$ and
$\mathbf P_C\supseteq\mathbf B_C$ be the maximal parabolics corresponding
to the terminal nodes, and let $\mathbf P_D\subseteq\mathbf G_D$ be the
spinor maximal parabolic corresponding to the chosen component.  Put
\[
 \mathbf X_B:=\mathbf G_B/\mathbf P_B,
 \qquad
 \mathbf X_C:=\mathbf G_C/\mathbf P_C,
 \qquad
 \mathbf X_D:=\mathbf G_D/\mathbf P_D.
\]
Over $\mathbb C$, the last two homogeneous schemes recover the varieties
used above:
\[
 \mathbf X_{C,\mathbb C}\simeq\operatorname{LG}(N,2N)=G_C,
 \qquad
 \mathbf X_{D,\mathbb C}\simeq
 \operatorname{OG}^{+}(N+1,2N+2)=G_D.
\]
Moreover, the standard odd/even spinor isomorphism
\[
 \mathbf X_{B,\mathbb C}\simeq\operatorname{OG}(N,2N+1)
 \xrightarrow{\sim}G_D
\]
identifies the strict-partition Schubert stratifications; see
\cite[Introduction and \S3.1]{KT04}.  The Weyl groups of types $B_N$ and
$C_N$ are both the signed permutation group, and their terminal-parabolic
quotients have the same Bruhat poset, indexed by strict partitions
$\nu\subseteq\rho_N$.  Via the odd/even spinor identification, the same
indexing is used on $G_D$.  Thus $R^C_{\lambda/\mu}$ and
$R^D_{\lambda/\mu}$ realize the same interval $[\mu,\lambda]$ in the two
Bruhat posets.

\begin{lemma}[Integral Schubert calculus and base change]
\label{appA:lem:integral-base-change}
Let $K$ be an algebraically closed field.  The Chow rings of
$\mathbf X_{C,K}$ and $\mathbf X_{D,K}$ are free abelian on their
Schubert bases, and base change carries each Schubert class to the class
with the same Weyl-group index.  If $\mathcal E$ and $\mathcal Q_D$ denote
the corresponding tautological quotient bundles, then, in integral Chow,
\begin{align}
 c_r(\mathcal E)&=\sigma_{(r)} &&(1\leq r\leq N),
 &\sigma_\nu&=\widetilde Q_\nu
   \bigl(c_1(\mathcal E),c_2(\mathcal E),\ldots\bigr),
 \label{appA:eq:integral-Giambelli-C}\\
 c_r(\mathcal Q_D)&=2\tau_{(r)} &&(1\leq r\leq N),
 &c_{N+1}(\mathcal Q_D)&=0,
 \label{appA:eq:integral-special-D}\\
 &&&
 &\tau_\nu&=\widetilde P_\nu
   \bigl(c_1(\mathcal Q_D),c_2(\mathcal Q_D),\ldots\bigr)
 \label{appA:eq:integral-Giambelli-D}
\end{align}
for every strict $\nu\subseteq\rho_N$.  Here
$\widetilde P_\nu(\mathcal Q_D)$ denotes the universal integral spinor
Giambelli polynomial in the special classes
$\tau_{(r)}=c_r(\mathcal Q_D)/2$; it is not an a priori evaluation of a
polynomial with denominators in an arbitrary sequence of Chern classes.
In particular, these identities remain valid in characteristic two.
\end{lemma}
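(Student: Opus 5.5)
The plan is to prove the lemma over $\mathbb Z$ first and then deduce everything over an arbitrary algebraically closed field $K$ by base change. The three ingredients are the Bruhat (Schubert-cell) decomposition of the homogeneous schemes $\mathbf X_C$ and $\mathbf X_D$, the behaviour of Chern classes under pullback, and the classical integral Giambelli formulas over $\mathbb C$.

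\textbf{Step 1: free Schubert bases and compatibility with base change.} For a split Chevalley group scheme and a parabolic $\mathbf P$, the quotient $\mathbf G/\mathbf P$ is stratified by Bruhat cells $\mathbf B w\mathbf P/\mathbf P\simeq\mathbb A^{\ell(w)}_{\mathbb Z}$, indexed by minimal coset representatives. The Schubert varieties are their closures, defined and flat over $\mathbb Z$. A variety with an affine cellular decomposition has Chow groups freely generated by the cell closures (Fulton, Example~19.1.11). Applied to $\mathbf X_{C,K}$ and $\mathbf X_{D,K}$, this gives free abelian Chow rings on the Schubert classes. Since each Schubert scheme is flat over $\mathbb Z$, its fiber over $K$ is the Schubert variety of the same index, so the base-change map sends basis elements to basis elements. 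Moreover the structure constants of the intersection pairing are determined by the Bruhat order and by Chevalley's formula for products with divisor classes, both of which are independent of $K$. The structure constants are therefore the same over every $K$.

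\textbf{Step 2: transport the integral identities from $\mathbb C$ to $K$.} The tautological bundles $\mathcal E$ and $\mathcal Q_D$ are defined over $\mathbb Z$, as quotients of the trivial bundle by the universal isotropic subbundle. Their Chern classes are compatible with base change. Over $\mathbb C$, the identities \eqref{appA:eq:integral-Giambelli-C}--\eqref{appA:eq:integral-Giambelli-D} are the Kresch--Tamvakis and Pragacz results already quoted in Proposition~\ref{prop:stable-map} and Proposition~\ref{appA:prop:spinor-specialization}. Each identity expresses a Chern polynomial, or the universal integral Giambelli polynomial in the special classes, as an integer combination of Schubert classes.

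To move these identities to $K$, I will use that the Chow rings of the geometric fibers over $\mathbb C$ and over $K$ are both canonically the same free $\mathbb Z$-module with the same multiplication. This can be made precise through the specialization map for the smooth proper scheme $\mathbf X\to\operatorname{Spec}\mathbb Z$: it is a ring homomorphism compatible with Chern classes of bundles defined over $\mathbb Z$, and it sends each Schubert class to the Schubert class with the same index. The cleanest route is to pass through $\mathbb Z_{(p)}$ or a DVR with residue field $\overline{\mathbb F}_p$ and fraction field embedding into $\mathbb C$, and to use Fulton's specialization (Chapter~20). An integer relation among these classes over $\mathbb C$ then specializes to the same relation over $K$, and extending scalars from $\overline{\mathbb F}_p$ to $K$ is harmless.

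\textbf{Step 3: the Giambelli statements.} The expressions $\widetilde Q_\nu(c(\mathcal E))$ and $\widetilde P_\nu$ written in the special classes $\tau_{(r)}$ are integer polynomials. Their identification with $\sigma_\nu$ and $\tau_\nu$ therefore specializes verbatim. The relation $c_r(\mathcal Q_D)=2\tau_{(r)}$ also specializes as an integral identity, with no division. This proves that the identities hold in characteristic two.

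\textbf{Main obstacle.} The delicate point is the rigorous compatibility of specialization with the Schubert bases and with Chern classes in characteristic two, where the quadric defining type $D$ must be taken as the split form over $\mathbb Z$. I would handle this by working with the Chevalley scheme $\mathbf G_D/\mathbf P_D$ directly, rather than with an orthogonal Grassmannian defined by a bilinear form, and by identifying $\mathcal Q_D$ as the homogeneous bundle attached to the relevant representation of $\mathbf P_D$ over $\mathbb Z$.
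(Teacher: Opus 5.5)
Your argument is correct in substance. It shares the paper's overall strategy: split $\mathbb Z$-models, flat relative Schubert schemes, and transport of the known complex identities. The transport step is carried out differently.

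The paper computes the Chow ring of the $\mathbb Z$-scheme $\mathbf X_\bullet$ itself. The relative Bruhat decomposition, homotopy invariance and the localization sequence show that $A^*(\mathbf X_\bullet)$ is free on the relative Schubert classes, so restriction $j_\eta^*$ to the generic fiber is injective. Each difference class, such as $c_r(\mathcal Q_D)-2\tau_{(r)}$, vanishes over $\mathbb C$, hence over $\mathbb Q$, hence on $\mathbf X_\bullet$. It is then pulled back to $\mathbf X_{\bullet,K}$ by refined Gysin pullback to $\mathbf X_{\bullet,\mathbb F_p}$ followed by flat pullback.

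You instead invoke Fulton's specialization homomorphism $A^*(X_F)\to A^*(X_k)$ over a DVR such as $\mathbb Z_{(p)}$. For a smooth proper family it is a ring homomorphism, commutes with Chern classes of bundles defined over the DVR, and preserves degrees. You combine it with injectivity of $A^*(X_{\mathbb Q})\to A^*(X_{\mathbb C})$.

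Fulton's specialization map is built from the same Gysin pullback to the special fiber, so the two arguments are close relatives. Yours avoids developing Chow groups of a scheme over $\operatorname{Spec}\mathbb Z$, including the identification of operational and homological classes there. The paper's gives the additional fact that a single integral ring maps compatibly onto every geometric fiber.

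Your route also has a small bonus. Specialization preserves the degree pairing between Schubert and opposite Schubert classes, so the pairing matrix over a field of characteristic $p$ is again the identity. This proves linear independence of the Schubert classes there. It matters because Fulton's Example~19.1.11, which you cite for freeness, is a statement about complex varieties and does not cover positive characteristic.

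Two points need repair.

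\textbf{Fibers of Schubert schemes.} Flatness of a relative Schubert scheme does not by itself imply that its fiber over $K$ is the Schubert variety of the same index. A flat closure can acquire nonreduced or additional components in a special fiber. Specialization only needs the cycle-level statement $[(\overline{C_w})_k]=[\overline{C_{w,k}}]$, and this is what the paper proves. The cell $C_w\simeq\mathbb A^{\ell(w)}_{\mathbb Z}$ is open in $\overline{C_w}$ with reduced special fiber. The rest of the special fiber lies in the cells $C_{v,k}$ with $v<w$, which have smaller dimension. So the top-dimensional component occurs with multiplicity one.

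\textbf{The Chevalley remark.} The remark in Step~1 that the structure constants are determined by the Bruhat order and Chevalley's formula is unnecessary and, as stated, insufficient. For these maximal-parabolic quotients the Chow ring is not generated by divisor classes even rationally. In $\operatorname{LG}(3,6)$, for instance, $A^3$ has rank two, spanned by $\sigma_{(3)}$ and $\sigma_{(2,1)}$, so one would have to pass through $G/B$. It is simpler to drop the remark and let the specialization homomorphism carry the ring structure.
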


\begin{proof}
We distinguish the integral models from their fibers.  The schemes
$\mathbf X_C$ and $\mathbf X_D$ are smooth and projective over
$\operatorname{Spec}\mathbb Z$.  The relative Bruhat decomposition for a
split reductive group scheme and a standard parabolic gives each
$\mathbf X_\bullet$ a finite filtration by closed subschemes whose strata
are affine spaces $\mathbb A^{\ell(w)}_{\mathbb Z}$ indexed by
$W_\bullet^{P_\bullet}$, the minimal-length representatives for
$W_\bullet/W_{P_\bullet}$; see
\cite[Expos\'e~XXVI, \S5]{SGA3}.  By homotopy invariance,
$A^*(\mathbb A^m_{\mathbb Z})\simeq
A^*(\operatorname{Spec}\mathbb Z)=\mathbb Z$ in the relevant shifted
grading, so each relative cell contributes one generator.  Successive
application of the localization sequence shows that the relative Schubert
closures generate $A^*(\mathbf X_\bullet)$ as an abelian group: at each
stage the closure of the newly adjoined cell restricts to the fundamental
class of that affine-space stratum.  To prove linear independence, suppose
that
\[
  \sum_{w\in W_\bullet^{P_\bullet}}
  a_w[\overline{C_w}]=0.
\]
Flat base change to $\mathbb C$ gives the corresponding relation among the
ordinary Schubert classes on $\mathbf X_{\bullet,\mathbb C}$.  Those
classes form a $\mathbb Z$-basis by the standard cellular decomposition
over a field, so every $a_w$ is zero.  Hence
\begin{equation}
\label{appA:eq:relative-cellular-Chow}
 A^*(\mathbf X_\bullet)
 =\bigoplus_{w\in W_\bullet^{P_\bullet}}\mathbb Z[\overline{C_w}],
\end{equation}
as a graded abelian group, with each relative Schubert closure in its
natural codimension.  This is the relative form of the standard cellular
Chow argument~\cite[Sections~1.8--1.9, especially Example~1.9.1]{FultonIT}.
Here we use smoothness to identify operational Chow cohomology with
homological Chow groups by cap product with the fundamental class.

Let $j_\eta:\mathbf X_{\bullet,\mathbb Q}\to\mathbf X_\bullet$ be the
generic-fiber morphism.  The restriction map to the generic fiber, obtained
as the filtered colimit of the flat pullbacks over nonempty open subschemes
of $\operatorname{Spec}\mathbb Z$, sends every relative Schubert class in
\eqref{appA:eq:relative-cellular-Chow} to the Schubert class with the same
Weyl index on the generic fiber.  The latter classes form a
$\mathbb Z$-basis by the identical cellular argument over $\mathbb Q$;
hence
\[
 j_\eta^*:A^*(\mathbf X_\bullet)
 \longrightarrow A^*(\mathbf X_{\bullet,\mathbb Q})
\]
is injective.  For every algebraically closed field $K$, base change of the
relative cells gives the usual Bruhat cells on $\mathbf X_{\bullet,K}$, and
the same argument gives
\[
 A^*(\mathbf X_{\bullet,K})
 =\bigoplus_{w\in W_\bullet^{P_\bullet}}\mathbb Z[\overline{C_{w,K}}].
\]
We spell out the compatibility of the classes with a geometric fiber.  In
characteristic zero the base change is flat.  In characteristic $p>0$, the
morphism $\mathbf X_{\bullet,K}\to\mathbf X_\bullet$ factors as the flat
map $\mathbf X_{\bullet,K}\to\mathbf X_{\bullet,\mathbb F_p}$ followed by
the regular closed immersion
$\mathbf X_{\bullet,\mathbb F_p}\hookrightarrow\mathbf X_\bullet$.
Under the smooth-ambient identification of operational and homological Chow
groups, operational pullback is refined Gysin pullback along the latter map,
followed by flat pullback along the former.  Each relative Schubert scheme
$\overline{C_w}$ is, by construction, the scheme-theoretic closure of its
generic fiber in $\mathbf X_\bullet$.  On every affine open its coordinate
ring therefore injects into its localization at the nonzero integers; it is
$\mathbb Z$-torsion-free and hence flat over $\mathbb Z$.

We also record explicitly the multiplicity of the top-dimensional component
of its special fiber.  The relative Bruhat closure relations show that
$C_w$ is an open subscheme of $\overline{C_w}$ and that its complement is
stratified by the cells $C_v$ with $v<w$.  After base change to
$\mathbb F_p$, the open cell
\[
  C_{w,\mathbb F_p}
  \simeq \mathbb A_{\mathbb F_p}^{\ell(w)}
\]
is reduced, whereas every stratum in its complement has dimension
$\ell(v)<\ell(w)$.  Hence the reduced support of
$(\overline{C_w})_{\mathbb F_p}$ is the usual Schubert variety
$\overline{C_{w,\mathbb F_p}}$, this is its unique top-dimensional
irreducible component, and the special fiber is reduced at the generic point
of that component.  Consequently, the component occurs with multiplicity
one in the fundamental cycle:
\[
  \bigl[(\overline{C_w})_{\mathbb F_p}\bigr]
  =
  \bigl[\overline{C_{w,\mathbb F_p}}\bigr].
\]

Since $\overline{C_w}$ is flat over $\mathbb Z$, refined Gysin pullback along
the regular closed immersion
$\mathbf X_{\bullet,\mathbb F_p}\hookrightarrow\mathbf X_\bullet$
sends $[\overline{C_w}]$ to
$\bigl[(\overline{C_w})_{\mathbb F_p}\bigr]$, and hence, by the preceding
equality, to the usual Schubert class
$\bigl[\overline{C_{w,\mathbb F_p}}\bigr]$.  The subsequent flat pullback
along
$\mathbf X_{\bullet,K}\to\mathbf X_{\bullet,\mathbb F_p}$
gives
$\bigl[\overline{C_{w,K}}\bigr]$.  Thus every relative Schubert class pulls
back to the class with the same Weyl-group index, and every fiber Chow group
is torsion-free.

We next establish the special classes before invoking the full Giambelli
formulas.  The tautological bundles and the relative Schubert cycles are
defined on the integral models.  Over $\mathbb C$, the standard special-class
identities are
\[
 c_r(\mathcal E)=\sigma_{(r)},\qquad
 c_r(\mathcal Q_D)=2\tau_{(r)},\qquad
 c_{N+1}(\mathcal Q_D)=0;
\]
see~\cite[Section~2]{KT03} and~\cite[\S3.1]{KT04}.  Each difference is an
integral class on the corresponding $\mathbb Z$-model.  Its restriction to
the generic fiber becomes zero after extension of scalars from $\mathbb Q$
to $\mathbb C$.  That scalar-extension map is injective because it carries
the Schubert basis over $\mathbb Q$ to the Schubert basis over $\mathbb C$.
The difference therefore vanishes on the generic fiber, and the injectivity
of $j_\eta^*$ forces it to vanish on the integral model.  Pullback gives the
special-class identities on every geometric fiber, including characteristic
two.

Only now do we spread the full Giambelli identities.  The modified
$\widetilde Q_\nu$ formulas are integral polynomials in the classes
$c_r(\mathcal E)$.  On the spinor side, the identities just proved show that
$c_r(\mathcal Q_D)$ is divisible by $2$ in integral Chow and identify
$c_r(\mathcal Q_D)/2$ with the integral special class $\tau_{(r)}$.
Consequently the standard Pfaffian expression
$\widetilde P_\nu(c_1(\mathcal Q_D),c_2(\mathcal Q_D),\ldots)$ is an
integral class: equivalently, it is the universal spinor Giambelli polynomial
in the classes $\tau_{(r)}$.  Over $\mathbb C$ these evaluations equal
$\sigma_\nu$ and $\tau_\nu$, respectively, by
\cite[Section~2, equations~(1)--(2)]{KT03},
\cite[\S3.1]{KT04}, and~\cite{PR97}.  Their differences from the corresponding
relative Schubert classes are therefore integral classes that vanish on the
generic fiber by the same scalar-extension argument.  Injectivity of
$j_\eta^*$ and then pullback to each geometric fiber prove
\eqref{appA:eq:integral-Giambelli-C}--
\eqref{appA:eq:integral-Giambelli-D}.  In particular, every displayed power
of $2$ is an ordinary integer coefficient in a torsion-free Chow group and
does not disappear in characteristic two.
\end{proof}

\begin{corollary}[Fiberwise Richardson and projective-bundle formulas]
\label{appA:cor:fiberwise-formulas}
Let $N\geq1$, let $K$ be an algebraically closed field, let $n\geq1$, and
let $\mu\subseteq\lambda\subseteq\rho_N$ be strict partitions.  For $N=1$,
use the standard identifications
$\mathbf X_{C,K}\simeq\mathbf X_{D,K}\simeq\mathbb P^1_K$; for $N\geq2$,
retain the integral models above.  Put $d=|\lambda|-|\mu|$, and let
$R^C_{\lambda/\mu,K}$ and $R^D_{\lambda/\mu,K}$ denote the Richardson
varieties in the corresponding geometric fibers, defined by base-changed
opposite flags.  Write $\mathcal E_K$ and $\mathcal Q_{D,K}$ for the
corresponding tautological quotient bundles.  For every
$\alpha\in\mathbb N^n$ with $|\alpha|=d$, one has
\begin{align}
 [x^\alpha]P_{\lambda/\mu}
 &=\int_{R^C_{\lambda/\mu,K}}
   \prod_{i=1}^n c_{\alpha_i}(\mathcal E_K),
 \label{appA:eq:fiberwise-C-coeff}\\
 [x^\alpha]Q_{\lambda/\mu}
 &=\int_{R^D_{\lambda/\mu,K}}
   \prod_{i=1}^n c_{\alpha_i}(\mathcal Q_{D,K}).
 \label{appA:eq:fiberwise-D-coeff}
\end{align}
Here the bundles are restricted to the indicated Richardson varieties.
Consequently, the type~$C$ and type~$D$ projective-bundle identities remain
valid over $K$.  Let $\eta_i$ denote the tautological quotient class on the
$i$th type~$C$ factor, and let $\xi_i$ denote the tautological quotient
class on the $i$th type~$D$ factor.  The following are identities in
$\mathbb Q[x_1,\ldots,x_n]$, obtained from the integer-valued Chow
intersection numbers by factorial normalization:
\begin{align}
 \frac{1}{D_C!}\int_{Y^C_{\lambda/\mu,n,K}}
 \left(\sum_{i=1}^n x_i\eta_i\right)^{D_C}
 &=\cN\!\left((x_1\cdots x_n)^{N-1}P_{\lambda/\mu}(x)\right),
 &D_C&=d+n(N-1),
 \label{appA:eq:fiberwise-C-projective}\\
 \frac{1}{D_D!}\int_{Y^D_{\lambda/\mu,n,K}}
 \left(\sum_{i=1}^n x_i\xi_i\right)^{D_D}
 &=\cN\!\left((x_1\cdots x_n)^NQ_{\lambda/\mu}(x)\right),
 &D_D&=d+nN.
 \label{appA:eq:fiberwise-D-projective}
\end{align}
\end{corollary}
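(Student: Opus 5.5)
The plan is to rerun the characteristic-zero proofs of Theorem~\ref{thm:main-PQ} and Theorem~\ref{appA:thm:D-Richardson} over $K$, using Lemma~\ref{appA:lem:integral-base-change} in place of the complex Schubert calculus. Those proofs rely on only three facts:
\begin{enumerate}[label=\textup{(\alph*)}]
\item a ring homomorphism from $\Gamma_{\Z}$ (respectively $\Gamma^P_{\Z}$) to the Chow ring, sending $Q_\nu$ to $\sigma_\nu$ (respectively $P_\nu$ to $\tau_\nu$) and killing $\nu\not\subseteq\rho_N$;
\item the Poincar\'e duality pairing on Schubert classes;
\item the Richardson class formula $i_*[R]=\sigma_{\lambda^\vee}\sigma_\mu\cap[G]$.
\end{enumerate}
I would verify each of these on the fiber over $K$.

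For (a), I construct the homomorphism directly on $\mathbf X_{C,K}$. Self-duality $\cE_K\simeq\cU_K^\vee$ holds over any field, so $c_t(\cE_K)c_{-t}(\cE_K)=1$, and $q_r\mapsto c_r(\cE_K)$ is well defined by Macdonald's presentation. Lemma~\ref{appA:lem:integral-base-change} then gives $\phi(Q_\nu)=\widetilde Q_\nu(c(\cE_K))=\sigma_\nu$. The vanishing for $\nu_1>N$ follows from the same Pfaffian row expansion used in Proposition~\ref{prop:stable-map}, since $\rank\cE_K=N$.

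On the spinor side I must avoid dividing by $2$. The rational map of Proposition~\ref{appA:prop:spinor-specialization} is not available in characteristic two, so I instead define $\varphi_{N,K}^D$ as the composite $\Gamma^P_{\Z}\to A^*(\mathbf X_D)\to A^*(\mathbf X_{D,K})$. Here the first map is the integral-model homomorphism, obtained on the generic fiber and lifted by the injectivity of $j_\eta^*$ together with integrality of the images $\tau_\nu$. The identities \eqref{appA:eq:integral-special-D} and \eqref{appA:eq:integral-Giambelli-D} then give $q_r\mapsto c_r(\mathcal Q_{D,K})$ and $P_\nu\mapsto\tau_\nu$ over $K$.

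For (b), the duality numbers are degrees of zero-cycles on a smooth proper $\Z$-scheme, hence constant in fibers by the base-change compatibility of Schubert classes in Lemma~\ref{appA:lem:integral-base-change}. For (c), I would cite Richardson theory over an arbitrary algebraically closed field: Brion's results are characteristic-free. Properness of the intersection of opposite Schubert varieties, together with reducedness, gives the class formula in any characteristic.

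With (a)--(c) in hand, the coefficient computations are verbatim. For type~$C$ I use Lemma~\ref{lem:Cauchy-coeff}, which yields \eqref{appA:eq:fiberwise-C-coeff}. For type~$D$ I use the dual skew Cauchy expansion, which yields \eqref{appA:eq:fiberwise-D-coeff}. All integers involved are computed in torsion-free groups, so the powers of $2$ survive.

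For the projective-bundle identities, I apply part~(iii) of Theorem~\ref{thm:total-chern-volume}. Its proof is characteristic-free: it uses only the Whitney formula and Fulton's projective-bundle pushforward, with kernel duals $\cE_K$ of rank $N$ and $\mathcal Q_{D,K}$ of rank $N+1$. This gives \eqref{appA:eq:fiberwise-C-projective} and \eqref{appA:eq:fiberwise-D-projective} after dividing the integer intersection numbers by factorials in $\Q$.

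For $N=1$ both spaces are $\mathbb P^1_K$. There the check is direct: $\cE=\cO(1)$, $\mathcal Q_D$ has $c_1=2\tau_{(1)}$ in the stated normalization, and only $\lambda\in\{\varnothing,(1)\}$ occur.

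The main obstacle I expect is the spinor homomorphism in characteristic two, namely step (a) for type~$D$. One must ensure that $\varphi^D$ factors integrally through the $\Z$-model and is a ring map, not merely additive. My approach is to prove multiplicativity on the torsion-free integral model, where it follows from the rational statement after $\otimes\Q$, and then to pull back to $K$.
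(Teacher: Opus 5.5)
Your proposal is correct and follows essentially the same route as the paper. The paper also treats $N=1$ directly on $\mathbb P^1_K$, and for $N\geq2$ reruns the type~$C$ and spinor Cauchy--Schubert--Richardson coefficient extractions over $K$ using Lemma~\ref{appA:lem:integral-base-change}, fiberwise Schubert duality and Richardson class formulas, and the projection formula, then applies the characteristic-free projective-bundle pushforward of Theorem~\ref{thm:total-chern-volume}(iii); your integral construction of the spinor map $\Gamma^P_{\Z}\to A^*(\mathbf X_D)\to A^*(\mathbf X_{D,K})$, with multiplicativity checked after injection into the generic fiber, usefully spells out a step the paper leaves implicit.
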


\begin{proof}
For $N=1$, one has
$\mathbf X_{C,K}\simeq\mathbf X_{D,K}\simeq\mathbb P^1_K$,
$\mathcal E_K\simeq\mathcal O_{\mathbb P^1}(1)$, and
$\mathcal Q_{D,K}\simeq\mathcal O_{\mathbb P^1}(1)^{\oplus2}$.  Since
$\rho_1=(1)$, apart from the degree-zero identities the only nonconstant
coefficient identities are
\[
 P_{(1)}(x)=\sum_i x_i,\qquad Q_{(1)}(x)=2\sum_i x_i,
 \qquad
 \int_{\mathbb P^1}c_1(\mathcal E_K)=1,\qquad
 \int_{\mathbb P^1}c_1(\mathcal Q_{D,K})=2.
\]
The projective-bundle identities follow from the pushforward formula.  Thus
the corollary holds for $N=1$ over every algebraically closed field.  Assume
henceforth that $N\geq2$.
The ordinary Schur $P/Q$ Cauchy identities and their skew coefficient
extractions are identities over $\mathbb Z$.  Repeat the type~$C$ and
spinor Cauchy--Schubert arguments using
Lemma~\ref{appA:lem:integral-base-change}, the Schubert duality pairings and
Richardson class formulas in the geometric fibers, and the projection
formula.  This gives
\eqref{appA:eq:fiberwise-C-coeff} and
\eqref{appA:eq:fiberwise-D-coeff}.  The projective-bundle pushforward
formula is valid over every field and depends only on the ranks of the
bundles.  Applying it coefficientwise gives
\eqref{appA:eq:fiberwise-C-projective} and
\eqref{appA:eq:fiberwise-D-projective}.
\end{proof}

With respect to the standard positive systems, the positive roots in the
nilradicals of the two terminal maximal parabolics are
\begin{equation}
\label{appA:eq:nilradical-roots}
\begin{aligned}
  \Phi(\mathfrak u_C)
  &=\{2e_i:1\leq i\leq N\}
    \cup\{e_i+e_j:1\leq i<j\leq N\},\\
  \Phi(\mathfrak u_B)
  &=\{e_i:1\leq i\leq N\}
    \cup\{e_i+e_j:1\leq i<j\leq N\}.
\end{aligned}
\end{equation}
The tangent weights at the standard fixed points are the negatives of these
roots.  In the shifted-staircase parametrization, the diagonal boxes
correspond to $2e_i$ on the Lagrangian side and to $e_i$ on the spinor side;
the off-diagonal box $(i,j)$ corresponds to $e_i+e_j$ in both models.  This
is the root-theoretic origin of the distinction between the diagonal and
off-diagonal directions used below.

The two coefficient formulas differ only in their integral Schubert
normalizations.  The stable type~$C$ homomorphism of
Section~\ref{sec:typeC} sends $Q_\nu$ to $\sigma_\nu$, whereas
Proposition~\ref{appA:prop:spinor-specialization} sends $P_\nu$ to
$\tau_\nu$.  Since the $P$- and $Q$-bases are dual under the Schur $P/Q$
pairing, the same Cauchy--Schubert--Richardson coefficient extraction gives
\begin{equation}
\label{appA:eq:dual-skew-output}
  Q_\nu\longmapsto\sigma_\nu
  \quad\Longrightarrow\quad P_{\lambda/\mu},
  \qquad
  P_\nu\longmapsto\tau_\nu
  \quad\Longrightarrow\quad Q_{\lambda/\mu}.
\end{equation}
Thus the two Richardson proofs have the same formal structure, with the
homogeneous space and the integral Schubert normalization changed together.

\subsection{Characteristic two and Richardson degrees}
\label{appA:characteristic-two}

Let $k$ be an algebraically closed field of characteristic two and retain
$N\geq2$.  Write
\[
 G_B:=\mathbf X_{B,k},\qquad
 G_C:=\mathbf X_{C,k},\qquad
 G_D:=\mathbf X_{D,k}.
\]
The hyperplane construction of van Geemen--Marrani sends a maximal
isotropic $(N+1)$-plane $U$ in the even quadratic space to $U\cap H$ and
gives an isomorphism
\begin{equation}
\label{appA:eq:odd-even-char2}
 h:G_D\xrightarrow{\sim}G_B,
 \qquad
 \iota:=h^{-1}:G_B\xrightarrow{\sim}G_D;
\end{equation}
see~\cite[\S5.2]{VGM19}.  We spell out the compatibility with the
stratifications.  Choose split complete isotropic flags
$F_\bullet^D$ and $(F_\bullet^D)^{\mathrm{opp}}$ in the even quadratic space,
adapted to $H$ so that their members of dimensions at most $N$ lie in $H$,
and let $F_\bullet^B$ and $(F_\bullet^B)^{\mathrm{opp}}$ be the resulting
odd isotropic flags in $H$.  For $1\leq i\leq N$ and $U\in G_D$,
\[
 \dim(U\cap F_i^D)
 =\dim\bigl((U\cap H)\cap F_i^B\bigr),
\]
and the same equality holds for the opposite flags.  The strict-partition
Schubert and opposite-Schubert rank conditions are therefore identical
under $h$.  Thus $h$ and $\iota$ identify the Bruhat and opposite-Bruhat
cells with the same strict-partition indices.  Equivalently, the
isomorphism is equivariant for the split type~$B$ action, sends the chosen
base point to the chosen base point, and transports the selected Borel and
opposite-Borel orbits to their counterparts.  We use the Borels, opposite
Borels, maximal parabolics, and Weyl-group representatives determined by
these compatible pinnings.  Since all split models, flags, and pinnings are
defined over $\mathbb F_2$, we identify their Frobenius twists with the
original models and write $F_X$ for relative Frobenius.

The common Weyl group identifies the two root systems by the root-length-exchanging
bijection
\begin{equation}
\label{appA:eq:root-bijection}
 e_i\longleftrightarrow 2e_i,
 \qquad
 e_i\pm e_j\longleftrightarrow e_i\pm e_j.
\end{equation}
Choose the pinning-compatible very special isogenies
\[
 \pi_B:\mathbf G_{B,k}\longrightarrow\mathbf G_{C,k},
 \qquad
 \pi_C:\mathbf G_{C,k}\longrightarrow\mathbf G_{B,k}.
\]
For every root $\gamma$ and its partner $\bar\gamma$ under
\eqref{appA:eq:root-bijection}, the root-subgroup description of an isogeny
provides nonzero constants $c^B_\gamma,c^C_\gamma\in k^\times$ such that
\begin{align}
 \pi_B\bigl(x^B_\gamma(t)\bigr)
 &=x^C_{\bar\gamma}\bigl(c^B_\gamma
   t^{\epsilon_B(\gamma)}\bigr),
 &\epsilon_B(\gamma)&=
 \begin{cases}2,&\gamma\text{ short},\\1,&\gamma\text{ long},\end{cases}
 \label{appA:eq:root-map-B}\\
 \pi_C\bigl(x^C_\gamma(t)\bigr)
 &=x^B_{\bar\gamma}\bigl(c^C_\gamma
   t^{\epsilon_C(\gamma)}\bigr),
 &\epsilon_C(\gamma)&=
 \begin{cases}2,&\gamma\text{ short},\\1,&\gamma\text{ long}.
 \end{cases}
 \label{appA:eq:root-map-C}
\end{align}
Moreover,
\begin{equation}
\label{appA:eq:group-Frobenius-factorizations}
 \pi_B\circ\pi_C=F_{\mathbf G_C},
 \qquad
 \pi_C\circ\pi_B=F_{\mathbf G_B}.
\end{equation}
These are the root-subgroup description and Frobenius factorization of the
very special isogeny; see Maccan~\cite[equation~(1.2) and
Proposition~1.10]{Maccan26}.

The root bijection sends the terminal simple root to the terminal simple
root.  Hence $\pi_B$ and $\pi_C$ carry the chosen Borels and maximal
parabolics to their counterparts and induce homogeneous-space morphisms
\[
 \overline\beta:G_B\longrightarrow G_C,
 \qquad
 \overline\alpha:G_C\longrightarrow G_B.
\]
Define
\begin{equation}
\label{appA:eq:alpha-beta}
  \alpha:=\iota\circ\overline\alpha:G_C\longrightarrow G_D,
  \qquad
  \beta:=\overline\beta\circ\iota^{-1}:G_D\longrightarrow G_C.
\end{equation}
By Proposition~5.1 and Remark~5.2 of~\cite{VGM19}, the
homogeneous-space morphism induced by $\pi_B$ agrees, after transport by
$h$, with their hyperplane-projection morphism on the standard
alternating-matrix big cell.  Since the source is integral and the target is separated, agreement on that
dense cell implies global equality; this is the map $\beta$ in
\eqref{appA:eq:alpha-beta}.  The map $\alpha$ is the homogeneous-space
morphism induced by the reverse very special isogeny $\pi_C$.  On the
standard symmetric-matrix big cell it agrees with the
principal-minor/Pfaffian morphism of
van Geemen--Marrani~\cite[Theorem~2.3]{VGM19}; the global equivariant
closed-orbit interpretation is given in their discussion following
\cite[Remark~5.2]{VGM19}.  Since the source is integral and the target is separated, agreement on the
dense big cell identifies the two morphisms globally.

\begin{proposition}[The two special morphisms]
\label{appA:prop:special-morphisms}
The morphisms $\alpha$ and $\beta$ are finite flat and radicial and satisfy
\begin{equation}
\label{appA:eq:Frobenius-factorizations}
  \beta\circ\alpha=F_{G_C},
  \qquad
  \alpha\circ\beta=F_{G_D}.
\end{equation}
There is a one-dimensional $k$-vector space $L$ and an exact sequence
\begin{equation}
\label{appA:eq:bundle-comparison}
  0\longrightarrow L^\vee\otimes\mathcal O_{G_D}
  \longrightarrow\mathcal Q_D
  \longrightarrow\beta^*\mathcal E
  \longrightarrow0.
\end{equation}
Moreover, for every $\nu\subseteq\rho_N$,
\begin{equation}
\label{appA:eq:Schubert-pullbacks}
  \beta^*\sigma_\nu=2^{\ell(\nu)}\tau_\nu,
  \qquad
  \alpha^*\tau_\nu=2^{|\nu|-\ell(\nu)}\sigma_\nu.
\end{equation}
In particular,
\begin{equation}
\label{appA:eq:ambient-degrees}
  \deg\beta=2^N,
  \qquad
  \deg\alpha=2^{M-N}.
\end{equation}
\end{proposition}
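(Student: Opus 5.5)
The plan is to derive every assertion from the group-level statements \eqref{appA:eq:root-map-B}--\eqref{appA:eq:group-Frobenius-factorizations}, pushed down to the homogeneous spaces and transported along the isomorphism $\iota$ of \eqref{appA:eq:odd-even-char2}.

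\textbf{Frobenius factorizations.} Since all models, pinnings, and parabolics are defined over $\mathbb F_2$, the group Frobenius $F_{\mathbf G_C}$ induces the relative Frobenius $F_{G_C}$ on $G_C=\mathbf G_{C,k}/\mathbf P_{C,k}$. The identity $\pi_B\circ\pi_C=F_{\mathbf G_C}$ then descends to $\overline\beta\circ\overline\alpha=F_{G_C}$. Because $\iota$ cancels in \eqref{appA:eq:alpha-beta}, this gives $\beta\circ\alpha=F_{G_C}$, and $\alpha\circ\beta=F_{G_D}$ follows in the same way (after conjugating $F_{G_B}$ by $\iota$, which commutes with Frobenius since $\iota$ is defined over $\mathbb F_2$).

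\textbf{Finite, radicial, flat.} Relative Frobenius is finite, surjective, and a universal homeomorphism. From $\beta\circ\alpha=F_{G_C}$, the map $\alpha$ is proper with finite fibres, hence finite, and it is universally injective, hence radicial; the same holds for $\beta$. Both spaces are smooth of dimension $M$. A finite morphism between regular schemes of equal dimension is flat by miracle flatness, so $\alpha$ and $\beta$ are finite flat and radicial. It follows that $\deg\alpha\cdot\deg\beta=\deg F=2^M$.

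\textbf{Bundle sequence.} I would use the van Geemen--Marrani description of $\beta$. The hyperplane $H\subset W$ carries a restricted bilinear form whose radical is a line $L'$ with $q|_{L'}\neq0$, so $H/L'$ is symplectic of dimension $2N$, and $\beta(U)$ is the image of $U\cap H$ in $H/L'$. Since $q$ is nonzero on $L'$ and $U$ is isotropic, $L'\cap U=0$. Hence $U\cap H\to\beta(U)$ is an isomorphism, and $\beta^*\mathcal U$ is the bundle of fibres $U\cap H$. Dualizing $0\to U\cap H\to U\to W/H\to0$, and using $\mathcal Q_D\simeq\mathcal S^\vee$ together with $\mathcal E\simeq\mathcal U^\vee$, yields \eqref{appA:eq:bundle-comparison} with $L=W/H$.

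\textbf{Schubert pullbacks and degrees.} Equivariance under the isogenies, together with the compatibility of $\iota$ with cells, shows that $\beta$ maps each Schubert variety $X^D_\eta$ onto $X^C_\eta$ with the same index. The cell $C_\eta$ is an affine space whose coordinates are root-subgroup parameters, one for each box of $\eta$. By \eqref{appA:eq:root-map-B}, under $\pi_B$ the short-root parameters (the diagonal boxes, $e_i$ on the spinor side) are squared and the long ones are not. Hence $\deg(\beta|_{X^D_\eta})=2^{\ell(\eta)}$, and symmetrically $\deg(\alpha|_{X^C_\eta})=2^{|\eta|-\ell(\eta)}$, since off-diagonal $e_i+e_j$ are short in type~$C$.

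Next I would pair against the dual basis. By the projection formula,
\[
\int_{G_D}\beta^*\sigma_\nu\,\tau_\eta=\int_{G_C}\sigma_\nu\,\beta_*[X^D_\eta]=2^{\ell(\eta)}\int_{G_C}\sigma_\nu\,\sigma_\eta ,
\]
where $\eta$ ranges over indices of dimension $|\nu|$ (codimension class $\tau_{\nu^\vee}$, whose Schubert variety has the boxes of $\nu$). Duality \eqref{appA:eq:D-pairing} and \eqref{appA:eq:LG-duality} then gives $\beta^*\sigma_\nu=2^{\ell(\nu)}\tau_\nu$, and likewise for $\alpha$. Taking $\nu=\rho_N$ (a point class, equivalently the top cell) gives $\deg\beta=2^N$ and $\deg\alpha=2^{M-N}$, which is consistent with $2^M$.

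\textbf{Main obstacle.} I expect the hardest step to be the cell-degree computation: justifying rigorously that, in the root-subgroup coordinates on each Schubert cell, $\beta$ is exactly the monomial map $t_\gamma\mapsto c_\gamma t_\gamma^{\epsilon(\gamma)}$. This requires that the product of root subgroups parametrizing $C_\eta$ be mapped factor by factor, which holds because an isogeny is a homomorphism and preserves the ordered product decomposition, and that it be compatible with the transport by $\iota$. The degree of such a monomial map is then the product of the exponents, as used above.
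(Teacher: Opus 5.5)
Your proof is correct, but it reaches the Schubert pullback formulas by a genuinely different route from the paper.

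\textbf{Where you agree with the paper.} The Frobenius factorizations, the finite/radicial/flat argument via miracle flatness, and the bundle sequence all match the paper's proof. For the bundle sequence you actually spell out the hyperplane geometry that the paper only cites: the radical line $L'$ has $q|_{L'}\neq0$, hence $U\not\subseteq H$ and $U\cap H\cong\beta(U)$.

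\textbf{How the paper gets the pullbacks.} It argues algebraically. The constant sub-line gives $c(\mathcal Q_D)=\beta^*c(\mathcal E)$. The integral Giambelli formulas of Lemma~\ref{appA:lem:integral-base-change} then give
$\beta^*\sigma_\nu=\widetilde Q_\nu(\mathcal Q_D)=2^{\ell(\nu)}\widetilde P_\nu(\mathcal Q_D)=2^{\ell(\nu)}\tau_\nu$.
Next, $\alpha^*\beta^*=F_{G_C}^*$, the fact that Frobenius multiplies a codimension-$j$ Schubert class by $2^j$, and torsion-freeness yield $\alpha^*\tau_\nu$. The point class gives the degrees.

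\textbf{How you get them.} You compute $\beta_*[X^D_\eta]=2^{\ell(\eta)}[X^C_\eta]$ and $\alpha_*[X^C_\eta]=2^{|\eta|-\ell(\eta)}[X^D_\eta]$ from the monomial form of the maps in root-subgroup coordinates on each cell. You then read off both pullbacks by the projection formula and Schubert duality. That cellwise computation is exactly the paper's later Lemma~\ref{appA:lem:Bruhat-compatibility}, which uses only the universal-homeomorphism part of the proposition, so there is no circularity. You simply reverse the paper's order: the paper deduces Richardson degrees from the pullback formulas, whereas you deduce the pullback formulas from Schubert-variety degrees.

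\textbf{What each route buys.}
\begin{itemize}
\item Your route is purely geometric and treats $\alpha$ and $\beta$ symmetrically.
\item It needs neither Giambelli nor the bundle sequence; the latter becomes a consistency check, $\beta^*\sigma_{(r)}=c_r(\mathcal Q_D)=2\tau_{(r)}$, which confirms the conventions match.
\item It directly proves the ``diagonal boxes'' interpretation that the paper only states in a closing remark.
\item The paper's route avoids all per-cell combinatorics and ties the factor $2^{\ell(\nu)}$ directly to the $\widetilde Q/\widetilde P$ normalization.
\end{itemize}

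\textbf{What you still need to check.} You assert, without checking, that the cell of the dimension-$|\eta|$ Schubert variety has exactly $\ell(\eta)$ coordinates of diagonal type; its class is $\tau_{\eta^\vee}$ on the spinor side and $\sigma_{\eta^\vee}$ on the Lagrangian side. This is standard. For a minimal signed-permutation representative $w$:
\begin{itemize}
\item $2e_i\in\Psi_C(w)$ exactly when $w^{-1}$ changes the sign of $e_i$;
\item the number of such sign changes is the number of parts of the strict partition whose sum is $\ell(w)$;
\item $\gamma$ and $-w^{-1}\gamma$ have the same length.
\end{itemize}
You should still match this dictionary explicitly to the paper's indexing of $\sigma_\nu$ and $\tau_\nu$.

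\textbf{A notational slip.} In your displayed projection-formula identity, the classes whose caps are $[X^D_\eta]$ and $[X^C_\eta]$ are $\tau_{\eta^\vee}$ and $\sigma_{\eta^\vee}$, not $\tau_\eta$ and $\sigma_\eta$.
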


\begin{proof}
The group factorizations~\eqref{appA:eq:group-Frobenius-factorizations}
induce the corresponding factorizations for $\overline\alpha$ and
$\overline\beta$.  The isomorphism $\iota$ is defined over $\mathbb F_2$
and therefore commutes with relative Frobenius, giving
\eqref{appA:eq:Frobenius-factorizations}.  These factorizations imply that both maps are surjective and universally
injective.  Thus they are radicial and quasi-finite.  They are projective, hence
proper, and a proper quasi-finite morphism is finite.  Let $f$ denote either map, let $x$ be a point of its source, put
$y=f(x)$, and write
\[
 A=\mathcal O_{\operatorname{target}(f),y},\qquad
 B=\mathcal O_{\operatorname{source}(f),x}.
\]
Both homogeneous spaces are smooth and equidimensional of dimension
$M=N(N+1)/2$, so $A$ and $B$ are regular local rings and in particular $B$
is Cohen--Macaulay.  The finite surjective radicial map $f$ is a universal
homeomorphism; hence $\dim A=\dim B$.  Thus $B$ is a finite
Cohen--Macaulay $A$-module of full dimension over the regular local ring
$A$.  Miracle flatness~\cite[Tag~00R4]{Stacks} shows that $B$ is flat over
$A$.  Since this holds at every point and $f$ is finite, both morphisms are
finite flat.

The hyperplane construction for $\beta$ supplies the bundle comparison.
Let $H$ be the distinguished hyperplane in the split even-dimensional
quadratic space and put $L=W/H$.  The intersection of the universal spinor
subbundle with $H$ is $\beta^*\mathcal U$, and projection to $L$ gives
\[
  0\longrightarrow\beta^*\mathcal U
  \longrightarrow\mathcal S
  \longrightarrow L\otimes\mathcal O_{G_D}
  \longrightarrow0.
\]
Dualizing and using $\mathcal E\simeq\mathcal U^\vee$ gives
\eqref{appA:eq:bundle-comparison}; see
\cite[\S\S5.1--5.3]{VGM19}.  Since its left-hand term is constant,
$c(\mathcal Q_D)=\beta^*c(\mathcal E)$.

Lemma~\ref{appA:lem:integral-base-change} and the modified Giambelli formulas
now give
\[
  \beta^*\sigma_\nu
  =\widetilde Q_\nu(\beta^*\mathcal E)
  =\widetilde Q_\nu(\mathcal Q_D)
  =2^{\ell(\nu)}\widetilde P_\nu(\mathcal Q_D)
  =2^{\ell(\nu)}\tau_\nu.
\]
Applying $\alpha^*$ and using
$\alpha^*\beta^*=F_{G_C}^*$ yields
\[
  2^{\ell(\nu)}\alpha^*\tau_\nu
  =2^{|\nu|}\sigma_\nu.
\]
Indeed, relative Frobenius is the identity on the underlying topological
space, so its pullback of a codimension-$j$ Schubert class defined over
$\mathbb F_2$ is a positive multiple of the same irreducible Schubert class.
The ambient Frobenius has degree $2^M$, its restriction to that Schubert
variety has degree $2^{M-j}$, and finite flat pull-push is multiplication by
$2^M$; hence the multiple is $2^j$.  The Chow ring is
torsion-free by Lemma~\ref{appA:lem:integral-base-change}, so the second
identity in~\eqref{appA:eq:Schubert-pullbacks} follows.  Finally, apply the
two pullback identities to the point class $\nu=\rho_N$.  Since
$\ell(\rho_N)=N$ and $|\rho_N|=M$, the projection formula gives
\eqref{appA:eq:ambient-degrees}.
\end{proof}

\begin{lemma}[Bruhat compatibility]
\label{appA:lem:Bruhat-compatibility}
The morphisms $\alpha$ and $\beta$ map every Bruhat cell and every opposite
Bruhat cell finitely, surjectively, and radicially onto the cell with the same
index in the common Weyl group.  Consequently, for every strict
$\mu\subseteq\lambda\subseteq\rho_N$,
\begin{equation}
\label{appA:eq:reduced-Richardson-preimages}
  (\beta^{-1}R^C_{\lambda/\mu})_{\mathrm{red}}
  =R^D_{\lambda/\mu},
  \qquad
  (\alpha^{-1}R^D_{\lambda/\mu})_{\mathrm{red}}
  =R^C_{\lambda/\mu}.
\end{equation}
\end{lemma}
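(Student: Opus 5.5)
The argument rests on equivariance. Both $\pi_B$ and $\pi_C$ respect the pinnings. They carry $\mathbf B_B$, $\mathbf B_C$ onto each other (up to Frobenius twist), together with the maximal tori, the opposite Borels, and the terminal parabolics. The induced maps of flag varieties therefore intertwine the Borel actions through the isogenies. The odd/even isomorphism $\iota$ preserves Bruhat and opposite Bruhat cells with the same strict-partition index, by the rank-condition comparison recorded before \eqref{appA:eq:alpha-beta}. It is therefore enough to treat $\overline\alpha$ and $\overline\beta$ between $G_C$ and $G_B$.

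Take a Weyl representative $w\in W^{P}$. Its torus fixed point $wP_C$ is sent by $\overline\alpha$ to $wP_B$; this uses that the root bijection \eqref{appA:eq:root-bijection} is $W$-equivariant, so the isogeny maps $N(T_C)$ onto $N(T_B)$ inducing the identity on the common Weyl group. The cell $C_w=B_C wP_C/P_C$ is a single orbit of the unipotent group $U_w=\prod_{\gamma\in\Phi^+\cap w\Phi^-}U_\gamma$, and this product is an isomorphism onto $C_w\simeq\mathbb A^{\ell(w)}$. By \eqref{appA:eq:root-map-C}, $\overline\alpha$ sends $\prod x^C_\gamma(t_\gamma)\,w$ to $\prod x^B_{\bar\gamma}(c_\gamma t_\gamma^{\epsilon(\gamma)})\,w$. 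The root bijection carries $\Phi^+\cap w\Phi^-$ onto the corresponding inversion set on the other side. In these coordinates the map $C_w\to C'_w$ is therefore $t_\gamma\mapsto c_\gamma t_\gamma^{\epsilon(\gamma)}$, with $\epsilon\in\{1,2\}$. Such a coordinatewise map is finite, surjective and radicial, since it is a product of identities and Frobenius maps up to units. The same argument applies to $\overline\beta$, and to opposite cells with $U^-$ in place of $U$.

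The main thing to check is that the product ordering of root subgroups is respected, so that the image of an ordered product is again an ordered product. This holds because the isogeny is a group homomorphism sending each $U_\gamma$ onto $U_{\bar\gamma}$. Cells are disjoint, so the set-theoretic preimage of $C'_w$ is exactly $C_w$, by surjectivity of $\alpha$ and the cellwise statement.

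For \eqref{appA:eq:reduced-Richardson-preimages}, note that a Schubert variety is the union of the cells indexed by $v$ below its index in the common Bruhat order, and the two Bruhat posets coincide. Hence $\beta^{-1}X^C_{\lambda^\vee}$ equals $X^D_{\lambda^\vee}$ set-theoretically, and the same holds for the opposite Schubert varieties $X^\mu$. Intersecting gives $\beta^{-1}(R^C_{\lambda/\mu})=R^D_{\lambda/\mu}$ as sets. Since $R^D_{\lambda/\mu}$ is reduced (it is integral by Richardson theory), the reduced preimage is $R^D_{\lambda/\mu}$. The same argument applies to $\alpha$.
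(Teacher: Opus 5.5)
Your proposal is correct and follows essentially the same route as the paper. You reduce via $\iota$ to $\overline\alpha,\overline\beta$, put each Bruhat or opposite Bruhat cell in root-subgroup coordinates so that the map becomes $t_\gamma\mapsto c_\gamma t_\gamma^{\epsilon(\gamma)}$, and then pass to Schubert varieties as unions of cells and use reducedness of Richardson varieties (in characteristic two, the Frobenius-splitting fact the paper cites from Brion--Kumar). The only cosmetic difference is that you absorb the torus discrepancy between $\pi(\dot w)$ and $\dot w$ into the parabolic via the fixed point $\dot wP$, whereas the paper moves it through the ordered product and rescales the coordinates.
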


\begin{proof}
Via the isomorphism~\eqref{appA:eq:odd-even-char2}, it is enough to work on
the type~$B$ and type~$C$ homogeneous spaces.  We give the argument for
$\overline\beta$; the proof for $\overline\alpha$ is the same with the two
root lengths interchanged.

Let $W^P$ denote the common set of minimal-length representatives for the
right cosets $W/W_P$; with this convention the Bruhat cell indexed by
$w\in W^P$ is
\[
 C_w^\bullet=\mathbf B_\bullet\dot w\mathbf P_\bullet/
              \mathbf P_\bullet.
\]
For $\bullet\in\{B,C\}$, put
\[
 \Psi_\bullet(w)
 :=\{\gamma\in\Phi_\bullet^+:w^{-1}\gamma\in\Phi_\bullet^-\}.
\]
For $w\in W^P$, the set $\Psi_\bullet(w)$ is the inversion set of
$w^{-1}$, and hence $|\Psi_\bullet(w)|=\ell(w)$.  The
root-length-exchanging bijection~\eqref{appA:eq:root-bijection} preserves
the chosen positive systems and intertwines the common Weyl-group action.
It therefore carries $\Psi_B(w)$ bijectively onto $\Psi_C(w)$.

Fix a reduced expression $w=s_{i_1}\cdots s_{i_\ell}$ in the common Weyl
group.  The associated ordered inversion roots are
\[
 \gamma_j=s_{i_1}\cdots s_{i_{j-1}}(\alpha_{i_j})
 \qquad(1\leq j\leq\ell).
\]
Taking the corresponding roots in types $B$ and $C$, multiplication gives
the standard root-subgroup coordinate isomorphisms
\begin{align}
 \mathbb A_k^\ell
 &\xrightarrow{\sim}C_w^B,
 &(t_1,\ldots,t_\ell)&\longmapsto
   x^B_{\gamma_1}(t_1)\cdots x^B_{\gamma_\ell}(t_\ell)
   \dot w\mathbf P_B,\label{appA:eq:cell-coordinates-B}\\
 \mathbb A_k^\ell
 &\xrightarrow{\sim}C_w^C,
 &(u_1,\ldots,u_\ell)&\longmapsto
   x^C_{\bar\gamma_1}(u_1)\cdots x^C_{\bar\gamma_\ell}(u_\ell)
   \dot w\mathbf P_C.
 \label{appA:eq:cell-coordinates-C}
\end{align}
These are the usual factorizations of
$U\cap\dot wU^-\dot w^{-1}$ into its root subgroups; see
Springer~\cite[\S8.3]{Springer98}.

The element $\pi_B(\dot w)$ represents the same Weyl-group element as the
chosen target representative $\dot w$ and therefore differs from it by a
torus factor.  Since the torus lies in $\mathbf P_C$, moving that factor
through the ordered root product only rescales the root coordinates by
nonzero constants.  Combining this observation with
\eqref{appA:eq:root-map-B}, the restriction of $\overline\beta$ to the cell
indexed by $w$ becomes, in the coordinates above,
\begin{equation}
\label{appA:eq:coordinatewise-cell-map}
 (t_j)_{j=1}^\ell
 \longmapsto
 \bigl(a_jt_j^{\epsilon_B(\gamma_j)}\bigr)_{j=1}^\ell,
 \qquad a_j\in k^\times,
 \quad\epsilon_B(\gamma_j)\in\{1,2\}.
\end{equation}
Every coordinate is a nonzero scalar multiple of either the identity of
$\mathbb A^1_k$ or relative Frobenius.  Hence
\eqref{appA:eq:coordinatewise-cell-map} is finite, surjective, and radicial.
Applying the same construction to the opposite pinning
$(\mathbf T_\bullet,\mathbf B_\bullet^-,
  \{x^\bullet_{-\gamma}\})$
proves the identical assertion for every opposite Bruhat cell.  Transporting
the type~$B$ cells through $\iota$ proves the stated cellwise assertions for
$\beta$; the argument using~\eqref{appA:eq:root-map-C} proves them for
$\alpha$.

The Schubert and opposite Schubert varieties are the closures of their
corresponding cells, equivalently the unions of cells with the appropriate
Bruhat inequalities.  Since $\alpha$ and $\beta$ are universal
homeomorphisms and preserve every cell index, the underlying topological
inverse image of each Schubert or opposite Schubert variety is its
counterpart with the same index.  Therefore its reduced scheme-theoretic
inverse image is exactly that counterpart.  Applying this to the two factors
defining a Richardson variety gives
\[
 \bigl(\beta^{-1}X^C_{\lambda^\vee}\bigr)_{\mathrm{red}}
   =X^D_{\lambda^\vee},
 \qquad
 \bigl(\beta^{-1}(X^C)^\mu\bigr)_{\mathrm{red}}
   =(X^D)^\mu,
\]
and similarly for $\alpha$.  Richardson varieties for opposite Borels are reduced and irreducible in
arbitrary characteristic; see Brion~\cite[Section~1.3]{Brion05} and the
Frobenius-splitting treatment in Brion--Kumar~\cite[Chapter~2]{BrionKumar05}.
Thus the Richardson intersections on both sides are reduced and integral,
and taking the intersection and then reducing proves
\eqref{appA:eq:reduced-Richardson-preimages}.
\end{proof}

\begin{remark}[Low ranks]
\label{appA:rem:low-rank}
The spinor specialization, Richardson formula, and projective-bundle model
of Subsections~\ref{appA:spinor-specialization}--\ref{appA:spinor-Richardson}
hold for every $N\geq1$.  The pinned argument above includes $N=2$; the
accidental isomorphism of the abstract root systems $B_2$ and $C_2$ does not
alter the long/short-root formulas.  When $N=1$, both homogeneous spaces are $\mathbb P^1$.  After compatible
identifications, $\alpha$ is an isomorphism and $\beta$ is relative
Frobenius.  Moreover, $\mathcal E\simeq\mathcal O_{\mathbb P^1}(1)$ and
$\mathcal Q_D\simeq\mathcal O_{\mathbb P^1}(1)^{\oplus2}$, while the
bundle comparison is the twisted Euler sequence
\[
 0\longrightarrow\mathcal O_{\mathbb P^1}
 \longrightarrow\mathcal O_{\mathbb P^1}(1)^{\oplus2}
 \longrightarrow\mathcal O_{\mathbb P^1}(2)
 =\beta^*\mathcal O_{\mathbb P^1}(1)\longrightarrow0.
\]
Hence $\deg\alpha=1=2^{M-N}$ and $\deg\beta=2=2^N$, and all Richardson
and projective-bundle formulas below reduce directly to the corresponding
statements on $\mathbb P^1$.
\end{remark}

Let
\[
  \beta_{\lambda/\mu}:R^D_{\lambda/\mu}\longrightarrow R^C_{\lambda/\mu},
  \qquad
  \alpha_{\lambda/\mu}:R^C_{\lambda/\mu}\longrightarrow R^D_{\lambda/\mu}
\]
be the induced morphisms between the reduced Richardson varieties.  For
$N=1$ they have the meaning described in Remark~\ref{appA:rem:low-rank}.

\begin{theorem}[Richardson degrees]
\label{appA:thm:Richardson-degrees}
Put
\[
  d=|\lambda|-|\mu|,
  \qquad
  r=\ell(\lambda)-\ell(\mu).
\]
Then
\begin{equation}
\label{appA:eq:restricted-degrees}
  \deg\beta_{\lambda/\mu}=2^r,
  \qquad
  \deg\alpha_{\lambda/\mu}=2^{d-r}.
\end{equation}
For every $\gamma\in\mathbb N^n$ with $|\gamma|=d$,
\begin{equation}
\label{appA:eq:Chern-comparison}
  \int_{R^D_{\lambda/\mu}}
  \prod_i c_{\gamma_i}(\mathcal Q_D|_{R^D_{\lambda/\mu}})
  =2^r
  \int_{R^C_{\lambda/\mu}}
  \prod_i c_{\gamma_i}(\mathcal E|_{R^C_{\lambda/\mu}}).
\end{equation}
Thus the standard identity
\begin{equation}
\label{appA:eq:PQ-degree}
  Q_{\lambda/\mu}
  =2^{\ell(\lambda)-\ell(\mu)}P_{\lambda/\mu}
\end{equation}
is geometrically accounted for by the degree of the spinor-to-Lagrangian
Richardson morphism.
\end{theorem}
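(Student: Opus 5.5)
We compute both degrees through pushforward of fundamental classes and the ambient Schubert pullback formulas of Proposition~\ref{appA:prop:special-morphisms}. By Lemma~\ref{appA:lem:Bruhat-compatibility}, $\beta$ maps $R^D_{\lambda/\mu}$ onto $R^C_{\lambda/\mu}$, and the restriction $\beta_{\lambda/\mu}$ is finite, surjective and radicial. Hence $\beta_*[R^D_{\lambda/\mu}]=\deg(\beta_{\lambda/\mu})\,[R^C_{\lambda/\mu}]$.

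To determine this multiple, we pair with a test class on $G_C$ using the projection formula. The natural test class is $\sigma_{\lambda^\vee}\sigma_\mu$'s dual, namely $\sigma_\lambda\sigma_{\mu^\vee}$. This pairs to $1$ with $[R^C]=\sigma_{\lambda^\vee}\sigma_\mu$ provided the Richardson class is the correct dual. Concretely,
\[
 \int_{R^D}\beta^*(\sigma_\lambda\sigma_{\mu^\vee})
 =\deg\beta_{\lambda/\mu}\int_{R^C}\sigma_\lambda\sigma_{\mu^\vee}.
\]
The left side equals
\[
 2^{\ell(\lambda)+\ell(\mu^\vee)}\int_{G_D}\tau_\lambda\tau_{\mu^\vee}\tau_{\lambda^\vee}\tau_\mu
\]
by~\eqref{appA:eq:Schubert-pullbacks} and~\eqref{appA:eq:D-Richardson-class}. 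That test class is not ideal, since the quadruple products need not be $1$.

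A cleaner approach pairs with the point class of $R^C$, or more simply uses any degree-$d$ class $\theta\in A^d(G_C)$ with $\int_{R^C}\theta\neq0$. In the Chern form, take $\theta=\prod_i c_{\gamma_i}(\mathcal E)$. By~\eqref{appA:eq:bundle-comparison}, $\beta^*c(\mathcal E)=c(\mathcal Q_D)$, so the projection formula yields
\[
 \int_{R^D}\prod_i c_{\gamma_i}(\mathcal Q_D)=\deg\beta_{\lambda/\mu}\int_{R^C}\prod_i c_{\gamma_i}(\mathcal E).
\]
Corollary~\ref{appA:cor:fiberwise-formulas}, which holds in characteristic two, identifies the two sides with $[x^\gamma]Q_{\lambda/\mu}$ and $[x^\gamma]P_{\lambda/\mu}$. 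Lemma~\ref{lem:skew-PQ-scaling} then gives the ratio $2^r$, once we choose $\gamma$ with $[x^\gamma]P_{\lambda/\mu}\neq0$. Such a $\gamma$ exists by taking $n\ge d$, since $P_{\lambda/\mu}\neq0$ for $\mu\subseteq\lambda$; the equality $\deg\beta_{\lambda/\mu}=2^r$ is independent of $n$. This simultaneously yields~\eqref{appA:eq:Chern-comparison} for all $\gamma$, and with it the interpretation of~\eqref{appA:eq:PQ-degree}.

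For $\alpha$, we use $\beta\circ\alpha=F_{G_C}$ restricted to $R^C$. Relative Frobenius on a $d$-dimensional variety defined over $\mathbb F_2$ has degree $2^d$, and the Richardson varieties are defined over $\mathbb F_2$ via the split flags. Since degrees of finite surjective maps of integral varieties are multiplicative,
\[
 \deg\alpha_{\lambda/\mu}\cdot\deg\beta_{\lambda/\mu}=2^d,
\]
so $\deg\alpha_{\lambda/\mu}=2^{d-r}$.

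The main obstacle is making the restricted compositions legitimate. We must check that $\beta_{\lambda/\mu}\circ\alpha_{\lambda/\mu}$ equals the relative Frobenius of the reduced variety $R^C_{\lambda/\mu}$. This holds because $F_{G_C}$ restricts to the Frobenius of any closed subscheme defined over $\mathbb F_2$, and the induced maps are defined on reduced loci by Lemma~\ref{appA:lem:Bruhat-compatibility}. We also use that proper pushforward of the fundamental class of an integral variety under a finite surjective map is the degree times the class, including the inseparable case. The $N=1$ case is checked directly as in Remark~\ref{appA:rem:low-rank}.
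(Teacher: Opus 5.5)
Your argument is correct, but it takes a different route from the paper's.

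\textbf{The paper's route.} The paper never invokes the skew coefficient formulas in this proof. It pulls back the ambient Richardson class via $\beta^*\sigma_\nu=2^{\ell(\nu)}\tau_\nu$, obtaining $\beta^*(\sigma_{\lambda^\vee}\sigma_\mu)\cap[G_D]=2^{N-r}[R^D_{\lambda/\mu}]$. It then pushes forward using $\deg\beta=2^N$ and cancels in the torsion-free group $A_d(G_C)$ to get $e_\beta=2^r$. It computes $e_\alpha=2^{d-r}$ the same way, from $\alpha^*\tau_\nu=2^{|\nu|-\ell(\nu)}\sigma_\nu$ and $\deg\alpha=2^{M-N}$, and uses $e_\alpha e_\beta=2^d$ only as a consistency check. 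Finally it derives the Chern comparison from $e_\beta$ and the bundle sequence.

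\textbf{Your route.} You read off $\deg\beta_{\lambda/\mu}$ as a ratio of Chern numbers, using the bundle sequence, Corollary~\ref{appA:cor:fiberwise-formulas}, and the algebraic identity of Lemma~\ref{lem:skew-PQ-scaling}. You then get $\deg\alpha_{\lambda/\mu}$ formally from $\beta_{\lambda/\mu}\circ\alpha_{\lambda/\mu}=F_{R^C_{\lambda/\mu}}$. The supporting facts you rely on are all sound:
\begin{itemize}
\item maps from reduced sources factor through the reduced Richardson varieties;
\item $R^C_{\lambda/\mu}$ is defined over $\mathbb F_2$;
\item Frobenius on an integral $d$-fold has degree $2^d$;
\item degrees are multiplicative.
\end{itemize}

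\textbf{What each route buys.} Yours is shorter and needs neither the Schubert pullback formulas nor the ambient degrees. What it gives up is the content of the theorem's last sentence. Because $Q_{\lambda/\mu}=2^rP_{\lambda/\mu}$ is an input for you, the claim that this identity is ``geometrically accounted for'' by $\deg\beta_{\lambda/\mu}$ becomes circular. Your closing phrase ``and with it the interpretation of~\eqref{appA:eq:PQ-degree}'' should be dropped or weakened. In the paper, the degree is obtained independently, with only the straight-shape normalization entering through the Giambelli polynomials. There, the Chern comparison plus the fiberwise formulas genuinely re-derive the skew scaling.

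\textbf{A deletion.} Remove the abandoned test class $\sigma_\lambda\sigma_{\mu^\vee}$. It has codimension $M+d$, not $d$, so that display is vacuous.
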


\begin{proof}
For $N=1$, all assertions follow directly from
Remark~\ref{appA:rem:low-rank}; hence assume $N\geq2$.  By
Lemma~\ref{appA:lem:Bruhat-compatibility}, the restrictions are finite
dominant morphisms between integral varieties of the same dimension.  The
first identity in~\eqref{appA:eq:Schubert-pullbacks} gives
\begin{equation}
\label{appA:eq:beta-pullback-Richardson}
  \beta^*(\sigma_{\lambda^\vee}\sigma_\mu)\cap[G_D]
  =2^{\ell(\lambda^\vee)+\ell(\mu)}[R^D_{\lambda/\mu}]
  =2^{N-r}[R^D_{\lambda/\mu}],
\end{equation}
because $\ell(\lambda^\vee)=N-\ell(\lambda)$.  Write
\[
  (\beta_{\lambda/\mu})_*[R^D_{\lambda/\mu}]
  =e_\beta[R^C_{\lambda/\mu}].
\]
Pushing~\eqref{appA:eq:beta-pullback-Richardson} forward and using the
projection formula and $\deg\beta=2^N$ gives
\[
  2^{N-r}e_\beta[R^C_{\lambda/\mu}]
  =2^N[R^C_{\lambda/\mu}]
\]
in $A_d(G_C)$.  The class $[R^C_{\lambda/\mu}]$ is nonzero: for example,
its intersection with a suitable power of the ample generator of
$\operatorname{Pic}(G_C)$ has positive degree.  Since $A_d(G_C)$ is free
abelian, and hence torsion-free, by
Lemma~\ref{appA:lem:integral-base-change}, the displayed equality implies
\[
  2^{N-r}e_\beta=2^N.
\]
Therefore
\[
  e_\beta=2^r.
\]

Similarly, the second identity in~\eqref{appA:eq:Schubert-pullbacks} gives
\begin{equation}
\label{appA:eq:alpha-pullback-Richardson}
  \alpha^*(\tau_{\lambda^\vee}\tau_\mu)\cap[G_C]
  =2^{M-N-d+r}[R^C_{\lambda/\mu}],
\end{equation}
where the complement identities fixed in
Subsection~\ref{appA:spinor-specialization} give
\[
  |\lambda^\vee|-\ell(\lambda^\vee)
  +|\mu|-\ell(\mu)=M-N-d+r.
\]
Writing
\[
  (\alpha_{\lambda/\mu})_*[R^C_{\lambda/\mu}]
  =e_\alpha[R^D_{\lambda/\mu}]
\]
and pushing~\eqref{appA:eq:alpha-pullback-Richardson} forward gives
\[
  2^{M-N-d+r}e_\alpha[R^D_{\lambda/\mu}]
  =2^{M-N}[R^D_{\lambda/\mu}]
\]
in $A_d(G_D)$.  The class $[R^D_{\lambda/\mu}]$ is nonzero, since its
intersection with a suitable power of the ample generator of
$\operatorname{Pic}(G_D)$ has positive degree.  Since $A_d(G_D)$ is free
abelian by Lemma~\ref{appA:lem:integral-base-change}, the displayed equality
implies
\[
  2^{M-N-d+r}e_\alpha=2^{M-N}.
\]
Consequently,
\[
  e_\alpha=2^{d-r}.
\]
In particular,
\[
  e_\alpha e_\beta=2^d,
\]
as required by Frobenius on the $d$-dimensional Richardson variety.

Restrict~\eqref{appA:eq:bundle-comparison} to $R^D_{\lambda/\mu}$.  For
$\theta=\prod_i c_{\gamma_i}(\mathcal E|_{R^C_{\lambda/\mu}})$, the
projection formula gives
\[
  \int_{R^D_{\lambda/\mu}}\beta_{\lambda/\mu}^*\theta
  =\int_{R^C_{\lambda/\mu}}
    \theta\cap(\beta_{\lambda/\mu})_*[R^D_{\lambda/\mu}]
  =2^r\int_{R^C_{\lambda/\mu}}\theta,
\]
which is~\eqref{appA:eq:Chern-comparison}.  Equation
\eqref{appA:eq:PQ-degree} is the standard skew $P/Q$ normalization from
Section~\ref{sec:typeC}; the equality of its scalar with
$\deg\beta_{\lambda/\mu}$ is the geometric content of the comparison.
\end{proof}

\begin{example}[The shifted shape $(3,1)/(1)$]
\label{appA:ex:31-over-1}
Let $N=3$, $\rho_3=(3,2,1)$, $\lambda=(3,1)$, and $\mu=(1)$.
Then
\[
 \lambda^\vee=\rho_3\setminus\lambda=(2),
 \qquad
 d=|\lambda|-|\mu|=3,
 \qquad
 r=\ell(\lambda)-\ell(\mu)=1.
\]
The shifted skew diagram has the boxes $(1,2)$, $(1,3)$, and $(2,2)$;
there is one diagonal box and two off-diagonal boxes.

We first compute the skew functions.  From~\eqref{eq:Q-two-row} and
\eqref{eq:q-relations},
\[
 Q_{(2,1)}=q_2q_1-2q_3,
 \qquad
 Q_{(3,1)}=q_3q_1-2q_4,
 \qquad
 q_1^2=2q_2,
 \qquad
 q_2^2=2q_1q_3-2q_4.
\]
Consequently,
\[
 Q_{(1)}Q_{(3)}=Q_{(3,1)}+2Q_{(4)},
 \qquad
 Q_{(1)}Q_{(2,1)}=2Q_{(3,1)}.
\]
Since $(3)$ and $(2,1)$ are the only strict partitions of~$3$,
\eqref{eq:shifted-LR-def} and~\eqref{eq:skew-P-expansion} give
\begin{equation}\label{appA:eq:example-skew-expansion}
 P_{(3,1)/(1)}=P_{(3)}+2P_{(2,1)}.
\end{equation}
Using Lemma~\ref{lem:P-two-variable}, we therefore obtain
\begin{align}
 P_{(3,1)/(1)}(x,y)
 &=x^3+y^3+4x^2y+4xy^2,
 \label{appA:eq:example-P}\\
 Q_{(3,1)/(1)}(x,y)
 &=2x^3+2y^3+8x^2y+8xy^2,
 \label{appA:eq:example-Q}
\end{align}
where the second equality also follows from
Lemma~\ref{lem:skew-PQ-scaling}.

Let $R_C=R^C_{(3,1)/(1)}$ and $R_D=R^D_{(3,1)/(1)}$.  The coefficient
formulas~\eqref{appA:eq:fiberwise-C-coeff} and
\eqref{appA:eq:fiberwise-D-coeff} make the factor~$2^r=2$ visible in
individual Chern numbers:
\begin{align*}
 \int_{R_C}c_3(\mathcal E)&=1,
 &\int_{R_C}c_2(\mathcal E)c_1(\mathcal E)&=4,\\
 \int_{R_D}c_3(\mathcal Q_D)&=2,
 &\int_{R_D}c_2(\mathcal Q_D)c_1(\mathcal Q_D)&=8.
\end{align*}
Here and below the bundles are restricted to the indicated Richardson
varieties.

Retain the algebraically closed characteristic-two ground field fixed in
Subsection~\ref{appA:characteristic-two}.  Since $\lambda^\vee=(2)$ and
$\mu=(1)$, the pullback formulas
\eqref{appA:eq:Schubert-pullbacks} give
\[
 \beta^*(\sigma_{(2)}\sigma_{(1)})\cap[G_D]=4[R_D],
 \qquad
 \alpha^*(\tau_{(2)}\tau_{(1)})\cap[G_C]=2[R_C].
\]
Moreover, $M=|\rho_3|=6$, and hence
\[
 \deg\beta=2^3=8,
 \qquad
 \deg\alpha=2^{6-3}=8
\]
by~\eqref{appA:eq:ambient-degrees}.  Pushing the preceding two cycle
identities forward gives
\[
 4\deg(\beta_{(3,1)/(1)})=8,
 \qquad
 2\deg(\alpha_{(3,1)/(1)})=8.
\]
Thus
\[
 \deg(\beta_{(3,1)/(1)})=2=2^r,
 \qquad
 \deg(\alpha_{(3,1)/(1)})=4=2^{d-r}.
\]
In this example the spinor-to-Lagrangian degree records the single diagonal
box, while the reverse degree records the two off-diagonal boxes; their
product is $2^3$, the degree of Frobenius on the three-dimensional
Richardson variety.
\end{example}

The exponent $r$ is the number of diagonal boxes of the shifted skew diagram
$\lambda/\mu$, while $d-r$ is the number of off-diagonal boxes.  The
cellwise root-subgroup description in
Lemma~\ref{appA:lem:Bruhat-compatibility} shows that $\beta$ is Frobenius
on the diagonal root directions and $\alpha$ on the off-diagonal root
directions.  Thus
\eqref{appA:eq:restricted-degrees} is the global degree form of the
long/short-root exchange in~\eqref{appA:eq:nilradical-roots}.

\subsection{The projective-bundle bridge}
\label{appA:projective-bundle-bridge}

Continue over $k$.  Write
\[
  R_C=R^C_{\lambda/\mu},
  \qquad
  R_D=R^D_{\lambda/\mu},
  \qquad
  \mathcal E_R=\mathcal E|_{R_C},
  \qquad
  \mathcal Q_R=\mathcal Q_D|_{R_D}.
\]
Let
\[
  Y^C_{\lambda/\mu,n}
  :=\bigl(\mathbb P_{R_C}(\mathcal E_R)\bigr)^{\times_{R_C}n}
\]
be the type~$C$ projective-bundle construction of Section~\ref{sec:typeC}.  Its base
change along $\beta_{\lambda/\mu}$ is
\begin{equation}
\label{appA:eq:base-changed-C-model}
  \widetilde Y_C
  :=R_D\times_{R_C}Y^C_{\lambda/\mu,n}
  =\bigl(\mathbb P_{R_D}(\beta_{\lambda/\mu}^*\mathcal E_R)
    \bigr)^{\times_{R_D}n}.
\end{equation}
The quotient in \eqref{appA:eq:bundle-comparison} induces a closed immersion
\begin{equation}
\label{appA:eq:bridge-immersion}
  j:\widetilde Y_C\hookrightarrow Y^D_{\lambda/\mu,n}.
\end{equation}

\begin{theorem}[Tautological complete-intersection bridge]
\label{appA:thm:projective-bundle-bridge}
The morphism $j$ is a regular closed immersion of codimension $n$, and
\begin{equation}
\label{appA:eq:bridge-cycle}
  j_*[\widetilde Y_C]
  =\xi_1\cdots\xi_n\cap[Y^D_{\lambda/\mu,n}].
\end{equation}
Let
\begin{align*}
  V_C(x)&:=\frac{1}{D_C!}
  \int_{Y^C_{\lambda/\mu,n}}
  \left(\sum_i x_i\eta_i\right)^{D_C},
  &D_C&=d+n(N-1),\\
  V_D(x)&:=\frac{1}{D_D!}
  \int_{Y^D_{\lambda/\mu,n}}
  \left(\sum_i x_i\xi_i\right)^{D_D},
  &D_D&=d+nN,
\end{align*}
where $\eta_i$ are the tautological classes on the type~$C$ factors.  These
are elements of $\mathbb Q[x_1,\ldots,x_n]$, defined by Chow intersection
numbers on varieties over $k$.  Then
\begin{equation}
\label{appA:eq:volume-bridge}
  \partial_{x_1}\cdots\partial_{x_n}V_D(x)=2^rV_C(x).
\end{equation}
Equivalently, as an identity of polynomials over $\mathbb Q$,
\begin{equation}
\label{appA:eq:normalized-bridge}
  \partial_{x_1}\cdots\partial_{x_n}
  \cN\!\left((x_1\cdots x_n)^NQ_{\lambda/\mu}(x)\right)
  =2^r\cN\!\left((x_1\cdots x_n)^{N-1}
  P_{\lambda/\mu}(x)\right).
\end{equation}
\end{theorem}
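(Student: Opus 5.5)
The plan is to work one projective-bundle factor at a time and then take fiber products over $R_D$. Restricting the exact sequence \eqref{appA:eq:bundle-comparison} to $R_D$ gives a surjection $\mathcal Q_R\twoheadrightarrow\beta_{\lambda/\mu}^*\mathcal E_R$ with kernel the trivial line bundle $L^\vee\otimes\mathcal O_{R_D}$. In Grothendieck's quotient convention, a quotient $\mathcal G\twoheadrightarrow\mathcal G''$ induces a closed immersion $\mathbb P(\mathcal G'')\hookrightarrow\mathbb P(\mathcal G)$. Its image is the zero locus of the composite section
\[
L^\vee\otimes\mathcal O\to\mathcal Q_R\to\mathcal O_{\mathbb P(\mathcal Q_R)}(1).
\]
Since $L^\vee$ is a fixed line, this is a global section $s$ of $\mathcal O(1)\otimes L$, and $c_1(\mathcal O(1)\otimes L)=\xi$.

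The section $s$ is fibrewise a nonzero linear form on each projective-space fibre $\mathbb P^N$, so its zero scheme is a relative hyperplane $\mathbb P^{N-1}$-subbundle. Hence $s$ is a regular section, and the zero scheme is an effective Cartier divisor flat over $R_D$, with $[Z(s)]=\xi\cap[\mathbb P_{R_D}(\mathcal Q_R)]$ by Fulton~\cite[Example~3.2.16/Prop.~14.1]{FultonIT}. On the $n$-fold fiber product $Y^D_{\lambda/\mu,n}$, the pullbacks $s_1,\ldots,s_n$ of these sections cut out $\widetilde Y_C$. Regularity follows because each $s_i$ is fibrewise a linear form in an independent factor, so the sequence is regular locally over $R_D$ by a flatness/fibre-dimension argument. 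Alternatively, one can factor $j$ as a chain of $n$ relative hyperplane embeddings, each a Cartier divisor. The resulting codimension is $n$, and iterating the divisor class formula gives \eqref{appA:eq:bridge-cycle}. Under the identification \eqref{appA:eq:base-changed-C-model}, the restriction $j^*\xi_i$ is the tautological class $\tilde\eta_i$ of the $i$th factor of $\widetilde Y_C$. This holds because $\mathcal O(1)$ restricts to $\mathcal O(1)$ under a quotient-induced immersion, up to the trivial twist by $L$, which is numerically invisible.

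The volume identity comes next. Applying $\partial_{x_1}\cdots\partial_{x_n}$ to $\frac1{D_D!}\int(\sum x_i\xi_i)^{D_D}$ gives $\frac1{(D_D-n)!}\int\xi_1\cdots\xi_n(\sum x_i\xi_i)^{D_D-n}$, and $D_D-n=D_C$. By \eqref{appA:eq:bridge-cycle} and the projection formula, this equals
\[
\frac1{D_C!}\int_{\widetilde Y_C}\Bigl(\sum x_i\tilde\eta_i\Bigr)^{D_C}.
\]
Now $\widetilde Y_C\to Y^C_{\lambda/\mu,n}$ is the base change of the finite flat-on-image map $\beta_{\lambda/\mu}$ of degree $2^r$ from Theorem~\ref{appA:thm:Richardson-degrees}. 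The classes $\tilde\eta_i$ are the pullbacks of the $\eta_i$. Its pushforward of the fundamental class is $2^r[Y^C_{\lambda/\mu,n}]$, since a base change along a projective bundle preserves the generic degree and both spaces are integral. The projection formula then yields \eqref{appA:eq:volume-bridge}. Finally, \eqref{appA:eq:normalized-bridge} follows by substituting the fiberwise formulas \eqref{appA:eq:fiberwise-C-projective} and \eqref{appA:eq:fiberwise-D-projective}. As a consistency check, it also follows directly from $Q_{\lambda/\mu}=2^rP_{\lambda/\mu}$ and $\partial_1\cdots\partial_n\cN(x^{\mathbf 1}G)=\cN(G)$.

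The main obstacle is the regularity and cycle-class step. One must check that the sections $s_i$ form a regular sequence on the possibly singular $Y^D_{\lambda/\mu,n}$, so that the intersection-theoretic product $\xi_1\cdots\xi_n\cap[Y^D]$ equals the fundamental class of $\widetilde Y_C$ with multiplicity one. I would handle this by working locally over $R_D$, where all bundles are trivialized. There $Y^D$ is locally $R_D\times(\mathbb P^N)^n$, and the $s_i$ are independent linear coordinates on distinct factors, so regularity and reducedness of the zero scheme are evident.
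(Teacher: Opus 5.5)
Your proposal is correct and follows essentially the paper's proof. The composite section of $\mathcal O(1)\otimes L$ cuts out $\mathbb P_{R_D}(\beta_{\lambda/\mu}^*\mathcal E_R)$, and locally it is a polynomial variable over the possibly singular base, so the $n$ pulled-back sections form a regular sequence giving \eqref{appA:eq:bridge-cycle}. Differentiation plus the projection formula for the degree-$2^r$ base change of $\beta_{\lambda/\mu}$ then yields \eqref{appA:eq:volume-bridge}.

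The differences are minor. The paper treats $N=1$ separately via the twisted Euler sequence, which has the same shape as \eqref{appA:eq:bundle-comparison}, so your argument applies verbatim there. You justify the degree $2^r$ of $\widetilde Y_C\to Y^C_{\lambda/\mu,n}$ more explicitly than the paper does. Also, $\mathcal O(1)$ restricts to $\mathcal O(1)$ exactly under the quotient-induced immersion, with no twist by $L$.
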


\begin{proof}
For $N=1$, the twisted Euler sequence in
Remark~\ref{appA:rem:low-rank} identifies
$\mathbb P_{R_D}(\beta_{\lambda/\mu}^*\mathcal E_R)$ with the zero scheme of
a tautological section on $\mathbb P_{R_D}(\mathcal Q_R)$, hence with a
regular Cartier divisor; on the $n$-fold fiber product the corresponding
sections form a regular sequence.  The same cycle,
derivative, and pushforward argument therefore proves the assertion.  We may
assume $N\geq2$ for the remainder of the proof.

On a factor $p:\mathbb P_{R_D}(\mathcal Q_R)\to R_D$, the composite
\[
  p^*(L^\vee\otimes\mathcal O_{R_D})
  \longrightarrow p^*\mathcal Q_R
  \longrightarrow\mathcal O(1)
\]
is a section of $\mathcal O(1)\otimes L$.  Its zero scheme parametrizes the
one-dimensional quotients of $\mathcal Q_R$ that annihilate the distinguished
constant line subbundle, and is therefore
$\mathbb P_{R_D}(\beta_{\lambda/\mu}^*\mathcal E_R)$.  Since $L$ is constant,
the divisor class of this section is $\xi=c_1(\mathcal O(1))$.  Zariski-locally
on $R_D$ the vector-bundle extension splits, and the section is one relative
homogeneous coordinate.  On every affine chart meeting its zero scheme this
coordinate is a polynomial variable over the possibly singular base ring,
hence a non-zero-divisor.  On the $n$-fold fiber product the corresponding
coordinates lie in distinct factors and form a regular sequence.  This proves
that $j$ is regular and gives \eqref{appA:eq:bridge-cycle}.

Since $D_D-n=D_C$, differentiation and
\eqref{appA:eq:bridge-cycle} give
\[
  \partial_{x_1}\cdots\partial_{x_n}V_D(x)
  =\frac{1}{D_C!}
   \int_{\widetilde Y_C}
   \left(\sum_i x_i j^*\xi_i\right)^{D_C}.
\]
The morphism $\widetilde Y_C\to Y^C_{\lambda/\mu,n}$ is finite dominant of
degree $2^r$, and $j^*\xi_i$ is the pullback of $\eta_i$.  The projection
formula proves \eqref{appA:eq:volume-bridge}.  The fiberwise type~$C$ and type~$D$ projective-bundle identities in
Corollary~\ref{appA:cor:fiberwise-formulas} then give
\eqref{appA:eq:normalized-bridge}.  Algebraically, that normalized identity
also follows from $Q_{\lambda/\mu}=2^rP_{\lambda/\mu}$; the content here is
that its derivative is realized by the regular complete intersection
\eqref{appA:eq:bridge-cycle}.
\end{proof}

\begin{remark}
For $N\geq2$, the common Bruhat indexing and the dual $P/Q$ Schubert
normalizations do not furnish an isomorphism between $G_C$ and $G_D$ over
$\mathbb C$.  The case $N=1$ is the exceptional identification described in
Remark~\ref{appA:rem:low-rank}.  The special comparison morphisms $\alpha$
and $\beta$ occur in characteristic two.  There the two numerical differences in the
projective-bundle constructions have direct geometric meanings: the factor
$2^r$ is the degree of $\beta_{\lambda/\mu}$, while the shift changes from
$N-1$ to $N$ because \eqref{appA:eq:bundle-comparison} contains one additional
constant line bundle.
\end{remark}


\begin{thebibliography}{99}

\bibitem{ABW82}
K.~Akin, D.~A.~Buchsbaum, and J.~Weyman,
\emph{Schur functors and Schur complexes},
Adv. Math. \textbf{44} (1982), no.~3, 207--278.

\bibitem{ATZ24}
S.~An, K.~Tung, and Y.~Zhang,
\emph{Postnikov--Stanley polynomials are Lorentzian},
arXiv:2412.02051v2, 2024.

\bibitem{BGST26}
N.~Bergeron, L.~Gagnon, H.~Spink, and V.~Tewari,
\emph{The quasisymmetric Grassmannian},
arXiv:2604.24903v1, 2026.

\bibitem{BH20}
P.~Br\"and\'en and J.~Huh,
\emph{Lorentzian polynomials},
Ann. of Math. (2) \textbf{192} (2020), no.~3, 821--891.

\bibitem{BKT17}
A.~S.~Buch, A.~Kresch, and H.~Tamvakis,
\emph{A Giambelli formula for isotropic Grassmannians},
Selecta Math. (N.S.) \textbf{23} (2017), no.~2, 869--914.

\bibitem{Brion05}
M.~Brion,
\emph{Lectures on the geometry of flag varieties},
in Topics in cohomological studies of algebraic varieties,
Trends Math., Birkh\"auser, Basel, 2005, pp.~33--85.

\bibitem{BrionKumar05}
M.~Brion and S.~Kumar,
\emph{Frobenius splitting methods in geometry and representation theory},
Progress in Mathematics, vol.~231, Birkh\"auser Boston, Boston, MA, 2005.

\bibitem{CCPS26}
S.~H.~Chan, H.~Chen, I.~Pak, and D.~Soskin,
\emph{Correlation inequalities for Schur positivity},
arXiv:2606.06688v1, 2026.

\bibitem{ChinQin25}
T.~Chin and D.~Qin,
\emph{Lorentzian Symmetric Polynomials},
arXiv:2510.07819v1, 2025.

\bibitem{CidRuiz26}
Y.~Cid-Ruiz,
\emph{Mixed Segre zeta functions and their log-concavity},
arXiv:2507.06424v2, to appear in Algebra \& Number Theory.

\bibitem{FultonIT}
W.~Fulton,
\emph{Intersection theory}, second ed.,
Ergebnisse der Mathematik und ihrer Grenzgebiete, vol.~2,
Springer-Verlag, Berlin, 1998.

\bibitem{FultonPragacz}
W.~Fulton and P.~Pragacz,
\emph{Schubert varieties and degeneracy loci},
Lecture Notes in Mathematics, vol.~1689, Springer-Verlag, Berlin, 1998.

\bibitem{FultonYT}
W.~Fulton,
\emph{Young tableaux},
London Mathematical Society Student Texts, vol.~35,
Cambridge University Press, Cambridge, 1997.

\bibitem{GHMSW25}
L.~Grund, J.~Huh, M.~Micha\l ek, H.~S\"uss, and B.~Wang,
\emph{Linear operators preserving volume polynomials},
arXiv:2506.22415v1, 2025.

\bibitem{Hironaka64}
H.~Hironaka,
\emph{Resolution of singularities of an algebraic variety over a field of characteristic zero. I, II},
Ann. of Math. (2) \textbf{79} (1964), 109--203, 205--326.

\bibitem{HMMSD22}
J.~Huh, J.~Matherne, K.~M\'esz\'aros, and A.~St.~Dizier,
\emph{Logarithmic concavity of Schur and related polynomials},
Trans. Amer. Math. Soc. \textbf{375} (2022), no.~6, 4411--4427.

\bibitem{HuhVolume26}
J.~Huh,
\emph{Volume polynomials},
arXiv:2601.13249v4, 2026.

\bibitem{JiangLi23}
C.~Jiang and Z.~Li,
\emph{Algebraic reverse Khovanskii--Teissier inequality via Okounkov bodies},
Math. Z. \textbf{305} (2023), no.~2, Paper No.~26, 14 pp.

\bibitem{JingLiu26}
N.~Jing and N.~Liu,
\emph{A skew Murnaghan--Nakayama rule for Hopf dual pairs},
arXiv:2606.15138v2, 2026.

\bibitem{KT03}
A.~Kresch and H.~Tamvakis,
\emph{Quantum cohomology of the Lagrangian Grassmannian},
J. Algebraic Geom. \textbf{12} (2003), no.~4, 777--810.

\bibitem{KT04}
A.~Kresch and H.~Tamvakis,
\emph{Quantum cohomology of orthogonal Grassmannians},
Compos. Math. \textbf{140} (2004), no.~2, 482--500.

\bibitem{LLS11}
T.~Lam, A.~Lauve, and F.~Sottile,
\emph{Skew Littlewood--Richardson rules from Hopf algebras},
Int. Math. Res. Not. IMRN 2011, no.~6, 1205--1219.

\bibitem{LPP07}
T.~Lam, A.~Postnikov, and P.~Pylyavskyy,
\emph{Schur positivity and Schur log-concavity},
Amer. J. Math. \textbf{129} (2007), no.~6, 1611--1622.

\bibitem{LeNguyen26}
T.~Le and S.~Nguyen,
\emph{Skew Hives, Skew Skeps, Skew Schur Log-Concavity},
arXiv:2608.13544v1, 2026.

\bibitem{LehmannXiao17}
B.~Lehmann and J.~Xiao,
\emph{Correspondences between convex geometry and complex geometry},
Epijournal G\'eom. Alg\'ebrique \textbf{1} (2017), Art.~6, 29 pp.

\bibitem{Macdonald}
I.~G.~Macdonald,
\emph{Symmetric functions and Hall polynomials}, second ed.,
Oxford Mathematical Monographs, Oxford University Press, New York, 1995.

\bibitem{Maccan26}
M.~Maccan,
\emph{Projective homogeneous varieties of Picard rank one in small characteristic},
Ann. Sc. Norm. Super. Pisa Cl. Sci. (5) \textbf{27} (2026), no.~2, 613--705.

\bibitem{MarshallOlkinArnold11}
A.~W.~Marshall, I.~Olkin, and B.~C.~Arnold,
\emph{Inequalities: theory of majorization and its applications}, second ed.,
Springer Series in Statistics, Springer, New York, 2011.

\bibitem{McNamara08}
P.~R.~W.~McNamara,
\emph{Necessary conditions for Schur-positivity},
J. Algebraic Combin. \textbf{28} (2008), no.~4, 495--507.

\bibitem{MTY19}
C.~Monical, N.~Tokcan, and A.~Yong,
\emph{Newton polytopes in algebraic combinatorics},
Selecta Math. (N.S.) \textbf{25} (2019), no.~5, Paper No.~66, 37 pp.

\bibitem{Murota03}
K.~Murota,
\emph{Discrete convex analysis},
SIAM Monographs on Discrete Mathematics and Applications,
Society for Industrial and Applied Mathematics, Philadelphia, 2003.

\bibitem{NST24}
P.~Nadeau, H.~Spink, and V.~Tewari,
\emph{The geometry of quasisymmetric coinvariants},
arXiv:2410.12643v2, 2024.

\bibitem{PR97}
P.~Pragacz and J.~Ratajski,
\emph{Formulas for Lagrangian and orthogonal degeneracy loci; the $\widetilde Q$-polynomial approach},
Compos. Math. \textbf{107} (1997), no.~1, 11--87.

\bibitem{Rado52}
R.~Rado,
\emph{An inequality},
J. London Math. Soc. \textbf{27} (1952), 1--6.

\bibitem{SGA3}
M.~Demazure and A.~Grothendieck,
\emph{Sch\'emas en groupes. III: Structure des sch\'emas en groupes r\'eductifs},
S\'eminaire de G\'eom\'etrie Alg\'ebrique du Bois Marie 1962--64 (SGA~3),
Documents Math\'ematiques, vol.~8, Soci\'et\'e Math\'ematique de France,
Paris, 2011.

\bibitem{ShawVW07}
S.~Shaw and S.~van Willigenburg,
\emph{Multiplicity free expansions of Schur $P$-functions},
Ann. Comb. \textbf{11} (2007), no.~1, 69--77.

\bibitem{Speyer26}
D.~E.~Speyer,
\emph{$L$-log-concavity and a proof of the conjecture of Lam, Postnikov
and Pylyavskyy},
arXiv:2601.05007v2, 2026.

\bibitem{Springer98}
T.~A.~Springer,
\emph{Linear algebraic groups}, second ed.,
Progress in Mathematics, vol.~9, Birkh\"auser Boston, Boston, MA, 1998.

\bibitem{Stacks}
The Stacks Project Authors,
\emph{The Stacks Project},
\url{https://stacks.math.columbia.edu}.

\bibitem{Stembridge89}
J.~R.~Stembridge,
\emph{Shifted tableaux and the projective representations of symmetric groups},
Adv. Math. \textbf{74} (1989), no.~1, 87--134.

\bibitem{VGM19}
B.~van Geemen and A.~Marrani,
\emph{Lagrangian Grassmannians and spinor varieties in characteristic two},
SIGMA \textbf{15} (2019), Paper No.~064, 22 pp.

\bibitem{Zhang26}
P.~B.~Zhang,
\emph{Normalized skew Schur polynomials are Lorentzian},
arXiv:2608.12266v1, 2026.

\end{thebibliography}
\end{document}